\colorlet{darkblue}{blue!90!black}
\colorlet{darkred}{red!90!black}
\colorlet{darkgreen}{green!60!black}
\newcommand*{\bdot}[1]{
  \accentset{\mbox{\large\bfseries .}}{#1}}
\newtheorem{theorem}{Theorem}[section]
\newtheorem{lemma}[theorem]{Lemma}
\newtheorem{proposition}[theorem]{Proposition}
\newtheorem{corollary}[theorem]{Corollary}
\theoremstyle{definition}
\newtheorem{assumption}[theorem]{Assumption}
\newtheorem{definition}[theorem]{Definition}
\theoremstyle{remark}
\newtheorem{remark}[theorem]{Remark}
\Crefname{assumption}{Assumption}{Assumptions}
\newcommand{\vn}[1]{{\vert\kern-0.23ex\vert\kern-0.23ex\vert #1 
    \vert\kern-0.23ex\vert\kern-0.23ex\vert}}
\def\eps{\varepsilon}
\newcommand{\R}{{\mathbb{R}}}
\def\T{\mathbb{T}}
\newcommand{\E}{\mathbb{E}}
\def\path{\mathrm{path}}
\newcommand{\tand}{\quad\text{and}\quad}
\author{Konstantinos Dareiotis \thanks{School of Maths, University of Leeds, Leeds, U.K., \url{K.Dareiotis@leeds.ac.uk}}\,, 
Teodor Holland \thanks{School of Maths, University of Leeds, Leeds, U.K., \url{T.Holland1@leeds.ac.uk}}\,,    Khoa L\^e \thanks{School of Maths, University of Leeds, Leeds, U.K., \url{K.Le@leeds.ac.uk}}}
\title{Regularisation by multiplicative noise for reaction--diffusion equations}
\begin{document}

\maketitle

\begin{abstract}
We consider the stochastic reaction--diffusion equation in $1+1$ dimensions driven by multiplicative space--time white noise,
with a  distributional drift belonging to a Besov--H\"older space with any regularity index larger than $-1$.
We assume that the diffusion coefficient is a regular function which is bounded away from zero.
By using a combination of stochastic sewing techniques and Malliavin calculus, we show that the equation admits a unique solution.
\end{abstract}
\tableofcontents

 \section*{Acknowledgment}
KD and KL have been supported by  the Engineering \& Physical Sciences Research Council (EPSRC) [grant number EP/Y016955/1]. TH has been  supported by the Engineering \& Physical Sciences Research Council (EPSRC) [grant number EP/W523860/1].

\section{Introduction}
This paper is concerned with regularisation by noise phenomena for the  stochastic reaction--diffusion equation on the $1$-dimensional torus $\mathbb{T} := \mathbb{R}/\mathbb{Z}$ driven by multiplicative space--time white noise
\begin{equs}\label{SHE}
(\partial_t - \Delta)u = b(u) + \sigma(u)\xi, \qquad u|_{t=0}=u_0. 
\end{equs}
We show that \eqref{SHE} admits a unique solution  provided that $\sigma$ is regular, bounded away from zero  and that $b$ is a distribution with a regularity index more than $-1$ in the Besov--H\"older scale.

For results on  regularisation by noise phenomena for stochastic differential equations (SDEs), we refer the reader to \cite{zvonkin1974transform, veretennikov1980strong, davie2007uniqueness, catelliergubinelli, krylovrockner}. Concerning stochastic partial differential equations (SPDEs), 
the first results on  regularisation by noise can be traced back to the works of Gy\"ongy and Pardoux \cite{gyongy1993quasi},  \cite{Gyongy1993regularisation}. Therein, the authors consider SPDEs of the form 
\begin{equs}
(\partial_t - \Delta)u = b(u) + \xi, \qquad u|_{t=0}=u_0,
\label{additiveSHE}
\end{equs}
which corresponds to \eqref{SHE} with \(\sigma=1\).
It is well known that the deterministic counterpart of \eqref{additiveSHE} admits a unique  solution provided that $b$ is a  Lipschitz continuous function. Without Lipschitz regularity, solutions may not exists or may not be identified uniquely. 
The situation changes in the presence of noise. It is shown in \cite{gyongy1993quasi} and \cite{Gyongy1993regularisation} that \eqref{additiveSHE} admits a unique  strong solution provided that $b$ is merely the sum of a bounded  measurable function and an $L_p$-integrable function with some $p \geq 2$. Similar results were obtained for SPDEs in an abstract Hilbert-space framework with bounded and measurable drift in \cite{dapratoflandolipriolarockner}.  In \cite{butkovsky2019regularisation}, Butkovsky and Mytnik show when \(b\) is bounded and measurable, path-by-path uniqueness also holds for (\ref{additiveSHE}). 
For such drift, discrete approximation schemes for the solution of \eqref{additiveSHE} have been established with an optimal rate in \cite{butkovsky2023optimal}, quantifying earlier results from \cite{gyongylattice1,gyongylattice2}. 

Notice that in all the previous results, $b$ is quite  irregular, nevertheless it is a function. 
The first well-posedness result which accommodates distributional drift $b$ is due to  Athreya, Butkovsky, Mytnik, and the third co-author in \cite{athreya2024well}. 
In such case, the composition $b(u)$ is not well-defined a priori and solutions to \eqref{additiveSHE} are defined in a regularised sense which is similar to that of Bass and Chen in \cite{bass2001stochastic} (see \cref{defofsolution}). 
They show in \cite{athreya2024well} that \eqref{additiveSHE} admits a unique probabilistically strong solution provided  that $b$ belongs to the Besov space $\mathscr{B}_{q,\infty}^\alpha$ with $\alpha - 1/q \geq -1$, $\alpha>-1$,  and $q \in [1,\infty]$. 
Such Besov space includes bounded measurable functions, \(L_1\)-integrable functions, as well as Radon measures. To obtain such results, \cite{athreya2024well} establishes Lipschitz regularity for some related singular integrals using the stochastic sewing lemma introduced  by the third co-author in \cite{le2020sewing}.
The regularity threshold  $-1$ is in agreement with the finite dimensional analogue \cite{catelliergubinelli} where it is shown that any SDE driven by additive fractional Brownian motion with Hurst parameter $H \in (0,1)$ has a unique solution provided that the drift belongs to the Besov--H\"older space $\mathscr{B}_{\infty,\infty}^\alpha$ with $\alpha> 1 - \frac{1}{2H}$. 
The two results are related by setting 
$H = 1/4$, which is the temporal regularity of the random field solution of \eqref{additiveSHE} with $b =0$.
Quantitative convergence of discrete approximation schemes under the assumptions of \cite{athreya2024well} is also considered by Gouden\'ege, Haress and Richard in \cite{goudenège2024numericalapproximationstochasticheat}, extending \cite{butkovsky2023optimal}. 

All of the aforementioned  results concern the additive noise case. For the multiplicative case, much less is known. 
In \cite{ballygyongypardoux, gyongy1995nondegeratequasilinear,alabertgyongysingular}, the authors show that \eqref{SHE} has a unique solution when \(\sigma\) is regular and bounded away from \(0\), the drift is  measurable and bounded/locally bounded/integrable respectively. The proofs from these references rely on the Girsanov theorem, $L_p$-estimates for the density of the driftless equation (as obtained in \cite{pardoux1993absolute}), and a comparison principle.
Our result herein is an analogue of \cite{athreya2024well} for the multiplicative noise case. Namely, we show existence and uniqueness for \eqref{SHE} when  \(\sigma\) is regular and bounded away from \(0\), the  drift belongs to the Besov--H\"older space $\mathscr{B}_{q,\infty}^\alpha$ with $\alpha>-1$ and $q = \infty$. For simplicity, we do not consider the case when $q<\infty$, which allows us to obtain qualitative stability results and highlight the essential elements of our approach. 
Similar to \cite{athreya2024well}, our method also relies on the  stochastic sewing lemma from \cite{le2020sewing} which does not rely on Girsanov  theorem nor comparison principles. Therefore, the techniques within could also be applied to equations driven by  L\'evy noise and to systems of equations. Compare with \cite{athreya2024well}, while the probabilistic properties of the noise term in the additive case are explicitly understood, this is no longer the case for our multiplicative equation \eqref{SHE}. Therefore, employing the sewing methods in the present paper is more involved than \cite{athreya2024well}. In the sewing arguments in previous works, one approximates a solution using the integral form of the corresponding equation.  This works quite well in the additive noise case, \cite{catelliergubinelli,athreya2024well}. It also works quite well in some multiplicative noise cases if the noise is not too irregular,  for example  equations driven by fractional Brownian motion with Hurst parameter $H>1/2$ \cite{dareiotis2024path}. However, for $H < 1/2$, this approach leads to suboptimal results. The same is true for the  setting of the present paper. With such an approach, one would only be able to obtain well-posedness when $b$ has positive H\"older regularity.
Instead, in order to cover the whole regime $b \in \mathscr{B}^\alpha_{\infty, \infty}$ with $\alpha>-1$,   we  come up with sewing arguments that employ the flow of the driftless equation (see \cref{methodofproofsection} for a more detailed overview of our method). Consequently,  we need (and obtain) some regularisation estimates related to the density of the solution to the driftless equation and its derivatives. These estimates are  achieved via Malliavin calculus  which demands a relatively high regularity from \(\sigma\). 
 This  approach  is not equation-specific  but rather works as a general principle. In fact,  we are using it in a parallel work (\cite{RoughDistributional}) to improve the results from \cite{dareiotis2024path} to optimal, in the case of rough equations driven by fractional Brownian motion with Hurst parameter $H \in (1/3, 1/2)$ . 

The paper is organised as follows: In  \cref{notationsection},  we introduce some necessary notation. In  \cref{mainresultsection}, we define the key concepts, state the main result, and give an overview of the proof. Facts on stochastic sewing and Malliavin calculus are summarised in \cref{sec.prelim}.  In \cref{malliavinsection}, we study the regularity of the flow of the driftless equation, provide bounds on the moments of its Malliavin derivatives, and  establish relevant nondegeneracy estimates. In  \cref{driftilessapproxsection}, we quantify how well the flow of the driftless equation approximates the solution of \eqref{SHE}. In  \cref{regularisationsection}, we use the tools developed up until then to derive regularisation results for the solution of (\ref{SHE}) via the stochastic sewing lemma. These estimates are then used in  \cref{Spsection} to prove a stability result,
and in  \cref{driftsection} to estimate the drift term of \eqref{SHE}. The main result is proven in  \cref{mainproofsection}.
Finally, the appendix contains some useful auxiliary elementary estimates which are used throughout the article.

\section{Notation}\label{notationsection}

Let $H := L_2([0,1]\times \mathbb{T})$. Let $\xi :=\{\xi(h): h \in H\}$ be an isonormal Gaussian process\footnote{i.e. $\xi$ is a centered Gaussian family of random variables with $\mathbb{E}(\xi(h)\xi(\bar{h})) = \langle h, \bar{h} \rangle_H$ for all $h,\bar{h} \in H$.} on a complete probability space $(\Omega,\mathscr{F},\mathbb{P})$, and suppose that $\mathscr{F}$ is generated by $\xi$. Let $(\mathscr{F}_t)_{t \in [0,1]}$ be the filtration generated by $\xi$ and augmented by the $\sigma$-algebra $\mathcal{N}$ generated by all $\mathbb{P}$-null sets, that is
\begin{equs}
    \mathscr{F}_t := \sigma\big(\big\{\xi(\mathbf{1}_{[0,r)\times B}) : r \in [0,t], B \in \mathscr{B}(\mathbb{T})\big\}\big)\vee \mathcal{N}
\end{equs}
where for two $\sigma$-algebras $\mathscr{X},\mathscr{Y}$ we denote $\mathscr{X} \vee \mathscr{Y} := \sigma(\mathscr{X} \cup \mathscr{Y})$.
The predictable $\sigma$-algebra on $\Omega \times [0,1]$ is denoted by $\mathscr{P}$. The conditional expectation given $\mathscr{F}_t$ is denoted by $\mathbb{E}^t := \mathbb{E}(\cdot | \mathscr{F}_t)$. We use $L_p$ as a shorthand for $L_p(\Omega)$. For a sub-$\sigma$-algebra $\mathscr{G}\subset \mathscr{F}$, the conditional $L_p$-norm is denoted by
$$\|\cdot\|_{L_p|\mathscr{G}} := (\mathbb{E}(|\cdot|^p\vert \mathscr{G}))^{1/p},$$
and for $p \in [1,\infty)$, $q \in [1,\infty]$ we denote
\begin{equs}\|\cdot\|_{L_{p,q}^\mathscr{G}} := \| \|\cdot\|_{L_p|\mathscr{G}}\|_{L_q}.
\label{LpqGnpormdef}\end{equs}
Let $A\subseteq \mathbb{T}^d$ and $(B,|\cdot|)$ be a normed space. We denote by $\mathbb{B}(A,B)$  the collection of measurable functions $f: A \to B$ such that
$$\|f\|_{\mathbb{B}(A,B)} := \sup_{x \in A}|f(x)| <\infty.$$
 We denote space of continuous functions $f: A \to B$ by $C(A,B)$, and it is also canonically equipped with the $\mathbb{B}$-norm.
For $\alpha \in \mathbb{N}$ we denote by $C^\alpha(A,B)$ the space of continuous functions $f:A \to B$ such that for all multi-indices $l \in (\mathbb{Z}_{\geq 0})^d$ with $|l| \leq \alpha$ the derivative $\partial^l f$ is continuous, and 
\begin{equs}\|f\|_{C^\alpha(A,B)} := \sum_{|l| \leq \alpha}\|\partial^l f\|_{\mathbb{B}} <\infty.
\end{equs}
By convention the above sum includes the term $\| \partial^{(0,\dots,0)}f\|_{\mathbb{B}}$, where we define $\partial^{(0,\dots,0)}f:=f$.
For $\alpha \in (0,1)$ and $f:A \to B$, the $\alpha$-H\"older seminorm of $f$ is given by
$$[f]_{C^\alpha(A,B)} : = \sup_{\substack{x,y \in A\\ x\neq y}} \frac{|f(x) - f(y)|}{|x-y|^\alpha}.$$
For $\alpha \in (1, \infty) \setminus \mathbb{Z}$ we then denote by $C^\alpha(A,B)$ the space of all functions such that for all multi-indices  $l \in (\mathbb{Z}_{\geq 0})^d$ with $|l|< \alpha$, the derivative $\partial^l f$ exists, and 
\begin{equ}
\| f\|_{C^\alpha(A,B)} := \|f\|_{C^{\lfloor \alpha \rfloor}(A,B)} + \sum_{\alpha-1\leq|l|<\alpha} [\partial^l f]_{C^{\alpha - |l|}(A,B)} <\infty.
\end{equ}
The collection of smooth (i.e. infinitely differentiable)  and bounded functions with bounded derivatives will be denoted by
$$C^\infty(A,B) := \bigcap_{n =0}^{\infty} C^n(A,B).$$ 
For $\alpha<0$ we say that a Schwartz-distribution $f$ is of class $C^\alpha(\mathbb{R}, \mathbb{R})$, if
\begin{equs}
\|f\|_{C^\alpha(\mathbb{R},\mathbb{R})} : = \sup_{\varepsilon \in (0,1]} \varepsilon^{-\alpha/2}\| P_\varepsilon^{\mathbb{R}} f \|_{\mathbb{B}(\mathbb{R}, \mathbb{R})} <\infty,
\end{equs}
where for $(t,x) \in [0,1]\times \mathbb{R}$, $P_t^{\mathbb{R}} f(x) :=  \int_{\mathbb{R}}p_t^{\mathbb{R}}(x-y) f(y) dy$ and $p^\mathbb{R}$ is the \emph{heat kernel} on $\mathbb{R}$ defined by
\begin{equs}\label{heatkernel}
p_t^{\mathbb{R}}(x) := \frac{1}{(4\pi t)^{d/2}}\exp\Big(-\frac{|x|^2}{4t}\Big).
\end{equs}
Note that for any $\alpha \in \mathbb{R}\setminus \mathbb{Z}$, the space $C^\alpha(\mathbb{R},\mathbb{R})$ coincides with the Besov space $\mathscr{B}_{\infty,\infty}^\alpha$.
We also define the \emph{periodic heat kernel} on $\mathbb{T}$ for $t \in [0,1]$ and $x,y \in \mathbb{T}$ by
$$p_t(x,y) := \sum_{k \in \mathbb{Z}}p_t^{\mathbb{R}}(x-y+k)$$
and for $g:\mathbb{T} \to \mathbb{R}$ we denote $P_t g(x) := \int_{\mathbb{T}} p_t(x,y)g(y)dy$.
When no ambiguity can arise, we will simply abbreviate $C^\alpha(A)$ or $C^\alpha$ for $C^\alpha(A,B)$.
For $\alpha>-1$ we denote the completion of $C^\infty$ in the norm $\|\cdot\|_{C^\alpha}$ by $C^{\alpha+}$.
\begin{remark}\label{holderinclusions}
For all $\varepsilon>0$ we have the inclusions
$C^{\alpha+ \varepsilon} \subset C^{\alpha+} \subset C^{\alpha}$.
\end{remark}

\section{Formulation and main result}\label{mainresultsection}   
\subsection{Formulation and main result}
We introduce our main assumptions and the concept of solutions to \eqref{SHE}. 
\begin{assumption}\label{assumptions}  The function  $b$ is of class $C^{\alpha+}$ for some $\alpha \in (-1,0)$ and the function $\sigma$ is of class $C^4$.    Moreover,  there exists a positive constant $\mu$ such that 
$$\sigma^2 (x) \geq \mu^2 \quad \text{for all }x \in \R.$$ Finally, 
 the initial condition $u_0 : \mathbb{T} \to \mathbb{R}$ is a bounded and continuous deterministic function.
\end{assumption}

\begin{definition}[Regularised solution]\label{defofsolution}    
Let $u: \Omega \times [0,1]\times \mathbb{T} \to \mathbb{R}$ be a $\mathscr{P}\otimes \mathscr{B}(\mathbb{T})$-measurable random field, such that $u(t,x)$ is continuous in $(t,x) \in [0,1]\times \mathbb{T}$. We say that $u$ is a \emph{regularised solution}  of (\ref{SHE}) if there exists a $\mathscr{P}\otimes \mathscr{B}(\mathbb{T})$-measurable random field $D^{u}:\Omega \times [0,1]\times \mathbb{T} \to \mathbb{R}$ such that 
\begin{enumerate}
\item For any sequence $(b^n)_{n\in \mathbb{N}} \subset C^\infty$ such that $b^n \to b$ in $C^{\alpha}$, we have that
\begin{equs}\label{driftdef}
\sup_{(t,x) \in[0,1]\times \mathbb{T}}\Big|D_t^u(x) -\int_0^t \int_{\mathbb{T}}p_{t-r}(x,y)b^n(u(r,y))dydr\Big| \longrightarrow 0\end{equs}  
in probability.
\item
For each $(t,x) \in [0,1]\times \mathbb{T}$,
\begin{equs}u(t,x) = P_tu_0(x) + D_t^u(x) + \int_0^t \int_{\mathbb{T}} p_{t-r}(x,y) \sigma(u(r,y)) \xi(dy,dr)\qquad \text{a.s.} \label{integraleqn}
\end{equs}
\end{enumerate}
\end{definition}

\begin{remark}
	For a given regularised solution $u$, the random field $D^u$ is uniquely characterised by relation \eqref{driftdef}. Furthermore, in the more regular setting when $\alpha \geq 0$, \cref{defofsolution} reduces to  the standard notion of a mild solution. In such case, one has $D^u_t(x)=\int_0^t \int_{\mathbb{T}}p_{t-r}(x,y)b(u(r,y))dydr$.
\end{remark}

For $(S,T) \in [0,1]^2$ such that $S \leq T$, let us define the simplices \begin{equs}\,[S,T]_{\leq}^2 := \{  (s,t) \in [S,T]^2: s \leq t\}\, \text{ and } \,[S,T]_{<}^2 := \{(s,t) \in [S,T]^2 : s<t\}.
\end{equs}

To describe the regularity of the solutions, we introduce the following spaces of random fields.
\begin{definition}[The spaces $\mathscr{V}_p^\beta$, $\mathscr{U}_p^\beta$ and $\mathscr{U}^\beta$]\label{defofclassV} Let $ \beta \in [0, 1]$ and $p \in[1,\infty)$. We denote by $\mathscr{V}_p^{\beta}[0,1]$ the collection of all $\mathscr{P}\otimes \mathscr{B}(\mathbb{T})$-measurable functions $f:\Omega\times[0,1]\times \mathbb{T}\to \mathbb{R}$  such that $f \in \mathbb{B}([0,1]\times \mathbb{T}, L_p)$ and 
$$[f]_{\mathscr{V}_p^{\gamma}[0,1]} := \sup_{x \in \mathbb{T}}\sup_{(s,t) \in [0,1]_{<}^2}\frac{\|f_t(x) - P_{t-s}f_s(x)\|_{L_{p,\infty}^{\mathscr{F}_s}}}{|t-s|^{\gamma}} < \infty. $$
For $(S,T) \in [0,1]^2_{\leq}$, the space $\mathscr{V}_p^{\beta}[S,T]$ and the corresponding seminorm are defined analogously. 
We denote by  $\mathscr{U}_p^\beta$ the collection of all 
 regularised solutions $u$ of  \eqref{SHE} such that $D^u \in \mathscr{V}_p^{\beta}[0,1]$. 
We moreover define
$$\mathscr{U}^\beta := \bigcap_{p=1}^\infty \mathscr{U}_p^\beta.$$
\end{definition}

We are now in position to state our main theorem. 

\begin{theorem}[Well-posedness]\label{mainresult}
Let Assumption \ref{assumptions} hold. There exists a regularised solution $u$ to (\ref{SHE}) in the class $\mathscr{U}^{1 + \alpha/4}$. Moreover if $v$ is another solution of (\ref{SHE}) in the class $\mathscr{U}_2^\beta$ for some $\beta \geq \frac{1}{2}- \frac{\alpha}{4}$, then $u(t,x) = v(t,x)$ almost surely for all $(t,x) \in [0,1]\times \mathbb{T}$.
\end{theorem}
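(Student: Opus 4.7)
The plan is to approximate the distribution $b$ by a sequence of smooth functions $(b^n)_{n\in \N} \subset C^\infty$ with $b^n \to b$ in $C^{\alpha}$, use classical well-posedness to produce smooth solutions $u^n$ of \eqref{SHE} with drift $b^n$ in place of $b$, and then pass to the limit $n \to \infty$, using the regularisation, drift and stability results developed in \cref{regularisationsection}, \cref{driftsection} and \cref{Spsection} respectively to control the limiting procedure.

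As $\sigma \in C^4$ is bounded away from zero and each $b^n$ is smooth and bounded, the approximating equation admits a unique classical mild solution $u^n$, which is automatically a regularised solution in $\mathscr{U}^{1+\alpha/4}$ with the explicit representation $D^{u^n}_t(x) = \int_0^t \int_{\T} p_{t-r}(x,y) b^n(u^n(r,y))\,dy\,dr$. The crucial step is to establish the uniform bound
\begin{equs}
\big\| D^{u^n}_t(x) - P_{t-s} D^{u^n}_s(x) \big\|_{L_{p,\infty}^{\mathscr{F}_s}} \lesssim \|b\|_{C^{\alpha+}} \, |t-s|^{1+\alpha/4}
\end{equs}
for every $p<\infty$, $(s,t) \in [0,1]_<^2$ and $x \in \T$, with constant independent of $n$. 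Such an estimate is the content of the drift estimates of \cref{driftsection}, which rest on the regularisation bounds of \cref{regularisationsection} derived through the stochastic sewing lemma, and which depend on $b^n$ only through $\|b^n\|_{C^\alpha} \lesssim \|b\|_{C^{\alpha+}}$. Thus $(u^n)$ is uniformly bounded in $\mathscr{U}_p^{1+\alpha/4}$ for each $p$.

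To extract a limit, I would apply the stability result of \cref{Spsection} to the pair $(u^n,u^m)$ to obtain that $\sup_{(t,x)\in [0,1]\times \T}\|u^n(t,x) - u^m(t,x)\|_{L_p} \to 0$ as $n,m \to \infty$, using that $b^n - b^m \to 0$ in $C^\alpha$. Hence $(u^n)$ is Cauchy in $\mathbb{B}([0,1]\times \T, L_p)$ for every $p$, with a $\mathscr{P}\otimes \mathscr{B}(\T)$-measurable limit $u$ admitting a continuous modification. Passing to the limit in the mild formulation \eqref{integraleqn} satisfied by $u^n$, and exploiting the uniform $\mathscr{V}_p^{1+\alpha/4}$ bound to control the drift terms together with the density of $C^\infty$ in $C^{\alpha+}$ to verify \eqref{driftdef}, yields a $\mathscr{P}\otimes \mathscr{B}(\T)$-measurable random field $D^u \in \mathscr{V}_p^{1+\alpha/4}[0,1]$ such that $u \in \mathscr{U}^{1+\alpha/4}$ is a regularised solution of \eqref{SHE}.

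For uniqueness, given two solutions $u,v \in \mathscr{U}_2^\beta$ with $\beta \geq \tfrac{1}{2} - \tfrac{\alpha}{4}$, the stability result of \cref{Spsection} applied directly to $(u,v)$ yields $u(t,x) = v(t,x)$ almost surely for every $(t,x)$; the threshold $\beta \geq \tfrac{1}{2} - \tfrac{\alpha}{4}$ is precisely what is needed for the Gronwall-type closure in the stability estimate to go through. The main conceptual obstacle lies not in this theorem but in its inputs: the regularisation estimates of \cref{regularisationsection} require applying the stochastic sewing lemma to a germ constructed from the flow of the driftless equation rather than from the usual mild formulation, which in turn demands the sharp Malliavin differentiability and nondegeneracy bounds of \cref{malliavinsection} together with the quantitative comparison between $u$ and the driftless flow established in \cref{driftilessapproxsection}. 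Once those ingredients are available, the approximation--stability argument outlined here completes the proof of \cref{mainresult}.
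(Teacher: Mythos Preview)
Your proposal is correct and follows essentially the same strategy as the paper: approximate $b$ by smooth $b^n$, use the uniform drift bound of \cref{Dbound} to control $[D^{u^n}]_{\mathscr{V}_p^{1+\alpha/4}}$, invoke the $\mathscr{S}_p^{1/2}$-stability of \cref{globalbuckling} to obtain Cauchyness of $(u^n)$, and pass to the limit while verifying \eqref{driftdef}, with uniqueness again coming from \cref{globalbuckling}. The paper organises the limit slightly differently, showing separately that $(D^{u^n})$ and $(V^{u^n})$ converge in $C^{\frac14-\frac{\varepsilon}{2},\frac12-\varepsilon}([0,1]\times\T,L_p)$ and then assembling $\tilde u$, but this is a matter of presentation rather than substance.
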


\subsection{Overview of methods of proofs}\label{methodofproofsection} 
\label{sub:overview_of_methods_of_proofs}
	The bulk of the proofs relies on moment estimates for singular  integrals which are typically of the form
	\begin{align*}
		I:=\int_0^1\int_{\mathbb{T}}h(y)f(u(r,y))dydr
	\end{align*}
	where $h$ is an integrable function, $u$ is a solution to \eqref{SHE} and $f$ is a distribution with negative H\"older regularity. 
	An effective tool to  estimate moments of $I$, which emerges from \cite{le2020sewing}, is the stochastic sewing lemma. 
	Heuristically, the lemma decomposes $I$ corresponding to partitions of the time interval $[0,1]$ with vanishing mesh size. More precisely, let $\pi$ be a partition of $[0,1]$, then one writes
	\begin{align*}
		I=\sum_{[s,t]\in \pi}\int_s^t\int_{\mathbb{T}}h(y)f(u(r,y))dydr.
	\end{align*}
	On each subinterval $[s,t]$, we approximate the random variable $\int_s^t\int_{\mathbb{T}}(\ldots)dydr$ by its conditional expectation given $\mathscr{F}_s$, i.e. $\E^s\int_s^t\int_{\mathbb{T}}(\ldots)dydr$. Because  the conditional law of $u(r,y)$ given $\mathscr{F}_s$ is unknown a priori, we further  approximate $u(r,y)$ by a random variable, denoted by $\psi^s(r,y)$. There are two desirable properties for these approximations. First,  one must recover $I$ when the mesh size of $\pi$ vanishes, namely
	\begin{align*}
		I=\lim_{|\pi|\downarrow0} \sum_{[s,t]\in \pi}\E^s\int_s^t\int_{\mathbb{T}}h(y)f(\psi^s(r,y))dydr.
	\end{align*}
	Second, the conditional expectation $\E^s f(\psi^s(r,y))$ is well-defined and can be estimated so that for some $p\ge2$ and $\varepsilon>0$, one has
	\begin{align}
		\label{me.c1}
		\|\E^s\int_s^t\int_{\mathbb{T}}h(y)f(\psi^s(r,y))dydr\|_{L_p(\Omega)}\lesssim(t-s)^{\frac12+\varepsilon}
		\\\shortintertext{and}
		\|\E^s\int_a^t\int_{\mathbb{T}}h(y)[f(\psi^s(r,y))-f(\psi^a(r,y)]\|_{L_p(\Omega)}\lesssim(t-s)^{1+\varepsilon}
		\label{me.c2}
	\end{align}
	for every $s\le a\le t$.
	Under these two properties, the stochastic sewing lemma can be applied, and it provides estimates for the $p$-th moment of $I$.


	Let us explain how \(\psi^s\) is chosen.
	Relation \eqref{integraleqn} provides a natural decomposition of a solution as the sum of a nondegenerate noise and the drift, namely
	\begin{align*}
    u(t,x)=P_tu_0(x)+D^u_t(x)+V_t(x), \text{ where }V_t(x)=\int_0^t \int_{\mathbb{T}} p_{t-r}(x,y) \sigma(u(r,y)) \xi(dy,dr).
	\end{align*}
	It follows that for each $s\le t$, 
	\begin{align*}
		u(t,x)=P_{t-s}u(s,\cdot)(x)+ [D^u_t(x)-P_{t-s}D^u_s(x)]+[V_t(x)-P_{t-s}V_s(x)].
	\end{align*}
	One could then choose to approximate $u(t,x)$ by the random variable
	\begin{align*}
		\psi^s(t,x)
		&:=P_{t-s}u(s,\cdot)(x)+[V_t(x)-P_{t-s}V_s(x)].
	\end{align*}
	The error of this approximation can be quantified by the following estimate
	\begin{equs}  \label{eq:order_approx}
		\|u(t,x)- \psi^s(t,x)\|_{L_p(\Omega)} \lesssim |t-s|^\gamma
	\end{equs}
	for every \(s\le t\) and 
	for some \(\gamma>0\). The larger the value of \(\gamma\) is, the better the approximation is. 
	We note that
	\begin{align*}
		[V_t(x)-P_{t-s}V_s(x)]
		=\int_s^t \int_{\mathbb{T}} p_{t-r}(x,y) \sigma(u(r,y)) \xi(dy,dr).
	\end{align*}
	In the additive case (i.e. when  $\sigma$ is a constant), $V_t(x)-P_{t-s}V_s(x)$ has a normal distribution and hence, the conditional expectation $\E^s f(\psi^s(t,x))$ can be  evaluated precisely. 
	The stochastic sewing method described above can be applied  (i.e. achieving \eqref{me.c1} and \eqref{me.c2}) under  some suitable regularity assumptions on $f$ and that $\gamma>1/2-\alpha/4 \approx 3/4$ for $\alpha \approx -1$ (recall that $-1< \alpha<0$ is the regularity of the drift). This is the approach from \cite{athreya2024well}.

 Going toward the multiplicative noise case, one might hope that a similar argument would work. Notice that in this case, the distribution of 
 $V_t(x)-P_{t-s}V_s(x)$ conditionally on $\mathscr{F}_s$ is not known a priori. A naive way to circumvent this issue is to consider 
\begin{align}\label{id.psinaive}
		\psi^s(t,x)
		&:=P_{t-s}u(s,\cdot)(x)+\int_s^t \int_{\mathbb{T}} p_{t-r}(x,y) \sigma(u(s,y)) \xi(dy,dr),
	\end{align}
 which is obtained by freezing the solution in the integrand at time $s$. In this way, conditionally on $\mathscr{F}_s$, $\psi^s(t,x)$ once again has a normal distribution, which allows for concrete analysis. However, one can not go far with this choice as it is immediate that
 \begin{equs}
	u(t,x)- \psi^s(t,x) =\int_s^t \int_{\mathbb{T}} p_{t-r}(x,y) \big( \sigma(u(r,y))- \sigma(u(s,y)) \big)\xi(dy,dr) ,
 \end{equs}
 whose moments are (expectedly)  of order \( |t-s|^{1/2}\) (consisting of two contributions of the same order \(1/4\) from the stochastic integral and from the temporal regularity of the solution). The exponent \(1/2\) falls short of the required threshold \(3/4\) which is necessary in the additive case.  
 This makes the naive approximation \eqref{id.psinaive} unsuitable for the sewing method under \cref{assumptions}.

 Moving forward, to resolve these issues, we introduce the following approximation 
 \begin{equs}\label{id.psigood}
     \psi^s(t,x):= \phi^{u(s,\cdot),s}(t,x), 
 \end{equs}
	where $\phi^{z,s}$ denotes the solution to the \emph{driftless equation}
	\begin{align*}
		(\partial_t - \Delta)\phi^{z,s} = \sigma(\phi^{z,s})\xi, \quad \phi^{z,s}(s,\cdot)=z(\cdot).
	\end{align*}
Observe that when \(\sigma\) is a constant, \eqref{id.psinaive} and \eqref{id.psigood} coincide, but otherwise they are generally different.  
Indeed, we show in  \cref{driftilessapproxsection} that  the approximation \eqref{id.psigood} satisfies the estimate \eqref{eq:order_approx}  with $\gamma= 1+\alpha/4$ which is larger than \(1/2-\alpha/4\) as is  required for the application of the sewing method. 
The distribution of $\psi^s(t,x)$ conditioned on $\mathscr{F}_s$  might not be as explicit as in the additive noise case but nevertheless, one can extract the information which is sufficient to verify \eqref{me.c1} and \eqref{me.c2}. This essentially boils down to obtaining estimates related to the density of the solution of the driftless equation and its derivatives, which are achieved by tools from Malliavin calculus (see  \cref{nondegeneracysection}). 

When comparing our method to the existing ones from the literature, we can draw some similarities as well as genuine  differences. 
The works \cite{ballygyongypardoux, gyongy1995nondegeratequasilinear,alabertgyongysingular} also utilise estimates on the density of the solution to the driftless equation, however, in a completely different way. In fact, these works use Girsanov theorem to extract relevant and useful a priori estimates for the solution to \eqref{SHE} from the solution of the driftless equation. Under our main assumption, the Girsanov theorem is not applicable which makes this argument obsolete. Additionally, our uniqueness argument relies on qualitative stability estimates, as opposed to comparison principles in the aforementioned works. 
Compare to \cite{athreya2024well}, we also use stochastic sewing method. However, while \cite{athreya2024well} relies on the approximation \eqref{id.psinaive}, we introduce and utilise the better approximation \eqref{id.psigood}. To the best of our knowledge, this is the first time it has been used in the study of regularisation by noise phenomena by sewing methods. Furthermore, because the conditional law of \(\psi^s\) is not  explicit, additional works have been carried out in order to apply the sewing method successfully.

The driftless equation also appears in \cite{catellier2022regularization} in the study of regularisation by multiplicative fractional noise for SDEs.  In this work, the authors employ a transformation, which is based on the inverse of the flow generated by the driftless equation, to transform the original equation into an additive one. Comparing the results of \cite{catellier2022regularization} and  \cite{dareiotis2024path} reveals that such transformation is quite demanding and does not lead to results which are in alignment with \cite{catelliergubinelli}. 
The connection between \eqref{SHE} and the driftless equation is well-known, perhaps since the Girsanov theorem. Another instance of such relation appears in  \cite{imkeller2001conjugacy} in a different context. Our work therefore exhibits a new connection between the two equations.

\section{Preliminaries}\label{sec.prelim}
We state and recall some facts about stochastic sewing  and Malliavin calculus. When a result is known, we refer to the cited references for proofs. Other statements are relatively straightforward and short proofs are given. 
\subsection{Stochastic sewing}\label{sewingsection}
Let $(S,T) \in [0,1]_{<}^2$. For any functions  $\mathscr{A}:[S,T] \to \mathbb{R}$, $A:[S,T]^2_\le \to \mathbb{R}$, for any $(s,t) \in [0,1]_{\leq}^2$ and $a \in [s,t]$, we define $\mathscr{A}_{s,t} := \mathscr{A}_t - \mathscr{A}_s$, and $\delta A_{s,a,t} := A_{s,t} - A_{s,a} - A_{a,t}$. 

The first stochastic sewing lemma is introduced by the third co-author in \cite{le2020sewing}. To best suit our purpose herein, we state a conditional version of the lemma which applies in settings with $L_{q,p}^{\mathscr{F}_s}$-norms (defined in (\ref{LpqGnpormdef})).
This version is originated from the works \cite{friz2021rough,athreya2024well,le2023stochastic}, where the reader can find its proof.

\begin{lemma}[Conditional stochastic sewing lemma]\label{conditionalSSL}
  \label{lem:conditionalSSL}
Let $p,q$ satisfy $2 \leq q \leq p \leq \infty$ with $q <\infty$. Let $(S,T) \in [0,1]_{\leq}^2$ and let $A:[S,T]_{\leq}^2 \to L_p(\Omega)$ be a function such that for any $(s,t) \in [S,T]_{\leq}^2$ the random vector $A_{s,t}$ is $\mathscr{F}_t$-measurable. 
Suppose that for some $\varepsilon_1, \varepsilon_2>0$ and $C_1,C_2$ the bounds
\begin{equs}
\|A_{s,t}\|_{L_{q,p}^{\mathscr{F}_s}} \leq C_1|t-s|^{1/2+\varepsilon_1}, \quad
\|\mathbb{E}^s \delta A_{s,a,t}\|_{L_p} \leq C_2|t-s|^{1+\varepsilon_2}   \label{condiSSL2}
\end{equs}
hold for all $S \leq s \leq a \leq t \leq T$. Then, there exists a unique map $\mathscr{A}:[S,T] \to L_p(\Omega)$ such that $\mathscr{A}_S =0$, $\mathscr{A}_t$ is $\mathscr{F}_t$-measurable for all $t \in [S,T]$, and the following bounds hold for some constants $K_1,K_2>0$:
\begin{equs}
\|\mathscr{A}_{s,t} - A_{s,t}\|_{L_{q,p}^{\mathscr{F}_s}} &\leq K_1 |t-s|^{1/2 + \varepsilon_1} \label{condiSSL3}
\\
\|\mathbb{E}^s(\mathscr{A}_{s,t} - A_{s,t})\|_{L_p} &\leq K_2 |t-s|^{1+\varepsilon_2}. \label{condiSSL4}
\end{equs}
Furthermore, there exists a constant $K$ depending only on $\varepsilon_1,\varepsilon_2, d,p$ such that $\mathscr{A}$ satisfies the bound
$$\|\mathscr{A}_{s,t}\|_{L_{q,p}^{\mathscr{G}_s}} \leq K C_1 |t-s|^{1/2 + \varepsilon_1} + K C_2 |t-s|^{1+\varepsilon_2}$$
for all $(s,t)\in [S,T]_{\leq}^2$.
\end{lemma}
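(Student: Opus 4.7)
The plan is to construct $\mathscr{A}_t$, for each $t\in[S,T]$, as the limit in $L_p(\Omega)$ of the Riemann-type sums
$$I^\pi_{S,t} := \sum_{[u,v]\in\pi} A_{u,v}$$
along a sequence of dyadic partitions $\pi_n$ of $[S,t]$ whose mesh tends to zero. Existence, uniqueness, and the rate bounds \eqref{condiSSL3}--\eqref{condiSSL4} will all drop out of telescoping the successive-refinement errors.

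First I would estimate the successive-refinement difference
$$I^{\pi_{n+1}}_{S,T} - I^{\pi_n}_{S,T} = -\sum_i \delta A_{t_i,t_{i+1/2},t_{i+1}} =: \sum_i Y_i,$$
where $t_{i+1/2}$ is the midpoint of the $i$-th interval of $\pi_n$. Decomposing $Y_i = N_i + M_i$ with $N_i := \mathbb{E}^{t_i} Y_i$ and $M_i := Y_i - N_i$, the second bound in \eqref{condiSSL2} gives directly
$$\Big\|\sum_i N_i\Big\|_{L_p} \le \sum_i \|N_i\|_{L_p} \le C_2\,(T-S)\, 2^{-n\varepsilon_2}.$$
For the martingale part, the $(M_i)$ form a conditional martingale-difference array with respect to $(\mathscr{F}_{t_{i+1}})$, and the key ingredient is a conditional Burkholder--Davis--Gundy inequality of the form
$$\Big\|\sum_i M_i\Big\|_{L_{q,p}^{\mathscr{F}_S}}^q \lesssim_{p,q} \sum_i \|M_i\|_{L_{q,p}^{\mathscr{F}_{t_i}}}^q.$$
Combined with the routine estimate $\|M_i\|_{L_{q,p}^{\mathscr{F}_{t_i}}}\lesssim C_1 |t_{i+1}-t_i|^{1/2+\varepsilon_1}$, obtained by plugging in the three simplex values making up $\delta A$ and using the tower property to pass through the intermediate conditioning $\mathscr{F}_{t_{i+1/2}}$, the hypothesis $q\ge 2$ then turns $q(1/2+\varepsilon_1)$ into at least $1+q\varepsilon_1$ and yields
$$\Big\|\sum_i M_i\Big\|_{L_{q,p}^{\mathscr{F}_S}} \lesssim C_1\,(T-S)^{1/2+\varepsilon_1}\, 2^{-n\varepsilon_1}.$$
Both bounds are geometrically summable in $n$, so $(I^{\pi_n}_{S,T})_n$ is Cauchy in $L_p$, and summing the telescoping series produces \eqref{condiSSL3}--\eqref{condiSSL4} with $K_1,K_2$ of the claimed form. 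Running the same argument on any subinterval $[s,t]\subset[S,T]$ gives the uniform bound on $\|\mathscr{A}_{s,t}\|_{L_{q,p}^{\mathscr{F}_s}}$ stated in the conclusion.

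The main obstacle is establishing the conditional Burkholder inequality in exactly this mixed-norm form: one needs the $L_{q,p}^{\mathscr{F}_S}$-norm of a sum of $\mathscr{F}_{t_{i+1}}$-measurable conditional martingale differences to be controlled by the $\ell^q$-combination of the individual $L_{q,p}^{\mathscr{F}_{t_i}}$-norms. The route is to apply scalar BDG conditionally on $\mathscr{F}_S$ to recover the square-function, then use the conditional Minkowski inequality (valid precisely because $q/2\ge 1$) to push the sum outside the conditional $L_{q/2}$-norm, and finally take outer $L_p$. Without the assumption $q\ge 2$, one would lose a counting factor in the number of partition points that would destroy the geometric rate above; this is the sole structural reason the lemma is stated for $q\ge 2$.

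For uniqueness, if $\mathscr{A}$ and $\widetilde{\mathscr{A}}$ are two candidates, their difference $\mathscr{B}_t := \mathscr{A}_t - \widetilde{\mathscr{A}}_t$ satisfies $\mathscr{B}_S = 0$, is additive in the sense $\mathscr{B}_{s,a}+\mathscr{B}_{a,t}=\mathscr{B}_{s,t}$, and inherits the bounds $\|\mathscr{B}_{s,t}\|_{L_{q,p}^{\mathscr{F}_s}} \lesssim |t-s|^{1/2+\varepsilon_1}$ and $\|\mathbb{E}^s\mathscr{B}_{s,t}\|_{L_p} \lesssim |t-s|^{1+\varepsilon_2}$. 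Writing $\mathscr{B}_{S,T}$ as a telescoping sum along an equidistant partition of mesh $\eta$ and applying the same conditional-mean/martingale decomposition used above, one obtains bounds of order $\eta^{\varepsilon_2}$ and $\eta^{\varepsilon_1}$ respectively; letting $\eta\downarrow 0$ forces $\mathscr{B}\equiv 0$, which proves uniqueness.
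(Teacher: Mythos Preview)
The paper does not give its own proof of this lemma; it states the result and refers to \cite{friz2021rough,athreya2024well,le2023stochastic} for the proof. Your sketch is precisely the standard argument carried out in those references: dyadic refinement, splitting each $\delta A$ into its $\mathscr{F}_{t_i}$-conditional mean and a martingale-difference remainder, triangle inequality on the former, a conditional BDG/Minkowski step on the latter, then geometric summation. So your approach agrees with the literature the paper cites.

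One point worth tightening: the inequality you write as
\[
\Big\|\sum_i M_i\Big\|_{L_{q,p}^{\mathscr{F}_S}}^q \lesssim_{p,q} \sum_i \|M_i\|_{L_{q,p}^{\mathscr{F}_{t_i}}}^q
\]
is the right target, but the route you describe---conditional BDG on $\mathscr{F}_S$ followed by conditional Minkowski---naturally produces norms conditioned on $\mathscr{F}_S$, not on $\mathscr{F}_{t_i}$. To land on the $\mathscr{F}_{t_i}$-conditioned norms you need the version of BDG that bounds by the \emph{predictable} quadratic variation $\sum_i \mathbb{E}^{t_i}|M_i|^2$ (or equivalently iterate the tower property inside the square function before taking outer $L_p$). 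This is exactly what is done in \cite{le2020sewing,athreya2024well}; it is not a gap in your plan, just a place where the one-line summary hides a second use of adaptedness beyond the obvious one.
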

We will call $A$   \emph{a germ of} the process $\mathscr{A}$. 
In practice, we mostly take \(q=p\) (in which case, $L_{q,p}^{\mathscr{F}_s}$-norm and $L_{p}$-norm coincide) and \(q=\infty\).  
\subsection{Malliavin calculus}
Let $\mathscr{W}$ denote the the space of \emph{smooth and cylindrical random variables}, i.e. random variables of the form
$$F  =f(\xi(h_1),\dots, \xi(h_n))$$
for some $n \in \mathbb{N}$, $h_1,\dots, h_n \in H$, and for some smooth $f$ such that $f$ and its partial derivatives of all orders have polynomial growth.
The \emph{Malliavin derivative} of such a random variable is given by
$$\mathscr{D}_{\theta,\zeta}F:= \sum_{i=1}^n \partial_{i}f(\xi(h_1),\dots, \xi(h_n)) h_i(\theta,\zeta)$$
for all $(\theta,\zeta) \in [0,1]\times \mathbb{T}$ where $\partial_i$ denotes partial derivative with respect to the $i$-th argument. For all $k \in \mathbb{Z}_{\geq 0}$, $p \geq 1$ the iterated Malliavin derivative $\mathscr{D}^k$ is closable as an operator from $L_p(\Omega)$ into $L_p(\Omega ; H^{\otimes k})$. By convention, the $0$-th Malliavin derivative is the identity map, and $H^{\otimes 0} := \mathbb{R}$.
For $k \in \mathbb{Z}_{\geq 0}$ and $p \geq 1$, we denote by $\mathscr{W}^k_p$ the completion of $\mathscr{W}$ with respect to the norm
$$F \mapsto \|F\|_{\mathscr{W}^k_p} :=\Big( \mathbb{E}|F|^p + \sum_{i=1}^k \mathbb{E}\| \mathscr{D}^iF\|_{H^{\otimes i}}^p \Big)^{1/p}.$$
We moreover use the notation
$$\mathscr{W}^k := \bigcap_{p \geq 1} \mathscr{W}_p^k.$$
On the class $\mathscr{W}_p^k$ one can also define the $\bdot{\mathscr{W}}_p^k$-seminorm by
$$F \mapsto \|F\|_{\bdot{\mathscr{W}}_p^k} := \|\|\mathscr{D}^kF\|_{H^\otimes k}\|_{L_p}.$$ By convention, we have
$$\|\cdot\|_{\mathscr{W}_p^0} = \|\cdot\|_{\bdot{\mathscr{W}}_p^0}  = \|\cdot\|_{L_p}.$$
Note that $\|\cdot\|_{\mathscr{W}_p^k}$ and $\sum_{i=0}^K\|\cdot\|_{\bdot{\mathscr{W}}_p^i}$ are equivalent norms. The above definitions can be extended for the Hilbert-space valued case as follows. Let $V$ be a separable Hilbert-space, and consider the family $\mathscr{W}(V)$ of random variables of the form
$$F = \sum_{i=1}^m F_i v_i$$
for some $F_1,\dots, F_m \in \mathscr{W}$, and $ v_1,\dots, v_m \in V$. For $k\geq 1$, we define
$$\mathscr{D}^k F := \sum_{j=1}^m \mathscr{D}^kF_j \otimes v_j.$$
Then $\mathscr{D}^k$ is a closable operator from $L_p(\Omega;V)$ into $L_p(\Omega; H^{\otimes k}\otimes V)$ for any $p \geq 1$. We define the space $\mathscr{W}^k_p(V)$ as the completion $\mathscr{W}(V)$ with respect to the norm
$$F \mapsto \|F\|_{\mathscr{W}^k_p(V)} :=\Big( \mathbb{E}\|F\|_V^p + \sum_{i=1}^k \mathbb{E}\| \mathscr{D}^iF\|_{H^{\otimes i}\otimes V}^p \Big)^{1/p}.$$
For a random variable $u \in L_2(\Omega;H)$ it is said that $u \in \text{dom}(\delta)$, if there exists a constant $c>0$ such that
$$\mathbb{E}\langle \mathscr{D}F, u \rangle_H \leq c\|F\|_{L_2}$$
for all $F \in \mathscr{W}^1_2$. If this holds, then $\delta(u)$ denotes the unique element of $L_2(\Omega)$ that satisfies
\begin{equs}\mathbb{E}(F\delta(u)) = \mathbb{E}\langle \mathscr{D}F , u \rangle_H
\end{equs}
for any $F \in \mathscr{W}^1_2$. The random variable $\delta(u)$ is called the \emph{Skorokhod integral} (or the \emph{divergence}) of $u$. If in addition $u$ is adapted, then the Skorokhod integral coincides with the usual stochastic integral, that is for all $t \in [0,1]$ we have
$$\int_0^t \int_{\mathbb{T}} u(r,y) \xi(dy,dr) = \delta(u \mathbf{1}_{[0,t]}).$$
The following result follows from \cite[Proposition 2.1.4]{nualart2006malliavin}
\begin{proposition}
[Malliavin integration by parts]\label{repeatedMalliavinbyparts}
Let $n \in \mathbb{N}$, $u, G_0 \in \mathscr{W}^n$ and let $f:\mathbb{R} \to \mathbb{R}$ be $n$ times differentiable. Suppose moreover that for all $p \in [1,\infty)$, we have
$\mathbb{E}\|\mathscr{D}u(t,x)\|_{H}^{-p} <\infty.$
Define iterated Skorokhod integrals recursively for $k \in \{0,\dots, n-1\}$ by
\begin{equs}
G_{k+1} := \delta\Big(\frac{\mathscr{D}u}{\|\mathscr{D}u\|_H^2}G_k\Big).
\end{equs}    
The following holds:
\begin{equs}
\mathbb{E}\big(\nabla^n f(u) G_0\big) = \mathbb{E}\big( f(u) G_n\big).
\end{equs}
\end{proposition}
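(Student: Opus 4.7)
The plan is to prove the identity by induction on $n$, with the $n=1$ case serving as the fundamental step and the general case obtained by iteration.

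For the base case $n = 1$, the Malliavin chain rule applied to $f(u) \in \mathscr{W}_p^1$ (valid because $f$ is differentiable with appropriate growth and $u \in \mathscr{W}^1$) gives $\mathscr{D}(f(u)) = f'(u)\,\mathscr{D}u$ in $L_p(\Omega; H)$. Pairing both sides with $\mathscr{D}u / \|\mathscr{D}u\|_H^2$ in the $H$-inner product, which is well-defined pointwise $\mathbb{P}$-almost surely thanks to the nondegeneracy assumption $\mathbb{E}\|\mathscr{D}u\|_H^{-p}<\infty$, and multiplying by $G_0$, yields
\begin{equs}
f'(u)\,G_0 = \left\langle \mathscr{D}(f(u)),\, \frac{G_0\,\mathscr{D}u}{\|\mathscr{D}u\|_H^2}\right\rangle_H.
\end{equs}
Taking expectations and invoking the divergence duality $\mathbb{E}\langle \mathscr{D}F, v\rangle_H = \mathbb{E}(F\,\delta(v))$ with $F=f(u)$ and $v = G_0\,\mathscr{D}u / \|\mathscr{D}u\|_H^2$ gives $\mathbb{E}(f'(u)G_0) = \mathbb{E}(f(u)\,G_1)$, which is the $n=1$ identity.

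For the inductive step, I would assume the identity at order $n$ and apply the base case with $f$ replaced by $f^{(n)}$ to obtain $\mathbb{E}(f^{(n+1)}(u)\,G_0) = \mathbb{E}(f^{(n)}(u)\,G_1)$. Applying the inductive hypothesis with the initial weight $G_1$ in place of $G_0$, so that the recursively constructed weights shift to $G_2, G_3, \ldots, G_{n+1}$, gives $\mathbb{E}(f^{(n)}(u)\,G_1) = \mathbb{E}(f(u)\,G_{n+1})$, which closes the induction.

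The main obstacle is not the algebraic skeleton but the analytic bookkeeping ensuring every object in the induction makes sense. Concretely, one must verify at each step that $v_k := G_k\,\mathscr{D}u/\|\mathscr{D}u\|_H^2$ lies in $\mathrm{dom}(\delta)$ and that $G_{k+1} := \delta(v_k)$ inherits enough Malliavin--Sobolev regularity to iterate the procedure $n$ times. This is handled by Meyer's inequalities, which control divergence norms by Malliavin--Sobolev norms of the integrand, combined with the hypotheses $u, G_0 \in \mathscr{W}^n$ and the finiteness of $\mathbb{E}\|\mathscr{D}u\|_H^{-p}$ for every $p$, which together ensure that the quotient $\mathscr{D}u/\|\mathscr{D}u\|_H^2$ lies in $\mathscr{W}^{n-1}(H)$ with all moments. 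A cleaner alternative would be to first establish the identity for $f$ smooth and $u, G_0$ smooth cylindrical, where all manipulations reduce to elementary calculus, and then pass to the limit by density in the appropriate Malliavin--Sobolev norms.
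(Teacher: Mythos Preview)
Your argument is correct and is precisely the standard inductive proof based on the chain rule and the duality relation for the divergence operator. The paper does not actually give a proof of this proposition; it simply cites \cite[Proposition~2.1.4]{nualart2006malliavin}, whose proof proceeds along the same lines you describe.
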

We also recall the combinatorial notation from \cite{chen2021regularity}.  Let $n \in \mathbb{N}$.
\begin{itemize}
\item For $1 \leq k \leq n$, we denote by $\Lambda(n,k)$ the set of partitions of the integer $n$ of length $k$, that is, if $\lambda \in \Lambda(n,k)$, then $\lambda \in \mathbb{N}^k$, and by writing $\lambda = (\lambda_1 ,\dots , \lambda_k)$, it satisfies
$$\lambda_1 \geq \dots \geq \lambda_k \geq 1 \qquad\text{ and }\qquad \sum_{i=1}^k \lambda_i = n. $$
\item For $\lambda \in \Lambda(n,k)$, we let $\mathcal{P}(n, \lambda)$ be all partitions of $n$ ordered objects $\{\theta_1 ,\dots , \theta_n\}$, with $\theta_1 \geq \dots \geq \theta_n$ into $k$ groups $\{\theta_1^1,\dots, \theta_{\lambda_1}^1\},\dots, \{\theta_1^k,\dots, \theta_{\lambda_k}^k\}$, such that within each group the elements are ordered, i.e.
$\theta_1^j \geq \dots \geq \theta_{\lambda_j}^j$ for $1 \leq j \leq k$.
Note that
$| \mathcal{P}(n,\lambda)| = { n \choose \lambda_1,\dots, \lambda_k} = \frac{n!}{\lambda_1!\dots \lambda_k!}.$
\item For a generic element
\begin{align*}
\gamma &:= ((\theta_1,\zeta_1),\dots ,(\theta_n,\zeta_n)) \in ([0,1]\times \mathbb{T})^n,
\end{align*}
we will denote by $\hat{\gamma}_k$ the element of $([0,1]\times \mathbb{T})^{n-1}$ that is obtained by omitting the $k$-th entry of $\gamma$, i.e.
\begin{equs}
\hat{\gamma}_k &:= ((\theta_1,\zeta_1),\dots, (\theta_{k-1},\zeta_{k-1}),(\theta_{k+1},\zeta_{k+1}),\dots, (\theta_n,\zeta_n)). \label{gammahatdef}
\end{equs}
\end{itemize}

We state some generic estimates on the Malliavin derivatives of functions of random variables which are needed in later sections. The proofs of these results rely purely on elementary  principles, such as the chain rule.
\begin{proposition}[{\cite[Lemma 5.3]{chen2021regularity}}]\label{chenhunualart}
Suppose that $f \in C^n$ and $\phi \in \mathscr{W}^n$. Then, for almost all $\gamma = ((\theta_1,\zeta_1), \dots, (\theta_n, \zeta_n)) \in ([0,1]\times \mathbb{T})^n$, we have
\begin{equation}\label{CHN1}
\mathscr{D}_\gamma^n f(\phi) = \sum_{k=1}^{n} f^{(k)}(\phi) \sum_{\lambda \in \Lambda(n,k)} \sum_{\mathcal{P}(n,\lambda)} \prod_{j=1}^k \mathscr{D}^{\lambda_j}_{(\theta_1^j, \zeta_1^j),\dots, (\theta_{\lambda_j}^j, \zeta_{\lambda_j}^j) }\phi.
\end{equation}
\end{proposition}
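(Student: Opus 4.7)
The plan is to prove formula \eqref{CHN1} by induction on $n$. For the base case $n=1$, the right-hand side reduces to $f^{(1)}(\phi)\,\mathscr{D}^1_{(\theta_1,\zeta_1)}\phi$, since $\Lambda(1,1)=\{(1)\}$ and $|\mathcal{P}(1,(1))|=1$; this is precisely the standard Malliavin chain rule, which follows directly from the definition of $\mathscr{D}$ on $\mathscr{W}$ and extends to $\phi\in\mathscr{W}^1$ by closability (see \cite[Proposition 1.2.3]{nualart2006malliavin}).

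For the inductive step, assuming the formula holds for $n$, I would apply the Malliavin derivative $\mathscr{D}_{(\theta_{n+1},\zeta_{n+1})}$ to both sides. On the left this yields $\mathscr{D}^{n+1}_{\gamma'}f(\phi)$ with $\gamma'=(\gamma,(\theta_{n+1},\zeta_{n+1}))$. On the right, I would apply the Leibniz rule to each summand $f^{(k)}(\phi)\prod_{j=1}^k \mathscr{D}^{\lambda_j}\phi$. Differentiating the prefactor via the chain rule produces $f^{(k+1)}(\phi)\,\mathscr{D}_{(\theta_{n+1},\zeta_{n+1})}\phi$ times the unchanged product, which corresponds to extending $\lambda$ by a new block of size $1$ to form a partition of $n+1$ of length $k+1$. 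Differentiating the product distributes $\mathscr{D}_{(\theta_{n+1},\zeta_{n+1})}$ among the $k$ factors, each term yielding $\mathscr{D}^{\lambda_i+1}\phi$ with $(\theta_{n+1},\zeta_{n+1})$ appended to the arguments of the $i$-th block, which corresponds to incrementing one part of $\lambda$ by one to form a partition of $n+1$ of length $k$.

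The bulk of the work is then the combinatorial verification that these two branches reconstruct the full sum in \eqref{CHN1} with $n$ replaced by $n+1$. Every $\mu \in \Lambda(n+1,k)$ arises in exactly one of two ways: if $\mu$ contains a part equal to $1$, remove it to obtain some $\lambda\in\Lambda(n,k-1)$, otherwise reduce one part of $\mu$ by one to obtain some $\lambda\in\Lambda(n,k)$. I would then match multiplicities: the first branch contributes exactly those elements of $\mathcal{P}(n+1,\mu)$ in which $(\theta_{n+1},\zeta_{n+1})$ is the sole entry of its block, while the second branch, after summing over the choice of index $i\in\{1,\ldots,k\}$, reproduces every element in which $(\theta_{n+1},\zeta_{n+1})$ lies in a non-singleton block. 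Together these exhaust $\mathcal{P}(n+1,\mu)$ exactly once.

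The main obstacle I anticipate is precisely this combinatorial bookkeeping, which must carefully track how the ordered-group structure of $\mathcal{P}(n,\lambda)$ transforms under either inserting a singleton block or appending an argument to an existing block. The analytic ingredients (chain rule, Leibniz rule, commutativity and closability of $\mathscr{D}^k$) are routine: I would first establish \eqref{CHN1} for $f$ smooth with bounded derivatives and $\phi\in\mathscr{W}$, and then extend to $f\in C^n$ and $\phi\in\mathscr{W}^n$ by a standard density argument based on the definition of $\mathscr{W}_p^n$ as the completion of $\mathscr{W}$.
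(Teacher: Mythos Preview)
Your inductive argument is correct and is the standard route to the Fa\`a di Bruno formula for Malliavin derivatives. Note, however, that the paper does not supply its own proof of this proposition: it is quoted directly from \cite[Lemma~5.3]{chen2021regularity}, so there is nothing in the paper to compare your argument against.
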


\begin{lemma}\label{Dsigmaubuckling}
Fix some constants $\varepsilon>0$ and $n \in \mathbb{N}$. For $i \in \{1,\dots, 4\}$ consider random variables $\phi^i \in \mathscr{W}^n$. Suppose that for all $p \in [1,\infty)$ and $k \in \{1,\dots, n-1\}$ there exists a constant $N_0 = N_0(k,p)$ such that
\begin{equs}\max_{i \in \{1,\dots,4\}}\|\phi^i\|_{\bdot{\mathscr{W}}_p^k} \leq N_0 \varepsilon^k. \label{uilessthanepsilontothek}
\end{equs}
Suppose that $f: \mathbb{R} \to \mathbb{R}$ is smooth. For all $p \in [1,\infty)$ the following statements hold.
\begin{enumerate}[(a)]
    \item \label{fofphi1bound}
There exists a constant $N = N(n,p,\|f\|_{C^n})>0$ such that
\begin{equs}\|f(\phi^1)\|_{\bdot{\mathscr{W}}_p^n} \leq N \varepsilon^{n} + N \|\phi^1\|_{\bdot{\mathscr{W}}_p^n}.\end{equs}
\item \label{fphi1minusfphi2bound}
There exists a constant $N = N(n,p, \|f\|_{C^{n+1}})$ such that
\begin{equs}\|f(\phi^1) - f(\phi^2)\|_{\bdot{\mathscr{W}}_p^n} \leq N\sum_{i=0}^{n-1}\varepsilon^{n-i}\|\phi^1-\phi^2\| _{\bdot{\mathscr{W}}_{2p}^i}+ N\|\phi^1-\phi^2\|_{\bdot{\mathscr{W}}_{p}^n}. \label{sigmaudiffboundgeneral}
\end{equs}
\item\label{generalities4point}
Suppose moreover that (\ref{uilessthanepsilontothek}) also holds for $k=n$.
There exists a constant $N = N(n,p,\|f\|_{C^{n+2}})$ such that
\begin{equs}
&\|f(\phi^1) - f(\phi^2) - f(\phi^3) + f(\phi^4)\|_{\bdot{\mathscr{W}}_p^n}\\
&\qquad\leq N \sum_{i+j+k =n} \|\phi^1 - \phi^2\|_{\bdot{\mathscr{W}}_{4p}^i}\Big(\|\phi^1 - \phi^3\|_{\bdot{\mathscr{W}}_{4p}^j} + \|\phi^2 - \phi^4\|_{\bdot{\mathscr{W}}_{4p}^j}\Big)\varepsilon^{k}\\
&\qquad\qquad + N \sum_{i=0}^{n-1}\|\phi^1 - \phi^2 - \phi^3 + \phi^4\|_{\bdot{\mathscr{W}}_{2p}^i}\varepsilon^{n-i}+ N\|\phi^1  - \phi^2 - \phi^3 + \phi^4\|_{\bdot{\mathscr{W}}_p^n}.
\end{equs}
\end{enumerate}
\end{lemma}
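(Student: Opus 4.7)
All three parts will follow from a single mechanism: apply the Chen-Hu-Nualart formula (Proposition \ref{chenhunualart}) to expand $\mathscr{D}^n$ acting on a composition, isolate the unique top-order term in which all $n$ derivatives hit one factor, and control the remaining products of lower-order Malliavin derivatives by H\"older's inequality together with the hypothesis \eqref{uilessthanepsilontothek}.

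For part (a), Proposition \ref{chenhunualart} will give
\begin{equs}
\mathscr{D}^n f(\phi^1) = f'(\phi^1)\, \mathscr{D}^n \phi^1 + \sum_{k=2}^{n} f^{(k)}(\phi^1) \sum_{\lambda \in \Lambda(n,k)} \sum_{\mathcal{P}(n,\lambda)}\prod_{j=1}^{k}\mathscr{D}^{\lambda_j}\phi^1.
\end{equs}
The $k=1$ contribution is bounded by $\|f'\|_\infty\|\phi^1\|_{\bdot{\mathscr{W}}_p^n}$. For $k\ge 2$ all parts satisfy $\lambda_j\le n-1$, so upon choosing H\"older exponents $p_j$ with $\sum_j 1/p_j=1/p$, \eqref{uilessthanepsilontothek} will give $\prod_j \|\mathscr{D}^{\lambda_j}\phi^1\|_{L_{p_j}(H^{\otimes\lambda_j})}\le N_0^k \prod_j\varepsilon^{\lambda_j}=N_0^k\varepsilon^n$. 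For part (b), the plan is to write $f(\phi^1)-f(\phi^2)=(\phi^1-\phi^2)G$ with $G:=\int_0^1 f'(\phi^2+\tau(\phi^1-\phi^2))d\tau$ and apply Leibniz to $\mathscr{D}^n$: the $i=n$ term will contribute $\|f'\|_\infty\|\phi^1-\phi^2\|_{\bdot{\mathscr{W}}_p^n}$, while for $i<n$, applying Chen-Hu-Nualart to $G$ (noting that $\phi^2+\tau(\phi^1-\phi^2)$ is a convex combination of $\phi^1,\phi^2$, so its Malliavin derivatives up to order $n-1$ are bounded by constant multiples of $\varepsilon^{\cdot}$) and arguing as in part (a) will yield $\|\mathscr{D}^{n-i}G\|_{L_{2p}(H^{\otimes(n-i)})}\lesssim\varepsilon^{n-i}$; a final H\"older step then produces $\varepsilon^{n-i}\|\phi^1-\phi^2\|_{\bdot{\mathscr{W}}_{2p}^i}$.

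The crux of part (c) will be an algebraic decomposition that routes the second-order differences into precisely the pairs $(\phi^1-\phi^3)$ and $(\phi^2-\phi^4)$ appearing on the right-hand side. Setting $\Delta:=\phi^1-\phi^2-\phi^3+\phi^4$ and $G_\ell:=\int_0^1 f'(\phi^{2\ell}+\tau(\phi^{2\ell-1}-\phi^{2\ell}))d\tau$ for $\ell\in\{1,2\}$, the regrouping $f(\phi^1)-f(\phi^2)-f(\phi^3)+f(\phi^4)=\Delta G_2 + (\phi^1-\phi^2)(G_1-G_2)$ combined with the convex-combination identity $(\phi^2+\tau(\phi^1-\phi^2))-(\phi^4+\tau(\phi^3-\phi^4))=(1-\tau)(\phi^2-\phi^4)+\tau(\phi^1-\phi^3)$ and a second fundamental-theorem-of-calculus step will give $G_1-G_2=(\phi^2-\phi^4)K_1+(\phi^1-\phi^3)K_2$, where $K_1,K_2$ are double integrals of $f''$ along a two-parameter family of convex combinations of $\phi^1,\dots,\phi^4$. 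Substituting will yield
\begin{equs}
f(\phi^1)-f(\phi^2)-f(\phi^3)+f(\phi^4)=\Delta G_2+(\phi^1-\phi^2)(\phi^2-\phi^4)K_1+(\phi^1-\phi^2)(\phi^1-\phi^3)K_2.
\end{equs}

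It then remains to apply $\mathscr{D}^n$ by Leibniz and to estimate $\|\mathscr{D}^k G_2\|$ and $\|\mathscr{D}^k K_\ell\|$ by $\varepsilon^k$ (or by $\|f\|_{C^{n+2}}$ when $k=0$) via Chen-Hu-Nualart as in part (a)---this is the step that uses the hypothesis extended to $k=n$. The $\Delta G_2$ term will generate the last term and the second sum on the right-hand side (by the argument of part (b) with $\Delta$ replacing $\phi^1-\phi^2$), while each of the two triple products, expanded by a three-factor Leibniz rule, will produce the triple sum $\sum_{i+j+k=n}$. The hard part will be the algebraic decomposition itself: naive regroupings leave residuals of the form $(\phi^1-\phi^2)\Delta$ or $(\phi^1-\phi^2)(\phi^3-\phi^4)$ that cannot be absorbed into the stated right-hand side; rewriting $(\phi^2+\tau(\phi^1-\phi^2))-(\phi^4+\tau(\phi^3-\phi^4))$ as a \emph{genuine} convex combination of $(\phi^2-\phi^4)$ and $(\phi^1-\phi^3)$---rather than as $(\phi^2-\phi^4)+\tau\Delta$---is what channels the second-order differences into the two permitted pairings.
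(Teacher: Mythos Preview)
Your proposal is correct and follows essentially the same approach as the paper. Parts (a) and (b) match the paper's argument verbatim, and your four-point decomposition in part (c) is exactly the identity the paper isolates separately as \cref{4pointcompar} (your $G_2$ is the paper's $\int_0^1 \nabla f(\Theta_2(\theta))\,d\theta$ and your $K_1,K_2$ combine to give the paper's double integral with $\Theta_1(\theta,\eta)=\eta\psi_1(\tau)+(1-\eta)\psi_2(\tau)$); the subsequent Leibniz/H\"older bookkeeping is also identical.
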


\begin{proof}
By (\ref{CHN1}) we can see that
\begin{equs}
\| f(\phi^1) \|_{\bdot{\mathscr{W}}_p^n} &\lesssim \|f\|_{C^1}\|\phi^1\|_{\bdot{\mathscr{W}}_p^n} +\|f\|_{C^n} \sum_{k=2}^n \sum_{\lambda \in \Lambda(n,k)} \sum_{\mathscr{P}(n,\lambda)}\prod_{j=1}^k \| \phi^1\|_{\bdot{\mathscr{W}}_{2^k p}^{\lambda_j}}.\end{equs}
The second term in this expression can be estimated using (\ref{uilessthanepsilontothek}) by
\begin{equs}
  \sum_{k=2}^n \sum_{\lambda \in \Lambda(n,k)} \sum_{\mathscr{P}(n,\lambda)}\prod_{j=1}^k \varepsilon^{ \lambda_j}\lesssim  \varepsilon^{n}
\end{equs}
where we used the definition of $\Lambda(n,k)$. This proves point (\ref{fofphi1bound}).

We proceed by proving point (\ref{fphi1minusfphi2bound}). Note that by the Minkowski inequality and the Leibniz rule, by (\ref{uilessthanepsilontothek}), and by point (\ref{fofphi1bound}) we get
\begin{equs}
    \|f(\phi^1) - f(\phi^2)\|_{\bdot{\mathscr{W}}_p^n} & = \Big\|\int_0^1 f'(\theta \phi^1 + (1-\theta)\phi^2)(\phi^1-\phi^2)  d \theta \Big\|_{\bdot{\mathscr{W}}_p^n}\\
    & \lesssim \int_0^1 \Big(\sum_{i=0}^{n-1} \| f'(\theta \phi^1 + (1-\theta)\phi^2)\|_{\bdot{\mathscr{W}}_{2p}^{n-i}}\|\phi^1-\phi^2\|_{\bdot{\mathscr{W}}_{2p}^i}\\
    &\qquad\qquad\qquad+ \|\| f'\big(\theta \phi^1 + (1-\theta)\phi^2\big)\mathscr{D}^n(\phi^1 - \phi^2)\|_{H^{\otimes n}}\|_{L_p}\Big) d \theta\\
    & \lesssim \sum_{i=0}^{n-1} \|f'\|_{C^{n-i}}\varepsilon^{n-i} \|\phi^1- \phi^2\|_{\bdot{\mathscr{W}}_{2p}^i} + \|f'\|_{\mathbb{B}}\|\phi^1-\phi^2\|_{\bdot{\mathscr{W}}_{p}^n}.
\end{equs}
From here point (\ref{fphi1minusfphi2bound}) follows.

Finally, we prove point (\ref{generalities4point}). By \cref{4pointcompar} we have that
\begin{equs}
&\|f(u^1) - f(u^2) - f(u^3) + f(u^4)\|_{\bdot{\mathscr{W}}_p^n}\\
&\leq
\Big\| \int_0^1 \int_0^1(\phi^1 - \phi^2)\Big(\theta(\phi^1 - \phi^3) + (1-\theta)(\phi^2 - \phi^4)\Big) \nabla^2f(\Theta_1(\theta,\eta)) d\eta d\theta\Big\|_{\bdot{\mathscr{W}}_p^n}\\
&\qquad +\Big\|(\phi^1 - \phi^2 - \phi^3 + \phi^4)\int_0^1 \nabla f(\Theta_2(\theta)) d\theta \Big\|_{\bdot{\mathscr{W}}_p^n} \\
 &=: A +B,
\end{equs}
where for each $\theta,\eta \in [0,1]$, the expressions $\Theta_1(\theta,\eta), \Theta_2(\theta)$ are convex combinations of $\phi^1,\dots,\phi^4$.
By the Minkowski inequality and by the H\"older inequality, we get
we get
\begin{equs}
&A \lesssim \int_0^1 \int_0^1 \sum_{i+j+k =n} \|\phi^1 - \phi^2\|_{\bdot{\mathscr{W}}_{4p}^i}\|\theta(\phi^1 - \phi^3) + (1-\theta)(\phi^2 - \phi^4)\|_{\bdot{\mathscr{W}}_{4p}^j}\\
&\qquad\qquad\qquad\qquad\qquad\qquad \times\|\nabla^2 f(\Theta_1(\theta,\eta))\|_{\bdot{\mathscr{W}}_{4p}^k} d\eta d\theta. 
\end{equs}
By using point (\ref{fofphi1bound}) for $k \geq 1$, and using the regularity of $f$ for $k=0$, we can see that
$
\|\nabla^2 f(\Theta(\theta,\eta))\|_{\bdot{\mathscr{W}}_{4p}^k} \lesssim \varepsilon^k.$ Therefore we get
\begin{equs}
A \lesssim \sum_{i+j+k = n}\|\phi^1 - \phi^2\|_{\bdot{\mathscr{W}}_{4p}^i}\Big(\|\phi^1 - \phi^3\|_{\bdot{\mathscr{W}}_{4p}^j}+\|\phi^2 - \phi^4\|_{\bdot{\mathscr{W}}_{4p}^j}\Big)\varepsilon^{k}.
\end{equs}
Finally,
\begin{equs}
B &\lesssim \int_0^1 \sum_{i=0}^{n-1}\|\phi^1 - \phi^2 - \phi^3 + \phi^4\|_{\bdot{\mathscr{W}}_{2p}^i}\|\nabla f(\Theta_1(\theta))\|_{\bdot{\mathscr{W}}_{2p}^{n-i}} d \theta +  \|\phi^1 - \phi^2 - \phi^3 + \phi^4\|_{\bdot{\mathscr{W}}_{p}^n}\|f'\|_{\mathbb{B}} \\ 
&\lesssim\sum_{i=0}^{n-1}\|\phi^1 - \phi^2 - \phi^2 + \phi^4\|_{\bdot{\mathscr{W}}_{2p}^i}\varepsilon^{n-i} + \|\phi^1 - \phi^2 - \phi^3 + \phi^4\|_{\bdot{\mathscr{W}}_{p}^n}
\end{equs}
where the last inequality again follows from point (\ref{fofphi1bound}). Hence the proof is finished.
\end{proof}

\begin{lemma}\label{DXperYlemma}
Consider constants $\varepsilon,c>0$, $n \in \mathbb{Z}_{\geq 0}$. Let $X \in \mathscr{W}^{n+1}$, $Y \in \mathscr{W}^n$ with $Y \geq 0$.
Suppose that for all $p \in (2,\infty)$ there exists a constant $N_0 = N_0>0$ such that for all $k \in \{1,\dots, n+1\}$, $l \in \{0,\dots, n\}$ we have
\begin{equs}
\|X\|_{\bdot{\mathscr{W}}_p^k} &\leq N_0\varepsilon^k,\qquad
\|Y\|_{\bdot{\mathscr{W}}_p^{l}} &\leq N_0 \varepsilon^{c+l}, \qquad \mathbb{E}[Y^{-p} ]\leq N_0 \varepsilon^{-cp}. \label{XYassumptions}
\end{equs}
Define an $H$-valued random variable by
$w := \frac{\mathscr{D}X}{Y}$.
Then, for each $p \in [1,\infty)$, there exists a constant $N = N(N_0,n,p)>0$ such that
$$\|\mathscr{D}^n w \|_{L_p (\Omega, H^{\otimes (n+1)})} \leq N \varepsilon^{n+1 -c}.$$
\end{lemma}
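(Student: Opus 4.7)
The plan is to write $w = \mathscr{D}X \cdot Y^{-1}$ and expand $\mathscr{D}^n w$ via the Leibniz rule for Malliavin derivatives, so that
\begin{equs}
\mathscr{D}^n w \;=\; \sum_{k=0}^{n} \binom{n}{k}\, \mathscr{D}^{n-k}(\mathscr{D}X) \otimes \mathscr{D}^k(Y^{-1})
\;=\; \sum_{k=0}^{n} \binom{n}{k}\, \mathscr{D}^{n-k+1}X \otimes \mathscr{D}^k(Y^{-1}).
\end{equs}
For the first factor the assumption on $X$ gives $\|\mathscr{D}^{n-k+1}X\|_{L_p(H^{\otimes (n-k+1)})} \lesssim \varepsilon^{n-k+1}$ directly. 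The task therefore reduces to estimating $\|\mathscr{D}^k(Y^{-1})\|_{L_p(H^{\otimes k})}$ for $0 \le k \le n$, after which the $L_p$-bound on $\mathscr{D}^n w$ follows from H\"older's inequality on $\Omega$.

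For $k=0$ the bound $\|Y^{-1}\|_{L_p} \le N_0^{1/p}\varepsilon^{-c}$ is immediate from the inverse-moment hypothesis. For $k \ge 1$, since $Y > 0$ a.s., the function $f(y)=1/y$ is smooth on the support of the law of $Y$, and applying \cref{chenhunualart} (after a standard cut-off/approximation to make $f$ globally smooth with polynomial growth) yields
\begin{equs}
\mathscr{D}^k_{\gamma}(Y^{-1}) \;=\; \sum_{j=1}^{k}\frac{(-1)^j j!}{Y^{j+1}} \sum_{\lambda \in \Lambda(k,j)} \sum_{\mathcal{P}(k,\lambda)} \prod_{i=1}^{j} \mathscr{D}^{\lambda_i}_{(\theta_1^i,\zeta_1^i),\dots,(\theta_{\lambda_i}^i,\zeta_{\lambda_i}^i)} Y.
\end{equs}
Taking $H^{\otimes k}$-norms and then $L_p$-norms, H\"older's inequality with exponents tuned to accommodate the $j+1$ factors gives, for each $(\lambda,\mathcal{P})$,
\begin{equs}
\Big\|Y^{-(j+1)} \prod_{i=1}^j \|\mathscr{D}^{\lambda_i}Y\|_{H^{\otimes \lambda_i}}\Big\|_{L_p}
\;\lesssim\; \|Y^{-1}\|_{L_{(j+1)q}}^{j+1} \prod_{i=1}^{j} \|Y\|_{\bdot{\mathscr{W}}_{jq}^{\lambda_i}}
\;\lesssim\; \varepsilon^{-(j+1)c} \prod_{i=1}^{j} \varepsilon^{c+\lambda_i}
\;=\; \varepsilon^{k-c},
\end{equs}
where the last identity uses $\sum_i \lambda_i = k$, and the inverse-moment bound is applied with the exponent $(j+1)q$ absorbed into the constant $N_0$ (valid for all $p$). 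Hence $\|\mathscr{D}^k(Y^{-1})\|_{L_p(H^{\otimes k})} \lesssim \varepsilon^{k-c}$ uniformly in $0 \le k \le n$.

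Combining, each term in the Leibniz expansion contributes $\varepsilon^{n-k+1}\cdot \varepsilon^{k-c} = \varepsilon^{n+1-c}$ after a final H\"older in $\Omega$ (possible by raising all $L_p$-bounds to sufficiently large exponents, which is permitted since the assumptions are posited for every $p \in (2,\infty)$). Summing over $k=0,\dots,n$ yields the claimed bound. The main technical care needed is the bookkeeping of H\"older exponents across the $j+1$ factors in the chain-rule expansion of $\mathscr{D}^k(Y^{-1})$ and the justification that $f(y)=1/y$ may be treated via \cref{chenhunualart} despite its singularity at $0$; this is resolved either by truncation combined with dominated convergence using the finite negative moments of $Y$, or by noting that \cref{chenhunualart} only requires $f$ to be smooth on a neighbourhood of the essential range of the argument.
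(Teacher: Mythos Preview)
Your proposal is correct and follows essentially the same approach as the paper: expand $\mathscr{D}^k(Y^{-1})$ via the chain rule (\cref{chenhunualart}), use the assumptions and H\"older to get $\|\mathscr{D}^k(Y^{-1})\|_{L_p(H^{\otimes k})} \lesssim \varepsilon^{k-c}$, then combine with the bounds on $\mathscr{D}^{n-k+1}X$ through the Leibniz rule. The paper handles the singularity of $y\mapsto 1/y$ by the same shift-and-approximate argument you mention, and the power counting $\varepsilon^{-(j+1)c}\prod_i \varepsilon^{c+\lambda_i}=\varepsilon^{k-c}$ is identical.
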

\begin{proof}
We may assume that $p>2$. By (\ref{XYassumptions}) and  a simple approximation argument (shifting $Y$ away from $0$) we have that $Y \in \mathscr{W}^n$, and for $m \in \{1,\dots, n\}$, $\gamma \in ([0,1]\times \mathbb{T})^m$ we have
\begin{equs}
\mathscr{D}^m_\gamma(Y)^{-1} = \sum_{k=1}^m \frac{(-1)^k k!}{(Y )^{k+1}}\sum_{\lambda \in \Lambda(n,k)}\sum_{\mathcal{P}(n, \lambda)}\prod_{j=1}^k \mathscr{D}^{\lambda_j}_{(\theta_1^j,\zeta_1^j),\dots(\theta_{\lambda_j}^j,\zeta_{\lambda_j}^j)}Y.
\end{equs}
Thus by (\ref{XYassumptions}) we have
\begin{equs}
\|\mathscr{D}^m (Y^{-1})\|_{L_p(\Omega; H^{\otimes m})}&\lesssim \sum_{k=1}^m \varepsilon^{-c(k+1)} \sum_{\lambda \in \Lambda(m,k)}\sum_{\mathcal{P}(m,\lambda)}\prod_{j=1}^k \varepsilon^{c + \lambda_j}\\
&\lesssim \sum_{k=1}^m \varepsilon^{-c(k+1)} \sum_{\lambda \in \Lambda(m,k)}\sum_{\mathcal{P}(m,\lambda)} \varepsilon^{\sum_{j=1}^k(c+\lambda_j)}\\
&\lesssim \sum_{k=1}^m\varepsilon^{-c(k+1) + ck + m} \lesssim \varepsilon^{m-c}.
 \label{DnDThetaminustwo}
\end{equs}
For $\gamma \in ([0,1]\times \mathbb{T})^n$ and for $\eta \in [0,1]\times \mathbb{T}$, using the Leibniz rule, we have
\begin{align}\label{leibnizforw}
\mathscr{D}_\gamma^n w(\eta) = \sum_{\lambda_1 + \lambda_2 =n}\sum_{(\gamma_1 , \gamma_2) \in \mathcal{P}(n,2)} \mathscr{D}_{\gamma_1}^{\lambda_1}(\mathscr{D}_\eta X) \mathscr{D}_{\gamma_2}^{\lambda_2}(Y^{-1}).
\end{align}
Using (\ref{leibnizforw}), (\ref{XYassumptions}) and (\ref{DnDThetaminustwo}) we get that
\begin{align*}
\| \| \mathscr{D}^n w \|_{H^{\otimes (n+1)}}\|_{L_p} &\lesssim\sum_{ \lambda_1 + \lambda_2 =n}\sum_{(\gamma_1 , \gamma_2) \in \mathcal{P}(n,2)} \| \| \mathscr{D}^{\lambda_1 +1}X \|_{H^{\otimes (\lambda_1 +1)}}\|_{L_{2p}}\| \| \mathscr{D}^{\lambda_2}(Y^{-1})\|_{H^{\otimes \lambda_2}}\|_{L_{2p}}\\
&\lesssim \sum_{= \lambda_1 + \lambda_2 =n}\sum_{(\gamma_1 , \gamma_2) \in \mathcal{P}(n,2)} \varepsilon^{\lambda_1 +1}\varepsilon^{\lambda_2 -c}\lesssim \varepsilon^{n+1 -c}
\end{align*}
as required.
\end{proof}

\section{Malliavin regularity for the driftless equation}\label{malliavinsection}
This section is concerned with the solution of the  driftless multiplicative stochastic heat equation 
\begin{equs}\label{driftlessSHE}
(\partial_t - \Delta)\phi &= \sigma(\phi) \xi, \qquad \phi(0,\cdot)&=\phi_0
\end{equs}
and its Malliavin derivatives. 
Herein, $\phi_0 \in C(\mathbb{T})$ is fixed and the solution $\phi: \Omega \times [0,1]\times \mathbb{T} \to \mathbb{R}$ is a $\mathscr{P}\otimes\mathscr{B}(\mathbb{T})$-measurable random field,  a.s.  continuous on \([0,1]\times \mathbb{T}\),  and satisfies the following equation almost surely
\begin{equs}\label{integralSHE}
\phi(t,x) =P_t\phi_0(x)+ \int_0^t \int_{\mathbb{T}}p_{t-r}(x,y) \sigma(\phi(r,y))\xi(dy,dr), \quad \forall (t,x) \in [0,1]\times \mathbb{T}.
\end{equs}

\subsection{Moment bounds for Malliavin derivatives}\label{posimomentboundsection}

We will show the following result.
\begin{lemma}\label{posimomentbound} Let $\phi$ be the solution of (\ref{driftlessSHE}).
For any $n \in \mathbb{Z}_{\geq 0}$ and $p \in [1,\infty)$, if in addition $\sigma \in C^n$,  then there exists some constant $N = N(n,p,\| \sigma\|_{C^n})$ such that for all $t \in [0,1]$ we have
$$\sup_{x \in \mathbb{T}}\| \phi(t,x)\|_{\bdot{\mathscr{W}}_p^n} \leq N(1+\mathbf{1}_{n=0}\|u_0\|_{\mathbb{B}(\mathbb{T})}) t^{n/4}.$$
\end{lemma}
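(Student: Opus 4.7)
The case $n=0$ is a standard moment estimate for the mild solution. Since $\sigma\in C^0$ is bounded, applying the Burkholder--Davis--Gundy (BDG) inequality to the stochastic integral in \eqref{integralSHE}, together with $|P_t\phi_0(x)|\le \|\phi_0\|_{\mathbb{B}(\mathbb{T})}$ and the heat-kernel bound $\int_0^t\!\int_{\mathbb{T}} p_{t-r}(x,y)^2\,dy\,dr\lesssim t^{1/2}$, yields a Volterra inequality for $t\mapsto\sup_x\|\phi(t,x)\|_{L_p}^p$ with kernel $(t-r)^{-1/2}$. A singular Gronwall argument then gives the $n=0$ bound, with the linear dependence on $\|\phi_0\|_{\mathbb{B}(\mathbb{T})}$ inherited from the inhomogeneous term.

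For $n\ge 1$, proceed by induction on $n$; since $\phi_0$ is deterministic its Malliavin derivatives vanish, consistent with the indicator $\mathbf{1}_{n=0}$ in the statement. Malliavin-differentiating \eqref{integralSHE} repeatedly and invoking the formula of Proposition \ref{chenhunualart} for $\mathscr{D}^n[\sigma(\phi)]$, one obtains, for a.e.\ $\gamma=((\theta_1,\zeta_1),\dots,(\theta_n,\zeta_n))$ and with $\theta_\ast:=\max_i\theta_i$, an integral equation
\begin{equs}
\mathscr{D}^n_\gamma\phi(t,x) = F^n_\gamma(t,x) + R^n_\gamma(t,x) + \int_{\theta_\ast}^t\!\!\int_{\mathbb{T}}p_{t-r}(x,y)\,\sigma'(\phi(r,y))\,\mathscr{D}^n_\gamma\phi(r,y)\,\xi(dy,dr),
\end{equs}
where $F^n_\gamma(t,x)$ is a \emph{boundary forcing} produced by the $n$ Dirac contributions that appear each time a Malliavin derivative hits the stochastic integral: a finite sum of products of Malliavin derivatives of $\phi$ of orders strictly less than $n$, weighted by chains of heat kernels $\prod_k p_{\tau_k-\tau_{k-1}}(\cdot,\cdot)$ running along orderings of $\theta_1,\dots,\theta_n$. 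The remainder $R^n_\gamma(t,x)$ is a stochastic integral collecting the non-principal terms of the Faà di Bruno expansion of $\mathscr{D}^n[\sigma(\phi)]$, each again involving only Malliavin derivatives of $\phi$ of order strictly less than $n$.

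Set $M_n(t):=\sup_{x\in\mathbb{T}}\|\,\|\mathscr{D}^n\phi(t,x)\|_{H^{\otimes n}}\,\|_{L_p}$ with $p\ge 2$. Applying BDG fibrewise in $\gamma$ to the stochastic integral above, exchanging the $L_p$ and $H^{\otimes n}$ norms via Minkowski's inequality, and using Lemma \ref{Dsigmaubuckling}, point (\ref{fofphi1bound}), at scale $\varepsilon=r^{1/4}$ (justified by the induction hypothesis up to order $n-1$), one obtains a Volterra inequality of the form
\begin{equs}
M_n(t)^2 \lesssim \Phi_n(t) + \int_0^t (t-r)^{-1/2}\bigl(r^{n/2}+M_n(r)^2\bigr)\,dr,
\end{equs}
where $\Phi_n(t)$ collects the contributions of $F^n$ and $R^n$. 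The crucial scaling is that for any chain of heat kernels appearing in $F^n_\gamma$ or $R^n_\gamma$, squaring and integrating over $\gamma\in([0,t]\times\mathbb{T})^n$ produces a factor $t^{n/2}$ through successive computations of the form $\int_\tau(\tau'-\tau)^{-1/2}d\tau\lesssim(\tau')^{1/2}$. Combined with the induction hypothesis for the lower-order Malliavin factors this gives $\Phi_n(t)\lesssim t^{n/2}$, and a singular Gronwall argument then yields $M_n(t)\lesssim t^{n/4}$, closing the induction.

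The principal difficulty is the combinatorial bookkeeping: each time a Malliavin derivative hits the stochastic integral it creates a Dirac contribution and hence a new heat-kernel factor, so the full expansion of $\mathscr{D}^n\phi$ is a sum over tree-like structures on the variables $\theta_1,\dots,\theta_n$. One must verify that squared-integration of each such tree over the simplex returns exactly $t^{n/2}$, so that, combined with the induction hypothesis, the Volterra inequality above has the claimed form. The nondegeneracy $\sigma^2\ge\mu^2$ plays no role here; only the boundedness of $\sigma$ and its first $n$ derivatives matters.
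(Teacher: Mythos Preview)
Your overall strategy—induction on $n$, BDG, then a singular Gronwall argument—matches the paper's. The difference is in how the forcing is organised, and the paper's route is substantially simpler than what you sketch.

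You describe the boundary forcing $F^n_\gamma$ as a sum over ``tree-like structures'' weighted by \emph{chains} of heat kernels $\prod_k p_{\tau_k-\tau_{k-1}}(\cdot,\cdot)$, and you flag the verification that each such chain integrates to $t^{n/2}$ as the principal difficulty. The paper avoids this entirely by invoking Proposition~\ref{CHN3} (the Chen--Hu--Nualart identity), which gives the clean two-term decomposition
\[
\mathscr{D}^n_\gamma\phi(t,x)=\mathbf{1}_{[0,t]}(\theta^*)\sum_{k=1}^n p_{t-\theta_k}(x,\zeta_k)\,\mathscr{D}^{n-1}_{\hat\gamma_k}[\sigma(\phi(\theta_k,\zeta_k))]
+\mathbf{1}_{[0,t]}(\theta^*)\int_{\theta^*}^t\!\!\int_{\mathbb{T}}p_{t-r}(x,y)\,\mathscr{D}^n_\gamma[\sigma(\phi(r,y))]\,\xi(dy,dr).
\]
There is only a \emph{single} heat kernel factor in the boundary term, multiplied by $\mathscr{D}^{n-1}[\sigma(\phi)]$; no chains, no trees. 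Likewise, the paper does \emph{not} split off the principal term $\sigma'(\phi)\mathscr{D}^n\phi$ from the stochastic integral: it keeps $\mathscr{D}^n[\sigma(\phi)]$ intact and estimates $\|\sigma(\phi)\|_{\bdot{\mathscr{W}}^m_p}$ for $m=n-1$ and $m=n$ directly via Lemma~\ref{Dsigmaubuckling}\,(\ref{fofphi1bound}), which already packages the Faà di Bruno combinatorics and yields $\|\sigma(\phi(r,y))\|_{\bdot{\mathscr{W}}^m_p}\lesssim r^{m/4}+\|\phi(r,y)\|_{\bdot{\mathscr{W}}^m_p}$. The boundary term then contributes $t^{(n-1)/2}\cdot\|p_{t-\cdot}(x,\cdot)\mathbf{1}_{[0,t]}\|_H^2\lesssim t^{n/2}$ in one line, and the stochastic integral feeds directly into the Volterra inequality you wrote. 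In short, your iterative expansion could be made to work, but Proposition~\ref{CHN3} plus Lemma~\ref{Dsigmaubuckling}\,(\ref{fofphi1bound}) eliminate the combinatorial bookkeeping you identify as the hard part. (Also, for $n=0$ no Gronwall is needed: boundedness of $\sigma$ gives $\|\phi(t,x)\|_{L_p}\lesssim\|\phi_0\|_{\mathbb{B}}+t^{1/4}$ directly.)
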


To show this, let us recall the  following non-quantitative result from  \cite[Proposition 4.3]{bally1998malliavin}.
\begin{proposition}[Boundedness and Malliavin differentiability]\label{malldiffability}
Let $\phi$ be the solution of (\ref{driftlessSHE}). For any $n \in \mathbb{N}$, $p \in [1,\infty)$, if $\sigma \in C^n$, then we have
$$\sup_{(t,x) \in [0,1]\times\mathbb{T}}\| \phi(t,x)\|_{{\mathscr{W}}_p^n} <\infty.$$
\end{proposition}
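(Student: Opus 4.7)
The plan is to proceed by induction on $n$, using \cref{malldiffability} to ensure that all Malliavin derivatives involved are a priori well-defined in the relevant spaces. The base case $n=0$ is the standard $L_p$ moment bound for the driftless SHE: applying the Burkholder--Davis--Gundy inequality to the mild formulation \eqref{integralSHE} and using the boundedness of $\sigma$ (which holds since $\sigma\in C^0$ in the paper's convention), one obtains
$$\|\phi(t,x)\|_{L_p} \le \|u_0\|_{\mathbb{B}(\mathbb{T})} + N \|\sigma\|_{\mathbb{B}}\Big(\int_0^t\int_{\mathbb{T}} p_{t-r}(x,y)^2\,dy\,dr\Big)^{1/2} \le N(1+\|u_0\|_{\mathbb{B}(\mathbb{T})}) t^{0/4},$$
using $\int_0^t\int_{\mathbb{T}}p_{t-r}(x,y)^2\,dy\,dr \lesssim t^{1/2} \leq 1$.

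For the inductive step, fix $n\ge 1$ and assume the bound holds at orders $0,1,\dots, n-1$ whenever $\sigma$ lies in the corresponding $C^{k}$. Differentiating \eqref{integralSHE} $n$ times in the sense of Malliavin and using the commutation formula for $\mathscr{D}$ with Skorokhod integrals (together with adaptedness of $\phi$, which forces $\mathscr{D}_{\theta,\zeta}\phi(r,y)=0$ for $r<\theta$), one obtains, for a.e.\ $\gamma=((\theta_1,\zeta_1),\dots,(\theta_n,\zeta_n))$ with $\theta_*:=\max_i \theta_i \le t$ (and letting $k^*$ be the corresponding maximising index),
\begin{equs}
\mathscr{D}^n_\gamma \phi(t,x) &= p_{t-\theta_{k^*}}(x,\zeta_{k^*})\mathscr{D}^{n-1}_{\hat\gamma_{k^*}}\sigma(\phi(\theta_{k^*},\zeta_{k^*})) \\
&\quad + \int_{\theta_*}^t\!\!\int_{\mathbb{T}} p_{t-r}(x,y)\,\mathscr{D}^n_\gamma\bigl[\sigma(\phi(r,y))\bigr]\,\xi(dy,dr).
\end{equs}
Using the Fa\`a di Bruno expansion from \cref{chenhunualart}, the integrand splits as $\sigma'(\phi(r,y))\mathscr{D}^n_\gamma\phi(r,y)+R^{(n)}_\gamma(r,y)$ where $R^{(n)}_\gamma$ involves products of Malliavin derivatives of $\phi(r,y)$ of orders $1,\dots,n-1$.

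Taking $H^{\otimes n}$-norms, then $L_p$-norms, applying Minkowski to the boundary term and BDG to the stochastic integral (legitimate since, once we condition past $\theta_*$, the integrand is adapted), we arrive at a Volterra-type inequality
$$\sup_{x\in\mathbb{T}}\|\mathscr{D}^n\phi(t,x)\|_{L_p(\Omega;H^{\otimes n})}^2 \lesssim \mathsf{B}(t) + \mathsf{R}(t) + \int_0^t (t-r)^{-1/2}\sup_{y\in\mathbb{T}}\|\mathscr{D}^n\phi(r,y)\|_{L_p(\Omega;H^{\otimes n})}^2\,dr,$$
where $\mathsf{B}(t)$ comes from the boundary term and $\mathsf{R}(t)$ from $R^{(n)}$. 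For $\mathsf{B}(t)$, after taking the $H^{\otimes n}$-norm one gets an integral of $p_{t-s}(x,\zeta)^2\|\mathscr{D}^{n-1}\sigma(\phi(s,\zeta))\|_{L_p(H^{\otimes(n-1)})}^2$ over $(s,\zeta)\in[0,t]\times\mathbb{T}$; by \cref{Dsigmaubuckling}\ref{fofphi1bound} with $\varepsilon=s^{1/4}$ together with the inductive hypothesis, $\|\mathscr{D}^{n-1}\sigma(\phi(s,\zeta))\|_{L_p(H^{\otimes(n-1)})}\lesssim s^{(n-1)/4}$, so a Beta-function computation yields $\mathsf{B}(t)\lesssim \int_0^t (t-s)^{-1/2}s^{(n-1)/2}\,ds\lesssim t^{n/2}$. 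An analogous accounting — distributing the $n$ Malliavin derivatives among factors using \cref{chenhunualart} and bounding each factor of order $k\in\{1,\dots,n-1\}$ by $r^{k/4}$ — gives $\mathsf{R}(t)\lesssim t^{n/2}$ as well. A standard Volterra--Gronwall argument then closes the estimate at rate $t^{n/2}$, i.e.\ $\sup_x\|\mathscr{D}^n\phi(t,x)\|_{L_p(\Omega;H^{\otimes n})}\lesssim t^{n/4}$.

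The main obstacle is the book-keeping: one must verify that every combinatorial term in $R^{(n)}$ (a product of derivatives of $\sigma$ evaluated at $\phi(r,y)$ and of factors $\mathscr{D}^{\lambda_j}\phi(r,y)$ with $\sum \lambda_j=n$) produces exactly the scaling $r^{n/4}$ predicted by the inductive hypothesis, so that the heat kernel $L^2$ estimate $\int_0^t (t-r)^{-1/2}r^{n/2}\,dr \asymp t^{(n+1)/2}$ does not deteriorate the target exponent when the Volterra--Gronwall step is iterated. This is precisely the content of \cref{Dsigmaubuckling}\ref{fofphi1bound}, whose homogeneity structure is designed to absorb exactly these products.
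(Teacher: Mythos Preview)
Your argument is circular. The statement you are asked to prove is \cref{malldiffability}, yet in the very first line of the inductive step you write ``using \cref{malldiffability} to ensure that all Malliavin derivatives involved are a priori well-defined in the relevant spaces.'' Concretely, your Volterra--Gronwall closure requires knowing in advance that $\sup_{(r,y)}\|\mathscr{D}^n\phi(r,y)\|_{L_p(\Omega;H^{\otimes n})}<\infty$; without that, the Gronwall-type lemma does not apply and the inequality cannot be closed. But that finiteness is exactly the content of \cref{malldiffability}.

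In fact, what you have written is essentially the paper's proof of \cref{posimomentbound} (the quantitative $t^{n/4}$ bound), which \emph{legitimately} uses \cref{malldiffability} as a black-box input to justify the Gronwall step. The paper does not prove \cref{malldiffability} at all: it is quoted from \cite[Proposition~4.3]{bally1998malliavin}. A self-contained proof of \cref{malldiffability} requires an approximation argument --- e.g.\ Picard iterates or regularised coefficients --- for which the Malliavin derivatives are manifestly finite at each stage, together with uniform bounds and a passage to the limit; the direct Gronwall route you sketch cannot bootstrap the required a~priori finiteness on its own.
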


We will also need the following result from \cite[Lemma 5.6]{chen2021regularity}:

\begin{proposition}\label{CHN3}
Let $\phi$ be the solution of (\ref{driftlessSHE}), and let $n \in \mathbb{N}, p \in [1,\infty), \sigma \in C^n.$
For all $(t,x) \in [0,1]\times \mathbb{T}$ and for almost every $\gamma = (\theta_i,\zeta_i)_{i=1}^n \in ([0,1]\times \mathbb{T})^n,$  we have
\begin{equs}
\mathscr{D}_\gamma^n \phi(t,x) &= \mathbf{1}_{[0,t]}(\theta^*)\sum_{k=1}^n p_{t-\theta_k}(x,\zeta_k) \mathscr{D}_{\hat{\gamma}_k}^{n-1}[\sigma(\phi(\theta_k,\zeta_k))]\\
&\qquad+\mathbf{1}_{[0,t]}(\theta^*)\int_{{\theta}^*}^t \int_{\mathbb{T}}p_{t-r}(x,y) \mathscr{D}_{\gamma}^n[\sigma(\phi(r,y))] \xi(dy,dr),                          \label{eq:Malliavin_der_solution}
\end{equs}
where $\hat{\gamma}_k$ is defined by (\ref{gammahatdef}) and $\theta^* := \max_{k \in \{1,\dots, n\}} \theta_k$.
\end{proposition}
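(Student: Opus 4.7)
The plan is to establish \eqref{eq:Malliavin_der_solution} by induction on $n$, with the key tool being the commutation rule for Malliavin differentiation with the stochastic integral against $\xi$, together with the symmetry of iterated Malliavin derivatives. The deterministic term $P_t\phi_0(x)$ in \eqref{integralSHE} contributes nothing to $\mathscr{D}^n\phi$, so only the stochastic integral needs to be differentiated. For $n=1$, applying $\mathscr{D}_{\theta_1,\zeta_1}$ to \eqref{integralSHE} and invoking the standard commutation rule
\begin{equation*}
\mathscr{D}_{\theta,\zeta}\int_0^t \int_{\mathbb{T}} f(r,y)\xi(dy,dr)=\mathbf{1}_{[0,t]}(\theta)f(\theta,\zeta)+\int_\theta^t \int_{\mathbb{T}}\mathscr{D}_{\theta,\zeta}f(r,y)\xi(dy,dr)
\end{equation*}
with $f(r,y)=p_{t-r}(x,y)\sigma(\phi(r,y))$ yields \eqref{eq:Malliavin_der_solution} at level $n=1$, upon interpreting $\mathscr{D}^0$ as the identity and $\hat\gamma_1$ as the empty tuple. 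The Malliavin differentiability and integrability needed to apply this rule at every order are supplied by \cref{malldiffability}.

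For the inductive step, assume the formula at order $n-1$. Both sides of \eqref{eq:Malliavin_der_solution} are symmetric under permutations of the entries of $\gamma$, so it suffices to prove the formula under the ordering $\theta_1\leq\theta_2\leq\cdots\leq\theta_n$, in which case $\theta^*=\theta_n$. Set $\gamma'=((\theta_i,\zeta_i))_{i=1}^{n-1}$ and apply $\mathscr{D}_{\theta_n,\zeta_n}$ to the inductive formula for $\mathscr{D}^{n-1}_{\gamma'}\phi(t,x)$. When differentiating the first sum of the inductive formula, each heat kernel $p_{t-\theta_k}(x,\zeta_k)$ is deterministic and passes through, while $\mathscr{D}_{\theta_n,\zeta_n}\mathscr{D}^{n-2}_{\hat\gamma'_k}=\mathscr{D}^{n-1}_{\hat\gamma_k}$ by the combinatorial identity $\hat\gamma'_k\cup\{(\theta_n,\zeta_n)\}=\hat\gamma_k$ (valid for $k<n$) and the symmetry of iterated Malliavin derivatives. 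Differentiating the stochastic integral term of the inductive formula via the same commutation rule produces a boundary contribution at $r=\theta_n$ equal to $p_{t-\theta_n}(x,\zeta_n)\mathscr{D}^{n-1}_{\gamma'}[\sigma(\phi(\theta_n,\zeta_n))]$, which is precisely the missing $k=n$ term of the sum in \eqref{eq:Malliavin_der_solution} because $\gamma'=\hat\gamma_n$, together with a new stochastic integral from $\theta_n$ to $t$ with integrand $p_{t-r}(x,y)\mathscr{D}^n_\gamma[\sigma(\phi(r,y))]$.

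It remains to reconcile the indicator prefactors. The expression obtained above carries the inductive indicator $\mathbf{1}_{[0,t]}(\theta_{n-1})$ on the $k\leq n-1$ terms but $\mathbf{1}_{[0,t]}(\theta_n)$ on both the new $k=n$ summand and the new stochastic integral, whereas \eqref{eq:Malliavin_der_solution} demands $\mathbf{1}_{[0,t]}(\theta_n)=\mathbf{1}_{[0,t]}(\theta^*)$ uniformly. The two indicators differ only on the event $\{\theta_{n-1}\leq t<\theta_n\}$; on that event, for each $k\leq n-1$, the tuple $\hat\gamma_k$ contains $(\theta_n,\zeta_n)$ with $\theta_n>\theta_k$, and since $\sigma(\phi(\theta_k,\zeta_k))$ is $\mathscr{F}_{\theta_k}$-measurable, any single factor $\mathscr{D}_{\theta_n,\zeta_n}$ annihilates it. Hence $\mathscr{D}^{n-1}_{\hat\gamma_k}[\sigma(\phi(\theta_k,\zeta_k))]=0$ in this regime, so the replacement $\mathbf{1}_{[0,t]}(\theta_{n-1})\mapsto\mathbf{1}_{[0,t]}(\theta_n)$ leaves those summands unchanged, closing the induction. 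The main technical obstacle is the repeated justification of the commutation rule at each inductive step, which requires verifying that the integrands $\mathscr{D}^{n-1}_{\gamma'}[\sigma(\phi(r,y))]$ lie in the appropriate $L_p$--Malliavin spaces; this is secured by combining \cref{malldiffability} with the chain-rule expansion of \cref{chenhunualart} applied to $\sigma\circ\phi$, which expresses these iterated derivatives as finite sums of products of Malliavin derivatives of $\phi$ itself, each controlled by the same proposition.
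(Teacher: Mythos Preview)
The paper does not give its own proof of this proposition; it is quoted verbatim as \cite[Lemma 5.6]{chen2021regularity}. Your inductive argument---base case via the commutation rule for $\mathscr{D}$ with the Walsh integral, inductive step by applying one further $\mathscr{D}_{\theta_n,\zeta_n}$ under the ordering $\theta_1\le\cdots\le\theta_n$, then reconciling the indicators using that $\mathscr{D}_{\theta_n,\zeta_n}$ annihilates $\mathscr{F}_{\theta_k}$-measurable quantities when $\theta_n>\theta_k$---is correct and is precisely the standard route taken in the cited reference. The appeal to \cref{malldiffability} and \cref{chenhunualart} for the requisite integrability at each step is appropriate.
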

\begin{remark}
Note that in the above equation the stochastic integral can be taken over the time interval $[0,t]$ rather than $[\theta^*,t]$, since for $r \leq \theta^*$ the Malliavin derivative $\mathscr{D}_\gamma^n \sigma(\phi(r,y))$ is zero (see \cite[Remark 5.1]{sanz2004course}).
\end{remark}

We can now proceed with the proof of \cref{posimomentbound}. 

\begin{proof}[Proof of \cref{posimomentbound}]
We will prove the result by induction. By the BDG\footnote{We call the Burkholder-Davis-Gundy inequality ``BDG inequality'' for brevity.} inequality and by the boundedness of $\sigma$, the result holds for the case $n=0$. Suppose that the result holds for the first $(n-1)$ Malliavin derivatives. We aim to show that the result also holds for the $n$-th Malliavin derivative. 
Assume without loss of generality that $p \geq 2$. By \eqref{eq:Malliavin_der_solution} and  the BDG inequality, we get
\begin{equs}\|\phi(t,x)\|_{\bdot{\mathscr{W}}_p^n}^2 &\lesssim \| p_{t-\cdot}(x,\cdot)\mathscr{D}^{n-1}\sigma(\phi(\cdot,\cdot))\mathbf{1}_{[0,t]}(\cdot)\|_{L_p(\Omega; H^{n})}^2\\
&\qquad\qquad+ \int_{0}^t \int_{\mathbb{T}} p_{t-r}^2(x,y) \|\sigma(\phi(r,y))\|_{\bdot{\mathscr{W}}_p^n}^2 dydr\\
&=: A(t,x) + B(t,x). \label{phiisAplusB}
\end{equs}
We proceed with proving that
\begin{equs}
A(t,x) \lesssim t^{n/2}. \label{Abound}
\end{equs}
 Indeed, in the $n=1$ case, we have by the boundedness of $\sigma$ that
\begin{equs}
A(t,x) = \|p_{t-\cdot}(x,\cdot)\sigma(\phi(\cdot,\cdot))\mathbf{1}_{[0,t]}(\cdot)\|_{L_p(\Omega, H)}^2 \lesssim \|p_{t-\cdot}(x,\cdot)\mathbf{1}_{[0,t]}(\cdot)\|_{H}^2 \lesssim t^{1/2}.
\end{equs}
Moreover in the $n \geq 2$ case,
by point (\ref{fofphi1bound}) of \cref{Dsigmaubuckling} and by the induction  hypothesis we have for $(\theta,\zeta) \in [0,t]\times \mathbb{T}$ that
\begin{align*}
\|  \sigma(\phi(\theta, \zeta))\|_{\bdot{\mathscr{W}}_p^{n-1}} &\lesssim \theta^{(n-1)/4} + \|\phi(\theta,\zeta)\|_{\bdot{\mathscr{W}}_{p}^{n-1}} \lesssim \theta^{(n-1)/4}\lesssim t^{(n-1)/4},
\end{align*}
and thus using Minkowski's inequality and the above bound, we get that
\begin{equs}
A(t,x)   &= \| \| p_{t-\cdot}(x,\cdot) \| \mathscr{D}^{n-1}\sigma(\phi(\cdot,\cdot))\|_{H^{\otimes(n-1)}}\mathbf{1}_{[0,t]}(\cdot)\|_{H}\|_{L_p}^2\\
&\leq \big\| p_{t-\cdot}(x,\cdot)\|\sigma(\phi(\cdot,\cdot))\|_{\bdot{\mathscr{W}}_p^{n-1}}\mathbf{1}_{[0,t]}(\cdot)\big\|_{H}^2\\
&\lesssim t^{(n-1)/2}\| p_{t-\cdot}(x,\cdot)\mathbf{1}_{[0,t]}(\cdot)\|_{H}^2 \lesssim t^{n/2}
\end{equs}
as required. Hence (\ref{Abound}) is proven.
We now proceed by bounding $B$. By point (\ref{fofphi1bound}) of \cref{Dsigmaubuckling} and by the induction hypothesis we have
\begin{equs}
B(t,x) &\lesssim  \int_{0}^t \int_{\mathbb{T}} p_{t-r}^2(x,y)\Big( r^{n/4} + \mathbb\|\phi(r,y)\|_{\bdot{\mathscr{W}}_p^{n}} \Big)^2dydr\\
&\lesssim \int_{0}^t \int_{\mathbb{T}} p_{t-r}^2(x,y) r^{n/2}dydr + \int_{0}^t \int_{\mathbb{T}} p_{t-r}^2(x,y)\|\phi(r,y)\|_{\bdot{\mathscr{W}}_p^{n}}^2 dydr\\
&\lesssim t^{(n+1)/2} + \int_{0}^t \int_{\mathbb{T}} p_{t-r}^2(x,y)\|\phi(r,y)\|_{\bdot{\mathscr{W}}_p^{n}}^2 dydr.\label{Bbound}
\end{equs}
By (\ref{phiisAplusB}), and by our bounds (\ref{Abound}), (\ref{Bbound}) on $A,B$, we conclude that
$$\|\phi(t,x)\|_{\bdot{\mathscr{W}}_p^n}^2 \lesssim t^{n/2} + \int_{0}^t \int_{\mathbb{T}} p_{t-r}^2(x,y)\|\phi(r,y)\|_{\bdot{\mathscr{W}}_p^n}^2 dydr.$$
By Proposition \ref{malldiffability} we have that $\sup_{(r,y) }\|\phi(r,y)\|_{\bdot{\mathscr{W}}_p^n}<\infty.$ Therefore by \cref{Gronwalltypelemma}, the statement we aim to show also holds for the $n$-th Malliavin derivative. Thus the proof is finished.
\end{proof}

\begin{lemma}\label{mallimatrixbound}
Let $n \in \mathbb{Z}_{\geq 0}$, $p \in [1,\infty)$, $\sigma \in C^{n+1}$, and let $\phi$ solve (\ref{driftlessSHE}). There exists some constant $N =N(n,p, \|\sigma\|_{C^{n+1}})$ such that for all $t \in (0,1]$ we have
$$\sup_{x \in \mathbb{T}}\Big\| \| \mathscr{D}\phi(t,x)\|_{H}^2 \Big\|_{\bdot{\mathscr{W}}_p^n} \leq N t^{(n+2)/4}.$$
\end{lemma}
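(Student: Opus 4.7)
The plan is to expand $\|\mathscr{D}\phi(t,x)\|_H^2 = \int_0^1 \int_{\mathbb{T}} [\mathscr{D}_{\theta,\zeta}\phi(t,x)]^2 d\zeta d\theta$ via the Leibniz rule for iterated Malliavin derivatives and then reduce the estimate to the moment bounds already provided by \cref{posimomentbound}. The assumption $\sigma \in C^{n+1}$ is exactly what is needed so that \cref{posimomentbound} applies up to order $n+1$, which is one more derivative than we are estimating on the outside.

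More explicitly, for each fixed $(\theta,\zeta)$, the commutativity of Malliavin derivatives together with the product rule gives, at $\gamma \in ([0,1]\times \mathbb{T})^n$,
$$\mathscr{D}^n_\gamma \bigl[(\mathscr{D}_{\theta,\zeta}\phi(t,x))^2\bigr] = \sum_{k=0}^n \sum_{\substack{I \subset \{1,\dots,n\}\\ |I|=k}} \mathscr{D}^{k+1}_{(\gamma_I,(\theta,\zeta))}\phi(t,x)\cdot \mathscr{D}^{n-k+1}_{(\gamma_{I^c},(\theta,\zeta))}\phi(t,x),$$
where $\gamma_I$ denotes the subsequence of $\gamma$ indexed by $I$. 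Integrating in $(\theta,\zeta)$ turns each summand into an $H$-inner product, and pointwise Cauchy--Schwarz in $H$ yields
$$\bigl|\mathscr{D}^n_\gamma \|\mathscr{D}\phi(t,x)\|_H^2\bigr| \leq \sum_{k=0}^n \sum_{\substack{I\subset\{1,\dots,n\}\\|I|=k}} \|\mathscr{D}^{k+1}_{(\gamma_I,\cdot)}\phi(t,x)\|_H \cdot \|\mathscr{D}^{n-k+1}_{(\gamma_{I^c},\cdot)}\phi(t,x)\|_H.$$
Since $\gamma_I$ and $\gamma_{I^c}$ are independent variables of integration, taking the $H^{\otimes n}$-norm in $\gamma$ factorises each summand, giving
$$\bigl\|\mathscr{D}^n \|\mathscr{D}\phi(t,x)\|_H^2\bigr\|_{H^{\otimes n}} \leq \sum_{k=0}^n \binom{n}{k} \|\mathscr{D}^{k+1}\phi(t,x)\|_{H^{\otimes(k+1)}} \|\mathscr{D}^{n-k+1}\phi(t,x)\|_{H^{\otimes(n-k+1)}}.$$

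Taking $L_p(\Omega)$-norms, applying Hölder's inequality with exponents $(2p,2p)$, and invoking \cref{posimomentbound} to bound each factor by $\|\phi(t,x)\|_{\bdot{\mathscr{W}}_{2p}^{k+1}}\lesssim t^{(k+1)/4}$ (and similarly with $k$ replaced by $n-k$), we conclude
$$\bigl\|\,\|\mathscr{D}\phi(t,x)\|_H^2\,\bigr\|_{\bdot{\mathscr{W}}_p^n} \lesssim \sum_{k=0}^n t^{(k+1)/4}\cdot t^{(n-k+1)/4} \lesssim t^{(n+2)/4},$$
uniformly in $x \in \mathbb{T}$, which is the asserted bound. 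I do not anticipate a substantive obstacle: the argument is essentially a product-rule expansion combined with the previous lemma. The only routine care required is the combinatorial bookkeeping for the Leibniz expansion and the tensor norms, plus the standard justification (using \cref{malldiffability} together with closability of $\mathscr{D}^n$) that $\|\mathscr{D}\phi(t,x)\|_H^2 \in \mathscr{W}_p^n$, so that the formal manipulations above are legitimate.
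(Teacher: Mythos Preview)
Your proof is correct and follows essentially the same approach as the paper: expand $(\mathscr{D}_{\theta,\zeta}\phi)^2$ via the Leibniz rule, apply Cauchy--Schwarz in the $(\theta,\zeta)$-variable and H\"older in $\Omega$ with exponents $(2p,2p)$, and then invoke \cref{posimomentbound} at orders $k+1$ and $n-k+1$ to obtain the $t^{(n+2)/4}$ rate. The only cosmetic difference is that the paper interchanges the order of some of the integrations (via Minkowski) before applying Leibniz, whereas you apply Leibniz first and then integrate; the resulting bound is identical.
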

\begin{proof}
By using the Minkowski inequality, the Leibniz rule, H\"older's inequality, and \cref{posimomentbound},
we can see that
\begin{equs}
&\| \| \mathscr{D}\phi(t,x)\|_{H}^2 \|_{\bdot{\mathscr{W}}_p^n}\\ &= \Big\| \Big\| \mathscr{D}^n\int_0^1 \int_{\mathbb{T}}(\mathscr{D}_{\theta,\zeta}\phi(t,x))^2 d\zeta d\theta \Big\|_{H^{\otimes n}}\Big\|_{L_p}\\
& \leq \Big\| \int_0^1 \int_{\mathbb{T}}\|\mathscr{D}^n\big(\mathscr{D}_{\theta,\zeta}\phi(t,x)\mathscr{D}_{\theta,\zeta}\phi(t,x) \big)\|_{H^{\otimes n}} d\zeta d\theta\Big\|_{L_p}\\
&\leq \sum_{i=0}^n \Big\|\int_0^1 \int_{\mathbb{T}}\|\mathscr{D}^i(\mathscr{D}_{\theta,\zeta}\phi(t,x))\mathscr{D}^{n-i}(\mathscr{D}_{\theta,\zeta}\phi(t,x))\|_{H^{\otimes n}} d\zeta d\theta \Big\|_{L_p}\\
& = \sum_{i=0}^n \Big\| \int_0^1 \int_{\mathbb{T}}\|\mathscr{D}^i\mathscr{D}_{\theta,\zeta}\phi(t,x)\|_{H^{\otimes i}}\|\mathscr{D}^{n-i}\mathscr{D}_{\theta,\zeta}\phi(t,x)\|_{H^{\otimes(n-i)}}d\zeta d\theta \Big\|_{L_p}\\
&\leq \sum_{i=0}^n\Big\|\Big(\int_0^1\int_{\mathbb{T}}\|\mathscr{D}^{i}\mathscr{D}_{\theta,\zeta}\phi(t,x)\|_{H^{\otimes i}}^2 d\zeta d\theta\Big)^{1/2}\Big(\int_0^1\int_{\mathbb{T}}\|\mathscr{D}^{n-i}\mathscr{D}_{\bar{\theta},\bar{\zeta}}\phi(t,x)\|_{H^{\otimes (n-i)}
}^2 d\bar{\zeta}
d\bar{\theta}\Big)^{1/2}\Big\|_{L_p}\\
& = \sum_{i=0}^n\Big\| \|\mathscr{D}^{i+1}\phi(t,x)\|_{H^{\otimes (i+1)}} \|\mathscr{D}^{n-i+1}\phi(t,x)\|_{H^{\otimes (n-i+1)}} \Big\|_{L_p}\\
&\leq \sum_{i=0}^n \|\phi(t,x)\|_{\bdot{\mathscr{W}}_{2p}^{i+1}}\|\phi(t,x)\|_{\bdot{\mathscr{W}}_{2p}^{n-i+1}} \lesssim \sum_{i=0}^n
t^{(i+1)/4}t^{(n-i+1)/4} \lesssim t^{(n+2)/4}\end{equs}
as required.
\end{proof}

\subsection{Lipschitz regularity with respect to the initial condition}\label{lipschitznesssection}
For any $z \in C(\mathbb{T})$, let $\phi^{z}$  denote the solution of (\ref{driftlessSHE}) with \(\phi_0=z\). For $n \in \mathbb{N}$, $\sigma \in C^n$, $q \in [1, \infty)$, $(z_1,z_2) \in (C(\mathbb{T}))^2$, $(t,x) \in [0,1]\times \mathbb{T}$, we define 
\begin{equs}
F_{q,n}^{(2)}(t,x,z_1,z_2) &:= \|\phi^{z_1}(t,x) - \phi^{z_2}(t,x)\|_{\bdot{\mathscr{W}}_q^n}, \label{F2def}\\
\Sigma_{q,n}^{(2)}(t,x,z_1,z_2) &:= \|\sigma(\phi^{z_1}(t,x)) - \sigma(\phi^{z_2}(t,x))\|_{\bdot{\mathscr{W}}_q^n} \label{Sigma2def}.
\end{equs}
The main result of this section is the following:
\begin{lemma}\label{lipschitzflowgeneral}
Let $n \in \mathbb{Z}_{\geq 0}$ and assume that $\sigma \in C^{n+1}$.
 For all $q \in [2, \infty)$, $(t,x) \in [0,1]\times \mathbb{T}$, we have that
  $$F_{q,n}^{(2)}(t,x,\cdot, \cdot) \in C((C(\mathbb{T}))^2).$$
  Moreover for all $q,p_1 \in [2, \infty)$ and $p_2 \in [2, \infty]$ there exists a constant $N = N(n,p_1,p_2,q,\|\sigma\|_{C^{n+1}})$ such that for any $\sigma$-algebra $\mathcal{G} \subset \mathcal{F}$ and any random variables $Z,\bar{Z} \in L_p(\Omega, C(\mathbb{T}))$ and for all $(t,x) \in [0,1]\times \mathbb{T}$ we have
$$\big\|F_{q,n}^{(2)}(t,x,Z,\bar{Z})\big\|_{L^{\mathscr{G}}_{p_1,p_2}}\leq N t^{n/4}\sup_{x \in \mathbb{T}}\|Z(x)-\bar{Z}(x)\|_{L_{p_1,p_2}^{\mathscr{G}}}.$$
\end{lemma}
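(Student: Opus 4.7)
The plan is to prove both the continuity claim and the quantitative bound by a single induction on $n$, first establishing the deterministic Lipschitz estimate $F_{q,n}^{(2)}(t,x,z_1,z_2) \lesssim t^{n/4}\|z_1-z_2\|_{\mathbb{B}}$ and then re-running the argument with conditional $L_{p_1}\vert\mathscr{G}$ norms to obtain the random version. Subtracting the mild formulations of $\phi^{z_1}$ and $\phi^{z_2}$ yields
\begin{equs}
\phi^{z_1}(t,x) - \phi^{z_2}(t,x) = P_t(z_1 - z_2)(x) + \int_0^t\!\!\int_{\mathbb{T}} p_{t-r}(x,y)\bigl[\sigma(\phi^{z_1}) - \sigma(\phi^{z_2})\bigr](r,y)\,\xi(dy,dr).
\end{equs}
For the base case $n=0$ I would apply the BDG inequality and Lipschitzness of $\sigma$, take $\sup_x$ in the resulting inequality, and close via a Gronwall argument handling the singular kernel $\int p_{t-r}^2(x,y)\,dy \lesssim (t-r)^{-1/2}$ (e.g.\ via the Gronwall-type lemma referenced in \cref{posimomentbound}).

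For the inductive step, I would apply \cref{CHN3} to each of $\phi^{z_1}$ and $\phi^{z_2}$, subtract, and take $\bdot{\mathscr{W}}_q^n$-norms. BDG on the stochastic integral and Minkowski on the boundary-type terms produce
\begin{equs}
\bigl[F_{q,n}^{(2)}(t,x,z_1,z_2)\bigr]^2 \lesssim (\text{boundary terms}) + \int_0^t\!\!\int_{\mathbb{T}} p_{t-r}^2(x,y)\bigl[\Sigma_{q,n}^{(2)}(r,y,z_1,z_2)\bigr]^2 dy\,dr.
\end{equs}
The key input is \cref{Dsigmaubuckling}(b) applied with $\varepsilon = r^{1/4}$, which is legitimate because \cref{posimomentbound} gives $\|\phi^{z_i}\|_{\bdot{\mathscr{W}}_p^k} \lesssim r^{k/4}$ for both $i=1,2$; it yields
\begin{equs}
\Sigma_{q,n}^{(2)}(r,y,z_1,z_2) \lesssim \sum_{i=0}^{n-1} r^{(n-i)/4} F_{2q,i}^{(2)}(r,y,z_1,z_2) + F_{q,n}^{(2)}(r,y,z_1,z_2).
\end{equs}
The induction hypothesis absorbs the first sum into $r^{n/4}\|z_1-z_2\|_{\mathbb{B}}$; the analogous expansion of the boundary terms via \cref{Dsigmaubuckling}(a) gives the same scaling. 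A Gronwall-type step on $\sup_x F_{q,n}^{(2)}(t,x,z_1,z_2)^2$ closes the loop, and the Lipschitz bound in turn yields the continuity claim.

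For random $Z,\bar Z$, I would rerun the identical induction with $\|\cdot\|_{L_{p_1}\vert\mathscr{G}}$ in place of $\|\cdot\|_{L_q}$, keeping $x$ fixed throughout so that the $\sup_y$ only enters through the forcing term. Conditional Minkowski gives $\|P_t(Z-\bar Z)(x)\|_{L_{p_1}\vert\mathscr{G}} \le \int p_t(x,y)\|(Z-\bar Z)(y)\|_{L_{p_1}\vert\mathscr{G}}\,dy \le \sup_y \|(Z-\bar Z)(y)\|_{L_{p_1}\vert\mathscr{G}}$; conditional BDG bounds the stochastic integral by its conditional quadratic variation. After closing the inductive inequality pointwise in $x$ (with $\sup_y$ of conditional norms on the right), taking the outer $L_{p_2}$ norm yields the claimed bound $\sup_y\|(Z-\bar Z)(y)\|_{L^{\mathscr{G}}_{p_1,p_2}}$. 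The main obstacle I anticipate is the combinatorial bookkeeping in the inductive step: \cref{CHN3} produces boundary terms that are sums over partitions involving Malliavin derivatives of $\sigma(\phi^{z_i})$ of all intermediate orders, and each must be expanded via \cref{Dsigmaubuckling}(a)–(b) and matched against the induction hypothesis with the correct power of $t^{1/4}$. A secondary subtlety is justifying the conditional BDG inequality for a general $\sigma$-algebra $\mathscr{G}\subset\mathscr{F}$ that is not assumed compatible with the filtration; this either requires a standard reduction or a direct estimate exploiting that only $L_{p_1}$-type norms appear inside the conditioning.
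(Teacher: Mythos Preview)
Your proposal matches the paper's approach closely: induction on $n$, base case via BDG plus a heat-kernel Gr\"onwall argument, inductive step via \cref{CHN3} together with \cref{Dsigmaubuckling}(b) applied with $\varepsilon=r^{1/4}$ (legitimised by \cref{posimomentbound}), and another Gr\"onwall step. One correction: the boundary terms from \cref{CHN3} involve $\Sigma_{q,n-1}^{(2)}$, which is a \emph{difference} of $\sigma$-compositions, so they are handled by part (b) of \cref{Dsigmaubuckling}, not part (a).

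Your description of the random case is slightly off, and your worry about conditional BDG over an arbitrary $\mathscr{G}$ is a red herring. The paper does not ``replace $\|\cdot\|_{L_q}$ by $\|\cdot\|_{L_{p_1}\vert\mathscr{G}}$'': the $L_q$-norm is baked into the definition of $F_{q,n}^{(2)}$ and never changes. BDG is applied only at the \emph{deterministic} level to produce a pointwise functional inequality of the form
\[
F_{q,n}^{(2)}(t,x,z_1,z_2)^2 \lesssim (\text{forcing in }z_1,z_2) + \int_0^t\!\!\int_{\mathbb{T}} p_{t-r}^2(x,y)\,F_{q,n}^{(2)}(r,y,z_1,z_2)^2\,dy\,dr
\]
for fixed $z_1,z_2\in C(\mathbb{T})$. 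One then evaluates this at the random $(Z,\bar Z)$ and takes the $L_{p_1,p_2}^{\mathscr{G}}$-norm of both sides via Minkowski; at that stage there is no stochastic integral left, so no conditional BDG is needed. To make the second Gr\"onwall step rigorous (the integrand must be a priori bounded), the paper first assumes $\|Z\|_{\mathbb{B}},\|\bar Z\|_{\mathbb{B}}\le N$, closes the estimate, and then removes this assumption by truncation and Fatou, using the continuity of $F_{q,n}^{(2)}$ established in the deterministic part.
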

\begin{proof} The result will be proven by induction.

\underline{Step 1:} We show that the statement holds for $n=0$.
For $z,\bar{z} \in C(\mathbb{T})$, we have by (\ref{integralSHE}) that
\begin{equs}
\phi^z(t,x) - \phi^{\bar{z}}(t,x) = P_t(z-\bar{z})(x) + \int_0^t\int_{\mathbb{T}} p_{t-r}(x,y)(\sigma(\phi^z(r,y)) - \sigma(\phi^{\bar{z}}(r,y))\xi(dy,dr).
\end{equs}
Therefore, by the BDG inequality we get
\begin{equs}&\| \phi^z(t,x) - \phi^{\bar{z}}(t,x)\|_{L_q}^2\\
&\qquad\lesssim |P_{t}(z-\bar{z})(x)|^2 + \int_0^t \int_{\mathbb{T}} p_{t-r}^2(x,y)\| \phi^z(r,y) - \phi^{\bar{z}}(r,y)\|_{L_q}^2 dy dr. \label{gzerobuckling}\end{equs}
 Notice that by the triangle inequality and Proposition \ref{malldiffability}, it follows that 
the norm in the  integrand is bounded in $(r,y)$.  Hence
 using \cref{Gronwalltypelemma}, we conclude that
\begin{equs}F_{q,0}^{(2)}(t,x,z,\bar{z}) =\|\phi^{z}(t,x) - \phi^{\bar{z}}(t,x)\|_{L_q} &\lesssim \sup_{(t,x) \in [0,1]\times \mathbb{T}}|P_{t}(z-\bar{z})(x)|\\
&\lesssim \sup_{x \in \mathbb{T}}|z(x) - \bar{z}(x)|. \label{deterministiclipscitzg0}\end{equs}
By the triangle inequality, it follows that $F^{(2)}_{q,0}(t,x,\cdot,\cdot) \in C((C(\mathbb{T}))^2, \mathbb{R}).$ This implies that $F_{q,0}^{(2)}(t,x,Z,\bar{Z})$ is indeed defined as a random variable.
We begin by showing that the desired inequality holds for the case when $\|Z\|_{\mathbb{B}},\|\bar{Z}\|_{\mathbb{B}} < N$ almost surely, for some constant $N<\infty$.
By evaluating (\ref{gzerobuckling}) at $(z,\bar{z}) = (Z,\bar{Z})$, and then taking the $L_{p_1,p_2}^{\mathscr{G}}$-norm of the square root of both sides, we get
\begin{equs}
&\|F_{q,0}^{(2)}(t,x,Z,\bar{Z})\|_{L_{p_1,p_2}^{\mathscr{G}}}\\
&\lesssim \|P_{t}(Z-\bar{Z})(x)\|_{L_{p_1,p_2}^{\mathscr{G}}}+ \Big\|\int_0^t \int_{\mathbb{T}} p_{t-r}^2(x,y)|F_{q,0}^{(2)}(r,y,Z,\bar{Z})|^2 dydr\Big\|_{L_{\frac{p_1}{2},\frac{p_2}{2}}^{\mathscr{G}}}^{1/2}\\
&\lesssim \sup_{w \in \mathbb{T}}\|Z(w)-\bar{Z}(w)\|_{L_{p_1, p_2}^\mathscr{G}} + \Big(\int_0^t \int_{\mathbb{T}} p_{t-r}^2(x,y)\|F_{q,0}^{(2)}(r,y,Z,\bar{Z})\|_{L_{p_1,p_2}^{\mathscr{G}}}^2 dydr\Big)^{1/2},
\end{equs}
where to obtain the last expression, we used Minkowski's inequality.
Hence we may conclude that
\begin{equs}\|F_{q,0}^{(2)}(t,x,Z,\bar{Z})\|_{L_{p_1,p_2}^{\mathscr{G}}}^2 &\lesssim \Big(\sup_{w \in \mathbb{T}}\|Z(w)-\bar{Z}(w)\|_{L_{p_1,p_2}^{\mathscr{G}}}\Big)^2\\
&\qquad\qquad+ \int_0^t \int_{\mathbb{T}} p_{t-r}^2(x,y)\|F_{q,0}^{(2)}(r,y,Z,\bar{Z})\|_{L_{p_1,p_2}^{\mathscr{G}}}^2 dydr.
\end{equs}
Note that by (\ref{deterministiclipscitzg0}) and due to the fact that $\|Z\|_{\mathbb{B}},\|\bar{Z}\|_{\mathbb{B}}\leq N $, we have
$\sup_{r,y}\|F_{q,0}^{(2)}(r,y,Z,\bar{Z})\|_{L_{p_1,p_2}^{\mathscr{G}}} \lesssim 2N $.
Hence by the above inequality and by \cref{Gronwalltypelemma} we get
$$\|F^{(2)}_{q,0}(t,x,Z,\bar{Z})\|_{L_{p_1,p_2}^{\mathscr{G}}} \lesssim \sup_{w \in \mathbb{T}}\|Z(w) - \bar{Z}(w)\|_{L_{p_1,p_2}^{\mathscr{G}}}.$$
We now remove the assumption that $\|Z\|_{\mathbb{B}},\|\bar{Z}\|_{\mathbb{B}}<N$. Indeed, if this does not hold, then we can construct the sequence of truncations $(Z_N,{\bar{Z}}_N) := ((N\wedge Z) \vee (-N), (N\wedge \bar{Z}) \vee (-N))$. Then using the continuity of $F_{q,0}^{(2)}$ and Fatou's lemma, the above inequality, and the fact that truncation is a Lipschitz operation, we get
\begin{equs}
\|F_{q,0}^{(2)}(t,x,Z,\bar{Z})\|_{L_{p,\infty}^{\mathscr{G}}} &\leq \liminf_{N \to \infty} \|F_{q,0}^{(2)}(t,x,Z_N,\bar{Z}_N)\|_{L_{p_1,p_2}^{\mathscr{G}}}\\
&\lesssim \liminf_{N \to \infty} \sup_{w \in \mathbb{T}}\|Z_N(w) - \bar{Z}_N(w)\|_{L_{p_1,p_2}^{\mathscr{G}}}\\
&\leq \sup_{w \in \mathbb{T}}\|Z(w) - \bar{Z}(w)\|_{L_{p_1,p_2}^{\mathscr{G}}}.
\end{equs}
This finishes the proof of the case $n=0$.

\underline{Step 2:}
Suppose that the result holds for $F_{q,0}^{(2)},\dots, F_{q,n-1}^{(2)}$ for some $n \in \mathbb{N}$. We aim to show that it also holds for $F_{q,n}^{(2)}$. We  first assume that for all $(t,x)\in [0,1]\times \mathbb{T}$, $F_{q,n}^{(2)}(t,x,z,\bar{z})$ is continuous in the $(z,\bar{z})$ variable and later we will show that this is indeed the case. By Proposition \ref{CHN3}, we have
\begin{equs}
F^{(2)}_{q,n}(t,x,z,\bar{z}) &=\|\mathscr{D}^n[\phi^z(t,x) - \phi^{\bar{z}}(t,x)]\|_{L_q(\Omega, H^{\otimes n})}\\
&\lesssim \Big\| p_{t-\cdot}(x,\cdot)\mathscr{D}^{n-1}\big( \sigma(\phi^z(\cdot,\cdot)) - \sigma(\phi^{\bar{z}}(\cdot,\cdot))\big)\mathbf{1}_{[0,t]}\Big\|_{L_q(\Omega; H^{\otimes n})}\\
&\qquad+ \Big\|\int_0^t \int_{\mathbb{T}} p_{t-r}(x,y) \mathscr{D}^n\big( \sigma(\phi^z(r,y)) - \sigma(\phi^{\bar{z}}(r,y))\big)\xi(dy,dr)\Big\|_{L_q(\Omega; H^{\otimes n})}\\
& =: A(z,\bar{z})+B(z,\bar{z}). \label{FisboundedbyAplusB}
\end{equs}
  By \cref{posimomentbound} we may apply point (\ref{fphi1minusfphi2bound}) of \cref{Dsigmaubuckling} with $\varepsilon = s^{1/4}$, to see that for any $m \leq n$ and $(s,y) \in [0,1]\times \mathbb{T}$ we have
\begin{equs}\Sigma_{q,m}^{(2)}(s,y,Z,\bar{Z})&=\|\sigma(\phi^z(s,y)) - \sigma(\phi^{\bar{z}}(s,y))\|_{\bdot{\mathscr{W}}_q^m}\\
&\lesssim \sum_{i=0}^{m-1} s^{(m-i)/4}\|\phi^z(s,y) - \phi^{\bar{z}}(s,y)\|_{\bdot{\mathscr{W}}_{2q}^i} + \|\phi^z(s,y)-\phi^{\bar{z}}(s,y)\|_{\bdot{\mathscr{W}}_q^m}.
\end{equs}
Thus by using the induction  hypothesis and the definition of $F^{(2)}$, we can see that
\begin{equs}
\| \Sigma_{q,m}^{(2)}(s,y,Z,\bar{Z})\|_{L_{p_1,p_2}^{\mathscr{G}}} \lesssim  s^{ m/4}\sup_{y \in \mathbb{T}}\|Z(y) - \bar{Z}(y)\|_{L_{p_1,p_2}^{\mathscr{G}}}+ \|F_{q,m}^{(2)}(s,y,Z,\bar{Z})\|_{L_{p_1,p_2}^{\mathscr{G}}}. \label{diffofsigmaus}
\end{equs}
We can bound $A$ by applying this result (with $m:=n-1$) as follows:
\begin{equs}
&\|A(Z,\bar{Z})\|_{L_{p_1,p_2}^{\mathscr{G}}}\\
&\lesssim\big\|p_{t-\cdot}(x,\cdot)\big\|\Sigma_{q,n-1}^{(2)}(\cdot,\cdot,Z,\bar{Z})\big\|_{L_{p_1,p_2}^{\mathscr{G}}} \mathbf{1}_{[0,t]}\big\|_{H}\\
&\lesssim\Big\| p_{t-\cdot}(x,\cdot)\sup_{(\theta,\zeta) \in [0,t]\times \mathbb{T}}\Big( \theta^{\frac{n-1}{4}}\sup_{w \in \mathbb{T}}\|Z(w) -\bar{Z}(w)\|_{L_{p_1,p_2}^{\mathscr{G}}} + \|F_{q,n-1}^{(2)}(\theta,\zeta,Z,\bar{Z})\|_{L_{p_1,p_2}^{\mathscr{G}}} \Big)\mathbf{1}_{[0,t]}\Big\|_H\\
&\lesssim t^{n/4}\sup_{w \in \mathbb{T}}\|Z(w) -\bar{Z}(w)\|_{L_{p_1,p_2}^\mathscr{G}} \label{Aboundforlipschitzness}
\end{equs}
where for the last inequality we used the induction  hypothesis. Now we also bound $B$. To this end, note that by the BDG inequality  we have
\begin{align*}
B(z,\bar{z})
&\lesssim \Big(\int_0^t \int_{\mathbb{T}}p_{t-r}^2(x,y) \|\sigma(\phi^z(r,y)) - \sigma(\phi^{\bar{z}}(r,y))\|_{\bdot{\mathscr{W}}_q^n}^2 dydr\Big)^{1/2},
\end{align*}
and thus using (\ref{diffofsigmaus}) we get
\begin{equs}&\|B(Z,\bar{Z})\|_{L_{p_1,p_2}^{\mathscr{G}}}^2\\
&\lesssim \int_0^t \int_{\mathbb{T}} p_{t-r}^2(x,y)\big\|\Sigma_{q,n}^{(2)}(r,y,Z,\bar{Z})\big\|_{L_{p_1,p_2}^{\mathscr{G}}}^2 dydr\\
&\lesssim \int_0^t \int_{\mathbb{T}}p_{t-r}^2(x,y)\Big(t^{n/4}\sup_{w \in \mathbb{T}}\|Z(w) - \bar{Z}(w)\|_{L_{p_1,p_2}^{\mathscr{G}}}+\|F_{q,n}^{(2)}(r,y,Z,\bar{Z})\|_{L_{p_1,p_2}^{\mathscr{G}}} \Big)^2dydr\\
&\lesssim t^{(n+1)/2}\sup_{w \in \mathbb{T}}\|Z(w) - \bar{Z}(w)\|_{L_{p_1,p_2}^{\mathscr{G}}}^2  + \int_0^t \int_{\mathbb{T}} p_{t-r}^2(x,y)\|F_{q,n}^{(2)}(r,y,Z,\bar{Z})\|_{L_{p_1,p_2}^{\mathscr{G}}}^2 dydr. \quad\label{Bboundforlipschitzness}
\end{equs}
By putting the bounds (\ref{Aboundforlipschitzness}) and (\ref{Bboundforlipschitzness}) into (\ref{FisboundedbyAplusB}) we can see that
\begin{equs}
\|F_{q,n}^{(2)}(t,x,Z,\bar{Z})\|_{L_{p_1,p_2}^{\mathscr{G}}}^2 &\lesssim t^{n/2}\sup_{w \in \mathbb{T}}\|Z(w) - \bar{Z}(w)\|_{L_{p_1,p_2}^{\mathscr{G}}}^2\\
&\qquad+ \int_0^t \int_{\mathbb{T}} p_{t-r}^2(x,y)\|F_{q,n}^{(2)}(r,y,Z,\bar{Z})\|_{L_{p_1,p_2}^{\mathscr{G}}}^2 dydr. \label{lipschitznessbucklingforgn}
\end{equs}
If $(Z,\bar{Z}) = (z,\bar{z}) \in (C(\mathbb{T}))^2$ is deterministic, then we may repeat the proof without assuming that $F_{q,n}^{(2)}(t,x,z,\bar{z})$ is continuous in $(z,\bar{z})$. The above inequality then simply states that
$$|F_{q,n}^{(2)}(t,x,z,\bar{z})|^2 \lesssim t^{n/2}\sup_{w \in \mathbb{T}}|z(w) - \bar{z}(w)|^2 + \int_0^t \int_{\mathbb{T}} p^2_{t-r}(x,y)|F_{q,n}^{(2)}(r,y,z,\bar{z})|^2dydr$$
Note that by the definition of $F^{(2)}$ and by Proposition \ref{malldiffability} we have
$$\sup_{(t,x)\in[0,1]\times \mathbb{T}}|F_{q,n}^{(2)}(t,x,z,\bar{z})| \leq \sup_{(t,x) \in [0,1]\times \mathbb{T}}\|\phi^z(t,z)\|_{\bdot{\mathscr{W}}_q^n} + \sup_{(t,x) \in [0,1]\times \mathbb{T}}\|\phi^{\bar{z}}(t,z)\|_{\bdot{\mathscr{W}}_q^n} <\infty,$$
so using \cref{Gronwalltypelemma} we get
$$\|\phi^z(t,x) - \phi^{\bar{z}}(t,x)\|_{\bdot{\mathscr{W}}_q^n} =|F^{(2)}_{q,n}(t,x,z,\bar{z})| \lesssim t^{n/4}\sup_{x \in \mathbb{T}}|z(x) -\bar{z}(x)| \lesssim \sup_{x \in \mathbb{T}}|z(x) - \bar{z}(x)|.$$
From this it easily follows that for all $(t,x) \in [0,1]\times \mathbb{T}$ the map $F_{q,n}^{(2)}(t,x,\cdot,\cdot)$ is of class $C((C(\mathbb{T}))^2, \mathbb{R})$.
Now going back to (\ref{lipschitznessbucklingforgn}), if $\|Z\|_{\mathbb{B}}, \|\bar{Z}\|_{\mathbb{B}} \leq N$ for some given $N >0$ then the desired result follows by \cref{Gronwalltypelemma}.
For the general case we can repeat the truncation argument from the $n=0$ case to finish the proof.
\end{proof}

\subsection{Nondegeneracy of convex combinations of solutions}
\label{nondegeneracysection}
Throughout the section we assume that $\sigma \in C^1$ such that there exists a constant $\mu>0$ such that for all $x \in \mathbb{R}$ we have $\sigma^2(x) \geq \mu^2$. 
Let $\phi^1,\dots, \phi^K$ solve the driftless equation (\ref{driftlessSHE}) with initial conditions $\phi^1_0,\dots, \phi^K_0$ respectively. Consider the convex combination
\begin{equs}\label{convexcombo}
\Theta(t,x) := \sum_{i=1}^K c_i \phi^i(t,x).
\end{equs}
 with
$\sum_{i=1}^K c_i =1$ and $ c_1,\dots, c_K \in [0,1]$.
For a smooth map $g$ and a nonnegative integer $n$, we aim to obtain estimates on the expectation of $\nabla^n g(\Theta(t,x))$ which depend only on a Besov--H\"older norm of $g$ with a negative index, see \cref{mainmalliavintool} below. 

 The following lemma quantifies  Theorem 4.5 in the chapter by Nualart in \cite{dalang2009minicourse}.
\begin{lemma}\label{negativemoments} For any $p \in (2,\infty)$ there exists some constant $N = N(p,\|\sigma\|_{C^1}, \mu)$, such that for all $t \in [0,1]$ we have
$$\sup_{x \in \mathbb{T}}\mathbb{E}\| \mathscr{D}\Theta(t,x)\|_H^{-p} \leq N t^{-p/4}.$$
\end{lemma}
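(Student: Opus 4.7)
The approach adapts the classical Pardoux--Nualart non-degeneracy argument for the stochastic heat equation (Theorem 4.5 of Nualart in \cite{dalang2009minicourse}) to the convex combination $\Theta$. The crucial structural observation is that since $\sigma^2 \geq \mu^2$ and $\sigma$ is continuous, $\sigma$ has constant sign; without loss of generality $\sigma \geq \mu$. Consequently, for any probability weights $(c_i)$,
\begin{equs}
\sum_{i=1}^K c_i \sigma(\phi^i(r,y)) \geq \mu,
\end{equs}
and it is this positivity that powers the lower bound.

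First, using \cref{CHN3} to represent each $\mathscr{D}_{r,y}\phi^i(t,x)$, one writes
\begin{equs}
\mathscr{D}_{r,y}\Theta(t,x) = p_{t-r}(x,y)\sum_{i=1}^K c_i \sigma(\phi^i(r,y)) + E_{r,y},
\end{equs}
where $E_{r,y} := \sum_i c_i \int_r^t\int_{\mathbb{T}} p_{t-s}(x,z)\sigma'(\phi^i(s,z))\mathscr{D}_{r,y}\phi^i(s,z)\xi(dz,ds)$. Applying $(a+b)^2 \geq \tfrac{1}{2}a^2 - b^2$ and restricting the $H$-integral to $[t-h,t]\times \mathbb{T}$ for some $h \in (0,t]$ yields
\begin{equs}
\|\mathscr{D}\Theta(t,x)\|_H^2 \geq \tfrac{\mu^2}{2}\int_{t-h}^t\int_{\mathbb{T}} p_{t-r}^2(x,y)\,dy\,dr \;-\; \int_{t-h}^t\int_{\mathbb{T}} E_{r,y}^2\,dy\,dr.
\end{equs}
A standard heat-kernel computation gives the deterministic lower bound $\int_{t-h}^t\int_{\mathbb{T}} p_{t-r}^2(x,y)\,dy\,dr \geq c_0 h^{1/2}$ for all $h$ small enough.

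Next, I would estimate the error moments. By a Gronwall-type argument applied to the integral equation for $\mathscr{D}_{r,y}\phi^i$ (exactly in the style of the proof of \cref{posimomentbound}), one obtains the pointwise bound $\|\mathscr{D}_{r,y}\phi^i(s,z)\|_{L_p} \lesssim p_{s-r}(z,y)$ uniformly in $i$. Combining with BDG and a heat-kernel convolution estimate yields $\|E_{r,y}\|_{L_{2p}}^2 \lesssim \int_r^t\int_{\mathbb{T}} p_{t-s}^2(x,z)p_{s-r}^2(z,y)\,dz\,ds$, so by Minkowski,
\begin{equs}
\Big\|\int_{t-h}^t\int_{\mathbb{T}} E_{r,y}^2\,dy\,dr\Big\|_{L_p} \leq \int_{t-h}^t\int_{\mathbb{T}} \|E_{r,y}\|_{L_{2p}}^2\,dy\,dr \lesssim h.
\end{equs}
Since $h \ll h^{1/2}$ for small $h$, Chebyshev then gives, for every $q \geq 1$,
\begin{equs}
\mathbb{P}\bigl(\|\mathscr{D}\Theta(t,x)\|_H^2 < \tfrac{c_0 \mu^2}{4}\,h^{1/2}\bigr) \leq C_q\,\mu^{-2q} h^{q/2} \qquad \text{for all } h \leq t.
\end{equs}

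Finally, I convert this tail bound into negative moments via the layer-cake representation
\begin{equs}
\mathbb{E}\|\mathscr{D}\Theta(t,x)\|_H^{-p} = \tfrac{p}{2}\int_0^\infty \epsilon^{-p/2-1}\,\mathbb{P}\bigl(\|\mathscr{D}\Theta(t,x)\|_H^2 < \epsilon\bigr)\,d\epsilon.
\end{equs}
Splitting at $\epsilon_* \sim \mu^2 t^{1/2}$, with $h = (4\epsilon/c_0\mu^2)^2$ on $[0,\epsilon_*]$ the above tail estimate (chosen with $q$ large) gives an integrand controlled by $\epsilon^{q-p/2-1}$; on $[\epsilon_*,\infty)$ the trivial bound $\mathbb{P}(\cdot)\leq 1$ integrates to $\epsilon_*^{-p/2}\lesssim t^{-p/4}$. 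Both contributions are of order $t^{-p/4}$, yielding the claim. The main technical point is the pointwise $L_p$-bound $\|\mathscr{D}_{r,y}\phi^i(s,z)\|_{L_p} \lesssim p_{s-r}(z,y)$ with the correct heat-kernel decay in the spatial variable: this requires propagating the kernel through the BDG bootstrap, analogously to \cref{posimomentbound} but retaining $p_{s-r}(z,y)$ rather than only its spatial integral.
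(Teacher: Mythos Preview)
Your proposal is correct and follows the same overall architecture as the paper's proof: decompose $\mathscr{D}_{r,y}\Theta(t,x)$ into the heat-kernel term and the stochastic-integral error, exploit the constant sign of $\sigma$ to get $|\sum c_i\sigma(\phi^i)|\geq\mu$, lower-bound the restricted $H$-norm by $c_0h^{1/2}$ minus the error, show the error has moments of order $h^p$, and convert the resulting tail bound into negative moments via layer-cake.

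The one technical difference is in how the error $I_B^h=\int_{t-h}^t\int_{\mathbb{T}}E_{r,y}^2\,dy\,dr$ is controlled. You propose the pointwise heat-kernel bound $\|\mathscr{D}_{r,y}\phi^i(s,z)\|_{L_p}\lesssim p_{s-r}(z,y)$ (a classical but non-trivial Gronwall in which the kernel has to be propagated), then integrate. The paper instead works directly with the integrated quantity $\|\mathscr{D}\phi^i(r,y)\|_{L_2([r-\delta,r]\times\mathbb{T})}$: after BDG one lands on moments of this restricted $H$-norm, and a short Gronwall (via \cref{Gronwalltypelemma}) gives $\mathbb{E}\|\mathscr{D}\phi^i(t,x)\|_{L_2([t-\delta,t]\times\mathbb{T})}^q\lesssim\delta^{q/4}$, hence $\mathbb{E}|I_B^\delta|^p\lesssim\delta^p$. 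The paper's route is slightly more economical because it avoids having to track the spatial kernel through the iteration; your route gives a strictly stronger intermediate estimate but at the cost of a more delicate bootstrap. Both yield the same final order and the same conclusion.
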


\begin{proof}
By Proposition \ref{CHN3} we have for $(t,x), (\theta,\zeta) \in [0,1]\times \mathbb{T}$ that
\begin{align*}
\mathscr{D}_{\theta,\zeta} \Theta(t,x) &= \mathbf{1}_{[0,t]}(\theta)p_{t-\theta}(x,\zeta)\sum_{i=1}^K c_i \sigma(\phi^i(\theta,\zeta))\\
&\qquad+ \mathbf{1}_{[0,t]}(\theta)\int_0^t \int_{\mathbb{T}} p_{t-r}(x,y)\Big(\sum_{i=1}^K c_i \sigma'(\phi^i(r,y)) \mathscr{D}_{\theta,\zeta} \phi^i(r,y) \Big)\xi(dy,dr)\\
& =: A(\theta,\zeta)+B(\theta,\zeta).
\end{align*}
From this we can see that
\begin{align*}\int_0^t \int_{\mathbb{T}} | \mathscr{D}_{\theta,\zeta} \Theta(t,x)|^2 d\zeta d\theta &\geq \frac{1}{2}\int_{t-\delta}^t \int_{\mathbb{T}} | A(\theta,\zeta)|^2 d\zeta d\theta  - \int_{t-\delta}^t \int_{\mathbb{T}} | B(\theta,\zeta)|^2 d\zeta d\theta\\
& =: I_A^\delta - I_B^\delta.
\end{align*}
So since $|A(\theta,\zeta)| \geq \mathbf{1}_{[0,t]}(\theta)p_{t-\theta}(x,\zeta)\mu$, and by the properties of the heat kernel, it follows that
\begin{equs}I_A^\delta = \frac{1}{2}\int_{t-\delta}^t \int_{\mathbb{T}}|A(\theta,\zeta)|^2 d\zeta d\theta  &\geq \mu^2 \int_{t-\delta}^t \int_{\mathbb{T}}|p_{t-\theta}(x,\zeta)|^2 d\zeta d \theta
\geq  k\mu^2 \delta^{1/2}
\end{equs}
for some universal constant $k>0$. Thus
$$I_A^\delta \geq c_0 \delta^{1/2} \quad \text{with} \quad c_0   = k \mu^2.$$
Therefore for $\varepsilon \in (0, c_0\delta^{1/2})$ we have
\begin{equs}
\mathbb{P}\Big(\int_0^t \int_{\mathbb{T}} | \mathscr{D}_{\theta,\zeta} \Theta(t,x)|^2 d\zeta d\theta < \varepsilon \Big)
&\leq \mathbb{P}(I_A^\delta - I_B^\delta <\varepsilon)\\
&\leq  \mathbb{P}(I_B^\delta >c_0\delta^{1/2} - \varepsilon)\\
&\leq (c_0\delta^{1/2} - \varepsilon)^{-p} \mathbb{E}|I_B^\delta|^p
\label{nualartstylebound}
\end{equs}
where the last inequality holds by Markov's inequality. We will now need to bound the expectation in the last line. Note that
\begin{align*}
\mathbb{E}|I_B^\delta|^p &= \mathbb{E}\Big|\int_{t-\delta}^t \int_{\mathbb{T}} \Big| \int_0^t \int_{\mathbb{T}} p_{t-r}(x,y)\Big(\sum_{i=1}^K c_i \sigma'(\phi^i(r,y)) \mathscr{D}_{\theta,\zeta} \phi^i(r,y) \Big)\xi(dy,dr)\Big|^2 d\zeta d\theta\Big|^p\\
&= \mathbb{E}\Big\|\int_0^t \int_{\mathbb{T}} p_{t-r}(x,y)\Big(\sum_{i=1}^K c_i \sigma'(\phi^i(r,y)) \mathscr{D} \phi^i(r,y) \Big)\xi(dy,dr)\Big\|_{L_2([t-\delta,t]\times \mathbb{T})}^{2p}\\
&\lesssim \mathbb{E}\Big(\int_{t-\delta}^t \int_{\mathbb{T}}|p_{t-r}(x,y)|^2 \sum_{i=1}^K \| \mathscr{D} \phi^i(r,y)\|_{L_2([t-\delta,t]\times \mathbb{T})}^2 dydr\Big)^{p}
\end{align*}
where we used the BDG inequality, and the fact that $\| \mathscr{D} \phi^i(r,y)\|_{L_2([t-\delta,t]\times \mathbb{T})} =0$ for $r<t-\delta$. Noting that $r-\delta<t-\delta$, and that $\mathscr{D}_{\theta,\zeta} \phi^i(r,y) =0$ for $\theta>r$, we may bound the $L_2([t-\delta,t]\times \mathbb{T})$ norm in the expression by the $L_2([r-\delta,r]\times \mathbb{T})$-norm, and write
\begin{equs}
\mathbb{E}|I_B^\delta|^p &\lesssim \sum_{i=1}^K \mathbb{E}\Big(\int_{t-\delta}^t \int_{\mathbb{T}} |p_{t-r}(x,y)|^2 \| \mathscr{D} \phi^i(r,y)\|_{L_2([r-\delta,r]\times \mathbb{T})}^2 dydr\Big)^{p} \nonumber\\
&\leq \delta^{p/2 -1}\sum_{i=1}^K \int_{t-\delta}^t \int_{\mathbb{T}} p_{t-r}(x,y) \mathbb{E}\| \mathscr{D} \phi^i(r,y) \|_{L_2([r-\delta , r]\times \mathbb{T})}^{2p} dydr \nonumber\\
&=: \delta^{p/2-1}\sum_{i=1}^K G_i\label{sumofFG}
\end{equs}
where we used \cref{holdertrick} with $\gamma = 2$ and $\delta =1$.
To bound $G_i$, we will need to bound $\| \mathscr{D} \phi^i(r,y)\|_{L_2([r-\delta, r]\times \mathbb{T})}.$ To this end, note that for $(\theta,\zeta) \in [0,t]\times \mathbb{T}$ we have
\begin{equs}\mathscr{D} \phi^i(t,x) &= p_{t-\theta}(x,\zeta)\sigma(\phi^i(\theta,\zeta))+ \int_0^t \int_{\mathbb{T}} p_{t-r}(x,y) \sigma'(\phi^i(r,y)) \mathscr{D}_{\theta,\zeta}\phi^i(r,y) \xi(dy,dr).
\end{equs}
Therefore using the BDG inequality, we get
\begin{align*}
\mathbb{E}\| \mathscr{D} \phi^i(t,x)\|_{L_2([t-\delta , t] \times \mathbb{T})}^q &\lesssim \|p_{t-\cdot}(x,\cdot)\|_{L_2([t-\delta , t] \times \mathbb{T})}^q \\
&\qquad+ \mathbb{E}\Big(\int_0^t \int_{\mathbb{T}} |p_{t-r}(x,y)|^2 \| \mathscr{D} \phi^i(r,y)\|_{L_2([t-\delta , t] \times \mathbb{T})}^2 dydr\Big)^{q/2}\\
&=: \bar{A} + \bar{B}.
\end{align*}
We have $\bar{A} \lesssim \delta^{q/4}$. Moreover by applying \cref{holdertrick} with $\gamma = \delta =2$ to $\bar{B}$ and noting again that $\|\mathscr{D} \phi^i(r,y)\|_{L_2([t-\delta,t]\times \mathbb{T})} \leq \|\mathscr{D} \phi^i(r,y)\|_{L_2([r-\delta,r]\times \mathbb{T})}$ we get
\begin{align*}
\bar{B}&\lesssim t^{(p-1)/2}\int_0^t \int_{\mathbb{T}} |p_{t-r}(x,y)|^2 \mathbb{E}\| \mathscr{D} \phi^i(r,y)\|_{L_2([r-\delta , r] \times \mathbb{T})}^{q} dydr
\end{align*}
for $q>2$.
Therefore we obtain
$$\mathbb{E}\| \mathscr{D} \phi^i(t,x)\|_{L_2([r-\delta , r] \times \mathbb{T})}^q \lesssim \delta^{q/4} + \int_0^t \int_{\mathbb{T}} |p_{t-r}(x,y)|^2 \mathbb{E}\| \mathscr{D} \phi^i(r,y)\|_{L_2([r-\delta , r] \times \mathbb{T})}^{q} dydr.$$
By \cref{malldiffability} we can see that the $q$-th moment in the integrand is bounded in $(r,y)$. 
Hence by \cref{Gronwalltypelemma} it follows that
$$\mathbb{E}\| \mathscr{D} \phi^i(t,x)\|_{L_2([r-\delta,r]\times \mathbb{T})}^q \lesssim \delta^{q/4}.$$
Applying this with $q =2p$ to bound $G_i$, we get
\begin{equs}
G_i \lesssim \int_{t-\delta}^t \int_{\mathbb{T}}p_{t-r}(x,y) \delta^{p/2}dydr
\lesssim \delta^{p/2+1}.\label{Gboundconvexcombo}
\end{equs}
Now putting (\ref{Gboundconvexcombo}) into (\ref{sumofFG}), we get
$\mathbb{E}|I_B^\delta|^p \lesssim \delta^p$.
Putting this into (\ref{nualartstylebound}) we see that for all $\delta \in [0, t]$ and all $\eps \in (0, c_0\delta^{1/2})$, we have
$$\mathbb{P}(\|\mathscr{D} \Theta(t,x)\|_{H}^2 <\varepsilon) \lesssim (c_0 \delta^{1/2}- \varepsilon)^{-p}\delta^{p}.$$
So if $\eps \in (0, (c_0/2) \sqrt{t})$, we can choose 
$\delta(\varepsilon) := \frac{4}{c_0^2}\varepsilon^2$, 
to get
$$\mathbb{P}(\|\mathscr{D} \Theta(t,x)\|_{H}^2 <\varepsilon) \lesssim  \varepsilon^p.$$
Let $L:= (2/c_0)^{p/2} t^{-p/4}$. Notice that if $\gamma>L$, then $\gamma^{-2/p} \in  (0, (c_0/2) \sqrt{t})$, and consequently we have 
$$\mathbb{P}(\|\mathscr{D} \Theta(t,x)\|_{H}^2 < \gamma^{-2/p}) \lesssim \gamma^{-2} .$$
Hence, we have
\begin{equs}
\mathbb{E}\|\mathscr{D}\Theta(t,x)\|_{H}^{-p} = \int_0^\infty \mathbb{P}\big(\|\mathscr{D} \Theta\|_H^{-p} \geq \gamma\big) d\gamma & \leq  L+ \int_L^\infty \mathbb{P}(\|\mathscr{D}\Theta\|_H^2  < \gamma^{-2/p}) d\gamma\\
&\lesssim L + \int_{L}^\infty \gamma^{-2} d\gamma\\
&\lesssim L + L^{-1} \lesssim t^{-p/4},
\end{equs}
which finishes the proof. 
\end{proof}

 For $(t,x) \in [0,1]\times \mathbb{T}$, we consider the $H$-valued random variables $w_{t,x}$ which are given for all $(\theta,\zeta) \in [0,1]\times \mathbb{T}$ by
$$w_{t,x}(\theta,\zeta) := \frac{\mathscr{D}_{\theta,\zeta}\Theta(t,x)}{\|\mathscr{D}\Theta(t,x)\|_{H}^2}.$$ 
For given $n \in \mathbb{N}$ and for a $C([0,1]\times \mathbb{T})$-valued random variable $G_0$ such that for all $(t,x) \in [0,1]\times \mathbb{T}$ $G_0(t,x) \in \mathscr{W}^n$, we may define iterated Skorokhod integrals for all $k \in \{0,\dots, n-1\}$ and $(t,x) \in [0,1]\times \mathbb{T}$ recursively by
\begin{align*}
G_{k+1}(t,x) &= \delta(w_{t,x}  G_k(t,x)).
\end{align*}
Then by \cref{repeatedMalliavinbyparts}, for any $f \in C^\infty$ we have the integration-by-parts formula
$$\mathbb{E}\big(\nabla^k f(\Theta) G_0\big) = \mathbb{E}\big(f(\Theta)G_k\big)$$
To bound the iterations $(G_k)_{k \in \{0,\dots, n\}}$, we will need the following bounds on $w$ and its Malliavin derivatives.
\begin{lemma}\label{wbounds}
Let $p \in [1,\infty)$, $n \in \mathbb{Z}_{\geq 0}$, and  $\sigma \in C^{n+1}$. Then there exists a constant $N = N(p,n, \|\sigma\|_{C^{n+1}}, \mu)$ such that for all $(t,x) \in [0,1]\times \mathbb{T}$, we have
$$\|  \mathscr{D}^n w_{t,x}\|_{L_p(\Omega;H^{\otimes (n+1)})}  \leq N t^{(n-1)/4}.$$
\end{lemma}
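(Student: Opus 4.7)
The plan is to apply \cref{DXperYlemma} directly with the choice
$X := \Theta(t,x)$, $Y := \|\mathscr{D}\Theta(t,x)\|_H^2$, $\varepsilon := t^{1/4}$, and $c := 2$. Then $w_{t,x} = \mathscr{D}X / Y$, and the conclusion of \cref{DXperYlemma} reads $\|\mathscr{D}^n w_{t,x}\|_{L_p(\Omega;H^{\otimes(n+1)})} \lesssim \varepsilon^{n+1-c} = t^{(n-1)/4}$, which is precisely the desired bound. All that remains is to verify the three hypotheses \eqref{XYassumptions} with these parameters.

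First, since $\Theta$ is a convex combination and the Malliavin derivative is linear, \cref{posimomentbound} (applied to each $\phi^i$, which requires $\sigma \in C^{n+1}$) gives
\[
\|\Theta(t,x)\|_{\dot{\mathscr{W}}_p^k} \leq \sum_{i=1}^K c_i \|\phi^i(t,x)\|_{\dot{\mathscr{W}}_p^k} \lesssim t^{k/4} = \varepsilon^k
\]
for $k \in \{1,\dots,n+1\}$, giving the first hypothesis (here the boundedness of $u_0 = \phi^i_0$ is ensured since each $\phi^i_0$ is continuous on the compact torus).

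Second, for the bound on $\|Y\|_{\dot{\mathscr{W}}_p^l}$, I would repeat verbatim the calculation in the proof of \cref{mallimatrixbound} but with $\phi$ replaced by $\Theta$. Writing $\|\mathscr{D}\Theta(t,x)\|_H^2 = \int_0^1\!\int_{\mathbb{T}} (\mathscr{D}_{\theta,\zeta}\Theta(t,x))^2\,d\zeta d\theta$, applying Minkowski's inequality, the Leibniz rule, Cauchy--Schwarz, and the moment bound from the first step yields
\[
\bigl\|\|\mathscr{D}\Theta(t,x)\|_H^2\bigr\|_{\dot{\mathscr{W}}_p^l} \lesssim \sum_{i=0}^{l} t^{(i+1)/4}\,t^{(l-i+1)/4} \lesssim t^{(l+2)/4} = \varepsilon^{c+l}
\]
for $l \in \{0,\dots,n\}$. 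This is the second hypothesis. The argument is essentially identical to \cref{mallimatrixbound} since the proof there only uses the moment estimate of \cref{posimomentbound}, which holds for $\Theta$ by convex combination.

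Third, \cref{negativemoments} applied with exponent $2p$ in place of $p$ yields
\[
\mathbb{E}[Y^{-p}] = \mathbb{E}\|\mathscr{D}\Theta(t,x)\|_H^{-2p} \lesssim t^{-p/2} = \varepsilon^{-cp},
\]
which is the third hypothesis. All three bounds together let us invoke \cref{DXperYlemma}, which closes the proof. There is no genuine obstacle: the heavy lifting—the positive moment bound on Malliavin derivatives of $\phi^i$, the moment bound on the Malliavin matrix, and the negative moment estimate—has already been carried out in \cref{posimomentbound,mallimatrixbound,negativemoments}, and the present lemma is essentially a bookkeeping exercise to combine them via the algebraic identity of \cref{DXperYlemma}.
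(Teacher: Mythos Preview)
Your proof is correct and follows essentially the same approach as the paper: set $X=\Theta(t,x)$, $Y=\|\mathscr{D}\Theta(t,x)\|_H^2$, verify the hypotheses of \cref{DXperYlemma} via \cref{posimomentbound}, \cref{mallimatrixbound}, and \cref{negativemoments}, and conclude with $\varepsilon=t^{1/4}$, $c=2$. Your explicit remark that \cref{mallimatrixbound} is stated for a single $\phi$ but extends verbatim to the convex combination $\Theta$ is a welcome clarification that the paper glosses over.
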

\begin{proof}
Fix $(t,x) \in [0,1]\times \mathbb{T}$ and let $X := \Theta(t,x)$, $Y := \|\mathscr{D}\Theta(t,x)\|_H^2$ and $w := (\mathscr{D}X)/Y$. We may assume that $p>2$.
By \cref{posimomentbound}, \cref{mallimatrixbound} and \cref{negativemoments} respectively, we have
$$\|X\|_{\bdot{\mathscr{W}}_p^k} \lesssim t^{\frac{k}{4}}, \qquad \|Y\|_{\bdot{\mathscr{W}}_p^k} \lesssim t^{\frac{2+k}{4}}, \qquad \mathbb{E}(Y)^{-p} \lesssim t^{-\frac{2p}{4}}.$$
Therefore by \cref{DXperYlemma} (with $\varepsilon := t^{1/4}$ and $c = 2$) to obtain
$$\|\mathscr{D}^n w\|_{L_p(\Omega; H^{\otimes (n+1)}} \lesssim (t^{1/4})^{n+1 -2} \lesssim t^{(n-1)/4}$$
as required.
\end{proof}

\begin{lemma}\label{iteratedskorokhod}
Let $n \in \mathbb{Z}_{\geq 0}$, and $\sigma \in C^n$. Then for each $k,m \in \mathbb{Z}_{\geq 0}$ such that $k+m \leq n$ and for all $p \in [1,\infty)$ there exists a constant $N = N(k,m,p, \| \sigma\|_{C^n}, \mu)$  such that with $q := 2^m p$ we have for all $(t,x) \in [0,1]\times \mathbb{T}$ that
\begin{equs}\| G_m(t,x) \|_{\mathscr{W}_p^k} \leq Nt^{-m/4}\|G_0(t,x)\|_{\mathscr{W}_{q}^{k+m}}.
\end{equs}
\end{lemma}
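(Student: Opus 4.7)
The natural approach is induction on $m$, with the base case $m=0$ being the trivial bound $\|G_0(t,x)\|_{\mathscr{W}_p^k}\leq \|G_0(t,x)\|_{\mathscr{W}_p^k}$. For the inductive step, one uses the definition $G_m(t,x)=\delta(w_{t,x}G_{m-1}(t,x))$ together with Meyer's inequality for the divergence operator (see e.g.\ \cite{nualart2006malliavin}), which gives, for any $p\in(1,\infty)$ and $k\geq 0$,
\begin{equs}
\|\delta(u)\|_{\mathscr{W}_p^k}\lesssim \sum_{i=0}^{k+1}\|\mathscr{D}^i u\|_{L_p(\Omega;H^{\otimes (i+1)})}.
\end{equs}
Applied to $u=w_{t,x}G_{m-1}(t,x)$, this reduces the problem to bounding the $\mathscr{W}_p^{k+1}(H)$-norm of the product $w_{t,x}G_{m-1}(t,x)$.

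Next, I would apply the Leibniz rule for the Malliavin derivative, writing
\begin{equs}
\mathscr{D}^j\bigl(w_{t,x}G_{m-1}(t,x)\bigr)=\sum_{i=0}^{j}\binom{j}{i}\mathscr{D}^i w_{t,x}\otimes \mathscr{D}^{j-i}G_{m-1}(t,x),
\end{equs}
and H\"older's inequality in $\Omega$ to split the $L_p$-norm into the product of an $L_{2p}$-norm of $\mathscr{D}^i w_{t,x}$ and an $L_{2p}$-norm of $\mathscr{D}^{j-i}G_{m-1}(t,x)$. By \cref{wbounds}, $\|\mathscr{D}^i w_{t,x}\|_{L_{2p}(\Omega;H^{\otimes(i+1)})}\lesssim t^{(i-1)/4}$. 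For the factor involving $G_{m-1}$, note that $j\leq k+1$ gives $(j-i)+(m-1)\leq k+m\leq n$, so the inductive hypothesis at level $m-1$ applies with parameters $(k_{\mathrm{new}},p_{\mathrm{new}})=(j-i,2p)$, yielding
\begin{equs}
\|G_{m-1}(t,x)\|_{\mathscr{W}_{2p}^{j-i}}\leq N\, t^{-(m-1)/4}\|G_0(t,x)\|_{\mathscr{W}_{2^{m-1}(2p)}^{(j-i)+(m-1)}}\leq N\, t^{-(m-1)/4}\|G_0(t,x)\|_{\mathscr{W}_{2^m p}^{k+m}},
\end{equs}
where the last inequality uses the monotonicity of $\|\cdot\|_{\mathscr{W}_{q}^{\ell}}$ in $\ell$.

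Combining these ingredients, the dominant contribution corresponds to $i=0$ in the Leibniz expansion, producing the factor $t^{-1/4}\cdot t^{-(m-1)/4}=t^{-m/4}$, while the remaining terms are of higher order in $t\in[0,1]$. Summing over $j\leq k+1$ and $i\leq j$ yields the desired bound
\begin{equs}
\|G_m(t,x)\|_{\mathscr{W}_p^k}\leq N\, t^{-m/4}\|G_0(t,x)\|_{\mathscr{W}_{2^m p}^{k+m}}.
\end{equs}
The doubling $q=2^m p$ appears naturally from iterating H\"older's inequality at each step of the induction. I expect the main bookkeeping obstacle to be tracking the loss of integrability (the doubling) and verifying at each step that the hypothesis $k+m\leq n$ provides enough Malliavin regularity of $G_0$ to absorb all the terms arising in the Leibniz expansion.
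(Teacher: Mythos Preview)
Your proposal is correct and follows essentially the same argument as the paper: induction on $m$, Meyer's inequality to pass from $\|\delta(wG_{m-1})\|_{\mathscr{W}_p^k}$ to $\|wG_{m-1}\|_{\mathscr{W}_p^{k+1}(H)}$, the Leibniz rule and H\"older's inequality to separate the factors, \cref{wbounds} for the $w$-factor, and the induction hypothesis for the $G_{m-1}$-factor, with the constraint $(j-i)+(m-1)\leq k+m\leq n$ verified exactly as needed. The paper bounds every $t^{(i-1)/4}$ crudely by $t^{-1/4}$ rather than singling out the $i=0$ term as dominant, but this is only a cosmetic difference.
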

\begin{proof}
For notational convenience, fix $(t,x) \in [0,1]\times \mathbb{T}$, set $w := w_{t,x}$, and for $i=1,\dots,n$ set $G_i:= G_i(t,x)$. The proof will be done by induction with respect to the $m$ variable. For $m = 0$ the statement is obviously true. Now suppose that the statement is true for some $m \leq n-1$. That is, we suppose that for all $l \in \mathbb{Z}_{\geq 0}$ such that $l+m \leq n$ we have
$$\|G_m\|_{\mathscr{W}_p^l} \lesssim t^{-m/4}\|G_0\|_{\mathscr{W}_{2^m p}^{l+m}}.$$
We show that the statement is also true for $m+1$, i.e. that for all $k \in \mathbb{Z}_{\geq 0}$ such that $k+(m+1) \leq n$, we have
\begin{equs}\|G_{m+1}\|_{\mathscr{W}_p^k} \lesssim t^{-(m+1)/4}\|G_0\|_{\mathscr{W}_{2^{m+1}p}^{k+m+1}}.
\label{Gmplus1bound}
\end{equs}
Let $k \in \mathbb{Z}_{\geq 0}$, such that $k+m+1 \leq n$.
Since the divergence $\delta: \mathscr{W}_p^{k+1} \to \mathscr{W}_p^k$ is a bounded linear operator (see
\cite[Proposition 1.5.7 and point 1 of remarks of Chapter 1]{nualart2006malliavin}), we have
\begin{equs}
\| G_{m+1}\|_{\mathscr{W}_p^k} = \| \delta(w G_{m}) \|_{\mathscr{W}_p^k} &\lesssim \| w G_{m} \|_{\mathscr{W}_p^{k+1}(H)} \\ &\lesssim   \sum_{i=0}^{k+1}  \| \mathscr{D}^i(wG_{m}) \|_{L_p(\Omega;H^{\otimes (i+1)})} \\
 &\lesssim \sum_{i=0}^{k+1}\sum_{\lambda_1 + \lambda_2 = i} \| \mathscr{D}^{\lambda_1}w\|_{L_{2p}(\Omega; H^{\otimes(\lambda_1 +1)})}\|\mathscr{D}^{\lambda_2} G_m\|_{L_{2p}(\Omega; H^{\otimes \lambda_2})} .\label{sumofprodsofwandG}
 \end{equs}
By \cref{wbounds}, and since $\lambda_1\geq 0$, we have 
\begin{equs}\| \mathscr{D}^{\lambda_1}w\|_{L_{2p}(\Omega; H^{\otimes(\lambda_1 +1)})} \lesssim t^{(\lambda _1 -1)/4} \lesssim t^{-1/4}. \label{DwboundforGm}
\end{equs}
Moreover since $\lambda_2 + m \leq k+1+m \leq n$, by the induction  hypothesis we have
\begin{equs}
\|\mathscr{D}^{\lambda_2} G_m\|_{L_{2p}(\Omega; H^{\otimes \lambda_2})} \leq \|G_m\|_{\mathscr{W}_{2p}^{\lambda_2}} \lesssim t^{-m/4}\|G_0\|_{\mathscr{W}_{2^{m +1} p}^{\lambda_2+m}} \leq t^{-m/4}\|G_0\|_{\mathscr{W}_{2^{m+1}p}^{k+1+m}} .\label{DGmbound}
\end{equs}
Now putting (\ref{DwboundforGm}) and (\ref{DGmbound}) into (\ref{sumofprodsofwandG}), we get
$$\|G_{m+1}\|_{\mathscr{W}_p^k} \lesssim t^{-1/4}t^{-m/4}\|G_0\|_{\mathscr{W}_{2^{m+1}p}^{k+1+m}}.$$
Hence (\ref{Gmplus1bound}) holds, and the proof is finished.
\end{proof}

\begin{lemma}\label{mainmalliavintool} Let $n \in \mathbb{Z}_{\geq 0}$, $\sigma \in C^{n+1}$, $\beta \in (-2,-1)\cup(-1,0)$, and set $q:= 2^{n+2}\mathbf{1}_{(-1,0)}(\beta) + 2^{n+3}\mathbf{1}_{(-2,-1)}(\beta) $ and $m :=  (n+1)\mathbf{1}_{(-1,0)}(\beta) + (n+2)\mathbf{1}_{(-2,-1)}(\beta)$.  There exists a constant $N = N(n,\beta, \|\sigma\|_{C^{n+1}}, \mu)$ such that for all $g \in C^\infty$, $(t,x) \in [0,1]\times \mathbb{T}$, we have
$$\big|\mathbb{E}\big(\nabla^n g(\Theta(t,x)) G_0(t,x)\big)\big| \leq  N\|g\|_{C^\beta} t^{\frac{\beta-n}{4}}\|G_0(t,x)\|_{\mathscr{W}_{q}^{m}}.$$
\end{lemma}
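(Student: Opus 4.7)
The plan is to combine a suitable number of Malliavin integrations by parts with a one-dimensional heat-kernel smoothing of $g$ on $\R$. The first step is to apply \cref{repeatedMalliavinbyparts} $n$ times (the required negative-moment bound $\E\|\mathscr{D}\Theta(t,x)\|_H^{-p}<\infty$ is supplied by \cref{negativemoments}) in order to reduce the problem to estimating $|\E(g(\Theta(t,x))G_n(t,x))|$ by $N\|g\|_{C^\beta}t^{(\beta-n)/4}\|G_0(t,x)\|_{\mathscr{W}_q^m}$.

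For any $\varepsilon\in(0,1]$, I would split $g = P_\varepsilon^\R g + (g - P_\varepsilon^\R g)$. The first summand is directly controlled by the definition of the $C^\beta$ norm, $\|P_\varepsilon^\R g\|_\mathbb{B}\le\varepsilon^{\beta/2}\|g\|_{C^\beta}$, so by Cauchy--Schwarz
\[
\bigl|\E(P_\varepsilon^\R g(\Theta)G_n)\bigr|\le\varepsilon^{\beta/2}\|g\|_{C^\beta}\,\|G_n\|_{L_2}.
\]
For the second summand, the identity $\partial_s P_s^\R = \partial_y^2 P_s^\R$ yields $g - P_\varepsilon^\R g = -\int_0^\varepsilon\partial_y^2 P_s^\R g\,ds$, so I would absorb the two $y$-derivatives through further Malliavin integrations by parts (again justified by \cref{repeatedMalliavinbyparts}). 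When $\beta\in(-1,0)$, one further step rewrites $\E(\partial_y^2 P_s^\R g(\Theta)G_n)$ as $\E(\partial_y P_s^\R g(\Theta)G_{n+1})$; using the heat-kernel gain $\|\partial_y P_s^\R g\|_\mathbb{B} \lesssim s^{-1/2}\|P_{s/2}^\R g\|_\mathbb{B}\lesssim s^{(\beta-1)/2}\|g\|_{C^\beta}$, the resulting $s$-integral is finite precisely because $\beta>-1$. When $\beta\in(-2,-1)$ a further integration by parts is required to reach $\E(P_s^\R g(\Theta)G_{n+2})$, which is bounded by $\|P_s^\R g\|_\mathbb{B}\|G_{n+2}\|_{L_2}\lesssim s^{\beta/2}\|g\|_{C^\beta}\|G_{n+2}\|_{L_2}$; integrability at $s=0$ now uses $\beta>-2$.

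Choosing $\varepsilon = t^{1/2}$ then balances both contributions at order $t^{(\beta-n)/4}$, and \cref{iteratedskorokhod} applied with $p=2$ translates $\|G_{n+1}\|_{L_2}$ into $t^{-(n+1)/4}\|G_0\|_{\mathscr{W}_{2^{n+2}}^{n+1}}$ and $\|G_{n+2}\|_{L_2}$ into $t^{-(n+2)/4}\|G_0\|_{\mathscr{W}_{2^{n+3}}^{n+2}}$, matching the stated $(q,m)$ in the two regimes. The main technical subtlety is matching the number of extra Malliavin integrations by parts with the strength of the singularity in the $s$-integral: one extra step barely handles $\beta\in(-1,0)$, while $\beta\in(-2,-1)$ strictly requires two, since $\int_0^\varepsilon s^{(\beta-1)/2}\,ds$ diverges in the latter range. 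All the ingredients---the positive moment bounds (\cref{posimomentbound,mallimatrixbound}), the non-degeneracy of $\mathscr{D}\Theta$ (\cref{negativemoments}), and the recursive bounds on iterated Skorokhod integrals (\cref{wbounds,iteratedskorokhod})---are available from the preceding subsections, so the proof amounts to combining them correctly with the heat-kernel decomposition.
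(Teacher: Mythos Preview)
Your argument is correct and gives a genuinely different route from the paper's. The paper proceeds via an \emph{elliptic} decomposition: writing $g=(1-\Delta)f$ with $f\in C^{\beta+2}$ (Proposition~\ref{laplacianlemma}), it bounds $|\E(f(\Theta)G_n)|$ trivially and, for the $\Delta f$ part, integrates by parts once (or twice, when $\beta\in(-2,-1)$) and then uses the mean-zero property $\E G_{n+1}=0$ to center $\nabla f(\Theta)$ (resp.\ $f(\Theta)$) at the deterministic value $\nabla f(\sum_i c_i P_t\phi^i_0(x))$; the $t$-power then comes from the explicit fluctuation estimate $\|(\Theta-\sum_i c_i P_t\phi^i_0)^{\gamma}\|_{L_2}\lesssim t^{\gamma/4}$ combined with the H\"older regularity of $\nabla f$ (resp.\ $f$).

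Your \emph{parabolic} decomposition $g=P_\varepsilon^\R g+(g-P_\varepsilon^\R g)$ with $\varepsilon=t^{1/2}$ sidesteps both the centering trick and the fluctuation estimate: the $t$-power appears purely from the scale balance, and the only inputs are the definition of $\|\cdot\|_{C^\beta}$, the standard smoothing estimate $\|\partial_y P_s^\R g\|_{\mathbb{B}}\lesssim s^{(\beta-1)/2}\|g\|_{C^\beta}$, and \cref{iteratedskorokhod}. This is arguably cleaner and more self-contained; the paper's route, on the other hand, makes explicit that the gain in $t$ ultimately comes from the short-time fluctuation of the driftless flow, which is informative for the narrative of the article. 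Both approaches produce exactly the same exponents and the same $(q,m)$.
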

\begin{proof}
 Let $f \in C^\infty$ be the solution of
$(1-\Delta)f = g$ and let $(t,x) \in [0,1]\times \mathbb{T}$.
By  \cref{repeatedMalliavinbyparts} and by the definition of $f$, we get
\begin{equs}
|\mathbb{E}(\nabla^n g(\Theta(t,x)) G_0(t,x))| &= |\mathbb{E}(  g(\Theta(t,x)) G_n(t,x))|\\
&\leq | \mathbb{E}(f(\Theta(t,x))G_n(t,x))| +|\mathbb{E}(\Delta f(\Theta(t,x))G_n(t,x))| =:A+B.
\end{equs}
It follows easily from \cref{iteratedskorokhod} and Proposition \ref{laplacianlemma} that 
\begin{equs}A \leq \|f\|_{\mathbb{B}}\|G_n(t,x)\|_{L_1} \leq \|f\|_{C^{2+\beta}}t^{-n/4}\|G_0(t,x)\|_{\mathscr{W}_{2^n}^n} \lesssim \|g\|_{C^{\alpha}} t^{-n/4 + \beta/4}\|G_0(t,x)\|_{\mathscr{W}_{q}^{m}}.\quad\quad
\label{Aboundformallbyparts}
\end{equs}
It remains to be shown that the desired bound also holds on $B$. To this end, we first note that by Jensen's inequality and by the BDG inequality for $\gamma \in (0,1)$ we have
\begin{equs}
\Big\| \Big(\Theta(t,x) - \sum_{i=1}^K c_i P_t \phi^i(0,\cdot)(x)\Big)^{\gamma}\Big\|_{L_2} & \leq \Big\|\sum_{i=1}^K c_i\Big(\phi^i(t,x) - P_t \phi^i(0,\cdot)(x)\Big)\Big\|_{L_{2}}^{\gamma}\\
&\leq \sum_{i=1}^K \Big\|\int_0^t \int_{\mathbb{T}} p_{t-r}(x,y)\sigma(\phi^i(r,y)) \xi(dy,dr)\Big\|_{L_{2}}^{\gamma}\\
&\lesssim t^{\gamma/4}. \label{ThetaminusICrate}
\end{equs}
We first consider the case that $\beta \in (-1,0)$. By \cref{repeatedMalliavinbyparts}, the fact that $\mathbb{E}G_{n+1} =0$, and (\ref{ThetaminusICrate}) with $\gamma = 1+\beta \in (0,1)$, we get 
\begin{equs}
B   &= |\mathbb{E}(\nabla f(\Theta(t,x))G_{n+1}(t,x))|\\
& = \Big|\mathbb{E}\Big(\Big(\nabla f(\Theta(t,x)) - \nabla f\Big(\sum_{i=1}^K c_i P_t \phi^i(0,\cdot)(x)\Big)\Big) G_{n+1}(t,x))\Big)\Big|\\
& \lesssim \|\nabla f\|_{C^{ 1+\beta}}\Big\| \Big(\Theta(t,x) - \sum_{i=1}^K c_i P_t \phi^i(0,\cdot)(x)\Big)^{1+\beta}\Big\|_{L_2}\|G_{n+1}(t,x)\|_{L_2}\\
&\lesssim \|f\|_{C^{2+\beta}} t^{\frac{1+\beta}{4}}\|G_{n+1}(t,x)\|_{L_2}\\
&\lesssim\|g\|_{C^\beta}t^{\frac{\beta - n}{4}}\|G_0(t,x)\|_{\mathscr{W}_{q}^{n+1}},\label{Bboundformallbyparts}
\end{equs}
where for the last inequality we used Proposition \ref{laplacianlemma} and \cref{iteratedskorokhod}.
We now also deal with the case when $\beta \in (-2,-1)$. Repeating the same steps with one more iteration of Malliavin integration by parts and with $\gamma = 2+\beta \in (0,1)$, we can see that
\begin{equs}
B &= |\mathbb{E}(f(\Theta(t,x))G_{n+2}(t,x))|\\
&\lesssim\| f\|_{C^{ 2+\beta}}\Big\| \Big(\Theta(t,x) - \sum_{i=1}^K c_i P_t \phi^i(0,\cdot)(x)\Big)^{2+\beta}\Big\|_{L_2}\|G_{n+2}(t,x)\|_{L_2}\\
&\lesssim \|g\|_{C^\beta}t^{\frac{\beta - n}{4}}\|G_0(t,x)\|_{\mathscr{W}_{q}^{n+2}}.\qquad\label{Bboundformallbypartsnew}
\end{equs}
By (\ref{Bboundformallbyparts}) and (\ref{Bboundformallbypartsnew}) we can see that for all $\beta \in (-2,-1) \cup (-1,0)$ we have
\begin{equs}B \lesssim \|g\|_{C^\beta}t^{\frac{\beta - n}{4}}\|G_0(t,x)\|_{\mathscr{W}_{q}^{m}}.\label{Bboundsummarised}
\end{equs}
By the bounds (\ref{Aboundformallbyparts}) and (\ref{Bboundsummarised}) on $A$ and $B$ respectively, the proof is finished.
\end{proof}
 Let $s \geq 0$ and suppose that $Z:\Omega\times \mathbb{T} \to \mathbb{R}$ is an $\mathscr{F}_s \otimes \mathscr{B}(\mathbb{T})$-measurable map, such that $Z(x)$ is continuous in $x$ and that $\sup_{x \in \mathbb{T}}\|Z(x)\|_{L_2} <\infty$. Let  $\phi^{Z,s}$ denote the solution of
\begin{equs}
(\partial_t  - \Delta )\phi^{Z,s} = \sigma(\phi^{Z,s})\xi \qquad \text{in $(s, 1) \times \mathbb{T}$},  \qquad \phi^{Z,s}_s = Z. \label{phidef0}
\end{equs}
For $(t,x) \in [s,1]\times \mathbb{T}$, the solution satisfies the integral equation
\begin{equs}\phi^{Z,s}(t,x) = P_{t-s}Z(x) + \int_s^t \int_{\mathbb{T}}p_{t-r}(x,y) \sigma(\phi^{Z,s}(r,y)) \xi(dy,dr). \label{phidef1}
\end{equs}
We will moreover use the shorthand
\begin{equs}\phi^Z(t,x) := \phi^{Z,0}(t,x). \label{phidef2}\end{equs}

\begin{lemma}\label{malliavinbypartsnew}
Let $K \in \mathbb{N}$, and for $z_1,\dots, z_K \in C(\mathbb{T})$, $(s,t)\in [0,1]_{\leq}^2$, $x \in \mathbb{T}$, consider the convex combination 
\begin{equs}
\Phi^{z_1,\dots,z_K,s}(t,x)&:= \sum_{i=1}^K c_i \phi^{z_i,s}(t,x).
\end{equs} 
 Let $h \in C(\mathbb{R}^K)$, and for $q \in [1,\infty)$ and $i \in \mathbb{Z}_{\geq 0}$ define $H_{q,i}: (C(\mathbb{T}))^K \to \mathbb{R}$ by
$$H_{q,i}(z_1,\dots, z_K) := \| h\big(\phi^{z_1}(t-s,x),\dots, \phi^{z_K}(t-s,x)\big)\|_{\bdot{\mathscr{W}}_q^i}.$$
Let $Z_1,\dots,Z_K$ be $\mathscr{F}_s$-measurable $C(\mathbb{T})$-valued random variables and $g \in C^\infty(\mathbb{R})$. For all $n \in \mathbb{Z}_{\geq 0}$, $\beta \in (-2,-1)\cup(-1,0)$ there exists a constant $N = N(n, \beta, \|\sigma\|_{C^{n+1}},\mu)$ such that with $q := 2^{n+2}\mathbf{1}_{(-1,0)}(\beta) + 2^{n+3}\mathbf{1}_{(-2,-1)}(\beta) $ and $m:= (n+1)\mathbf{1}_{(-1,0)}(\beta) + (n+2)\mathbf{1}_{(-2,-1)}(\beta)$ we have
\begin{align*}
&\Big|\mathbb{E}^s\Big( \nabla^n g\Big(\Phi^{Z_1,\dots, Z_K,s}(t,x)\Big)  h\big(\phi^{Z_1,s}(t,x),\dots, \phi^{Z_K,s}(t,x)\big) \Big)\Big| \\&
\qquad\qquad\qquad\qquad\qquad\qquad\leq N\|g\|_{C^\beta} (t-s)^{\frac{\beta -n}{4}}\sum_{i=0}^{m}H_{q,i}\big(Z_1,\dots, Z_K\big).
\end{align*}
\end{lemma}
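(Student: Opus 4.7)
The plan is to reduce the statement to the deterministic-initial-condition case already proved in \cref{mainmalliavintool}, by a freezing argument combined with time-shift invariance of space-time white noise. Since $Z_1,\dots,Z_K$ are $\mathscr{F}_s$-measurable, and the driftless fields $\phi^{z_i,s}$ (for deterministic $z_i$) are functionals of the noise $\xi|_{[s,1]\times\mathbb{T}}$, which is independent of $\mathscr{F}_s$, the standard freezing lemma for conditional expectations gives
$$\mathbb{E}^s\big[\nabla^n g(\Phi^{Z_1,\dots,Z_K,s}(t,x))\,h(\phi^{Z_1,s}(t,x),\dots,\phi^{Z_K,s}(t,x))\big]=\Psi(Z_1,\dots,Z_K),$$
where for deterministic $z_i\in C(\mathbb{T})$,
$$\Psi(z_1,\dots,z_K):=\mathbb{E}\big[\nabla^n g(\Phi^{z_1,\dots,z_K,s}(t,x))\,h(\phi^{z_1,s}(t,x),\dots,\phi^{z_K,s}(t,x))\big].$$
The measurability/continuity of $\Psi$ required to make this substitution rigorous follows from iterated use of \cref{lipschitzflowgeneral}.

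For deterministic $z_i$, the process $\tilde\xi(r,y):=\xi(s+r,y)$ is a space-time white noise on $[0,1-s]\times\mathbb{T}$, and by uniqueness of mild solutions of the driftless equation, $\phi^{z_i,s}(t,\cdot)$ equals in distribution $\tilde\phi^{z_i}(t-s,\cdot)$, the driftless solution with noise $\tilde\xi$ from $z_i$ at time $0$. Jointly,
$$\Big(\Phi^{z_1,\dots,z_K,s}(t,x),\,h(\phi^{z_1,s}(t,x),\dots,\phi^{z_K,s}(t,x))\Big)\stackrel{d}{=}\Big(\sum_{i=1}^K c_i\tilde\phi^{z_i}(t-s,x),\,h(\tilde\phi^{z_1}(t-s,x),\dots,\tilde\phi^{z_K}(t-s,x))\Big).$$
I then apply \cref{mainmalliavintool} with the convex combination $\sum c_i\tilde\phi^{z_i}$ in place of $\Theta$, time $t-s$ in place of $t$, and
$$G_0:=h(\tilde\phi^{z_1}(t-s,x),\dots,\tilde\phi^{z_K}(t-s,x)),$$
to obtain
$$|\Psi(z_1,\dots,z_K)|\lesssim\|g\|_{C^\beta}(t-s)^{(\beta-n)/4}\|G_0\|_{\mathscr{W}_q^m}.$$

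The conclusion now follows from the equivalence of $\|\cdot\|_{\mathscr{W}_q^m}$ with $\sum_{i=0}^m\|\cdot\|_{\bdot{\mathscr{W}}_q^i}$ and the distributional identity $\tilde\phi^{z_i}(t-s,x)\stackrel{d}{=}\phi^{z_i}(t-s,x)$, which together yield $\|G_0\|_{\mathscr{W}_q^m}\lesssim\sum_{i=0}^m H_{q,i}(z_1,\dots,z_K)$. Substituting $z_i=Z_i$ and combining with the first displayed identity gives the claim. The only genuinely technical point is the freezing step, which is a direct application of the standard disintegration of conditional expectations under independence; every other step is a translation of previously established estimates via shift invariance of the noise, so I do not anticipate any serious obstacle.
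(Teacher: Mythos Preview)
Your proposal is correct and follows essentially the same approach as the paper. The paper packages your freezing-plus-time-shift argument into a single auxiliary result (\cref{markovproperty}), which directly gives the identity $\mathbb{E}^s[\cdots]=G(Z_1,\dots,Z_K)$ with $G$ defined in terms of $\phi^{z_i}(t-s,x)$ rather than $\phi^{z_i,s}(t,x)$, and then applies \cref{mainmalliavintool} and the norm equivalence exactly as you do; the only difference is that you spell out the shift invariance and independence explicitly rather than invoking a prepackaged Markov-type lemma.
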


\begin{proof}
By \cref{markovproperty} we have
\begin{equs}
\Big|\mathbb{E}^s\Big( \nabla^n g\big(\Phi^{Z_1,\dots, Z_K,s}(t,x)\big)  h\big(\phi^{Z_1,s}(t,x),\dots, \phi^{Z_K,s}(t,x)\big) \Big)\Big| = G(Z_1,\dots,Z_K), \label{GZ1dotsZK}
\end{equs}
where for $z_1, \dots, z_K \in C(\mathbb{T})$ we define
$$G(z_1,\dots,z_K) := \Big|\mathbb{E}\Big(\nabla^n g\Big(\sum_{i=1}^K c_i \phi^{z_i}(t-s,x)\Big) h(\phi^{z_i}(t-s,x),\dots, \phi^{z_K}(t-s,x))\Big)\Big|.$$
By \cref{mainmalliavintool} we have
\begin{equs}G(z_1,\dots, z_K) &\lesssim \|g\|_{C^\beta}(t-s)^{\frac{\beta -n}{4}}\|h(\phi^{z_1}(t-s,x),\dots,\phi^{z_M}(t-s,x))\|_{\mathscr{W}_q^{m}}\\
&\lesssim \|g\|_{C^\beta}(t-s)^{\frac{\beta-n}{4}}\sum_{i=0}^{m}\|h(\phi^{z_1}(t-s,x),\dots, \phi^{z_M}(t-s,x))\|_{\bdot{\mathscr{W}}_q^{i}}\\
& = \|g\|_{C^\beta}(t-s)^{\frac{\beta -n}{4}}\sum_{i=0}^{m}H_{q,i}(z_1,\dots,z_K). \label{Hqnplus1}
\end{equs}
Now putting (\ref{Hqnplus1}) into (\ref{GZ1dotsZK}), the desired result follows.
\end{proof}

\section{Driftless approximation}\label{driftilessapproxsection}
In this section we deal with the approximation of the solution $u(t,x)$ by $\phi^{u(s, \cdot),s}(t,x)$. The main results of this section are  \cref{driftlessapprox} and  \cref{4pointestimate}. 

\begin{lemma}[Boundedness of regularised solutions
]\label{boundednessofsolutions}
Let $u$ be a regularised solution of (\ref{SHE}) with initial condition $u(0,\cdot) = u_0 \in C(\mathbb{T})$ and let $p \in [2,\infty)$. There exists a constant $N = N(p,\|\sigma\|_{\mathbb{B}})$ such that for all $(t,x) \in [0,1]\times \mathbb{T}$ we have
\begin{equs}\label{boundednessofu}\|u(t,x)\|_{L_p} \leq N\Big(\|u_0\|_{\mathbb{B}(\mathbb{T})}+ \|D^u_t(x)\|_{L_p} + t^{1/4}\Big).
\end{equs}
Consequently, if $u \in \mathscr{U}_p^0$, then 
\begin{equs}\sup_{(t,x) \in [0,1]\times \mathbb{T}}\|u(t,x)\|_{L_p} <\infty. \label{uisfinite}
\end{equs}
\end{lemma}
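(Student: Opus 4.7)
The plan is to start from the mild integral representation \eqref{integraleqn} and simply bound each of the three terms in $L_p$ using the triangle (Minkowski) inequality:
\begin{equs}
\|u(t,x)\|_{L_p} \le \|P_tu_0(x)\|_{L_p} + \|D^u_t(x)\|_{L_p} + \|V_t(x)\|_{L_p},
\end{equs}
where $V_t(x):=\int_0^t\int_{\mathbb{T}}p_{t-r}(x,y)\sigma(u(r,y))\xi(dy,dr)$. The first term is deterministic and satisfies $|P_tu_0(x)| \le \|u_0\|_{\mathbb{B}(\mathbb{T})}$, because $p_{t-r}(x,\cdot)$ is a probability density on $\mathbb{T}$. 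The second term already appears as a summand in the target bound, so nothing further is needed there.

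The only real work is in controlling the stochastic convolution $V_t(x)$. Since $u$ is a $\mathscr{P}\otimes\mathscr{B}(\mathbb{T})$-measurable random field and $\sigma$ is bounded (under \cref{assumptions}, $\sigma\in C^4$ entails $\|\sigma\|_{\mathbb{B}}<\infty$), I would apply the BDG inequality for space-time white-noise integrals to obtain
\begin{equs}
\|V_t(x)\|_{L_p}^2 \lesssim_p \Big\|\int_0^t\int_{\mathbb{T}}p_{t-r}^2(x,y)\sigma^2(u(r,y))\,dy\,dr\Big\|_{L_{p/2}} \lesssim \|\sigma\|_{\mathbb{B}}^2\int_0^t\int_{\mathbb{T}}p_{t-r}^2(x,y)\,dy\,dr.
\end{equs}
The standard bound on the periodic heat kernel gives $\int_0^t\int_{\mathbb{T}}p_{t-r}^2(x,y)\,dy\,dr \lesssim t^{1/2}$, hence $\|V_t(x)\|_{L_p} \lesssim \|\sigma\|_{\mathbb{B}}\,t^{1/4}$, where the implicit constant depends only on $p$ and $\|\sigma\|_{\mathbb{B}}$. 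Combining the three bounds yields \eqref{boundednessofu}. There is no genuine obstacle here; the main thing to be careful about is that the $L_p$ BDG estimate is valid for the predictable integrand $\sigma(u(r,y))$, which holds because $u$ is predictable by hypothesis.

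For the consequence \eqref{uisfinite}, I would invoke the definition of $\mathscr{U}_p^0$: membership in this class requires $D^u \in \mathscr{V}_p^0[0,1]$, and by \cref{defofclassV} elements of $\mathscr{V}_p^0[0,1]$ are in particular in $\mathbb{B}([0,1]\times\mathbb{T}, L_p)$, so $\sup_{(t,x)\in[0,1]\times\mathbb{T}}\|D^u_t(x)\|_{L_p}<\infty$. Inserting this together with $t \le 1$ into the first part of the lemma gives the stated uniform bound.
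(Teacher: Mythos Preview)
Your proof is correct and follows essentially the same approach as the paper: apply the integral representation \eqref{integraleqn}, bound the initial data term trivially, leave $\|D^u_t(x)\|_{L_p}$ as is, and use BDG together with $\int_0^t\int_{\mathbb{T}}p_{t-r}^2(x,y)\,dy\,dr\lesssim t^{1/2}$ for the stochastic convolution. For the consequence, the paper uses $D^u_0=0$ and the seminorm $[D^u]_{\mathscr{V}_p^0}$ to bound $\|D^u_t(x)\|_{L_p}$, whereas you invoke the $\mathbb{B}([0,1]\times\mathbb{T},L_p)$-membership built into \cref{defofclassV}; both are equally valid.
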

\begin{proof}
From (\ref{integraleqn}) and the BDG inequality we can see that
\begin{align*}\|u(t,x)\|_{L_p}^2 &\lesssim \|u_0\|_{\mathbb{B}(\mathbb{T})}^2 + \|D_t^u(x)\|_{L_p}^2 + \int_0^t \int_{\mathbb{T}}p_{t-r}^2(x,y)\|\sigma\|_{\mathbb{B}}^2dydr,
\end{align*}
and thus the inequality (\ref{boundednessofu}) follows.
Now suppose that $u \in \mathscr{U}_p^0$. Then noting that $D_t^u = D_t^u - P_{t-0}D_0^u$, we have
$$\sup_{(t,x) \in [0,1]\times \mathbb{T}}\|D^u_t(x)\|_{L_p}    \leq [D^u]_{\mathscr{V}_p^0} <\infty.$$
Also, note that by Assumption \ref{assumptions} the initial condition $u(0,\cdot)$ is bounded. Therefore all terms on the right hand side of (\ref{boundednessofu}) are bounded in $(t,x)$, and thus (\ref{uisfinite}) follows.
\end{proof}

 Recall that the random fields $\phi^{z,s}(t,x)$ and $\phi^z(t,x)$  are defined by (\ref{phidef1}) and (\ref{phidef2}) respectively.
Let $\sigma \in C^1$, $\alpha\in(-1,0)$, $\beta \in [0,1]$, $p \in [1,\infty)$, and for $i=1,2$ let $b^i \in C^{\alpha}$, and suppose that  $u^i \in \mathscr{U}_p^\beta$ are regularised solutions of the stochastic reaction--diffusion equations \begin{equs}(\partial_t - \Delta)u^i = b^i(u^i) + \sigma(u^i)\xi \label{u1u2equations}.
\end{equs}
 For $(S,T) \in [0,1]_{\leq}^2$ we define the \emph{$\mathscr{S}_p^{\beta}[S,T]$-bracket} of $u^1$ and $u^2$ by
\begin{equs}\label{Spbracket} \mkern-35mu
[ u^1,u^2]_{\mathscr{S}^{\beta}_p[S,T]} := \sup_{x \in \mathbb{T}}\sup_{(s,t)\in [S,T]_{<}^2}\frac{ \| u^1(t,x) - \phi^{u^1(s,\cdot),s}(t,x) - u^2(t,x) + \phi^{u^2(s,\cdot),s}(t,x) \|_{L_p}}{|t-s|^{\beta}}.
\end{equs}
\begin{remark}
Note that for all $s \in [0,1]$, by definition, the random field $u^i(s,x)$ is continuous in $x$, and by \cref{boundednessofsolutions} we have $\sup_{x \in \mathbb{T}}\|u^i(s,x)\|_{L_p} <\infty$. Therefore the equation (\ref{phidef0}) starting from $u^i(s,\cdot)$ does indeed have a unique solution, which is denoted by $\phi^{u^i(s,\cdot),s}(t,x)$.
Consequentially, the expression (\ref{Spbracket}) is well-defined.
\end{remark}
 For brevity, we will use the convention 
$$[u^1,u^2]_{\mathscr{S}_p^{\beta}} := [u^1,u^2]_{\mathscr{S}_p^{\beta}[0,1]}.$$
Moreover recalling the definition of  the $\mathscr{V}_p^\beta[S,T]$-bracket from Definition \ref{defofclassV}, we set \begin{equs}\,
[f]_{\mathscr{V}_p^\beta} :=[f]_{\mathscr{V}_p^\beta[0,1]}.
\end{equs}
By the triangle inequality and by \cref{lipschitzflowgeneral} the following result holds:
\begin{lemma}\label{bracketlemma} 
Let $\sigma \in C^1$, $b^1,b^2 \in C^\alpha$, $\beta \in (0,1]$, $p \in [2,\infty)$ and let $u^1,u^2$ be regularised solutions of (\ref{u1u2equations}) in the class $\mathscr{U}_p^\beta$. There exists some constant $N =N(p, \|\sigma\|_{C^1}, \alpha, \beta)$ such that  for all $(s,t) \in [0,1]_{\leq}^2$ we have
$$\|u^1(t,\cdot) - u^2(t,\cdot)\|_{\mathbb{B}(\mathbb{T}, L_p)} \leq  [u^1,u^2]_{\mathscr{S}_p^{\beta}[s,t]}(t-s)^\beta + N\|u^1(s,\cdot)  - u^2(s,\cdot)\|_{\mathbb{B}(\mathbb{T}, L_p)}.$$
\end{lemma}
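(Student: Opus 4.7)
The plan is to prove this by a direct triangle inequality decomposition, separating the difference $u^1(t,x)-u^2(t,x)$ into a part captured by the $\mathscr{S}_p^{\beta}$-bracket and a part governed by Lipschitz dependence of the driftless flow on its initial condition.

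First I would write, for each $(t,x) \in [s,1] \times \mathbb{T}$,
\begin{equs}
u^1(t,x) - u^2(t,x)
&= \bigl[ u^1(t,x) - \phi^{u^1(s,\cdot),s}(t,x) - u^2(t,x) + \phi^{u^2(s,\cdot),s}(t,x) \bigr] \\
&\qquad + \bigl[ \phi^{u^1(s,\cdot),s}(t,x) - \phi^{u^2(s,\cdot),s}(t,x) \bigr],
\end{equs}
and take the $L_p$-norm. The first bracket is, by definition \eqref{Spbracket}, bounded in $L_p$ by $[u^1,u^2]_{\mathscr{S}^{\beta}_p[s,t]}|t-s|^{\beta}$ uniformly in $x$, so this term contributes exactly the first summand on the right-hand side of the desired estimate.

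The real work is in the second bracket, which requires comparing two solutions of the driftless equation started at time $s$ with different random initial data $u^1(s,\cdot)$ and $u^2(s,\cdot)$. Here I would apply the Markov property (cited in the excerpt as \texttt{markovproperty}) to identify, conditionally on $\mathscr{F}_s$, the law of $\phi^{Z,s}(t,x) - \phi^{\bar{Z},s}(t,x)$ with that of $\phi^{z}(t-s,x) - \phi^{\bar z}(t-s,x)$ evaluated at $(z,\bar z) = (Z,\bar Z)$. This gives
\begin{equs}
\bigl\| \phi^{u^1(s,\cdot),s}(t,x) - \phi^{u^2(s,\cdot),s}(t,x) \bigr\|_{L_p \vert \mathscr{F}_s}
= F^{(2)}_{p,0}\bigl(t-s,x,u^1(s,\cdot),u^2(s,\cdot)\bigr),
\end{equs}
using the definition \eqref{F2def}. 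Taking $L_p$-norms of both sides and then applying \cref{lipschitzflowgeneral} with $n=0$, $q=p$, trivial $\mathscr{G}$ and $p_1 = p_2 = p$ yields
\begin{equs}
\bigl\| \phi^{u^1(s,\cdot),s}(t,x) - \phi^{u^2(s,\cdot),s}(t,x) \bigr\|_{L_p}
\leq N \sup_{y \in \mathbb{T}} \| u^1(s,y) - u^2(s,y) \|_{L_p},
\end{equs}
with $N$ depending only on $p$ and $\|\sigma\|_{C^1}$. Taking the supremum in $x \in \mathbb{T}$ on the left-hand side of the triangle inequality then combines the two estimates into the claimed bound.

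The only mildly delicate point is the Markov identification used to transfer from $\phi^{\cdot,s}$ (flow starting at time $s$ with random data) to the statement of \cref{lipschitzflowgeneral} (which is phrased in terms of $F^{(2)}_{q,n}$ built from the flow starting at time $0$). Everything else is bookkeeping; in particular no stochastic sewing or Malliavin computation is needed here, since the required Lipschitz regularity of the driftless flow in its initial condition has already been established in \cref{lipschitzflowgeneral}, and the well-definedness of $\phi^{u^i(s,\cdot),s}$ is guaranteed by \cref{boundednessofsolutions} together with the remark following \eqref{Spbracket}.
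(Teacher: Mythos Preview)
Your proposal is correct and follows exactly the approach the paper indicates: the paper's own proof consists of the single sentence ``By the triangle inequality and by \cref{lipschitzflowgeneral} the following result holds,'' and your argument is precisely the natural unpacking of that sentence. The use of the Markov property to reduce $\phi^{u^i(s,\cdot),s}(t,x)$ to the setting of \cref{lipschitzflowgeneral} is the only nontrivial detail, and you have handled it correctly.
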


\begin{lemma}[Driftless approximation]\label{driftlessapprox} 
Let $\sigma \in C^1$, $\alpha \in (-1,0)$, $b \in C^{\alpha}$. Let $p \in [2,\infty)$, $\beta \in [0, 1+ \frac{\alpha}{4}]$ and let $u$ be a regularised solution of (\ref{SHE}) in the class $\mathscr{U}_p^\beta$. Then there exists a constant $N = N(p, \|\sigma\|_{C^1}, \alpha, \beta)$ such that for all $0\leq s \leq t \leq 1$, we have 
$$\sup_{x \in \mathbb{T}}\|u(t,x) - \phi^{u(s,\cdot),s}(t,x)\|_{L_{p,\infty}^{\mathscr{F}_s}} \leq N [D^u]_{\mathscr{V}_p^\beta[s,t]}(t-s)^{\beta}.$$
\end{lemma}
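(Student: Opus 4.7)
The plan is to form the difference $\psi(t,x) := u(t,x) - \phi^{u(s,\cdot),s}(t,x)$ and exploit that both processes start from the same random initial datum $u(s,\cdot)$ at time $s$. Subtracting the integral equations \eqref{integraleqn} and \eqref{phidef1} cancels the $P_{t-s}u(s,\cdot)(x)$ contribution and yields
\[
\psi(t,x) = \big[D^u_t(x) - P_{t-s}D^u_s(x)\big] + \int_s^t\!\!\int_\mathbb{T} p_{t-r}(x,y)\big[\sigma(u(r,y))-\sigma(\phi^{u(s,\cdot),s}(r,y))\big]\xi(dy,dr).
\]
The first term has $L_{p,\infty}^{\mathscr{F}_s}$ norm dominated by $[D^u]_{\mathscr{V}_p^\beta[s,t]}(t-s)^\beta$ directly from the definition of the $\mathscr{V}_p^\beta[s,t]$-seminorm; the task is to absorb the stochastic integral using the Lipschitz bound $|\sigma(a)-\sigma(b)|\leq \|\sigma\|_{C^1}|a-b|$ afforded by $\sigma\in C^1$.

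Concretely, I would take $\|\cdot\|_{L_p\mid\mathscr{F}_s}$ of both sides, apply the conditional Burkholder--Davis--Gundy inequality, and then the conditional Minkowski inequality (treating $p_{t-r}^2(x,y)\,dy\,dr$ as a positive measure) to move $\|\cdot\|_{L_p\mid\mathscr{F}_s}$ inside the integral. This produces, pointwise in $\omega$,
\[
\|\psi(t,x)\|_{L_p\mid\mathscr{F}_s}^2 \leq N\|D^u_t(x)-P_{t-s}D^u_s(x)\|_{L_p\mid\mathscr{F}_s}^2 + N\int_s^t\!\!\int_\mathbb{T} p_{t-r}^2(x,y)\|\psi(r,y)\|_{L_p\mid\mathscr{F}_s}^2\,dy\,dr
\]
for some $N=N(p,\|\sigma\|_{C^1})$. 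Applying the heat-kernel Gronwall lemma \cref{Gronwalltypelemma} pointwise in $\omega$, together with the a.s.\ inequality $\|D^u_r(y)-P_{r-s}D^u_s(y)\|_{L_p\mid\mathscr{F}_s}(\omega)\leq [D^u]_{\mathscr{V}_p^\beta[s,t]}(r-s)^\beta$ built into the $L_{p,\infty}^{\mathscr{F}_s}$-norm used to define $\mathscr{V}_p^\beta$, would yield $\|\psi(t,x)\|_{L_p\mid\mathscr{F}_s}^2 \leq N[D^u]^2_{\mathscr{V}_p^\beta[s,t]}(t-s)^{2\beta}$ for a.e.\ $\omega$; taking $L_\infty(\Omega)$ and supremum over $x \in \mathbb{T}$ then completes the argument.

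The delicate point will be verifying the \emph{a priori} boundedness $\sup_{(r,y)\in [s,1]\times\mathbb{T}}\|\psi(r,y)\|_{L_p\mid\mathscr{F}_s}(\omega)<\infty$ required to run Gronwall pointwise in $\omega$. Since $\|u(s,\cdot)\|_{\mathbb{B}(\mathbb{T})}$ carries no a priori $L_\infty(\Omega)$ bound, uniform control in $\omega$ is unavailable and one must argue pathwise: for a.e.\ $\omega$, $u(s,\cdot)(\omega)$ is continuous and hence bounded on $\mathbb{T}$, and the conditional law of $\phi^{u(s,\cdot),s}$ given $\mathscr{F}_s$ is, via the Markov property \cref{markovproperty}, that of the driftless equation started from the deterministic function $u(s,\cdot)(\omega)$. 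Combining \cref{boundednessofsolutions} for $u$ with its conditional analogue for $\phi^{u(s,\cdot),s}$ then produces an $(r,y)$-uniform (but $\omega$-dependent) bound sufficient to trigger Gronwall.
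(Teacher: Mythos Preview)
Your approach is the same as the paper's---same decomposition, conditional BDG, Lipschitz bound on $\sigma$, and the heat-kernel Gronwall---but you have made the closing step harder than it needs to be. The paper does not run \cref{Gronwalltypelemma} pointwise in $\omega$; instead it takes the $L_\infty(\Omega)$ norm of the conditional inequality \emph{first}, obtaining a purely deterministic inequality for $(t,x)\mapsto\|\psi(t,x)\|_{L_{p,\infty}^{\mathscr{F}_s}}^2$, and then applies the deterministic Gronwall. This order-swap removes all the version and null-set issues inherent in treating $\omega\mapsto\|\psi(r,y)\|_{L_p\mid\mathscr{F}_s}(\omega)$ as a function of $(r,y)$ for each $\omega$.

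More importantly, your ``delicate point'' paragraph is a red herring. The a priori finiteness needed for Gronwall does not require any control on $\|u(s,\cdot)\|_{\mathbb{B}(\mathbb{T})}$ or the Markov property: simply bound $|\sigma(u)-\sigma(\phi)|\le 2\|\sigma\|_{\mathbb{B}}$ in the stochastic integral (using boundedness rather than Lipschitz) to get directly from your displayed inequality that
\[
\sup_{(t,x)}\|\psi(t,x)\|_{L_{p,\infty}^{\mathscr{F}_s}}\;\lesssim\;[D^u]_{\mathscr{V}_p^0}+\|\sigma\|_{\mathbb{B}}\;<\;\infty,
\]
which is a deterministic constant. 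This is exactly what the paper does, and it makes the whole argument three lines after the BDG step.
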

\begin{proof}
By splitting the stochastic integral in (\ref{integralSHE}) at time $s$, we have
\begin{equs}u(t,x) &=P_tu(0,\cdot)(x) + D_t^u(x) + \int_0^s \int_{\mathbb{T}} p_{t-r}(x,y)\sigma(u(r,y))\xi(dy,dr)\\
&\qquad\qquad\qquad\qquad\qquad+ \int_s^t \int_{\mathbb{T}} p_{t-r}(x,y)\sigma(u(r,y))\xi(dy,dr). \label{utxtocompare}
\end{equs}
Moreover using (\ref{phidef1}) to compute $\phi^{u(s,\cdot),s}(t,x)$ and then (\ref{integraleqn}) to express $u(s,\cdot)$, we get 
\begin{equs}
\phi^{u(s,\cdot),s}(t,x)&= P_{t-s}\Big(P_s u(0,\cdot)+ D_s^u(\cdot) + \int_0^s \int_{\mathbb{T}}p_{s-r}(\cdot,y)\sigma(u(r,y))\xi(dy,dr)\Big)(x)\\
&\qquad\qquad\qquad\qquad\qquad+ \int_s^t \int_{\mathbb{T}}p_{t-r}(x,y)\sigma(\phi^{u(s,\cdot),s}(r,y))\xi(dy,dr)\\
&=P_{t}u(0,\cdot)(x) + P_{t-s}D_s^u(x) + \int_0^s\int_{\mathbb{T}} p_{t-r}(x,y)\sigma(u(r,y))\xi(dy,dr) \\
&\qquad\qquad\qquad\qquad\qquad+ \int_s^t \int_{\mathbb{T}}p_{t-r}(x,y)\sigma(\phi^{u(s,\cdot),s}(r,y))\xi(dy,dr) \label{phiustxtocompare}
\end{equs}
where the last equality follows from the semigroup property that $P_{t-s}P_s = P_t$. Comparing (\ref{utxtocompare}) and (\ref{phiustxtocompare}), we can see that the error of the driftless approximation is given by
\begin{equs}
u(t,x) - \phi^{u(s,\cdot),s}(t,x) &= D_t^u(x) - P_{t-s}D_s^u(x)  \\
&\quad+ 
\int_s^t \int_{\mathbb{T}}p_{t-r}(x,y)\big( \sigma(u(r,y)) -\sigma(\phi^{u(s,\cdot),s}(r,y))\big)\xi(dy,dr).
\end{equs}
Hence by the conditional BDG inequality (see \cref{conditionalBDGforstochintegral}), we get
\begin{equs}
&\|u(t,x) - \phi^{u(s,\cdot),s}(t,x)\|_{L_p | \mathscr{F}_s}^2 \lesssim \|D_t^u(x) - P_{t-s}D_s^u(x)\|_{L_p|\mathscr{F}_s}^2\\
&\qquad\qquad\qquad\qquad+ \int_s^t \int_{\mathbb{T}}p^2_{t-r}(x,y)\big\| \sigma(u(r,y)) -\sigma(\phi^{u(s,\cdot),s}(r,y))\big\|_{L_p | \mathscr{F}_s}^2dydr. \label{uminusphiafterBDG}
\end{equs}
Therefore
\begin{equs}
\|u(t,x) - \phi^{u(s,\cdot),s}(t,x)\|_{L_{p,\infty}^{\mathscr{F}_s}}^2 &\lesssim [D^u]_{\mathscr{V}_p^\beta[s,t]}^2(t-s)^{2\beta}  \\
&+\int_s^t \int_{\mathbb{T}}p^2_{t-r}(x,y)\|u(r,y) - \phi^{u(s,\cdot),s}(r,y)\|_{L_{p,\infty}^{\mathscr{F}_s}}^2 dydr. \label{uminusphitobuckle}
\end{equs}
From (\ref{uminusphiafterBDG}) and the fact that $u \in \mathscr{U}_p^\beta$, we see that 
\begin{equs}
&\sup_{(t,x)\in [0,1]\times \mathbb{T}}\|u(t,x) - \phi^{u(s,\cdot),s}(t,x)\|_{L_{p,\infty}^{\mathscr{F}_s}} \lesssim [D^u]_{\mathscr{V}_p^0} + \|\sigma\|_{\mathbb{B}} <\infty.
\end{equs}
Hence by (\ref{uminusphitobuckle}) and by \cref{Gronwalltypelemma}, we obtain that
$$\sup_{x \in \mathbb{T}}\|u(t,x) - \phi^{u(s,\cdot),s}(t,x)\|_{L_{p,\infty}^{\mathscr{F}_s}}^2 \lesssim [D^u]^2_{\mathscr{V}_p^\beta[s,t]}(t-s)^{2\beta},$$
which implies the desired result.
\end{proof}

\begin{assumption}\label{4pointassumption}
 Let $\alpha \in (-1,0)$, $b \in C^\alpha$, $n \in \mathbb{Z}_{\geq 0}$, and let $\sigma \in C^{n+2}$ such that there exists a constant $\mu > 0$ such that for all $x \in \mathbb{R}$, $\sigma^2(x) > \mu^2$.
 Let $\beta \in [\frac{1}{2} ,1+\frac{\alpha}{4}]$, and suppose that $u^1,u^2$ are regularised solutions of (\ref{SHE}) in the class $\mathscr{U}^\beta$,
 \end{assumption}
 
  For $(s,a) \in [0,1]_{\leq}^2$ consider the $(C(\mathbb{T}))^4$-valued $\mathscr{F}_a$-measurable random variable
\begin{equs}Z :=\Big(\phi^{u^1(s,\cdot),s}(a,\cdot), \phi^{u^2(s,\cdot),s}(a,\cdot), u^1(a,\cdot), u^2(a,\cdot)\Big). \label{defofZ}
\end{equs}
Recall the definitions of $F^{(2)}$ and $\Sigma^{(2)}$ from (\ref{F2def}) and (\ref{Sigma2def}). Moreover for $(t,x) \in [0,1]\times \mathbb{T}$ and $z  = (z_1,\dots, z_4) \in (C(\mathbb{T}))^4$, define
\begin{equs}
F^{(4)}_{q,n}(t,x,{z}) &:= \|\phi^{{z}_1}(t,x) - \phi^{{z}_2}(t,x) - \phi^{{z}_3}(t,x) + \phi^{{z}_4}(t,x)\|_{\bdot{\mathscr{W}}_q^n}, \label{F4def}\\
\Sigma^{(4)}_{q,n}(t,x,{z})&:= \|\sigma(\phi^{{z}_1}(t,x)) - \sigma(\phi^{{z}_2}(t,x)) - \sigma(\phi^{{z}_3}(t,x)) + \sigma(\phi^{{z}_4}(t,x))\|_{\bdot{\mathscr{W}}_q^n}. \label{Sigma4def}
\end{equs}
By \cref{lipschitzflowgeneral}, it follows that the expression in \eqref{F4def} is continuous in $z$. Similarly, by \cref{lipschitzflowgeneral}, the product and chain rule formulas for Malliavin derivatives, it is easy to see that the same holds for the expression in \eqref{Sigma4def}. 
Our next task is to obtain an estimate on $F^{(4)}$ evaluated at $Z$. This estimate is given in the next lemma.

\begin{lemma}[Four point estimate]\label{4pointestimate}
Let Assumption \ref{4pointassumption} hold.
Then for all $p \in [2,\infty)$, there exists a constant $N = N(n,p,\|\sigma\|_{C^{n+2}}, \alpha, \beta)$ such that for all $(S,T) \in [0,1]_{\leq}^2$, $(s,a) \in [S,T]_{\leq}^2$, $ t \in [0, 1 - a+s]$ and $x \in \mathbb{T}$, we have that                
\begin{align*}&\sup_{x \in \mathbb{T}}\|F^{(4)}_{p,n}(t,\,x,Z)\|_{L_{p}}\\
&\leq N (1+\max_{i \in \{1,2\}}[D^{u^i}]_{\mathscr{V}_{2p}^\beta})\Big([u^1,u^2]_{\mathscr{S}_p^{1/2}[S,a]} + \|u^1(S,\cdot) - u^2(S,\cdot)\|_{\mathbb{B}(\mathbb{T}, L_p)}\Big)|a-s|^{\frac{1}{2}}
\end{align*}
where $Z$ is defined by (\ref{defofZ}).
\end{lemma}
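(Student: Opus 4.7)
The plan is to adapt the Duhamel--BDG--Gr\"onwall pattern of the proof of \cref{lipschitzflowgeneral}, now for the antisymmetric four-point combination, leveraging part~\eqref{generalities4point} of \cref{Dsigmaubuckling}. Set $\Psi^z(t,x) := \phi^{z_1}(t,x)-\phi^{z_2}(t,x)-\phi^{z_3}(t,x)+\phi^{z_4}(t,x)$; for deterministic $z \in (C(\mathbb{T}))^4$ one has
\begin{equs}
\Psi^z(t,x) &= P_t\bigl[z_1-z_2-z_3+z_4\bigr](x)\\
&\quad+ \int_0^t\!\!\int_{\mathbb{T}} p_{t-r}(x,y)\bigl[\sigma(\phi^{z_1}(r,y))-\sigma(\phi^{z_2}(r,y))-\sigma(\phi^{z_3}(r,y))+\sigma(\phi^{z_4}(r,y))\bigr]\xi(dy,dr),
\end{equs}
and \cref{CHN3} yields the analogous representation for $\mathscr{D}^n\Psi^z(t,x)$. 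Proceeding by induction on $n$, I would bound the Malliavin norms of the integrand via \cref{Dsigmaubuckling}\eqref{generalities4point}, justified with $\varepsilon=r^{1/4}$ by \cref{posimomentbound}. This produces three kinds of contributions: (i)~a cross product whose $F^{(2)}_{\cdot,i}$ factors collapse to $\sup_w|z_i(w)-z_j(w)|$ by the deterministic half of \cref{lipschitzflowgeneral}; (ii)~lower-order $F^{(4)}_{\cdot,<n}$ terms handled by the induction hypothesis; and (iii)~a same-order $F^{(4)}_{p,n}(r,y,z)$ term absorbed at the end via \cref{Gronwalltypelemma}. After BDG and Gr\"onwall, this delivers the pointwise deterministic bound
\begin{equs}
F^{(4)}_{p,n}(t,x,z) &\lesssim t^{n/4}\sup_w|(z_1-z_2-z_3+z_4)(w)| \\
&\quad +t^{(n+1)/4}\sup_w|(z_1-z_2)(w)|\,\sup_w\bigl(|(z_1-z_3)(w)|+|(z_2-z_4)(w)|\bigr).
\end{equs}

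Next I would substitute $z=Z$ and take the $L_p$-norm, exploiting the explicit structure of $Z$. The antisymmetric combination rewrites as $-\bigl[(u^1(a,\cdot)-\phi^{u^1(s,\cdot),s}(a,\cdot))-(u^2(a,\cdot)-\phi^{u^2(s,\cdot),s}(a,\cdot))\bigr]$, which is precisely the numerator of the $\mathscr{S}_p^{1/2}$-bracket in~\eqref{Spbracket} at $(s,a)$, so $\sup_w\|(\Delta^4 Z)(w)\|_{L_p}\le [u^1,u^2]_{\mathscr{S}_p^{1/2}[S,a]}|a-s|^{1/2}$. For the cross term, H\"older's inequality in $\omega$ combined with \cref{lipschitzflowgeneral} gives $\sup_w\|(Z_1-Z_2)(w)\|_{L_{2p}}\lesssim \|u^1(s,\cdot)-u^2(s,\cdot)\|_{\mathbb{B}(\mathbb{T},L_{2p})}$, while \cref{driftlessapprox} gives $\sup_w\|(Z_1-Z_3)(w)\|_{L_{2p}}\lesssim [D^{u^1}]_{\mathscr{V}_{2p}^\beta[s,a]}|a-s|^\beta$ (similarly for $Z_2-Z_4$). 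Applying \cref{bracketlemma} to split $\|u^1(s,\cdot)-u^2(s,\cdot)\|$ into contributions from $[u^1,u^2]_{\mathscr{S}_p^{1/2}[S,s]}|s-S|^{1/2}$ and $\|u^1(S,\cdot)-u^2(S,\cdot)\|_{\mathbb{B}(\mathbb{T},L_p)}$, and using $|a-s|^\beta \le |a-s|^{1/2}$ (valid since $\beta\ge1/2$), collects everything into the claimed form.

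The delicate point is reconciling the pointwise $\sup_w$ of scalar differences that arises in the deterministic four-point bound with the norms $\sup_w\|(\cdot)(w)\|_{L_q}$ featuring in the $\mathscr{S}$- and $\mathscr{V}$-brackets, and simultaneously balancing the indices $p$ versus $2p$ on the right-hand side. I expect the cleanest route is to run the Gr\"onwall iteration \emph{directly} at the level of $H(t,x):=\|F^{(4)}_{p,n}(t,x,Z)\|_{L_p}$: move $L_p(\Omega)$ inside the space--time convolution by Minkowski and use the conditional BDG (\cref{conditionalBDGforstochintegral}) for the stochastic integral, in direct analogy with the random-$Z$ half of the proof of \cref{lipschitzflowgeneral}. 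The resulting inequality then already carries $\sup_w\|\cdot\|_{L_q}$-type quantities compatible with \cref{lipschitzflowgeneral}, \cref{driftlessapprox} and \cref{bracketlemma}, and the H\"older split at the cross term can be tuned so that exactly one factor is absorbed into $(1+\max_i[D^{u^i}]_{\mathscr{V}_{2p}^\beta})$ while the remaining factor supplies the $\mathscr{S}_p^{1/2}$-bracket and the $\|u^1(S,\cdot)-u^2(S,\cdot)\|_{\mathbb{B}(\mathbb{T},L_p)}$ term.
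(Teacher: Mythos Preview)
Your approach is essentially the paper's: induction on $n$, Duhamel/\cref{CHN3} for the four-point combination, \cref{Dsigmaubuckling}\eqref{generalities4point} on the integrand, then BDG and \cref{Gronwalltypelemma}, with the antisymmetric initial datum identified as the numerator of the $\mathscr{S}_p^{1/2}$-bracket. The paper simply packages the $\Sigma^{(4)}$ estimate (your ``three kinds of contributions'') as a separate auxiliary lemma (\cref{pre4pointlemma}) before running the induction.

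There is one genuine gap, precisely the ``delicate point'' you flag but do not resolve: the integrability bookkeeping at the cross term. An unconditional H\"older split of the product $F^{(2)}(\cdot,Z_1,Z_2)\cdot F^{(2)}(\cdot,Z_l,Z_{l+2})$ forces \emph{both} factors into $L_{2p}$, so you end up with $\|u^1(s,\cdot)-u^2(s,\cdot)\|_{\mathbb{B}(\mathbb{T},L_{2p})}$, which \cref{bracketlemma} cannot downgrade to the $L_p$-quantities $[u^1,u^2]_{\mathscr{S}_p^{1/2}[S,a]}$ and $\|u^1(S,\cdot)-u^2(S,\cdot)\|_{\mathbb{B}(\mathbb{T},L_p)}$ in the claim. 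The paper's fix is to H\"older \emph{conditionally} on $\mathscr{F}_s$: the factor $F^{(2)}_{4p,j}(\cdot,Z_l,Z_{l+2})$ is bounded in $L_{2p,\infty}^{\mathscr{F}_s}$ via \cref{lipschitzflowgeneral} and \cref{driftlessapprox}, producing the $[D^{u^l}]_{\mathscr{V}_{2p}^\beta}(a-s)^\beta$ constant; the remaining factor $F^{(2)}_{4p,i}(\cdot,Z_1,Z_2)$ is then taken in $L_{2p,p}^{\mathscr{F}_s}$, and because $Z_1,Z_2$ are themselves driftless flows started from the $\mathscr{F}_s$-measurable data $u^i(s,\cdot)$, the Markov property (\cref{markovproperty}) rewrites the inner $L_{2p}|\mathscr{F}_s$-norm as $F^{(2)}_{2p,0}(a-s,\cdot,u^1(s,\cdot),u^2(s,\cdot))$, so a \emph{second} application of \cref{lipschitzflowgeneral} with $(p_1,p_2)=(p,p)$ lands you at $\|u^1(s,\cdot)-u^2(s,\cdot)\|_{\mathbb{B}(\mathbb{T},L_p)}$. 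This two-step conditional reduction is the mechanism that ``tunes the H\"older split'' you allude to; without it the indices do not close.
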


To prove the above estimate, we will need the following auxiliary lemma.

\begin{lemma}\label{pre4pointlemma}
Let Assumption \ref{4pointassumption} hold.
Then for all $p \in [2,\infty)$ there exists a constant $N = N(n,p, \|\sigma\|_{C^{n+2}}, \alpha, \beta)$ such that for all  $(s,a) \in [0,1]_{\leq}^2$, $t \in [0,1-a+s]$ and $x \in \mathbb{T}$ we have
\begin{equs}\|\Sigma^{(4)}_{p,n}(t,x,Z)\|_{L_{p}} &\leq N \max_{i \in \{1,2\}}[D^{u^i}]_{\mathscr{V}_{2p}^\beta}\sup_{x \in \mathbb{T}}\|u^1(s,\cdot) - u^2(s,\cdot)\|_{\mathbb{B}(\mathbb{T}, L_p)} (a-s)^{\beta}\\
&\qquad + N\mathbf{1}_{n \geq 1}\sum_{i=0}^{n-1} \|F^{(4)}_{2p,i}(t,x,Z)\|_{L_{p}}  + N\|F_{p,n}^{(4)}(t,x, Z)\|_{L_{p}}.
\end{equs}
\end{lemma}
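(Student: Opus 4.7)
The plan is to apply point \ref{generalities4point} of \cref{Dsigmaubuckling} with $f = \sigma$ and $\phi^i := \phi^{z_i}(t,x)$ for deterministic $z = (z_1, \ldots, z_4) \in (C(\mathbb{T}))^4$. By \cref{posimomentbound}, the bound $\|\phi^{z_i}(t,x)\|_{\bdot{\mathscr{W}}_p^k} \leq N t^{k/4}$ holds for $k \in \{1, \ldots, n\}$, so taking $\varepsilon = t^{1/4}$ yields the deterministic inequality
\begin{equs}
\Sigma^{(4)}_{p,n}(t,x,z) &\leq N \sum_{i+j+k=n} F^{(2)}_{4p,i}(t,x,z_1,z_2)\bigl[F^{(2)}_{4p,j}(t,x,z_1,z_3) + F^{(2)}_{4p,j}(t,x,z_2,z_4)\bigr] t^{k/4}\\
&\quad + N\mathbf{1}_{n\geq 1}\sum_{i=0}^{n-1}F^{(4)}_{2p,i}(t,x,z)\,t^{(n-i)/4} + N F^{(4)}_{p,n}(t,x,z).
\end{equs}
Substituting $z = Z$ gives an almost sure inequality; taking $L_p$-norms and using $t \leq 1$, the last two groups of terms directly match the corresponding contributions in the target bound, so only the quadratic sum requires further work.

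For a generic term of the quadratic sum, set $A := F^{(2)}_{4p,i}(t,x,Z_1,Z_2)$ and $B := F^{(2)}_{4p,j}(t,x,Z_1,Z_3)$. Conditional Cauchy--Schwarz on $\mathscr{F}_s$, followed by extracting the essential supremum of $\|B\|_{L_{2p}|\mathscr{F}_s}$ from the outer integration, yields the asymmetric H\"older-type estimate
\begin{equs}
\|AB\|_{L_p} \leq \|A\|_{L_{2p,p}^{\mathscr{F}_s}} \cdot \|B\|_{L_{2p,\infty}^{\mathscr{F}_s}}.
\end{equs}
For the $B$-factor, \cref{lipschitzflowgeneral} with $(\mathscr{G}, p_1, p_2) = (\mathscr{F}_s, 2p, \infty)$ combined with \cref{driftlessapprox} applied to $Z_1 - Z_3 = \phi^{u^1(s,\cdot),s}(a,\cdot) - u^1(a,\cdot)$ gives $\|B\|_{L_{2p,\infty}^{\mathscr{F}_s}} \leq N\,t^{j/4}\,[D^{u^1}]_{\mathscr{V}_{2p}^\beta}(a-s)^\beta$. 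For the $A$-factor, \cref{lipschitzflowgeneral} with $(\mathscr{G}, p_1, p_2) = (\mathscr{F}_s, 2p, p)$ followed by the time-shifted analogue of the same Lipschitz estimate (applied to the driftless equation starting at time $s$ from the $\mathscr{F}_s$-measurable initial data $u^1(s,\cdot), u^2(s,\cdot)$) gives $\|A\|_{L_{2p,p}^{\mathscr{F}_s}} \leq N\,t^{i/4}\sup_x \|u^1(s,x) - u^2(s,x)\|_{L_{2p,p}^{\mathscr{F}_s}}$. The crucial reduction $\|X\|_{L_{2p,p}^{\mathscr{F}_s}} = \|X\|_{L_p}$ for $\mathscr{F}_s$-measurable $X$ then collapses this to $N\,t^{i/4}\sup_x\|u^1(s,x) - u^2(s,x)\|_{L_p}$. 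Summing over $i+j+k = n$ (absorbing $t^{k/4} \leq 1$) and handling the $(Z_2, Z_4)$-term symmetrically with $[D^{u^2}]_{\mathscr{V}_{2p}^\beta}$ produces the first term of the target bound.

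The main technical point is the mismatch of moment exponents in the target inequality: the driftless-approximation factor carries $L_{2p}$ while the initial-data difference carries only $L_p$. Matching these forces the asymmetric choice $(p_1, p_2) = (2p, \infty)$ on the factor controlled by \cref{driftlessapprox} versus $(p_1, p_2) = (2p, p)$ on the factor controlled by the shifted Lipschitz estimate, which is exactly what allows $\mathscr{F}_s$-measurability of $u^1(s,\cdot) - u^2(s,\cdot)$ to collapse the latter conditional norm to the desired plain $L_p$-norm and close the estimate.
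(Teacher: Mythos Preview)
Your proof is correct and follows essentially the same route as the paper: both apply point \ref{generalities4point} of \cref{Dsigmaubuckling} with $\varepsilon=t^{1/4}$ (justified by \cref{posimomentbound}), then control the quadratic $F^{(2)}$-products via the asymmetric conditional H\"older splitting $L_{2p,p}^{\mathscr{F}_s}\times L_{2p,\infty}^{\mathscr{F}_s}$, bounding the $(Z_l,Z_{l+2})$-factor by \cref{lipschitzflowgeneral} and \cref{driftlessapprox}, and the $(Z_1,Z_2)$-factor by two nested applications of the Lipschitz estimate (the paper phrases the second as \cref{markovproperty} followed by \cref{lipschitzflowgeneral}, whereas you call it the time-shifted analogue plus the $\mathscr{F}_s$-measurability collapse $\|X\|_{L_{2p,p}^{\mathscr{F}_s}}=\|X\|_{L_p}$). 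The only cosmetic difference is that the paper separates the $n=0$ case and uses \eqref{4pointidentity} directly there, while you handle all $n$ uniformly through point \ref{generalities4point}.
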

\begin{proof}
We begin by proving the result for $n \geq 1$. By point (\ref{generalities4point}) in \cref{Dsigmaubuckling} (which we can apply with $\varepsilon = t^{1/4} \in [0,1]$ by \cref{posimomentbound}) and by the triangle inequality and H\"older's inequality we have that
 \begin{equs}&\|\Sigma_{p,n}^{(4)}(t,x,Z)\|_{L_p|\mathscr{F}_s}\\
 &\lesssim \sum_{i+j \leq n}  \big\|F_{4p,i}^{(2)}(t,x,Z_1,Z_2)\big\|_{L_{2p}|\mathscr{F}_s} \Big( \sum_{l \in \{1,2\}}\big\|F_{4p,j}^{(2)}(t,x, Z_l, Z_{l+2}) \big\|_{L_{2p}|\mathscr{F}_s} \Big)\\
 &\qquad + \sum_{i=0}^{n-1} \big\| F^{(4)}_{2p,i}(t,x,Z)\big\|_{L_p|\mathscr{F}_s} + \|F_{p,n}^{(4)}(t,x,Z)\|_{L_p|\mathscr{F}_s}\\
 & =: A+B+C.
 \end{equs}
 We can immediately see that $\|B\|_{L_p}, \|C\|_{L_p}$ can be estimated by the second and third terms of the desired bound. We proceed with showing that $\|A\|_{L_p}$ can be estimated by the 
 first term of the desired bound. To this end, note that by \cref{lipschitzflowgeneral} and by \cref{driftlessapprox} we have for $l=1,2$, uniformly in $j \in \{0,\dots, n\}$ that
 \begin{equs}
 \big\| F^{(2)}_{4p,j}(t,x,Z_l,Z_{l+2})\big\|_{L_{2p,\infty}^{\mathscr{F}_s}} &\lesssim \sup_{x \in \mathbb{T}}\|Z_l(x) - Z_{l+2}(x)\|_{L_{2p,\infty}^{\mathscr{F}_s}}\\
 & = \sup_{x \in \mathbb{T}}\|\phi^{u^l(s,\cdot),s}(a,x) - u^l(a,x)\|_{L_{2p,\infty}^{\mathscr{F}_s}}\\
   &\lesssim (a-s)^{\beta}\max_{l \in \{1,2\}}[D^{u^l}]_{\mathscr{V}_{2p}^\beta[s,a]}.
 \end{equs}
Therefore
\begin{equs}
A \lesssim (a-s)^{\beta}\max_{l \in \{1,2\}}[D^{u^l}]_{\mathscr{V}_{2p}^\beta}\sum_{i+j \leq n} \| F^{(2)}_{4p,i}(t,x,Z_1,Z_2) \|_{L_{2p}|\mathscr{F}_s} . \label{AboundedbyFtimestpowerD}
\end{equs}
Applying \cref{lipschitzflowgeneral} with $q = 4p$, $(p_1,p_2) = (2p,p)$, $\mathscr{G} = \mathscr{F}_s$, we get
\begin{equs}
\|F_{4p,i}^{(2)}(t,x,Z_1,Z_2)\|_{L_{2p, p}^{\mathscr{F}_s}} 
&\lesssim t^{i/4}\sup_{x \in \mathbb{T}}\|Z_1(x) - {Z}_2(x)\|_{L_{2p,p}^{\mathscr{G}}}\\
&= t^{i/4}\sup_{x \in \mathbb{T}}\|\phi^{u^1(s,\cdot), s}(a,x) - \phi^{u^2(s,\cdot), s}(a,x)\|_{L_{2p,p}^{\mathscr{G}}}\\
& = t^{i/4}\sup_{x \in \mathbb{T}}\| F_{2p, 0}(a-s, x, u^1(s,\cdot), u^2(s,\cdot))\|_{L_p},
\end{equs}
for all $i \in \{0,\dots,n\}$, where the last equality holds by \cref{markovproperty}. So applying \cref{lipschitzflowgeneral} with $q = 2p$, $p_1 = p_2 = p$, and with an arbitrary sub-$\sigma$-algebra $\mathscr{G}\subset \mathscr{F}$, we get from the above inequality that
\begin{equs}
\|F_{4p,i}^{(2)}(t,x,Z_1,Z_2)\|_{L_{2p,p}^{\mathscr{F}_s}} \lesssim \sup_{x \in \mathbb{T}}\|u^1(s,x) - u^2(s,x)\|_{L_p},
\end{equs}
and the bound is uniform in $i \in \{0,\dots, n\}$.
 Now taking the $L_p$-norm on (\ref{AboundedbyFtimestpowerD}),and applying the above inequality, we get that
\begin{equs}
\|A\|_{L_p}& \lesssim
(a-s)^{\beta}\max_{l \in \{1,2\}}[D^{u^l}]_{\mathscr{V}_{2p}^\beta}\sum_{i+j+k} \|F_{4p,i}^{(2)}(t,x,Z_1,Z_2)\|_{L_{2p,p}^{\mathscr{F}_s}}
\\&\lesssim (a-s)^{\beta}\max_{l \in \{1,2\}}[D^{u^l}]_{\mathscr{V}_{2p}^\beta}\sup_{x \in \mathbb{T}}\|u^1(s,x) - u^2(s,x)\|_{L_p}.
\end{equs}
which finishes the proof for the $n \geq 1$ case. Finally, for the $n=0$ case, note that using (\ref{4pointidentity}), we have
\begin{equs}
\|\Sigma_{p,n}^{(4)}(t,x,Z)\|_{L_p|\mathscr{F}_s} &\leq \|F_{2p,n}^{(4)}(t,x,Z_1,Z_2)\|_{L_{2p}|\mathscr{F}_s}\sum_{l \in \{1,2\}}\| F_{2p,j}^{(2)}(t,x,Z_{l}, Z_{l+2})\|_{L_{2p}|\mathscr{F}_s}\\
&\qquad\qquad\qquad\qquad\qquad\qquad\qquad+ \|F_{p,0}^{(4)}(t,x,Z)\|_{L_p|\mathscr{F}_s}\\
& = : \tilde{A} + \tilde{C}.
\end{equs}
By estimating $\|\tilde{A}\|_{L_p}$ and $\|\tilde{C}\|_{L_p}$ the same way as we did for $\|A\|_{L_p}$ and $\|C\|_{L_p}$ respectively, one can show that the desired result also holds for $n=0$, which finishes the proof.
\end{proof}
We are now in position to prove \cref{4pointestimate}.
\begin{proof}[Proof of \cref{4pointestimate}]
We begin by confirming that $\sup_{(t,x) \in [0,1]\times\mathbb{T}}\|F^{(4)}_{p,n}(t,x,Z)\|_{L_{p}} < \infty$.
This is indeed true, since by the triangle inequality, \cref{lipschitzflowgeneral} and \cref{driftlessapprox} we have
\begin{equs}
\sup_{x \in \mathbb{T}}\|F^{(4)}_{p,n}(t,x,Z)\|_{L_{p}} &\leq \sum_{i \in \{1,2\}}\| F_{p,n}^{(2)}\big(t,x, \phi^{u^i(s,\cdot),s}(a,\cdot), u^i(a,\cdot)\big)\|_{L_{p}}\\
&\lesssim \sum_{i \in \{1,2\}}t^{\frac{n}{4}}\sup_{x \in \mathbb{T}}\| \phi^{u^i(s,\cdot),s}(a,x)- u^i(a,x)\|_{L_{p}}
\\ & \lesssim t^{\frac{n}{4}}(a-s)^{\beta}\max_{i \in \{1,2\}}[D^{u^i}]_{\mathscr{V}_p^\beta}. \label{gnisfinite}
\end{equs}
The rest of the proof will be done by induction.

\underline{Step 1:} We prove that the statement holds for $n=0$. By
(\ref{phidef1}) we have for $z \in (C(\mathbb{T}))^4$ that
\begin{equs}
&(\phi^{z_1} - \phi^{z_2} - \phi^{z_3} + \phi^{z_4})(t,x)= P_{t}(z_1 -z_2 -z_3 + z_4)(x)\\
&+ \int_0^{t}\int_{\mathbb{T}} p_{t-r}(x,y)\big(\sigma(\phi^{z_1}(r,y)) -\sigma(\phi^{z_2}(r,y)) - \sigma(\phi^{z_3}(r,y))+\sigma(\phi^{z_4}(r,y))\big)\xi(dy,dr).
\end{equs}
Therefore by the BDG inequality
\begin{equs}&F_{p,0}^{(4)}(t,x,z)\\
&\lesssim P_t(z_1 - z_2 - z_3 + z_4)(x)+ \Big(\int_0^{t}\int_{\mathbb{T}}p_{t-r}^2(x,y)\big|\Sigma_{p,0}^{(4)}(r,y,z)\big|^2 dydr\Big)^{1/2}\\
&=: A(z) + B(z). \label{F4isAplusB}
\end{equs}
By the definition of the $\mathscr{S}_p^{1/2}$-bracket, we have
\begin{equs}
\|A(Z)\|_{L_{p}}&\leq \sup_{x \in \mathbb{T}}\| \phi^{u^1(s,\cdot),s}(a,x) - \phi^{u^2(s,\cdot),s}(a,x) - u^1(a,x) + u^2(a,x)\|_{L_p}\\
&\lesssim [u^1,u^2]_{\mathscr{S}_p^{1/2}[s,a]}(a-s)^{1/2}.
\end{equs}
Moreover using \cref{pre4pointlemma}, one can show that
\begin{align*}
\|B(Z)\|_{L_{p}}
&\lesssim \max_{i \in \{1,2,\}}[D^{u^i}]_{\mathscr{V}_{2p}^\beta}\sup_{x \in \mathbb{T}}\|u^1(s,x) - u^2(s,x)\|_{L_p}(a-s)^{\beta} \\
&\qquad + \Big(\int_0^{t}\int_{\mathbb{T}}p_{t-r}^2(x,y)\|F_{p,0}^{(4)}(r,y,Z)\|_{L_{p}}^2dydr\Big)^{1/2}.
\end{align*}
Using the above bounds on $A,B$, the decomposition (\ref{F4isAplusB}), and the assumption that we have $\beta \geq 1/2$, we can see that
\begin{align*}
&\|F^{(4)}_{p,0}\big(t,x,Z\big)\|_{L_{p}}^2 \\
&\lesssim \Big|(1+\max_{i \in \{1,2,\}}[D^{u^1}]_{\mathscr{V}_{2p}^\beta})\big([u^1,u^2]_{\mathscr{S}_p^{1/2}[S,a]} + \|u^1(S,\cdot) - u^2(S,\cdot)\|_{\mathbb{B}(\mathbb{T}, L_p)}\big)(a-s)^{1/2}\Big|^2\\
&\qquad + \int_0^{t}\int_{\mathbb{T}}p_{t-r}^2(x,y)\|F^{(4)}_{p,0}(r,y,Z)\|_{L_{p}}^2dydr.
\end{align*}
Since by assumption $u^1,u^2 \in \mathscr{U}_{2p}^\beta$, we have $\max_{i \in \{1,2\}}[D^{u^i}]_{\mathscr{V}_{2p}^\beta} <\infty$.
Therefore by (\ref{gnisfinite}) we have that the norm in the integrand is bounded in $(r,y)$.
Hence using \cref{Gronwalltypelemma} finishes the proof for the $n=0$ case.

\underline{Step 2:} Let $n \in \mathbb{N}$ and suppose that the statement holds for $F_{p,0}^{(4)},\dots, F^{(4)}_{p,n-1}$ for all $p \geq 2$. We aim to show that then the result also holds for $F^{(4)}_{p,n}$ for all $p \geq 2$.
Let $\gamma  = (\theta_i,\zeta_i)_{i=1}^n\in ([0,1]\times \mathbb{T})^n$ and $z \in (C(\mathbb{T}))^4$. Then by Proposition \ref{CHN3} we have that
\begin{equs}
&\mathscr{D}_\gamma^n(\phi^{z_1} - \phi^{z_2} - \phi^{z_3} + \phi^{z_4})(t,x)\\
&= \mathbf{1}_{[0,t]}(\theta^*)\sum_{k=1}^n p_{t-\theta_k}(x,\zeta_k)\\
&\qquad\qquad \times\mathscr{D}_{\hat{\gamma}_k}^{n-1}\big[
\sigma(\phi^{z_1}(\theta_k,\zeta_k)) -\sigma(\phi^{z_2}(\theta_k,\zeta_k))-\sigma(\phi^{z_3}(\theta_k,\zeta_k)) +\sigma(\phi^{z_4}(\theta_k,\zeta_k))\big]\\
& + \mathbf{1}_{[0,t]}(\theta^*)\int_{0}^{t} p_{t-r}(x,y)\\
&\qquad\qquad \times\mathscr{D}_\gamma^n\big[\sigma(\phi^{z_1}(r,y)) -\sigma(\phi^{z_2}(r,y)) - \sigma(\phi^{z_3}(r,y)) + \sigma(\phi^{z_4}(r,y))
\big]\xi(dy,dr).
\end{equs}
 Taking the $\|\cdot\|_{L_p(\Omega;H^{\otimes n})}$-norm of both sides and using the BDG inequality gives
\begin{equs}
&F^{(4)}_{p,n}(t,x,z_1,z_2,z_3,z_4)\\
&\lesssim \|\mathbf{1}_{[0,t]}(\cdot)p_{t-\cdot}(x,\cdot) \mathscr{D}^{n-1}[\sigma(\phi^{z_1}(\cdot,\cdot)) - \sigma(\phi^{z_2}(\cdot,\cdot)) - \sigma(\phi^{z_3}(\cdot,\cdot))+ \sigma(\phi^{z_4}(\cdot,\cdot))]\|_{L_p(\Omega;H^{\otimes n})}\\
&\qquad\qquad+ \Big(
\int_0^t\int_{\mathbb{T}} p_{t-r}^2(x,y)|\Sigma_{p,n}^{(4)}(r,y,z)|^2dydr
\Big)^{1/2}\\
& =: A(z_1,z_2,z_3,z_4)+B(z_1,z_2,z_3,z_4). \label{decompositionof4point}
\end{equs}
  We begin by bounding $A$.
Note that
\begin{equs}\qquad& A(z)
\lesssim \\
&\|\mathbf{1}_{[0,t]}(\cdot)p_{t-\cdot}(x,\cdot)\|\|\mathscr{D}^{n-1}[\sigma(\phi^{z_1}(\cdot,\cdot)) - \sigma(\phi^{z_2}(\cdot,\cdot)) - \sigma(\phi^{z_3}(\cdot,\cdot))+ \sigma(\phi^{z_4}(\cdot,\cdot))]\|_{H^{\otimes (n-1)}}\|_{L_p}\|_{H}\\
& = \|\mathbf{1}_{[0,t]}(\cdot)p_{t-\cdot}(x,\cdot)\Sigma^{(4)}_{p,n-1}(\cdot,\cdot,z)\|_H,
\end{equs}
and thus (recalling the definition of  $Z$ from (\ref{defofZ})) we have $A(Z) \lesssim \|\mathbf{1}_{[0,t]}(\cdot)p_{t-\cdot}(x,\cdot)\Sigma^{(4)}_{p,n-1}(\cdot,\cdot,Z)\|_H$. Hence using the Minkowski inequality we obtain that
\begin{equs}
\|A(Z)\|_{L_{p}} 
&\lesssim  \|\mathbf{1}_{[0,t](\cdot)} p_{t-\cdot}(x,\cdot) \|\Sigma^{(4)}_{p,n-1}(\cdot,\cdot,Z)\|_{L_{p}}\|_H\\
&\leq \|\mathbf{1}_{[0,t]}(\cdot)p_{t-\cdot}(x,\cdot)\|_{H}\sup_{(\theta,\zeta) \in [0,t]\times \mathbb{T}}\|\Sigma^{(4)}_{p,n-1}(\theta,\zeta,Z)\|_{L_{p}}.
\end{equs}
To bound the first factor, we note that $\|\mathbf{1}_{[0,t]}(\cdot)p_{t-\cdot}(x,\cdot)\|_{H} \lesssim t^{1/4} \leq 1$,
and to bound the second factor we use that by \cref{pre4pointlemma} and by the induction  hypothesis, for $(\theta,\zeta) \in [0,t]\times \mathbb{T}$ we have that
\begin{equs}&\|\Sigma^{(4)}_{p,n-1}(\theta,\zeta,Z)\|_{L_{p}}\\
&\lesssim \max_{i \in \{1,2\}}[D^{u^i}]_{\mathscr{V}_p^\beta}\|u^1(s,\cdot) - u^2(s,\cdot)\|_{\mathbb{B}(\mathbb{T}, L_p)} (a-s)^{\beta} + \sum_{i=0}^{n-1} \|F^{(4)}_{2p,i}(\theta,\zeta,Z)\|_{L_{p}} \\
&\lesssim (1+ \max_{i \in \{1,2\}}[D^{u^i}]_{\mathscr{V}_{2p}^\beta})\Big([u^1,u^2]_{\mathscr{S}_p^{1/2}[S,a]} + \|u^1(S,\cdot) - u^2(S,\cdot)\|_{\mathbb{B}(\mathbb{T}, L_p)}\Big) (a-s)^{\frac{1}{2}},
\end{equs}
where we used Lemma \ref{bracketlemma} and that by assumption we have $\beta \geq 1/2$.
Hence we can see that
$$\|A(Z)\|_{L_{p}} \lesssim (1+ \max_{i \in \{1,2\}}[D^{u^i}]_{\mathscr{V}_{2p}^\beta})\Big([u^1,u^2]_{\mathscr{S}_p^{1/2}[S,a]} + \|u^1(S,\cdot) - u^2(S,\cdot)\|_{\mathbb{B}(\mathbb{T}, L_p)}\Big)(a-s)^{\frac{1}{2}}.$$
 We proceed with bounding $B$. Note that
\begin{equs}
\|B(Z)\|_{L_{2,p}^{\mathscr{F}_s}} \lesssim \Big(\int_{0}^{t} \int_{\mathbb{T}} p_{t-r}^2(x,y) \| \Sigma_{q,n}^{(4)}(r,y,Z)\|_{L_{2,p}^{\mathscr{F}_s}}^2 dydr \Big)^{1/2}. \label{boundonBZ}
\end{equs}
By \cref{pre4pointlemma} and the induction  hypothesis we have for all $(r,y) \in [0,t]\times \mathbb{T}$ that 
\begin{equs}
&\|\Sigma^{(4)}_{p,n}(r,y,Z)\|_{L_{p}}\\
&\lesssim \max_{i \in \{1,2\}}[D^{u^i}]_{\mathscr{V}_{2p}^\beta}\|u^1(s,\cdot) - u^2(s,\cdot)\|_{\mathbb{B}(\mathbb{T}, L_p)} (a-s)^{\beta}\\
&\qquad + \sum_{i=0}^{n-1} \|F^{(4)}_{2p,i}(r,y,Z)\|_{L_{p}} + \|F^{(4)}_{p,n}(r,y,Z)\|_{L_{p}}\\
&\lesssim (1+ \max_{i \in \{1,2\}}[D^{u^i}]_{\mathscr{V}_{2p}^\beta})\Big([u^1,u^2]_{\mathscr{S}_p^{1/2}[S,a]} + \|u^1(S,\cdot) - u^2(S,\cdot)\|_{\mathbb{B}(\mathbb{T}, L_p)}\Big) (a-s)^{1/2}\\
&\qquad\qquad+\|F_{p,n}^{(4)}(r,y,Z)\|_{L_{p}}.
\end{equs}
where we again used the assumption that $\beta \geq \frac{1}{2}$. Putting this into (\ref{boundonBZ}), we can see that
\begin{equs}
&\|B(Z)\|_{L_{p} }\\
&\lesssim (1+ \max_{i \in \{1,2\}}[D^{u^i}]_{\mathscr{V}_{2p}^\beta})\Big([u^1,u^2]_{\mathscr{S}_p^{1/2}[S,a]} + \|u^1(S,\cdot) - u^2(S,\cdot)\|_{\mathbb{B}(\mathbb{T}, L_p)}\Big) (a-s)^{1/2}\\
&\qquad+ \Big(\int_{0}^{t}\int_{\mathbb{T}} p_{t-r}^2(x,y)\|F^{(4)}_{p,n}(r,y,Z)\|_{L_{p}}^2 dydr\Big)^{1/2}.
\end{equs}
By our bounds on $A,B$ we may conclude that
\begin{equs}&\|F^{(4)}_{p,n}(t,x,Z)\|_{L_{p}}\\
&\lesssim (1+ \max_{i \in \{1,2\}}[D^{u^i}]_{\mathscr{V}_{2p}^\beta})\Big([u^1,u^2]_{\mathscr{S}_p^{1/2}[S,a]} + \|u^1(S,\cdot) - u^2(S,\cdot)\|_{\mathbb{B}(\mathbb{T}, L_p)}\Big)(a-s)^{1/2} \\
&\qquad+ \Big(\int_{0}^{t}\int_{\mathbb{T}} p_{t-r}^2(x,y)\|F^{(4)}_{p,n}(r,y,Z)\|_{L_{p}}^2 dydr\Big)^{1/2}.
\end{equs}
By (\ref{gnisfinite})  the norm in the integrand is bounded in $(r,y)$. Hence by \cref{Gronwalltypelemma} the proof is finished.
\end{proof}

\begin{lemma}[Four point BDG inequality for driftless approximations]\label{4pointBDG}
Let $p \in [2,\infty)$, $\sigma \in C^2$, $\alpha \in (-1,0)$, $\beta \in [0, 1+\alpha/4]$ and for $i=1,2$, let $b^i \in C^\alpha$ and suppose that $u^i \in \mathscr{U}^\beta$ are regularised solutions of
$$(\partial_t - \Delta)u^i = b^i(u^i) + \sigma(u^i)\xi(dy,dr).$$ There exists a constant $N = N(p, \|\sigma\|_{C^2},  \varepsilon, \alpha, \beta)$ such that for $(s,t) \in [0,1]_{\leq}^2$ we have
\begin{equs}&\Big\|\int_s^t \int_{\mathbb{T}} p_{t-r}(x,y)\\
&\qquad\times\Big(\sigma(u^1(r,y)) - \sigma(u^2(r,y))- \sigma(\phi^{u^1(s,\cdot),s}(r,y)) + \sigma(\phi^{u^2(s,\cdot),s}(r,y))\Big) \xi(dy,dr)\Big\|_{L_p}
\\
&\leq 
N[D^{u^1}]_{\mathscr{V}_{2p}^\beta} \|u^1(s,\cdot) -u^2(s,\cdot)\|_{\mathbb{B}(\mathbb{T}, L_p)}(t-s)^{ \frac{1}{4}+\beta}\\
&\qquad+ N\Big(\int_s^t\int_{\mathbb{T}} p_{t-r}^2(x,y)\big\| u^1(r,y) - u^2(r,y) - \phi^{u^1(s,\cdot),s}(r,y) + \phi^{u^2(s,\cdot),s}(r,y)\big\|_{L_p}^2 dydr \Big)^{1/2}.
\end{equs}
\end{lemma}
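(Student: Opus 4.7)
The plan is to start from the conditional Burkholder--Davis--Gundy inequality, which reduces the $L_p$-norm of the stochastic integral to a heat-kernel-weighted integral of $\|\Xi(r,y)\|_{L_p}^2$, where
\begin{equs}
\Xi := \sigma(u^1) - \sigma(u^2) - \sigma(\phi^{u^1(s,\cdot),s}) + \sigma(\phi^{u^2(s,\cdot),s}).
\end{equs}
Concretely,
\begin{equs}
\Big\|\int_s^t\int_{\mathbb{T}} p_{t-r}(x,y)\Xi(r,y)\xi(dy,dr)\Big\|_{L_p}^2 \lesssim \int_s^t\int_{\mathbb{T}} p_{t-r}^2(x,y) \|\Xi(r,y)\|_{L_p}^2 \,dy\,dr.
\end{equs}
It then suffices to bound $\|\Xi(r,y)\|_{L_p}$ pointwise in a way that isolates a piece controlled by the four-point $u$-difference $\Upsilon := u^1 - u^2 - \phi^{u^1(s,\cdot),s} + \phi^{u^2(s,\cdot),s}$ from a remainder of order $(r-s)^\beta$ carrying the correct prefactor.

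The key algebraic step is a double mean value theorem. Applying the integral MVT separately to $\sigma(u^1) - \sigma(u^2)$ and to $\sigma(\phi^{u^1(s,\cdot),s}) - \sigma(\phi^{u^2(s,\cdot),s})$, with
\begin{equs}
\bar\sigma_a := \int_0^1\!\sigma'(\theta u^1 + (1-\theta)u^2)\,d\theta,\quad \bar\sigma_b := \int_0^1\!\sigma'(\theta\phi^{u^1(s,\cdot),s} + (1-\theta)\phi^{u^2(s,\cdot),s})\,d\theta,
\end{equs}
and telescoping, yields the identity $\Xi = \bar\sigma_a \Upsilon + (\bar\sigma_a - \bar\sigma_b)(\phi^{u^1(s,\cdot),s} - \phi^{u^2(s,\cdot),s})$. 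A further integral MVT applied to $\sigma'$ then produces $|\bar\sigma_a - \bar\sigma_b| \lesssim \sum_{i=1,2}|u^i - \phi^{u^i(s,\cdot),s}|$ up to $\|\sigma\|_{C^2}$.

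Next, I take $L_p$-norms and apply H\"older's inequality together with two earlier estimates: \cref{driftlessapprox} gives $\|u^i - \phi^{u^i(s,\cdot),s}(r,y)\|_{L_{2p}} \lesssim [D^{u^i}]_{\mathscr{V}_{2p}^\beta}(r-s)^\beta$, and \cref{lipschitzflowgeneral} (invoked with $n=0$, $q=2p$, $\mathscr{G}=\mathscr{F}_s$ and using the Markov property of the driftless flow) gives $\|\phi^{u^1(s,\cdot),s}(r,y) - \phi^{u^2(s,\cdot),s}(r,y)\|_{L_{2p}} \lesssim \|u^1(s,\cdot) - u^2(s,\cdot)\|_{\mathbb{B}(\mathbb{T},L_{2p})}$. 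Combining, one obtains a pointwise bound of the form
\begin{equs}
\|\Xi(r,y)\|_{L_p} \lesssim \|\Upsilon(r,y)\|_{L_p} + [D^{u^1}]_{\mathscr{V}_{2p}^\beta}\|u^1(s,\cdot) - u^2(s,\cdot)\|_{\mathbb{B}}(r-s)^\beta
\end{equs}
after absorbing the symmetric $i=2$ contribution into the constant. Substituting into the BDG integral, the $\Upsilon$-piece becomes the second summand of the RHS, while the remainder combined with the standard beta-function computation $\int_s^t(t-r)^{-1/2}(r-s)^{2\beta}\,dr \lesssim (t-s)^{1/2+2\beta}$, followed by a square root, produces the first summand with the claimed exponent $1/4+\beta$.

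The main subtlety is choosing the right decomposition of $\Xi$. A direct Taylor expansion of each $\sigma(u^i) - \sigma(\phi^{u^i(s,\cdot),s})$ around $\phi^{u^i(s,\cdot),s}$ would leave a quadratic remainder of size $|u^i - \phi^{u^i(s,\cdot),s}|^2$, whose heat-kernel integration contributes a term of order $[D^{u^i}]_{\mathscr{V}_{2p}^\beta}^2(t-s)^{1/4+\beta}$ that fits neither summand of the stated bound. The cross-pivoted double MVT above keeps the remainder \emph{linear} in each $u^i - \phi^{u^i(s,\cdot),s}$ while factorising out the flow difference $\phi^{u^1(s,\cdot),s} - \phi^{u^2(s,\cdot),s}$, which is precisely the structure needed to match the prefactor on the right-hand side.
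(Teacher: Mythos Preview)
Your skeleton matches the paper's: BDG, then a four-point comparison for $\sigma$, then H\"older plus the driftless approximation and Lipschitz-flow estimates. Two details, however, prevent your argument from recovering the statement exactly as written.

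First, your symmetric telescoping yields a cross term $(\bar\sigma_a-\bar\sigma_b)(\phi^1-\phi^2)$ with $|\bar\sigma_a-\bar\sigma_b|\lesssim |u^1-\phi^1|+|u^2-\phi^2|$, so after the estimates the prefactor is $[D^{u^1}]_{\mathscr{V}_{2p}^\beta}+[D^{u^2}]_{\mathscr{V}_{2p}^\beta}$, not just $[D^{u^1}]$; the $i=2$ piece cannot be ``absorbed into the constant'' since $[D^{u^2}]_{\mathscr{V}_{2p}^\beta}$ is not a constant. The paper instead uses the asymmetric inequality \eqref{4pointinequality} of \cref{4pointcompar}, whose cross term is $|\phi_1-\phi_2||\phi_1-\phi_3|$ and hence, with the choice $(\phi_1,\phi_2,\phi_3,\phi_4)=(\phi^{u^1(s,\cdot),s},\phi^{u^2(s,\cdot),s},u^1,u^2)$, produces only $[D^{u^1}]$. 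Downstream the lemma is always invoked with $\max_i[D^{u^i}]$, so this is cosmetic, but it does not match the stated bound.

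Second, and more substantively, a plain H\"older in $L_p$ gives $\|\phi^1-\phi^2\|_{L_{2p}}\lesssim\|u^1(s,\cdot)-u^2(s,\cdot)\|_{\mathbb{B}(\mathbb{T},L_{2p})}$, whereas the statement has $L_p$ on the initial difference. To land on $L_p$ you must exploit the conditional structure: by the tower rule and conditional H\"older,
\begin{equs}
\big\|(u^1-\phi^1)(\phi^1-\phi^2)\big\|_{L_p}\le\Big\|\,\|u^1-\phi^1\|_{L_{2p}|\mathscr{F}_s}\,\|\phi^1-\phi^2\|_{L_{2p}|\mathscr{F}_s}\Big\|_{L_p},
\end{equs}
then pull out the first factor as the almost-sure bound $[D^{u^1}]_{\mathscr{V}_{2p}^\beta}(r-s)^\beta$ supplied by \cref{driftlessapprox} (which controls the $L_{2p,\infty}^{\mathscr{F}_s}$ norm), and finally apply \cref{markovproperty} together with \cref{lipschitzflowgeneral} with outer exponents $(p_1,p_2)=(p,p)$ to the remaining $\big\|\,\|\phi^1-\phi^2\|_{L_{2p}|\mathscr{F}_s}\big\|_{L_p}$. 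This is exactly the route the paper takes and which your sketch gestures at (you mention $\mathscr{G}=\mathscr{F}_s$ and the Markov property) but does not carry through.
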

\begin{proof}
By the BDG inequality and by (\ref{4pointinequality}) in \cref{4pointcompar} we have
\begin{equs}
&\Big\|\int_s^t \int_{\mathbb{T}} p_{t-r}(x,y)\Big(\sigma(u^1(r,y))- \sigma(u^2(r,y))  \\
&\qquad\qquad\qquad\qquad -\sigma(\phi^{u^1(s,\cdot),s}(r,y)) + \sigma(\phi^{u^2(s,\cdot),s}(r,y))\Big) \xi(dy,dr)\Big\|_{L_p}^2\\
&\lesssim \int_s^t \int_{\mathbb{T}}p_{t-r}^2(x,y)\Big\|\sigma(u^1(r,y))- \sigma(u^2(r,y))\\
&\qquad\qquad\qquad\qquad - \sigma(\phi^{u^1(s,\cdot),s}(r,y)) + \sigma(\phi^{u^2(s,\cdot),s}(r,y))\Big\|_{L_p}^2 dydr\\
& \lesssim I+J
\end{equs}
with
\begin{align*}
I &:= \int_s^t \int_{\mathbb{T}} p_{t-r}^2(x,y)\Big\|\Big(\phi^{u^1(s,\cdot),s}(r,y) - \phi^{u^2(s,\cdot),s}(r,y)\Big)\Big(\phi^{u^1(s,\cdot),s}(r,y) - u^1(r,y)\Big)\Big\|_{L_p}^2dydr\\
J &:= \int_s^t \int_{\mathbb{T}} p_{t-r}^2(x,y)\|u^1(r,y) - u^2(r,y) - \phi^{u^1(s,\cdot),s}(r,y) + \phi^{u^2(s,\cdot),s}(r,y)\|_{L_p}^2 dydr.
\end{align*}
By the tower rule, H\"older's inequality,  \cref{driftlessapprox}, \cref{markovproperty} and \cref{lipschitzflowgeneral} (where we recall the definition (\ref{F2def}) of $F^{(2)}$), we can see that
\begin{equs}
&\Big\|\Big(\phi^{u^1(s,\cdot),s}(r,y) - \phi^{u^2(s,\cdot),s}(r,y)\Big)\Big(\phi^{u^1(s,\cdot),s}(r,y) - u^1(r,y)\Big)\Big\|_{L_p} \\
&\leq \Big\| \big\|\phi^{u^1(s,\cdot),s}(r,y) - \phi^{u^2(s,\cdot),s}(r,y)\big\|_{L_{2p}|\mathscr{F}_s}\big\|\phi^{u^1(s,\cdot),s}(r,y) - u^1(r,y)\big\|_{L_{2p}|\mathscr{F}_s}
\Big\|_{L_p}\\
&\lesssim [D^{u^1}]_{\mathscr{V}_{2p
}^\beta}(t-s)^{\beta} \Big\| \big\|\phi^{u^1(s,\cdot),s}(r,y) - \phi^{u^2(s,\cdot),s}(r,y)\big\|_{L_{2p}|\mathscr{F}_s}\Big\|_{L_p}\\
&= [D^{u^1}]_{\mathscr{V}_{2p}^\beta}(t-s)^{\beta}\Big\|F^{(2)}_{2p,0}\big(r-s,y, u^1(s,\cdot), u^2(s,\cdot)\big)\Big\|_{L_p}\\
&\lesssim [D^{u^1}]_{\mathscr{V}_{2p}^\beta}(t-s)^{\beta}\|u^1(s,\cdot) - u^2(s,\cdot)\|_{\mathbb{B}(\mathbb{T},L_p)}.
\end{equs}
 Putting this bound into the definition of $I$, we get that
\begin{equs}
 I &\lesssim [D^{u^1}]_{\mathscr{V}_{2p}^\beta}^2(t-s)^{2\beta}\|u^1(s,\cdot) - u^2(s,\cdot)\|_{C^(\mathbb{T},L_p)}^2\int_s^t\int_{\mathbb{T}}p_{t-r}^2(x,y)dydr\\
 &\lesssim [D^{u^1}]_{\mathscr{V}_{2p}^\beta}^2(t-s)^{2\beta + \frac{1}{2}}\|u^1(s,\cdot) - u^2(s,\cdot)\|_{\mathbb{B}(\mathbb{T},L_p)}^2.
\end{equs}
This bound on $I$ and the definition of $J$ together give the desired bound.
\end{proof}

\section{Regularisation estimates}\label{regularisationsection}
Let $u,u^1,u^2$ be regularised solutions of (\ref{SHE}) with potentially different drift terms, $f$ be a measurable kernel on $(0,1]\times\T$ and $g$ be a smooth function on $\R$.
In this section, we obtain quantitative bounds for expressions of the forms $\int_s^t \int_{\mathbb{T}} f_r(y)g(u(r,y))dydr$ and $\int_s^t \int_{\mathbb{T}}f_r(y)(g(u^1(r,y))- g(u^2(r,y)))dydr$ which depend on a Besov--H\"older norm of $g$ with a negative index.

\begin{lemma}\label{newregularisationlemma}
Let Assumption \ref{assumptions} hold and let $u$ be a regularised solution of (\ref{SHE}). Suppose that $f:(0,1]\times \mathbb{T} \to \mathbb{R}$ is a measurable function such that there exist constants $K>0$ and $\zeta \in [0,\frac{1}{4}]$
 such that for all $t \in (0,1]$ it holds that
$$\int_{\mathbb{T}}|f_t(y)|dy \leq Kt^{-\zeta}.$$
 Let $p \in [1,\infty)$. For all $\lambda \in \big(4\zeta-2,-1\big)\cup(-1,0)$
 and for all $\beta$ in the nonempty set $(\frac{1}{4} - \frac{\lambda}{4} + \zeta, 1 + \frac{\alpha}{4}]$, if $u \in \mathscr{U}_2^\beta$ then there exists a constant $N = N(p,\|\sigma\|_{C^2}, \mu, \lambda, \alpha, \beta, \zeta)$, such that for all $g \in C^\infty$, $(s,t) \in [0,1]_{\leq}^2$ and $\mathscr{G} \in \{\mathscr{F}_s, \{\emptyset, \Omega\}\}$
 we have 
\begin{equs}
    &\Big\| \int_s^t \int_{\mathbb{T}}f_{t-r}(y)g(u(r,y))dydr \Big\|_{L_{p,\infty}^\mathscr{G}}\\
    &\quad\leq N\|g\|_{C^\lambda}K\Big((t-s)^{1 + \frac{\lambda}{4} -\zeta} + [D^u]_{\mathscr{V}_2^\beta[s,t]}(t-s)^{\beta +\frac{\lambda + 3}{4}- \zeta} \Big).
\end{equs}
\end{lemma}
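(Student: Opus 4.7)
The plan is to apply the conditional stochastic sewing lemma \cref{conditionalSSL} to a germ built from the driftless flow, using \cref{malliavinbypartsnew} to convert the negative regularity of $g$ into positive powers of time, and then to identify the sewn process with the true singular integral.

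For fixed $T \in [0,1]$ and $0 \leq s \leq t \leq T$, introduce the germ
$$A^T_{s,t} := \mathbb{E}^s \int_s^t \int_{\mathbb{T}} f_{T-r}(y)\, g\bigl(\phi^{u(s,\cdot),s}(r,y)\bigr)\, dy\, dr.$$
For the first germ condition I would apply \cref{malliavinbypartsnew} with $K=1$, $n=0$, $h\equiv 1$, $Z_1 := u(s,\cdot)$ and $\beta := \lambda$; since $h$ is constant, the Malliavin sum on the right-hand side reduces to its $i=0$ term (equal to $1$), giving the pointwise estimate $|\mathbb{E}^s g(\phi^{u(s,\cdot),s}(r,y))| \lesssim \|g\|_{C^\lambda}(r-s)^{\lambda/4}$. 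Integrating against $|f_{T-r}(y)|$ yields the deterministic bound $\|A^T_{s,t}\|_{L_\infty} \lesssim \|g\|_{C^\lambda} K(t-s)^{1+\lambda/4-\zeta}$, whose exponent exceeds $1/2$ precisely because $\lambda > 4\zeta - 2$.

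For the second germ condition, compute
$$\mathbb{E}^s \delta A^T_{s,a,t} = \mathbb{E}^s \int_a^t \int_{\mathbb{T}} f_{T-r}(y)\bigl[g(\phi^{u(s,\cdot),s}(r,y)) - g(\phi^{u(a,\cdot),a}(r,y))\bigr] dy\, dr.$$
Exploit the flow property of the driftless equation by restarting both solutions at time $a$: set $Z_1 := \phi^{u(s,\cdot),s}(a,\cdot)$ and $Z_2 := u(a,\cdot)$, so that $\phi^{u(s,\cdot),s}(r,y) = \phi^{Z_1,a}(r,y)$ and $\phi^{u(a,\cdot),a}(r,y) = \phi^{Z_2,a}(r,y)$. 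The fundamental theorem of calculus rewrites $g(\phi^{Z_1,a}) - g(\phi^{Z_2,a})$ as the $\theta$-integral of $\nabla g$ at the convex combination $\theta\phi^{Z_1,a} + (1-\theta)\phi^{Z_2,a}$ times $\phi^{Z_1,a} - \phi^{Z_2,a}$, which is exactly the setting of \cref{malliavinbypartsnew} inside $\mathbb{E}^a$ with $K=2$, $n=1$, $h(x_1,x_2) = x_1 - x_2$ and $\beta := \lambda$. The resulting $F^{(2)}_{q,i}(r-a,y,Z_1,Z_2)$ factors are controlled in the $L^{\mathscr{F}_s}_{p,\infty}$-norm via \cref{lipschitzflowgeneral}, and $\|Z_1 - Z_2\|_{\mathbb{B}(\mathbb{T}, L^{\mathscr{F}_s}_{p,\infty})}$ by the driftless-approximation \cref{driftlessapprox}, giving $\lesssim [D^u]_{\mathscr{V}_p^\beta[s,a]}(a-s)^\beta$. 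Integrating then yields
$$\|\mathbb{E}^s \delta A^T_{s,a,t}\|_{L_\infty} \lesssim \|g\|_{C^\lambda} K [D^u]_{\mathscr{V}_p^\beta[s,t]}(t-s)^{\beta + 3/4 + \lambda/4 - \zeta},$$
and the exponent strictly exceeds $1$ under the hypothesis $\beta > 1/4 - \lambda/4 + \zeta$.

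With both hypotheses in place, apply \cref{conditionalSSL} (with $q=p$ and lemma-exponent $p_{\mathrm{lem}} = \infty$ so as to capture the $L^{\mathscr{F}_s}_{p,\infty}$ target norm; the unconditional $\mathscr{G} = \{\emptyset,\Omega\}$ case is weaker and follows simultaneously) to produce a unique sewn process $\mathscr{A}^T$ satisfying the claimed estimate. Identify $\mathscr{A}^T_t - \mathscr{A}^T_s$ with the genuine singular integral $\int_s^t \int_{\mathbb{T}} f_{T-r}(y) g(u(r,y))\, dy\, dr$ by checking that the latter satisfies the same germ-approximation bounds defining $\mathscr{A}^T$; here $g \in C^\infty$ lets one use the crude Lipschitz estimate $|g(u) - g(\phi^{u(s,\cdot),s})| \leq \|g\|_{C^1}|u-\phi^{u(s,\cdot),s}|$ combined with \cref{driftlessapprox}, and SSL uniqueness returns the final bound in terms of $\|g\|_{C^\lambda}$ rather than $\|g\|_{C^1}$. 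Setting $T = t$ concludes. The principal obstacle is the $\delta A^T$ estimate: recognising that restarting both flows at time $a$ unveils the convex-combination structure needed for \cref{malliavinbypartsnew}, and threading conditional $L^{\mathscr{F}_s}_{p,\infty}$-norms cleanly through \cref{lipschitzflowgeneral} and \cref{driftlessapprox} so that the bound on $\mathbb{E}^s \delta A^T$ is genuinely deterministic (i.e.\ $L_\infty$), which is what is required to attain the stronger conditional output norm and not merely a plain $L_p$ bound.
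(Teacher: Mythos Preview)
Your approach is essentially identical to the paper's: same germ, same applications of \cref{malliavinbypartsnew} with $n=0$ and $n=1$, same flow restart at time $a$, and same identification of the sewn process via smoothness of $g$ together with \cref{driftlessapprox}. The only slip is that since the hypothesis is merely $u \in \mathscr{U}_2^\beta$, you must bound $\mathbb{E}^s F^{(2)}_{q,i}$ through $\|F^{(2)}_{q,i}\|_{L_{2,\infty}^{\mathscr{F}_s}}$ and invoke \cref{driftlessapprox} with $p=2$, giving $[D^u]_{\mathscr{V}_2^\beta}$ (as in the statement) rather than $[D^u]_{\mathscr{V}_p^\beta}$, which could be infinite.
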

\begin{proof}
We may assume that $p>2$. For $(S,T) \in [0,1]_{\leq}^2$ and  $(s,t) \in [S,T]_{\leq}^2$ we consider the germ
\begin{align*}A_{s,t} := \mathbb{E}^s\int_s^t \int_{\mathbb{T}} f_{T-r}(y)g(\phi^{u(s,\cdot),s}(r,y))dydr.
\end{align*}
 Then
\begin{equs}
|A_{s,t}| 
&\lesssim \int_s^t \int_{\mathbb{T}} |f_{T-r}(y)||\mathbb{E}^s g(\phi^{u(s,\cdot),s}(r,y))|dydr.
\end{equs}
Note that by \cref{malliavinbypartsnew} (with $n=0$) we have
\begin{align*}
|\mathbb{E}^sg(\phi^{u(s,\cdot),s}(r,y))| & \lesssim \|g\|_{C^\lambda}(r-s)^{\lambda/4}.
\end{align*}
By the two inequalities above, by the fact that $\|f_{T-r}\|_{L_1(\mathbb{T})} \leq K(T-r)^{-\zeta} \leq K(t-r)^{-\zeta}$ and by the Cauchy--Schwarz inequality, we have
\begin{equs}\|A_{s,t}\|_{L_{p,\infty}^{\mathscr{G}_s}} &\lesssim \|g\|_{C^\lambda}\int_s^t (r-s)^{\lambda/4} \int_{\mathbb{T}}|f_{T-r}(y)|dydr\\
&\lesssim \|g\|_{C^\lambda }\int_s^t (r-s)^{\lambda/4}(t-r)^{-\zeta}dr\\
& \lesssim \|g\|_{C^\lambda}K(t-s)^{1 + \lambda/4 - \zeta}.
\end{equs}
From the assumption that $\lambda > 4\zeta -2$ it follows that the exponent $1+\lambda/4-\zeta$ is greater than $1/2$, and thus the first condition in (\ref{condiSSL2}) is satisfied. Let $a \in [s,t]$. Then
\begin{align*}
|\mathbb{E}^s \delta A_{s,a,t}| &= |\mathbb{E}^s(A_{s,t} - A_{s,a} - A_{a,t})|\\
& = \Big|\mathbb{E}^s \int_a^t \int_{\mathbb{T}} f_{T-r}(y)\mathbb{E}^a\Big(g(\phi^{u(s,\cdot),s}(r,y)) - g(\phi^{u(a,\cdot),a}(r,y))\Big)dydr\Big|.
\end{align*}
By the Fundamental Theorem of Calculus and \cref{malliavinbypartsnew} (with $n=1$), we get that
\begin{equs}
&\Big|\mathbb{E}^a\Big(g(\phi^{u(s,\cdot),s}(r,y)) - g(\phi^{u(a,\cdot),a}(r,y))\Big)\Big|\\
& = \Big| \int_0^1 \mathbb{E}^a\Big(\nabla g\Big(\theta \phi^{u(s,\cdot),s}(r,y) + (1-\theta) \phi^{u(a,\cdot),a}(r,y)\Big) \Big(\phi^{u(s,\cdot),s}(r,y) - \phi^{u(a,\cdot),a}(r,y)\Big)\Big)d\theta \Big|\\
& = \Big| \int_0^1 \mathbb{E}^a\Big(\nabla g\Big(\theta \phi^{\phi^{u(s,\cdot),s}(a,\cdot),\,a}(r,y) + (1-\theta) \phi^{u(a,\cdot),a}(r,y)\Big)\\
&\qquad\qquad\qquad\qquad \times\Big(\phi^{\phi^{u(s,\cdot),s}(a,\cdot),\,a}(r,y) - \phi^{u(a,\cdot),a}(r,y)\Big)\Big)d\theta \Big|\\
&\lesssim \|g\|_{C^\lambda}(r-a)^{(\lambda-1)/4}\sum_{i=0}^3F_{16,i}^{(2)}(r-a,y,\phi^{u(s,\cdot),s}(a,\cdot), u(a,\cdot))
\end{equs}
where $F^{(2)}$ is defined by (\ref{F2def}).
Therefore using the above result, \cref{lipschitzflowgeneral} and \cref{driftlessapprox}, we get
\begin{equs}
&\mathbb{E}^s\Big|\mathbb{E}^a\Big(g(\phi^{u(s,\cdot),s}(r,y)) - g(\phi^{u(a,\cdot),a}(r,y))\Big)\Big|\\ &\qquad\lesssim \|g\|_{C^\lambda}(r-a)^{ (\lambda-1)/4}\sum_{i=0}^3\mathbb{E}^s F_{16,i}^{(2)}\big(r-a,y,\phi^{u(s,\cdot),s}(a,\cdot), u(a,\cdot)\big)\\
&\qquad\lesssim \|g\|_{C^\lambda}(r-a)^{ (\lambda-1)/4}\sup_{x \in \mathbb{T}}\|\phi^{u(s,\cdot),s}(a,x) - u(a,x)\|_{L_{2,\infty}^{\mathscr{F}_s}}\\
&\qquad\lesssim \|g\|_{C^\lambda}(r-a)^{(\lambda-1)/4}[D^u]_{\mathscr{V}_2^\beta[S,T]}(a-s)^{\beta}.
\end{equs}
By the above inequality, by the assumptions on $f$ and by the fact that $t-a, a-s \leq t-s$, we get
\begin{equs}
\|\mathbb{E}^s \delta A_{s,a,t}\|_{L_\infty} &\lesssim  \mathbb{E}^s \int_a^t \int_{\mathbb{T}}f_{T-r}(y) \|g\|_{C^\lambda}[D^u]_{\mathscr{V}_2^\beta[S,T]}(r-a)^{(\lambda-1)/4}(a-s)^{\beta}dydr\\
&\lesssim \|g\|_{C^\lambda}[D^u]_{\mathscr{V}_2^\beta[S,T]}(a-s)^{\beta}\int_a^t(r-a)^{(\lambda-1)/4 }\int_\mathbb{T} f_{T-r}(y)dy dr\\
&\lesssim \|g\|_{C^\lambda}[D^u]_{\mathscr{V}_2^\beta[S,T]}(a-s)^\beta K\int_a^t(t-r)^{-\zeta}(r-a)^{\lambda/4 - 1/4}dr\\
&\lesssim \|g\|_{C^\lambda}[D^u]_{\mathscr{V}_2^\beta[S,T]}K(t-s)^{\beta - \zeta + \lambda/4 + 3/4}.
\end{equs}
By the assumption that $\beta > 1/4 - \lambda/4+\zeta$, it follows that the exponent $\beta - \zeta + \lambda/4 + 3/4$ is greater than $1$,
and thus the second condition in (\ref{condiSSL2}) is also satisfied. Let
$$\mathscr{A}_{s,t} := \int_s^t\int_{\mathbb{T}}f_{T-r}(y)g(u(r,y))dydr.$$
By the regularity of $g$ and by  \cref{driftlessapprox} we can easily see that (\ref{condiSSL3}) and (\ref{condiSSL4}) are satisfied. All conditions of \cref{conditionalSSL} are satisfied. Consequently, the conclusion follows from \cref{conditionalSSL}
and the fact that $(S,T) \in [0,1]_{\leq}^2$ was arbitrary.
\end{proof}

\begin{corollary}
    \label{otherregularisationlemma}
Let Assumption \ref{assumptions} hold and let $u$ be a regularised solution of (\ref{SHE}) and let $p \in [1,\infty)$. Then for all  $\lambda \in ( -2,-1)\cup(-1,0)$, $\beta \in (\frac{1}{4} - \frac{\lambda}{4}, 1+ \frac{\alpha}{4}]$, if $u \in \mathscr{U}_2^\beta$ then there exists a constant $N = N(p, \|\sigma\|_{C^2},\mu, \lambda, \alpha, \beta )$, such that for all $g \in C^{\infty}$, $(s,t) \in [0,1]_{\leq}^2$, $ x \in \mathbb{T}$, we have 
\begin{equs}&\Big\|\int_s^t \int_{\mathbb{T}} p_{t-r}(x,y) g(u(r,y))dy dr \Big\|_{L_{p,\infty} ^{\mathscr{F}_s}} \\
&\leq N \|g\|_{C^\lambda}\Big((t-s)^{1 + \lambda/4} + [D^u]_{\mathscr{V}_2^\beta[s,t]}(t-s)^{\beta + \frac{\lambda+3}{4}}\Big).
\end{equs}
\end{corollary}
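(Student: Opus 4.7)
The plan is to obtain this corollary as an immediate specialisation of \cref{newregularisationlemma}. For a fixed $x \in \mathbb{T}$, I would set $f_r(y) := p_r(x,y)$ and verify the integrability hypothesis of the lemma. Since the periodic heat kernel is a probability density on $\mathbb{T}$, namely $\int_{\mathbb{T}} p_t(x,y)\,dy = 1$ for all $(t,x) \in (0,1]\times\mathbb{T}$, the bound $\int_{\mathbb{T}} |f_t(y)|\,dy \leq K t^{-\zeta}$ holds with $K = 1$ and $\zeta = 0 \in [0, \tfrac14]$.

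Substituting $\zeta = 0$ into the parameter constraints of \cref{newregularisationlemma} recovers exactly $\lambda \in (-2,-1)\cup(-1,0)$ and $\beta \in (\tfrac14 - \tfrac{\lambda}{4},\, 1 + \tfrac{\alpha}{4}]$, which coincide with the hypotheses of the corollary. Noting that with this choice of $f$ one has $f_{t-r}(y) = p_{t-r}(x,y)$, the integral in the conclusion of the lemma agrees with the one in the corollary, and the bound produced by the lemma, after setting $K = 1$ and $\zeta = 0$, becomes
\begin{equs}
\Big\|\int_s^t\!\!\int_{\mathbb{T}} p_{t-r}(x,y) g(u(r,y))\,dy\,dr\Big\|_{L_{p,\infty}^{\mathscr{F}_s}} \leq N\|g\|_{C^\lambda}\Big((t-s)^{1+\lambda/4} + [D^u]_{\mathscr{V}_2^\beta[s,t]} (t-s)^{\beta + (\lambda + 3)/4}\Big),
\end{equs}
which is precisely the desired inequality (taking $\mathscr{G} = \mathscr{F}_s$ among the two choices allowed by \cref{newregularisationlemma}).

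There is effectively no obstacle to overcome: this corollary is meant to package \cref{newregularisationlemma} in a form tailored to the mild formulation of \eqref{SHE}, where the relevant convolution kernel is precisely the periodic heat kernel. The only remark is that the $x$-dependence is innocuous, since for each fixed $x$ one applies the lemma to a different $f$, and the constant in the lemma depends on $f$ only through $K$ (uniformly equal to $1$ in $x$) and $\zeta$ (uniformly equal to $0$); hence the constant $N$ can be chosen independently of $x$, and the stated list of dependencies $(p,\|\sigma\|_{C^2},\mu,\lambda,\alpha,\beta)$ is inherited directly from that of \cref{newregularisationlemma} with $\zeta$ fixed.
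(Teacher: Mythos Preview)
Your proposal is correct and matches the paper's proof exactly: fix $x$, set $f_r(y) := p_r(x,y)$, use $\int_{\mathbb{T}} p_r(x,y)\,dy = 1$ to take $K=1$, $\zeta=0$, and apply \cref{newregularisationlemma}. Your additional remarks on the uniformity in $x$ and the choice $\mathscr{G}=\mathscr{F}_s$ are accurate elaborations of what the paper leaves implicit.
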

\begin{proof}
Fix $x \in \mathbb{T}$ and for each $(r,y) \in (0,1]\times \mathbb{T}$ define $f_r(y) := p_{r}(x,y)$.  Then $f$ is a measurable function which satisfies
$\int_{\mathbb{T}} f_r(y) dy =1$. Hence, applying \cref{newregularisationlemma} (with $K := 1$, $\zeta := 0$), we obtain the result.
\end{proof}

\begin{corollary}\label{spatialregularisation}
Let Assumption \ref{assumptions} hold and let $u$ be a regularised solution of (\ref{SHE}) and let $p\in [1,\infty)$.  Then for all $\lambda \in (-1,0)$
and for all
$\beta \in (\frac{1}{2} - \frac{\lambda}{4}, 1+ \frac{\alpha}{4}]$, if $u \in \mathscr{U}_2^\beta$ then there exists a constant $N = N(p,\|\sigma\|_{C^2}, \mu, \lambda, \beta)$ such that for all $g \in C^{\infty}$, $(s,t) \in [0,1]_{\leq}^2$ and $x \in \mathbb{T}$ we have
$$\Big\| \int_s^t \int_{\mathbb{T}}\big(p_{t-r}(x,y) - p_{t-r}(\bar{x},y)\big) g(u(r,y))dydr \Big\|_{L_{p,\infty}^{\mathscr{F}_s}} \leq N\|g\|_{C^\lambda}(1+[D^u]_{\mathscr{V}_2^\beta})|x-\bar{x}|^{1/2}.$$
\end{corollary}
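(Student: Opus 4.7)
The plan is to reduce directly to \cref{newregularisationlemma} by treating $f_{t-r}(y) := p_{t-r}(x,y) - p_{t-r}(\bar x,y)$ as the kernel, with carefully calibrated $K$ and $\zeta$ so that the required moment bound already delivers the prefactor $|x-\bar x|^{1/2}$.

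First I would establish the heat-kernel interpolation bound
\begin{equs}
\int_{\mathbb{T}} |p_r(x,y) - p_r(\bar x,y)|\,dy \lesssim |x - \bar x|^{1/2}\, r^{-1/4}
\qquad \text{for all }r \in (0,1].
\end{equs}
This follows from combining two standard inequalities: on the one hand $\int_{\mathbb{T}}|p_r(x,y) - p_r(\bar x,y)|\,dy \le 2$ since each heat kernel integrates to $1$; on the other hand, by the mean value theorem and the gradient estimate $\int_{\mathbb{T}}|\partial_x p_r(x,y)|\,dy \lesssim r^{-1/2}$, we get $\int_{\mathbb{T}}|p_r(x,y) - p_r(\bar x,y)|\,dy \lesssim |x-\bar x|\, r^{-1/2}$. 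Interpolating these two bounds with exponent $1/2$ yields the claimed estimate.

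Next, I would apply \cref{newregularisationlemma} with the kernel $f_r(y) := p_r(x,y) - p_r(\bar x,y)$, the constant $K := C|x-\bar x|^{1/2}$, and $\zeta := 1/4$. The admissibility conditions are met: since $\zeta = 1/4$, the admissible range $(4\zeta - 2, -1)\cup(-1,0) = (-1,-1)\cup(-1,0) = (-1,0)$ accommodates any $\lambda \in (-1,0)$, and the lower threshold on $\beta$ becomes $\tfrac14 - \tfrac{\lambda}{4} + \zeta = \tfrac12 - \tfrac{\lambda}{4}$, which matches the hypothesis exactly. The conclusion of \cref{newregularisationlemma} therefore gives
\begin{equs}
    &\Big\| \int_s^t \int_{\mathbb{T}}f_{t-r}(y)g(u(r,y))\,dy\,dr \Big\|_{L_{p,\infty}^{\mathscr{F}_s}}\\
    &\quad\lesssim \|g\|_{C^\lambda}|x-\bar x|^{1/2}\Big((t-s)^{3/4 + \lambda/4} + [D^u]_{\mathscr{V}_2^\beta[s,t]}(t-s)^{\beta + (\lambda+2)/4} \Big).
\end{equs}

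Finally, since $\lambda > -1$ gives $3/4 + \lambda/4 > 1/2 > 0$, and $\beta > 1/2 - \lambda/4$ gives $\beta + (\lambda+2)/4 > 1 > 0$, both powers of $(t-s) \in [0,1]$ are bounded by a universal constant, and $[D^u]_{\mathscr{V}_2^\beta[s,t]} \le [D^u]_{\mathscr{V}_2^\beta}$. Absorbing these factors into the constant $N$ yields the desired estimate. There is no real obstacle here beyond identifying the correct interpolation exponent $1/2$ for the kernel bound, which is precisely what forces the exponent $|x - \bar x|^{1/2}$ on the right-hand side and makes the pair $(\zeta,\lambda) = (1/4,\lambda)$ sit inside the admissibility window of \cref{newregularisationlemma}.
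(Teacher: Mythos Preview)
Your proposal is correct and follows essentially the same approach as the paper: define $f_r(y) = p_r(x,y) - p_r(\bar x,y)$, invoke the heat-kernel difference bound (\ref{pspacediffL1}) with exponent $\varepsilon = 1/2$ to obtain $K = C|x-\bar x|^{1/2}$ and $\zeta = 1/4$, and apply \cref{newregularisationlemma}. You additionally spell out the interpolation argument behind the kernel bound and the admissibility check, which the paper leaves to the reader.
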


\begin{proof}
 Fix $x,\bar{x} \in \mathbb{T}$, and for each $(r,y) \in (0,1]\times \mathbb{T}$, define $f_r(y) := p_{r}(x,y) - p_{r}(\bar{x},y)$.
Then by (\ref{pspacediffL1}), we have $\int_{\mathbb{T}}f_r(y)dy \leq C|x-\bar{x}|^{1/2}r^{-1/4}$ for some constant positive $C$. Hence applying \cref{newregularisationlemma} (with $K: = C|x-\bar{x}|^{1/2}$ and $\zeta := 1/4$), we obtain the stated estimate.
\end{proof}

\begin{lemma}\label{corelemma}
Let $p\in [2,\infty)$, $\alpha \in (-1,0)$, and let $\sigma \in C^4$ such that there exists constant $\mu>0$ such that for all $x \in \mathbb{R}$ we have $\sigma^2(x) \geq \mu^2$. For $i=1,2$, let $b^i \in C^{\alpha}$ and let $u^i$ be regularised solutions of
$$(\partial_t - \Delta)u^i = b^i(u^i) + \sigma(u^i)\xi(dy,dr)$$
in the class $\mathscr{U}^\beta$ for some $\beta \in (\frac{1}{2} - \frac{\alpha}{4}, 1+\frac{\alpha}{4}]$. There exists a constant $N = N(p, \|\sigma\|_{C^4}, \mu, \alpha, \beta)$ such that for all $g \in C^\infty$, $(s,t) \in [0,1]^2_{\leq}$ and $x \in \mathbb{T}$ we have 
\begin{equs}
&\Big\| \int_s^t \int_{\mathbb{T}}p_{t-r}(x,y)\big(g(u^1(r,y)) - g(u^2(r,y))\big) dydr\Big\|_{L_p}
\\
&\leq \,   N\|g\|_{C^\alpha}(1+\max_{i \in \{1,2\}}[D^{u^i}]_{\mathscr{V}_{2p}^\beta})(t-s)^{ (3+\alpha)/4}
\\
& \qquad \qquad\qquad \times \Big([u^1,u^2]_{\mathscr{S}^{1/2}_p[s,t]} +\|u^1(s,\cdot) - u^2(s,\cdot)\|_{\mathbb{B}(\mathbb{T}, L_p)}\Big).
\end{equs}
\end{lemma}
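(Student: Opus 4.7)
The plan is to apply the conditional stochastic sewing lemma (\cref{conditionalSSL}) to a germ built from the driftless approximations of $u^1$ and $u^2$. Fix $x \in \mathbb{T}$, set $(S,T) := (s,t)$, and for $(s',t') \in [S,T]_\leq^2$ define
\begin{equs}
A_{s',t'} := \mathbb{E}^{s'} \int_{s'}^{t'} \!\int_{\mathbb{T}} p_{T-r}(x,y) \big( g(\phi^{u^1(s',\cdot),s'}(r,y)) - g(\phi^{u^2(s',\cdot),s'}(r,y)) \big)\, dy\, dr.
\end{equs}
Just as in the proof of \cref{newregularisationlemma}, smoothness of $g$ together with \cref{driftlessapprox} will show that the sewn process agrees with the target integral.

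For the first sewing condition, write $g(\phi^{u^1(s',\cdot),s'}) - g(\phi^{u^2(s',\cdot),s'}) = \int_0^1 \nabla g(\Theta_\theta)(\phi^{u^1(s',\cdot),s'} - \phi^{u^2(s',\cdot),s'})\,d\theta$ by the fundamental theorem of calculus, and apply \cref{malliavinbypartsnew} with $n=1$, $K=2$, $\beta=\alpha$, and $h(x_1,x_2) = x_1 - x_2$. This trades one derivative of $g$ for a factor $(r-s')^{(\alpha-1)/4}$ and produces the factors $F^{(2)}_{8,i}(r-s',y,u^1(s',\cdot),u^2(s',\cdot))$ for $i=0,1,2$. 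Taking the $L_p$-norm, applying \cref{lipschitzflowgeneral} (with $\mathscr{G} = \mathscr{F}_{s'}$, $p_1=p_2=p$) to bound each factor by $(r-s')^{i/4}\|u^1(s',\cdot)-u^2(s',\cdot)\|_{\mathbb{B}(\mathbb{T}, L_p)}$, and integrating in $r$ yields $\|A_{s',t'}\|_{L_p} \lesssim \|g\|_{C^\alpha}\|u^1(s',\cdot)-u^2(s',\cdot)\|_{\mathbb{B}(\mathbb{T}, L_p)}(t'-s')^{(3+\alpha)/4}$, so the first sewing condition holds with exponent $(3+\alpha)/4 > 1/2$ since $\alpha > -1$.

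The harder step is the coherence bound on $\mathbb{E}^{s'}\delta A_{s',a',t'}$. By the Markov property, $\phi^{u^i(s',\cdot),s'}(r,\cdot) = \phi^{\phi^{u^i(s',\cdot),s'}(a',\cdot),a'}(r,\cdot)$ for $r \geq a'$, so the integrand in $A_{s',t'} - A_{s',a'} - A_{a',t'}$ becomes the four-point increment of $g$ applied to $\phi^{z_j,a'}(r,y)$, with $Z = (z_1,z_2,z_3,z_4) := (\phi^{u^1(s',\cdot),s'}(a',\cdot), \phi^{u^2(s',\cdot),s'}(a',\cdot), u^1(a',\cdot), u^2(a',\cdot))$ as in \eqref{defofZ}. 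I would split this increment via the four-point Taylor identity from \cref{4pointcompar} into $\Delta_1 + \Delta_2$, where $\Delta_1$ is a $\nabla^2 g$-term multiplied by the product of two two-point differences $(\phi^{z_1,a'}-\phi^{z_2,a'})(\theta(\phi^{z_1,a'}-\phi^{z_3,a'})+(1-\theta)(\phi^{z_2,a'}-\phi^{z_4,a'}))$, and $\Delta_2$ is a $\nabla g$-term multiplied by the four-point difference $\phi^{z_1,a'}-\phi^{z_2,a'}-\phi^{z_3,a'}+\phi^{z_4,a'}$. For $\Delta_1$, apply \cref{malliavinbypartsnew} with $n=2$ to gain $(r-a')^{(\alpha-2)/4}$, and control the product of two-point differences by a conditional H\"older inequality, keeping one factor in $L_\infty$-in-$\omega$ via \cref{driftlessapprox} (which yields $[D^{u^j}]_{\mathscr{V}_{2p}^\beta}(a'-s')^\beta$) and the other in $L_p$-in-$\omega$ via \cref{lipschitzflowgeneral} (which yields $\|u^1(s',\cdot)-u^2(s',\cdot)\|_{\mathbb{B}(\mathbb{T},L_p)}$). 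For $\Delta_2$, apply \cref{malliavinbypartsnew} with $n=1$ to gain $(r-a')^{(\alpha-1)/4}$, and bound the four-point factor directly by \cref{4pointestimate}, producing $(1+\max_i [D^{u^i}]_{\mathscr{V}_{2p}^\beta})([u^1,u^2]_{\mathscr{S}_p^{1/2}[S,a']} + \|u^1(S,\cdot)-u^2(S,\cdot)\|_{\mathbb{B}(\mathbb{T},L_p)})(a'-s')^{1/2}$.

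Integrating against $p_{T-r}(x,y)$ over $r \in [a',t']$, piece (i) produces the time factor $(t'-s')^{\beta + (\alpha+2)/4}$ and piece (ii) produces $(t'-s')^{(\alpha+5)/4}$; the standing hypothesis $\beta > 1/2 - \alpha/4$ forces both exponents to exceed $1$, securing the second sewing condition. The conclusion then follows from \cref{conditionalSSL}, with the dominant time exponent $(3+\alpha)/4$ inherited from the germ. The main obstacle will be the bookkeeping in $\Delta_1$: distributing the $\bdot{\mathscr{W}}^i$-norms produced by the Leibniz rule across the product, and choosing the conditional H\"older exponents so that exactly one factor absorbs into an $L_\infty$-in-$\omega$ bound via \cref{driftlessapprox} (yielding the $\mathscr{V}_{2p}^\beta$-constant) while the other absorbs into an $L_p$-in-$\omega$ bound via \cref{lipschitzflowgeneral} (keeping the moment on the initial data at $L_p$), precisely in the spirit of the moment-balancing trick employed inside \cref{pre4pointlemma} and \cref{4pointestimate}.
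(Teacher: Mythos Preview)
Your proposal is correct and follows essentially the same route as the paper's proof: the same germ, the same use of \cref{malliavinbypartsnew} with $n=1$ for $\|A_{s',t'}\|_{L_p}$, the same four-point split via \cref{4pointcompar} for $\delta A$, with \cref{malliavinbypartsnew} at $n=2$ and $n=1$ on the two pieces, and the same time exponents $\beta+(\alpha+2)/4$ and $(\alpha+5)/4$. One small point you glossed over: in the first sewing bound the factor $\|u^1(s',\cdot)-u^2(s',\cdot)\|_{\mathbb{B}(\mathbb{T},L_p)}$ depends on the running endpoint $s'\in[S,T]$, so before invoking \cref{conditionalSSL} you still need \cref{bracketlemma} to pass to $[u^1,u^2]_{\mathscr{S}_p^{1/2}[S,T]}+\|u^1(S,\cdot)-u^2(S,\cdot)\|_{\mathbb{B}(\mathbb{T},L_p)}$, as the paper does.
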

\begin{proof}
Let $(S, T) \in [0,1]_{\leq}^2$, $x \in \mathbb{T}$ and for $(s,t) \in [S, T]^2_\leq$  define the germ
$$A_{s,t}(x) := \mathbb{E}^s \int_s^t \int_{\mathbb{T}} p_{T-r}(x,y)\Big(g(\phi^{u^1(s,\cdot),s}(r,y)) -g(\phi^{
u^2(s,\cdot),s}(r,y))\Big)dydr.$$
We first  bound $\|A_{s,t}\|_{L_p}$.  Using \cref{malliavinbypartsnew}(with $n=1$ and thus $q=8)$ and recalling the definition of $F^{(2)}$ from (\ref{F2def}), we have
\begin{equs} &\Big|\mathbb{E}^s\Big(g(\phi^{u^1(s,\cdot),s}(r,y)) - g(\phi^{u^2(s,\cdot),s}(r,y))\Big)\Big|\\
&=\Big|\int_0^1 \mathbb{E}^s\Big(\nabla g\Big(\theta \phi^{u^1(s,\cdot),s}(r,y) + (1-\theta)\phi^{u^2(s,\cdot),s}(r,y)\Big)\Big(\phi^{u^1(s,\cdot),s}(r,y) - \phi^{u^2(s,\cdot),s}(r,y)\Big)\Big) d\theta\Big|\\
&\lesssim  \|g\|_{C^\alpha} (r-s)^{ (\alpha-1)/4}\sum_{i=0}^2F^{(2)}_{8,i}\big(r-s,y, u^1(s,\cdot), u^2(s,\cdot)\big).
\end{equs}
We take the $L_p$-norm on the inequality, and by \cref{lipschitzflowgeneral} we get
\begin{equs}
&\|\mathbb{E}^s(g(\phi^{u^1(s,\cdot),s}(r,y))  - g(\phi^{u^2(s,\cdot),s}(r,y))\|_{L_p}\\ 
&\qquad\lesssim \|g\|_{C^\alpha}(r-s)^{ (\alpha-1)/4}\|u^1(s,\cdot) - u^2(s,\cdot)\|_{\mathbb{B}(\mathbb{T}, L_p)}.
\end{equs}
Using the definition of $A$, and the above inequality, we get
\begin{equs}
\|A_{s,t}(x)\|_{L_p} &\lesssim \int_s^t \int_{\mathbb{T}}p_{T-r}(x,y)\|g\|_{C^\alpha}(r-s)^{ (\alpha-1)/4}\|u^1(s,\cdot) - u^2(s,\cdot)\|_{\mathbb{B}(\mathbb{T},L_p)}dydr\\
&\lesssim \|g\|_{C^\alpha}\Big([u^1,u^2]_{\mathscr{S}_p^{1/2}[S,T]} + \|u^1(S,\cdot) - u^2(S,\cdot)\|_{\mathbb{B}(\mathbb{T}, L_p)}\Big)(t-s)^{(3+ \alpha)/4}.
\label{eq:Ast_bound}
\end{equs}
We proceed with an estimate for $\|\mathbb{E}^s\delta A_{s,a,t}\|_{L_p}$ for $a \in [s,t]$. Note that 
\begin{equs}
|\mathbb{E}^s\delta A_{s,a,t}| 
&= |\mathbb{E}^s(A_{s,t}- A_{s,a} - A_{a,t})|
\\
& = \Big|\mathbb{E}^s \int_a^t \int_{\mathbb{T}}p_{T-r}(x,y) \mathbb{E}^a\Big(g(\phi^{u^1(s,\cdot),s}(r,y)) - g(\phi^{u^2(s,\cdot),s}(r,y))
\\
& \qquad \qquad\qquad \qquad \qquad - g(\phi^{u^1(a,\cdot),a}(r,y)) + g(\phi^{u^2(a,\cdot),a}(r,y))\Big)dydr\Big|.
\end{equs}
For $(r,y) \in [0,1]\times \mathbb{T}$ and  $z \in (C(\mathbb{T}))^4$, we define
\begin{equs}{\Gamma}_{r,y}(z) :=\big|\mathbb{E}\big(g(\phi^{z_1}(r-a,y)) - g(\phi^{z_2}(r-a,y)) - g(\phi^{z_3}(r-a,y)) + g(\phi^{z_4}(r-a,y))\big)\big|.
\end{equs}
For brevity, we fix $(r,y) \in [s,t]\times \mathbb{T}$ and we set ${\Gamma} := {\Gamma}_{r,y}$, 
    $\phi_i :=\phi_{r-a}^{z_i}(y)$ and $\delta_{i,j}:= \phi_j - \phi_i$. 
By \cref{4pointcompar} we get
\begin{equs}{\Gamma}(z)
&\leq \Big|\int_0^1 \int_0^1 \mathbb{E}\Big( \delta_{1,2}\big(\theta\delta_{1,3} + (1-\theta)\delta_{2,4}\big) \nabla^2 g(\Theta_1(\theta,\eta)) \Big)d\theta d\eta\Big|\\
&\qquad+ \Big| \int_0^1 \mathbb{E}\big((\delta_{3,4} - \delta_{1,2}) \nabla g(\Theta_2(\theta)) \big) d\theta\Big|
\end{equs}
where $\Theta_1,\Theta_2$ are convex combinations\footnote{In particular:
\begin{equs}
\Theta_1(\theta,\eta) &:= \eta(\theta \phi_1 + (1-\theta)\phi_2) + (1-\eta)(\theta \phi_3 + (1-\theta) \phi_4),\qquad
\Theta_2(\theta) &:= \theta \phi_3 + (1-\theta) \phi_4,
\end{equs}
but this is not important for the proof.} of $\phi_1,\dots, \phi_4$. 
Hence using \cref{mainmalliavintool} (with $n=2$ and $n=1$ for the first and second terms respectively) and recalling the definition of $F^{(4)}$ from (\ref{F4def}) we get that
\begin{equs}
&{\Gamma}(z)\lesssim \|g\|_{C^{\alpha}}(r-a)^{-1/2 + \alpha/4}\int_0^1\|\delta_{1,2}(\theta\delta_{1,3} + (1-\theta)\delta_{2,4})\|_{\mathscr{W}_{16}^3}d\theta\\&\qquad+ \|g\|_{C^\alpha}(r-a)^{-1/4 + \alpha/4}\|\delta_{3,4} - \delta_{1,2}\|_{\mathscr{W}_8^2}\\
& \lesssim \|g\|_{C^{\alpha}}(r-a)^{-1/2 + \alpha/4}\sum_{i=0}^3F^{(2)}_{32,i}(r-a,y,z_1,z_2)\\
&\qquad\qquad\times\Big(F^{(2)}_{32,i}(r-a,y,z_1,z_3) + F^{(2)}_{32,i}(r-a,y,z_2,z_4)\Big)
\\&\qquad+ \|g\|_{C^\alpha}(r-a)^{-1/4 + \alpha/4}\sum_{i=0}^2F^{(4)}_{8,i}(r-a,y,z). \label{gammabound}
\end{equs}
Let
$$Z := (\phi_{s,a}^{u^1(s,\cdot)}, \phi_{s,a}^{u^2(s,\cdot)}, u^1(a,\cdot), u^2(a,\cdot)).$$
By \cref{lipschitzflowgeneral} and \cref{driftlessapprox}, 
we have that for $l=1,2$, $i \in \mathbb{Z}_{\geq 0}$ that
\begin{equs}\| F^{(2)}_{32,i}(r-a,y,Z_l, Z_{l+2})\|_{L_2|\mathscr{F}_s} \lesssim \sup_{x \in \mathbb{T}}\|Z_l(x) - Z_{l+2}(x)\|_{L_{2,\infty}^{\mathscr{F}_s}} \lesssim [D^{u^l}]_{\mathscr{V}_p^\beta}(t-s)^{\beta}.
\end{equs}
Using this, and \cref{lipschitzflowgeneral}, we can see that
\begin{equs}
&\|\mathbb{E}^s\Big(F_{32,i}^{(2)}(r-a,y,Z_1,Z_2)F_{32,i}^{(2)}(r-a,y,Z_l, Z_{l+2})\Big)\|_{L_p}\\
&\lesssim \|\| F_{32,i}^{(2)}(r-a,y,Z_1,Z_2)\|_{L_2|\mathscr{F}_s}\|F_{32,i}^{(2)}(r-a,y,Z_l, Z_{l+2})\|_{L_2|\mathscr{F}_s}\|_{L_p}\\
&\lesssim [D^{u^l}]_{\mathscr{V}_p^\beta}(t-s)^{\beta} \| \| F^{(2)}_{32,i}(r-a,y,Z_1,Z_2)\|_{L_2|\mathscr{F}_s}\|_{L_p}\\
&\lesssim [D^{u^l}]_{\mathscr{V}_p^\beta}(t-s)^{\beta} \sup_{x \in \mathbb{T}}\|\phi^{u^1(s,\cdot),s}(a,x) - \phi^{u^2(s,\cdot),s}(a,x) \|_{L_{2,p}^{\mathscr{F}_s}}\\\
&\lesssim \max_{l \in \{1,2\}}[D^{u^l}]_{\mathscr{V}_{2p}^\beta}(t-s)^{\beta} \sup_{x \in \mathbb{T}}\|u^1(s,x) - u^2(s,x)\|_{L_p}. \label{F12Fijbound}
\end{equs}
Note moreover that by \cref{4pointestimate},
\begin{equs}
\sum_{i=0}^2\| &\mathbb{E}^sF_{8,i}^{(4)}(r-a,y,Z)\|_{L_p}\\
&  \lesssim (1+\max_{l \in \{1,2\}}[D^{u^l}]_{\mathscr{V}_{2p}^\beta}) \Big([u^1,u^2]_{\mathscr{S}_p^{1/2}[S,T]} + \|u^1(S,\cdot) - u^2(S,\cdot)\|_{\mathbb{B}(\mathbb{T}, L_p)}\Big)|t-s|^{1/2}. \label{F4boundforquadrature}
\end{equs}
Using (\ref{gammabound}), (\ref{F12Fijbound}) and (\ref{F4boundforquadrature}), we get that
\begin{equs}
&\|\mathbb{E}^s{\Gamma}(Z)\|_{L_p} \\
&\qquad\lesssim \|g\|_{C^\alpha}(r-a)^{-1/2 + \alpha/4}\sum_{i=0}^3 \sum_{l \in \{1,2\}}\| \mathbb{E}^s \Big(F_{32,i}^{(2)}(r-a,y,Z_1, Z_2) F_{32,i}^{(2)}(r-a,y,Z_l, Z_{l+2})\Big)\|_{L_p}\\
&\qquad\qquad + \|g\|_{C^\alpha}(r-a)^{-1/4 +\alpha/4}\sum_{i=0}^2 \| \mathbb{E}^s F_{8,i}^{(4)}(r-a,y, Z)\|_{L_p}\\
&\lesssim \|g\|_{C^\alpha}(r-a)^{-1/2 + \alpha/4} \max_{l \in \{1,2\}}[D^{u^l}]_{\mathscr{V}_{2p}^\beta}(t-s)^{\beta}\sup_{x \in \mathbb{T}}\|u^1(s,x) - u^2(s,x)\|_{L_p} \\
&\qquad+ \|g\|_{C^\alpha}(r-a)^{-1/4 + \alpha/4}\\
&\qquad\qquad \times\sum_{i=0}^2(1+\max_{l \in \{1,2\}}[D^{u^l}]_{\mathscr{V}_{2p}^\beta})([u^1, u^2]_{\mathscr{S}_p^{1/2}[S,T]} + \|u^1(S,\cdot) - u^2(S,\cdot)\|_{\mathbb{B}(\mathbb{T}, L_p)})(t-s)^{1/2}\\
&\lesssim \|g\|_{C^\alpha}(1+\max_{l \in \{1,2\}}[D^{u^l}]_{\mathscr{V}_{2p}^\beta})([u^1, u^2]_{\mathscr{S}_p^{1/2}[S,T]} + \|u^1(S,\cdot) - u^2(S,\cdot)\|_{\mathbb{B}(\mathbb{T}, L_p)})\\
&\qquad \times\Big((r-a)^{-1/2 + \alpha/4}(t-s)^{\beta} + (r-a)^{-1/4 + \alpha/4}(t-s)^{1/2} \Big).
\end{equs}
Therefore, by using \cref{markovproperty} and the above bound, we get 
\begin{equs}
&\|\mathbb{E}^s \delta A_{s,a,t}\|_{L_p}\\
& \lesssim   \int_a^t \int_{\mathbb{T}} p_{T-r}(x,y)\|\mathbb{E}^s \Gamma_{r,y}(Z)\|_{L_p} dydr\\
& \lesssim \|g\|_{C^\alpha}(1+\max_{l \in \{1,2\}}[D^{u^l}]_{\mathscr{V}_{2p}^\beta})([u^1, u^2]_{\mathscr{S}_p^{1/2}[S,T]} + \|u^1(S,\cdot) - u^2(S,\cdot)\|_{\mathbb{B}(\mathbb{T}, L_p)})\\
&\qquad \times\Big((t-s)^{\beta}\int_a^t \int_{\mathbb{T}}p_{t-r}(x,y)(r-a)^{-1/2 + \alpha/4}dydr\\
&\qquad\qquad\qquad+ (t-s)^{1/2}\int_a^t \int_{\mathbb{T}}p_{t-r}(x,y)(r-a)^{-1/4 + \alpha/4}dydr\Big)\\
&\lesssim \|g\|_{C^\alpha}(1+\max_{l \in \{1,2\}}[D^{u^l}]_{\mathscr{V}_{2p}})([u^1, u^2]_{\mathscr{S}_p^{1/2}[S,T]} + \|u^1(S,\cdot) - u^2(S,\cdot)\|_{\mathbb{B}(\mathbb{T}, L_p)})\\
&\qquad \times\Big((t-s)^{\beta}(t-a)^{1/2 + \alpha/4} + (t-s)^{1/2}(t-a)^{3/4 + \alpha/4}\Big)\\
&\lesssim \|g\|_{C^\alpha}(1+\max_{l \in \{1,2\}}[D^{u^l}]_{\mathscr{V}_{2p}})([u^1, u^2]_{\mathscr{S}_p^{1/2}[S,T]} + \|u^1(S,\cdot) - u^2(S,\cdot)\|_{\mathbb{B}(\mathbb{T}, L_p)})\\
&\qquad \times\Big((t-s)^{\beta + 1/2 + \alpha/4} + (t-s)^{5/4 + \alpha/4}\Big).
\label{eq:delta_Ast}
\end{equs}
Note that $\beta + 1/2 + \alpha/4$ and $5/4 + \alpha/4$ are greater than $1$ by the assumptions that $\beta > 1/2 - \alpha/4$ and that $\alpha >-1$.
Consequently, by \eqref{eq:Ast_bound} and \eqref{eq:delta_Ast}, we have that the condition \eqref{condiSSL2} is satisfied. In addition, by using \cref{driftlessapprox} and the regularity of $g$, it is straightforward to see that the process
\begin{align*}
	\mathscr{A}_t:=\int_0^t \int_{\mathbb{T}}p_{T-r}(x,y)\big(g(u^1(r,y)) - g(u^2(r,y))\big) dydr
\end{align*} 
satisfies 
\eqref{condiSSL3} and \eqref{condiSSL4}. Consequently, the conclusion follows from \cref{lem:conditionalSSL}
and the fact that $(S,T) \in [0,1]_{\leq}^2$ was arbitrary.
\end{proof}

\begin{corollary}\label{biuidriftcomparison}
Let $p \in [2,\infty)$ and let $\sigma \in C^4$ such that there exists a constant $\mu>0$ such that $\sigma^2(x) \geq \mu^2$. For $i=1,2$, let $b^i \in C^{\alpha}$ and let $u^i$ be regularised solutions of
$$(\partial_t - \Delta)u^i = b^i(u^i) + \sigma(u^i)\xi(dy,dr)$$
in the class $\mathscr{U}^\beta$ for some $\beta \in \big(\frac{1}{2} - \frac{\alpha}{4}, 1+\frac{\alpha}{4}\big]$. There exists a constant $N = N(p, \|\sigma\|_{C^4}, \mu, \alpha, \beta)$ such that for all $g^1, g^2 \in C^\infty$, $(s,t) \in [0,1]^2_{\leq}$, we have 
\begin{equs}
&\Big\|\int_s^t \int_{\mathbb{T}} p_{t-r}(x,y)\big(g^1(u^1(r,y)) - g^2(u^2(r,y))\big)dydr\Big\|_{L_p}\\
&\leq N\big(1+\max_{i \in \{1,2\}}[D^{u^i}]_{\mathscr{V}_{2p}^\beta}\big)|t-s|^{(3+\alpha)/4}\\
&\qquad\qquad\times\Big(\|g^1 - g^2\|_{C^{\alpha-1}} +  \|g^2\|_{C^\alpha}\Big([u^1,u^2]_{\mathscr{S}_p^{1/2}[s,t]} + \|u^1(s,\cdot) - u^2(s,\cdot)\|_{\mathbb{B}(\mathbb{T}, L_p)}\Big)\Big).
 \end{equs}
\end{corollary}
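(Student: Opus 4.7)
The plan is to split the integrand via the telescoping identity
\[
g^1(u^1(r,y)) - g^2(u^2(r,y)) = (g^1-g^2)(u^1(r,y)) + \bigl(g^2(u^1(r,y)) - g^2(u^2(r,y))\bigr),
\]
estimate each resulting integral separately, and sum the two bounds. The second piece is already handled precisely by \cref{corelemma} applied with $g := g^2$; this contributes exactly the term of the claimed bound involving $\|g^2\|_{C^\alpha}$ multiplied by the bracket and the initial-data gap, with the correct factor $(t-s)^{(3+\alpha)/4}$ and the prefactor $1+\max_i[D^{u^i}]_{\mathscr{V}_{2p}^\beta}$.

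For the first piece, I would invoke \cref{otherregularisationlemma} with $g := g^1-g^2$ and regularity index $\lambda := \alpha - 1 \in (-2,-1)$. One checks that the hypothesis $\beta \in (\tfrac14 - \tfrac{\lambda}{4}, 1+\tfrac{\alpha}{4}] = (\tfrac12 - \tfrac{\alpha}{4}, 1+\tfrac{\alpha}{4}]$ matches the standing assumption on $\beta$ exactly, so the lemma applies to $u^1$ and yields
\[
\Bigl\|\int_s^t\!\int_{\mathbb{T}} p_{t-r}(x,y)(g^1-g^2)(u^1(r,y))\,dy\,dr\Bigr\|_{L_p} \lesssim \|g^1-g^2\|_{C^{\alpha-1}}\bigl((t-s)^{(3+\alpha)/4} + [D^{u^1}]_{\mathscr{V}_2^\beta}(t-s)^{\beta + (\alpha+2)/4}\bigr).
\]
Since $(t-s)\le 1$ and $\beta + (\alpha+2)/4 \ge (3+\alpha)/4$ (equivalently $\beta \ge 1/4$, which follows from $\beta > 1/2 - \alpha/4 > 1/2$), the second summand is absorbed into the first up to the multiplicative constant $1+[D^{u^1}]_{\mathscr{V}_{2p}^\beta}$ (noting $[D^{u^1}]_{\mathscr{V}_2^\beta}\le [D^{u^1}]_{\mathscr{V}_{2p}^\beta}$).

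Adding the two estimates delivers exactly the claimed bound. There is essentially no obstacle here; the whole corollary is a direct assembly of \cref{otherregularisationlemma} and \cref{corelemma}, and the only points requiring a moment of care are (i) verifying that the shifted Besov index $\lambda = \alpha-1$ lands in $(-2,-1)$ and that the range of admissible $\beta$ for \cref{otherregularisationlemma} coincides with the assumed one, and (ii) confirming that the time exponent $\beta + (\alpha+2)/4$ arising from the non-dominant term in \cref{otherregularisationlemma} is at least $(3+\alpha)/4$ so that it can be collapsed into the target power on $[0,1]$.
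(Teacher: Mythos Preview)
Your proposal is correct and follows essentially the same approach as the paper: the paper's proof also splits via the triangle inequality, applies \cref{otherregularisationlemma} with $\lambda=\alpha-1$ to the $(g^1-g^2)(u^1)$ piece (noting that $\tfrac14-\tfrac{\alpha-1}{4}=\tfrac12-\tfrac\alpha4$ so the $\beta$-range matches), and applies \cref{corelemma} with $g=g^2$ to the remaining piece. Your additional checks on the time exponents and the monotonicity $[D^{u^1}]_{\mathscr{V}_2^\beta}\le[D^{u^1}]_{\mathscr{V}_{2p}^\beta}$ are accurate and fill in details the paper leaves implicit.
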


\begin{proof}
Since $\beta >\frac{1}{2} - \frac{\alpha}{4} = \frac{1}{4} - \frac{\alpha-1}{4} $, we can see that the condition of \cref{otherregularisationlemma} is satisfied with $\lambda = \alpha-1$.
The desired result follows from \cref{otherregularisationlemma} and \cref{corelemma} by the triangle inequality. 
\end{proof}

\section{The $\mathscr{S}_p$-bracket of two solutions}\label{Spsection}
Throughout the section we work with the following assumption:
\begin{assumption}\label{bucklingsectionassumptions}
Let $\sigma \in C^4$ such that there exists constant $\mu>0$ such that $\sigma^2(x) \geq \mu^2$.   Let $\alpha \in (-1,0)$, $\beta \in (\frac{1}{2} - \frac{\alpha}{4}, 1+\frac{\alpha}{4}]$, and suppose that for $i=1,2$ we are given $b^i \in C^{\alpha+}$ and that $u^i$ are regularised solutions of
$$(\partial_t - \Delta)u^i = b^i(u^i) + \sigma(u^i)\xi(dy,dr)$$
in the class $\mathscr{U}^\beta$ with initial conditions $u^i(0,\cdot) = u_0^i \in C(\mathbb{T})$. 
\end{assumption}
Recall the definition of the $\mathscr{S}_p^{1/2}$-bracket from (\ref{Spbracket}).
Informally, the aim of this section is to show that
$$[u^1,u^2]_{\mathscr{S}_p^{1/2}[0,1]} \lesssim \|u^1_0 - u^2_0\|_{\mathbb{B}(\mathbb{T})}+ \|b^1 - b^2\|_{C^{\alpha-1}}.$$

\begin{lemma}\label{Shalfofunum} Let Assumption \ref{bucklingsectionassumptions} hold and let $p \in [2,\infty)$.
 Then $[u^1,u^2]_{\mathscr{S}_p^{1/2}} <\infty$. Moreover there exists a constant $N = N(p, \mu,\|\sigma\|_{C^4}, \alpha, \beta)$ such that
\begin{equs}\,&[u^1,u^2]_{\mathscr{S}_p^{1/2}[s,t]}\\
&\leq N \big(1+\max_{i \in \{1,2\}}[D^{u^i}]_{\mathscr{V}_{2p}^\beta}\big)(1+\|b^2\|_{C^\alpha})\\
&\qquad\times\Big(\|b^1 - b^2 \|_{C^{\alpha-1}}+[u^1,u^2]_{\mathscr{S}_p[s,t]} + \|u^1(s,\cdot)- u^2(s,\cdot)\|_{\mathbb{B}(\mathbb{T}, L_p)}  \Big)(t-s)^{ (1+\alpha)/4} .\label{globalbucklingequatio}
\end{equs}
\end{lemma}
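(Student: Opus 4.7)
The plan is to decompose the four-point quantity inside the $\mathscr{S}_p^{1/2}$-bracket into a drift difference (controlled by \cref{biuidriftcomparison}) and a stochastic integral (controlled by \cref{4pointBDG}), and then read off the exponent $(1+\alpha)/4$ after dividing by $(t-s)^{1/2}$.

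\textbf{Finiteness.} I would first verify $[u^1,u^2]_{\mathscr{S}_p^{1/2}} < \infty$ by a crude triangle inequality argument: applying \cref{driftlessapprox} to $u^1$ and $u^2$ separately yields $\|u^i(t,x) - \phi^{u^i(s,\cdot),s}(t,x)\|_{L_p} \lesssim [D^{u^i}]_{\mathscr{V}_p^\beta}(t-s)^\beta$. Since \cref{bucklingsectionassumptions} forces $\beta > 1/2 - \alpha/4 > 1/2$, dividing by $(t-s)^{1/2}$ gives a uniform bound.

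\textbf{Decomposition.} Mimicking the algebra in the proof of \cref{driftlessapprox} and using the definition of regularised solution, I would write, for each $i \in \{1,2\}$,
\begin{equs}
u^i(t,x) - \phi^{u^i(s,\cdot),s}(t,x) = (D^{u^i}_t(x) - P_{t-s}D^{u^i}_s(x)) + M^i_{s,t}(x),
\end{equs}
where $M^i_{s,t}(x) := \int_s^t\int_{\T} p_{t-r}(x,y)\big(\sigma(u^i(r,y)) - \sigma(\phi^{u^i(s,\cdot),s}(r,y))\big)\xi(dy,dr)$. Taking the difference over $i=1,2$ gives a drift part and a four-point stochastic integral.

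\textbf{Bounding the drift difference.} I would pick smooth approximations $b^{i,n} \to b^i$ in $C^\alpha$. By the definition of regularised solution, the drift difference $(D^{u^1}_t - P_{t-s}D^{u^1}_s) - (D^{u^2}_t - P_{t-s}D^{u^2}_s)$ is the limit in probability (uniformly in $(t,x)$) of $\int_s^t\int_\T p_{t-r}(x,y)(b^{1,n}(u^1(r,y)) - b^{2,n}(u^2(r,y)))dydr$. Applying \cref{biuidriftcomparison} with $g^i = b^{i,n}$ yields a uniform-in-$n$ bound of the form
\begin{equs}
N(1+\max_i[D^{u^i}]_{\mathscr{V}_{2p}^\beta})(t-s)^{(3+\alpha)/4}\Big(\|b^{1,n}-b^{2,n}\|_{C^{\alpha-1}} + \|b^{2,n}\|_{C^\alpha}\big([u^1,u^2]_{\mathscr{S}_p^{1/2}[s,t]} + \|u^1(s,\cdot)-u^2(s,\cdot)\|_{\mathbb{B}(\T,L_p)}\big)\Big),
\end{equs}
which passes to the limit by Fatou applied to the $L_p$-norm (extracting an a.s.\ convergent subsequence from the convergence in probability).

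\textbf{Bounding the stochastic integral, and combining.} For $M^1_{s,t}(x) - M^2_{s,t}(x)$, \cref{4pointBDG} produces an explicit contribution of order $[D^{u^1}]_{\mathscr{V}_{2p}^\beta}\|u^1(s,\cdot)-u^2(s,\cdot)\|_{\mathbb{B}(\T,L_p)}(t-s)^{1/4+\beta}$, together with a kernel integral whose integrand, by the bracket definition, is at most $[u^1,u^2]_{\mathscr{S}_p^{1/2}[s,t]}(r-s)^{1/2}$. A direct computation using $\int_\T p_{t-r}^2(x,y)dy \lesssim (t-r)^{-1/2}$ and $\int_s^t(r-s)(t-r)^{-1/2}dr \lesssim (t-s)^{3/2}$ contributes $(t-s)^{3/4}[u^1,u^2]_{\mathscr{S}_p^{1/2}[s,t]}$. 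Summing and dividing by $(t-s)^{1/2}$, the exponent check reduces to $(3+\alpha)/4 - 1/2 = (1+\alpha)/4$, $\beta - 1/4 \geq (1+\alpha)/4$ (using $\beta > 1/2 - \alpha/4$), and $1/4 \geq (1+\alpha)/4$ (using $\alpha < 0$), all of which hold. The main technical obstacle is the self-referential nature of the inequality: the bracket appears on both sides. Here one relies on the fact that the bootstrap is deferred — the prefactor $(t-s)^{(1+\alpha)/4}$ is designed to be absorbed in a later localisation argument — so for this lemma it suffices to check the exponents and to justify the limit in the smoothing procedure, the latter being the most delicate step because one must confirm that the regularisation error is quantitatively controlled by quantities that remain bounded as $n\to\infty$.
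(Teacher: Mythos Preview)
Your proposal is correct and follows the paper's approach: the same decomposition into a drift difference and a four-point stochastic integral, controlled by \cref{biuidriftcomparison} and \cref{4pointBDG} respectively, followed by the exponent bookkeeping you describe. The one tactical difference is that where you bound the kernel integral from \cref{4pointBDG} directly via $\|u^1(r,y)-u^2(r,y)-\phi^{u^1(s,\cdot),s}(r,y)+\phi^{u^2(s,\cdot),s}(r,y)\|_{L_p}\le[u^1,u^2]_{\mathscr{S}_p^{1/2}[s,t]}(r-s)^{1/2}$ and integrate, the paper instead feeds that term back into the Gr\"onwall-type \cref{Gronwalltypelemma}; both routes close to the same estimate with the same exponents.

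One point you leave implicit: the left-hand side is a supremum over all sub-pairs $(s',t')\in[s,t]_<^2$, so your pointwise bound must be applied at each such pair, which produces $\|u^1(s',\cdot)-u^2(s',\cdot)\|_{\mathbb{B}(\mathbb{T},L_p)}$ on the right with varying $s'$. The paper handles this by invoking \cref{bracketlemma} to convert that term back to $[u^1,u^2]_{\mathscr{S}_p^{1/2}[s,t]}+\|u^1(s,\cdot)-u^2(s,\cdot)\|_{\mathbb{B}(\mathbb{T},L_p)}$ before taking the supremum; you should make this step explicit.
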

\begin{proof}
Let $(S,T) \in [0,1]_{\leq}^2$. We begin by verifying that the $\mathscr{S}_p^{1/2}$-bracket is finite. By the triangle inequality, and by \cref{driftlessapprox}, we have for $(s,t) \in [S,T]_{\leq}^2$ that
\begin{equs}
&\sup_{x \in \mathbb{T}}\|u^1(t,x) - \phi^{u^1(s,\cdot),s}(t,x) - u^2(t,x) + \phi^{u^2(s,\cdot),s}(t,x)\|_{L_{p,\infty}^{\mathscr{F}_s}}\\
&\leq \max_{i \in \{1,2\}}\sup_{x \in \mathbb{T}}\|u^i(t,x) - \phi^{u^i(s,\cdot),s}(t,x)\|_{L_{p,\infty}^{\mathscr{F}_s}}\\
&\lesssim \max_{i \in \{1,2\}}[D^{u^i}]_{\mathscr{V}_p^\beta}(t-s)^{\beta}\lesssim \max_{i \{1,2\}}[D^{u^i}]_{\mathscr{V}_p^\beta}(t-s)^{1/2},
\end{equs}
where we used that by assumption we have $\beta \geq \frac{1}{2}-\frac{\alpha}{4} > \frac{1}{2}$. Thus by the fact that $\|\cdot\|_{L_p} = \| \|\cdot\|_{L_p|\mathscr{F}_s}\|_{L_p} \leq \|\cdot\|_{L_{p,\infty}^{\mathscr{F}_s}}$, it follows that 
\begin{equs}\,[u^1,u^2]_{\mathscr{S}_p^{1/2}} \lesssim \max_{i \in \{1,2\}}[D^{u^i}]_{\mathscr{V}_p^\beta},\label{Spbracketisbdd}
\end{equs}
which is finite, since by assumption $u^i \in \mathscr{U}^\beta$.
Note that for $(s,t) \in [0,1]_{\leq}^2$, $x \in \mathbb{T}$ we have by (\ref{phidef1}) and by (\ref{integraleqn}) that
\begin{equs}
&u^1(t,x) - u^2(t,x) - \phi^{u^1(s,\cdot),s}(t,x) +\phi^{u^2(s,\cdot),s}(t,x) =\\
&= u^1(t,x) - u^2(t,x) - P_{t-s}\big(u^1(s,\cdot)
- u^2(s,\cdot)\big)(x)\\ &\qquad\qquad- \int_s^t \int_{\mathbb{T}}p_{t-r}(x,y)\big(\sigma(\phi^{u^1(s,\cdot),s}(r,y)) - \sigma(\phi^{u^2(s,\cdot),s}(r,y)\big)\xi(dy,dr)\\
&= \Big(D^{u^1}_t -D^{u^2}_t  - P_{t-s}D^{u^1}_s + P_{t-s}D^{u^2}_s\Big)(x) \\
&\qquad + \int_s^t \int_{\mathbb{T}}p_{t-r}(x,y)\big(\sigma(u^1(r,y)) - \sigma(u^2(r,y))\\
&\qquad\qquad\qquad\qquad\qquad\qquad- \sigma(\phi^{u^1(s,\cdot),s}(r,y))+ \sigma(\phi^{u^2(s,\cdot),s}(r,y))\big) \xi(dy,dr)\\
& =: I(t,x)+J(t,x).
\end{equs}
For $i=1,2$ let $(b^{i,n})_{n \in \mathbb{N}} \subset C^\infty$ with $b^{i,n} \to b^i$ in $C^\alpha$. 
Then by Definition \ref{defofsolution} and by Fatou's lemma, we have
\begin{equs}&\sup_{(t,x) \in [0,1]\times \mathbb{T}}\|I(t,x)\|_{L_p}\\
&\lesssim \liminf_{n \to \infty}\sup_{(t,x) \in [0,1]\times \mathbb{T}}\Big\| \int_s^t \int_{\mathbb{T}}p_{t-r}(x,y) \big(b^{1,n}(u^1(r,y)) - b^{2,n}(u^2(r,y))\big)dydr\Big\|_{L_p}
\end{equs}
 So by \cref{biuidriftcomparison} we have 
\begin{align*}
\|I(t,x)\|_{L_p}&\lesssim  |t-s|^{(3+\alpha)/4}(1+\max_{i \in \{1,2\}}[D^{u^i}]_{\mathscr{V}_{2p}^\beta})\\
&\qquad \times\Big(\|b^1 -b^2\|_{C^{\alpha-1}} + \|b^2\|_{C^\alpha}\Big([u^1,u^2]_{\mathscr{S}_p^{1/2}[s,t]} + \|u^1(s,\cdot) - u^2(s,\cdot)\|_{\mathbb{B}(\mathbb{T}, L_p)}\Big)\Big).
\end{align*}
Note moreover that by \cref{4pointBDG} we have
\begin{equs}
&\|J(t,x)\|_{L_p} \\
&\lesssim [D^{u^1}]_{\mathscr{V}_{2p}^\beta}\|u^1(s,\cdot) -u^2(s,\cdot)\|_{\mathbb{B}(\mathbb{T}, L_p)}(t-s)^{ \frac{1}{4}+\beta}\\
&+ \Big(\int_s^t\int_{\mathbb{T}} p_{t-r}(x,y)\big\| u^1(r,y) - u^2(r,y) - \phi^{u^1(s,\cdot),s}(r,y) + \phi^{u^2(s,\cdot),s}(r,y)\big\|_{L_p}^2 dydr \Big)^{1/2}.
\end{equs}
By our bounds on $I, J$ and by the observation that $\frac{1}{4} + \beta > \frac{1}{4} + \frac{1}{2} - \frac{\alpha}{4} > \frac{3}{4} + \frac{\alpha}{4}$, we conclude that
\begin{align*}
&\|u^1(t,x) - u^2(t,x) - \phi^{u^1(s,\cdot),s}(t,x) +\phi^{u^2(s,\cdot),s}(t,x)\|_{L_p}^2 \lesssim\\
&\lesssim (1+\max_{i \in \{1,2\}}[D^{u^i}]_{\mathscr{V}_{2p}^\beta})^2(1+\|b^2\|_{C^\alpha})^2\\
&\qquad\qquad
\times\Big(\|b^1 - b^2 \|_{C^{\alpha-1}}+[u^1,u^2]_{\mathscr{S}_p^{1/2}[s,t]} + \|u^1(s,\cdot)- u^2(s,\cdot)\|_{\mathbb{B}(\mathbb{T}, L_p)}  \Big)^2(t-s)^{(3+\alpha)/2} \\
& \qquad\qquad + \int_s^t\int_{\mathbb{T}} p_{t-r}(x,y)\big\| u^1(r,y) - u^2(r,y) - \phi^{u^1(s,\cdot),s}(r,y) + \phi^{u^2(s,\cdot),s}(r,y)\big\|_{L_p}^2 dydr.
\end{align*}
Note that the norm in the integrand is bounded in $(r,y)$, since it is bounded by $[u^1,u^2]_{\mathscr{S}_p^{1/2}}$, which is finite by (\ref{Spbracketisbdd}).
Using \cref{Gronwalltypelemma}, and \cref{bracketlemma} (where we recall that $S \leq s \leq t$), we get
\begin{equs}
&\|u^1(t,x) - u^2(t,x) - \phi^{u^1(s,\cdot),s}(t,x) +\phi^{u^2(s,\cdot),s}(t,x)\|_{L_p}\\
&\lesssim (1+\max_{i \in \{1,2\}}[D^{u^i}]_{\mathscr{V}_{2p}^\beta})(1+\|b^2\|_{C^\alpha})\\
&\qquad\qquad\times\Big([u^1,u^2]_{\mathscr{S}_p^{1/2}[S,t]} + \|u^1(S,\cdot)- u^2(S,\cdot)\|_{\mathbb{B}(\mathbb{T}, L_p)} +\|b^1 - b^2 \|_{C^{\alpha-1}} \Big)(t-s)^{ (3+\alpha)/4}.
\end{equs}
Therefore dividing both sides by $(t-s)^{1/2}$ and taking supremum over $(s,t) \in [S,T]^2_<$, we obtain the desired bound with $(S,T)$ in place of $(s,t)$. Now the desired result follows by the fact that $(S,T) \in [0,1]_{\leq}^2$ was arbitrary.
\end{proof}

\begin{lemma}[Splitting the $\mathscr{S}_p^{1/2}$-bracket]\label{Spbracketistriangular} Let Assumption \ref{bucklingsectionassumptions} hold and let $p \in [2,\infty)$.
There exists a constant $N = N(p,\|\sigma\|_{C^{4}}, \mu, \alpha, \beta)$ such for all $(S,T) \in [0,1]_{\leq}^2$ and $Q \in [S,T]$ we have
\begin{equs}\,[u^1,u^2]_{\mathscr{S}_p^{1/2}[S,T]} &\leq N(1+ \max_{i \in \{1,2\}}[D^{u^i}]_{\mathscr{V}_{2p}^\beta})\Big([u^1,u^2]_{\mathscr{S}_p^{1/2}[S,Q]} + \|u^1(S,\cdot) - u^2(S,\cdot)\|_{\mathbb{B}(\mathbb{T}, L_p)}\Big)\\
&\qquad+2[u^1,u^2]_{\mathscr{S}_p^{1/2}[Q,T]}.
\end{equs}
\end{lemma}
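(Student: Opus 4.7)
The plan is to split the supremum defining $[u^1,u^2]_{\mathscr{S}_p^{1/2}[S,T]}$ into cases based on where the pair $(s,t) \in [S,T]_{<}^2$ sits relative to $Q$. If $(s,t) \in [S,Q]_{<}^2$ (resp.\ $(s,t) \in [Q,T]_{<}^2$), the numerator of the bracket is bounded directly by $[u^1,u^2]_{\mathscr{S}_p^{1/2}[S,Q]}(t-s)^{1/2}$ (resp.\ $[u^1,u^2]_{\mathscr{S}_p^{1/2}[Q,T]}(t-s)^{1/2}$). The only genuinely nontrivial case is $s < Q < t$, and that is where the whole content of the lemma lives.

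For this mixed case, I would first invoke the flow property of the driftless equation, which by uniqueness of \eqref{phidef0} reads
\[
\phi^{u^i(s,\cdot),s}(t,\cdot) = \phi^{\phi^{u^i(s,\cdot),s}(Q,\cdot),\,Q}(t,\cdot), \qquad i \in \{1,2\},\ s \le Q \le t.
\]
Adding and subtracting $\phi^{u^i(Q,\cdot),Q}(t,x)$ for each $i$ and then subtracting across $i=1,2$ splits the target difference into $\mathrm{I} + \mathrm{II}$, where $\mathrm{I} := (u^1-\phi^{u^1(Q,\cdot),Q})(t,x) - (u^2-\phi^{u^2(Q,\cdot),Q})(t,x)$ and
\[
\mathrm{II} := \phi^{u^1(Q,\cdot),Q}(t,x) - \phi^{\phi^{u^1(s,\cdot),s}(Q,\cdot),\,Q}(t,x) - \phi^{u^2(Q,\cdot),Q}(t,x) + \phi^{\phi^{u^2(s,\cdot),s}(Q,\cdot),\,Q}(t,x).
\]
Term $\mathrm{I}$ is bounded immediately from the definition of the bracket on $[Q,T]$: $\|\mathrm{I}\|_{L_p} \le [u^1,u^2]_{\mathscr{S}_p^{1/2}[Q,T]}(t-Q)^{1/2}$. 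Term $\mathrm{II}$ is a $+{-}{-}+$ combination of four driftless flows starting at time $Q$, which, up to a permutation of the four initial data that leaves the combination invariant, matches $F^{(4)}_{p,0}(t-Q,x,Z)$ for $Z$ defined by \eqref{defofZ} with $a=Q$. The four-point estimate \cref{4pointestimate} therefore yields
\[
\|\mathrm{II}\|_{L_p} \le N\big(1 + \max_{i\in\{1,2\}}[D^{u^i}]_{\mathscr{V}_{2p}^\beta}\big)\big([u^1,u^2]_{\mathscr{S}_p^{1/2}[S,Q]} + \|u^1(S,\cdot) - u^2(S,\cdot)\|_{\mathbb{B}(\mathbb{T},L_p)}\big)(Q-s)^{1/2}.
\]

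Using $(t-Q)^{1/2},(Q-s)^{1/2} \le (t-s)^{1/2}$, dividing by $(t-s)^{1/2}$, and taking the supremum over the three cases then produces the stated inequality. The factor $2$ in front of $[u^1,u^2]_{\mathscr{S}_p^{1/2}[Q,T]}$ simply collects the contributions from the pure case $(s,t)\in[Q,T]_{<}^2$ together with Term $\mathrm{I}$ of the mixed case, while the $N(1+\cdots)[u^1,u^2]_{\mathscr{S}_p^{1/2}[S,Q]}$ term absorbs both the pure case $(s,t)\in[S,Q]_{<}^2$ (taking $N\ge 1$) and Term $\mathrm{II}$ of the mixed case. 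The only substantive step is recognising Term $\mathrm{II}$ as a 4-point expression to which \cref{4pointestimate} applies directly; the rest is a routine case analysis.
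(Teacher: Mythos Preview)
Your proposal is correct and follows essentially the same approach as the paper: a case split on the position of $(s,t)$ relative to $Q$, the flow property $\phi^{u^i(s,\cdot),s}(t,\cdot)=\phi^{\phi^{u^i(s,\cdot),s}(Q,\cdot),Q}(t,\cdot)$ in the mixed case to decompose into a $[Q,T]$-bracket term plus a four-point combination, and then \cref{4pointestimate} for the latter. The only cosmetic difference is that the paper writes the four-point term directly as $\|F^{(4)}_{p,0}(t-Q,x,Z)\|_{L_p}$ with $Z$ exactly as in \eqref{defofZ} (with $a=Q$), while you note the sign/permutation that identifies your $\mathrm{II}$ with it.
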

\begin{proof}
For $(s,t) \in [0,1]_{\leq}^2$, we  set 
$$
A(s,t) := \sup_{x \in \mathbb{T}}\|u^1(t,x) - u^2(t,x) - \phi^{u^1(s,\cdot),s}(t,x) +\phi^{u^1(s,\cdot),s}(t,x) \|_{L_p}. 
$$
For $(s,t) \in [S,Q]^2_{\leq}$ or $(s,t) \in [Q,T]^2_{\leq}$, we clearly have 
\begin{equs}                \label{eq:trivial_split}
    A(s,t) \leq \big([u^1,u^2]_{\mathscr{S}_p^{1/2}[S,Q]}+[u^1,u^2]_{\mathscr{S}_p^{1/2}[Q,T]}\big)|t-s|^{1/2}.
\end{equs}
For $s \leq Q < t$, by using the triangle inequality and keeping in mind the definition of $F^{(4)}_{p,0}$ (see \eqref{F4def}) we have
\begin{equs}               
    A(s,t) & \leq A(Q, t)
    \\
    &+ \sup_{x \in \mathbb{T}}\|\phi^{u^1(Q,\cdot), Q}(t,x) - \phi^{u^2(Q,\cdot), Q}(t,x) - \phi^{u^1(s,\cdot),s}(t,x) + \phi^{u^2(s,\cdot),s}(t,x)\|_{L_p}
    \\
    &= A(Q, t)+\sup_{x \in \mathbb{T}}\big\|F^{(4)}_{p,0}\big(t-Q,\, x,\, \phi^{u^1(s,\cdot),s}(Q,\cdot),\phi^{u^2(s,\cdot),s}(Q,\cdot), u^1(Q,\cdot), u^2(Q,\cdot)\big)\big\|_{L_p}
\end{equs}
From this and \cref{4pointestimate}, we conclude that for $s \leq Q < t$
\begin{equs}
     A(s,t) & \leq [u^1,u^2]_{\mathscr{S}_p^{1/2}[Q,T]}|t-s|^{1/2}
     \\
     &+ N(1+\max_{i \in \{1,2\}}[D^{u^i}]_{\mathscr{V}_{2p}^\beta})\Big([u^1,u^2]_{\mathscr{S}_p^{1/2}[S,Q]} + \|u^1(S,\cdot) - u^2(S,\cdot)\|_{\mathbb{B}(\mathbb{T}, L_p)}\Big) |t-s|^{1/2}.\qquad \label{Astbracketbound}
\end{equs}
By the above combined with \eqref{eq:trivial_split}, the inequality (\ref{Astbracketbound}) holds for any $(s,t) \in [S, T]^2_{\leq}$,
from which the claim follows. 
\end{proof}

\begin{lemma}\label{triangualityofSp} Let Assumption \ref{bucklingsectionassumptions} hold
and let  $K \in \mathbb{Z}_{\geq 2}$, $p \in [2,\infty)$. There exists a constant $N = N(p,\|\sigma\|_{C^{4}}, K, \mu, \alpha, \beta)$ such that with
$M : = N(1+ \max_{i \in \{1,2\}}[D^{u^i}]_{\mathscr{V}_{2p}})$
we have
$$[u^1,u^2]_{\mathscr{S}_p^{1/2}} \leq (K-1)M^{K-1}\|u^1_0 - u^2_0\|_{\mathbb{B}(\mathbb{T})} + 2\sum_{i=0}^{K-1}M^{i}[u^1,u^2]_{\mathscr{S}_p[\frac{K-i-1}{K}, \frac{K-i}{K}]}.$$
\end{lemma}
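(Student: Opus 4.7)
The plan is to prove \cref{triangualityofSp} by iterating the splitting estimate \cref{Spbracketistriangular} along a uniform partition of $[0,1]$ of mesh size $1/K$. Let $M := N(1 + \max_{i \in \{1,2\}}[D^{u^i}]_{\mathscr{V}_{2p}^\beta})$ where $N$ is taken as in \cref{Spbracketistriangular}; without loss of generality I may enlarge $N$ so that $N \geq 1$ and hence $M \geq 1$. For $i \in \{0,\ldots,K-1\}$ I introduce the abbreviations
\begin{equs}
A_i := [u^1,u^2]_{\mathscr{S}_p^{1/2}[0, \frac{K-i}{K}]}, \qquad B_i := [u^1,u^2]_{\mathscr{S}_p^{1/2}[\frac{K-i-1}{K}, \frac{K-i}{K}]}.
\end{equs}
Note that $A_0$ is precisely the quantity to be bounded, that $A_{K-1} = B_{K-1}$, and that $\|u^1(0,\cdot) - u^2(0,\cdot)\|_{\mathbb{B}(\mathbb{T}, L_p)} = \|u^1_0 - u^2_0\|_{\mathbb{B}(\mathbb{T})}$ since the initial data are deterministic.

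Next, I invoke \cref{Spbracketistriangular} with $(S,Q,T) = (0, \frac{K-i-1}{K}, \frac{K-i}{K})$ for each $i \in \{0,1,\ldots,K-2\}$ to obtain the one-step recursion
\begin{equs}
A_i \leq M\bigl(A_{i+1} + \|u^1_0 - u^2_0\|_{\mathbb{B}(\mathbb{T})}\bigr) + 2 B_i.
\end{equs}
Unrolling this recursion $K-1$ times yields
\begin{equs}
A_0 \leq M^{K-1} A_{K-1} + \Bigl(\sum_{j=1}^{K-1} M^j\Bigr)\|u^1_0 - u^2_0\|_{\mathbb{B}(\mathbb{T})} + 2\sum_{i=0}^{K-2} M^i B_i.
\end{equs}

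The final step is routine simplification. Using $M \geq 1$ I bound $\sum_{j=1}^{K-1} M^j \leq (K-1)M^{K-1}$, and, since $A_{K-1} = B_{K-1}$, the leading term $M^{K-1} A_{K-1}$ is absorbed into the sum by estimating it by $2 M^{K-1} B_{K-1}$. This produces precisely the claimed inequality. The argument is essentially a telescoping bookkeeping computation; no new analytic input is required beyond \cref{Spbracketistriangular}, so I do not anticipate any substantive obstacle. The only point that requires care is aligning the indexing so that the $i$-th iterate corresponds to the subinterval $[\frac{K-i-1}{K},\frac{K-i}{K}]$ appearing on the right-hand side of the stated estimate.
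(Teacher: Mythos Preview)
Your proposal is correct and essentially identical to the paper's proof: both iterate \cref{Spbracketistriangular} along the uniform partition $\{j/K\}_{j=0}^{K}$ with $S=0$ fixed, obtain the same unrolled inequality $A_0 \leq M^{K-1}A_{K-1} + \bigl(\sum_{j=1}^{K-1}M^j\bigr)\|u_0^1-u_0^2\|_{\mathbb{B}(\mathbb{T})} + 2\sum_{i=0}^{K-2}M^iB_i$, and finish by the same two estimates (bounding the geometric sum by $(K-1)M^{K-1}$ and absorbing $M^{K-1}A_{K-1}=M^{K-1}B_{K-1}$ into the final sum). The only cosmetic difference is that the paper writes the unrolling as a formal induction on $n$, whereas you state the unrolled formula directly.
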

\begin{proof}
Let $a_{s,t} := [u^1,u^1]_{\mathscr{S}_p^{1/2}[s,t]}$ and $u_0^{1,2} := \|u^1_0 - u^2_0\|_{\mathbb{B}(\mathbb{T})}$. 
We will begin by using induction to show that for all $n \in \{1,\dots, K-1\}$ we have
\begin{equs}a_{0,1} \leq M^n a_{0,\frac{K-n}{K}} + \Big(\sum_{i=1}^n M^i\Big)u^{1,2}_0+2\sum_{i=0}^{n-1}M^i a_{\frac{K-i -1}{K}, \frac{K-i}{K}}. \label{azeroone}
\end{equs}
 By \cref{Spbracketistriangular} we have that
$$a_{0,1} \leq M(a_{0,\frac{K-1}{K}} + u_0^{1,2}) + 2a_{\frac{K-1}{K}, 1},$$
therefore (\ref{azeroone}) holds for the initial case $n=1$. Now suppose that (\ref{azeroone}) holds for some $n \in \mathbb{N}$. We will show that it also holds for $n+1$. To this end, we first apply the induction  hypothesis and then \cref{Spbracketistriangular}, to get that
\begin{equs}
a_{0,1} &\leq M^n a_{0,\frac{K-n}{K}} + \Big(\sum_{i=1}^n M^i\Big)u_0^{1,2} + 2\sum_{i=0}^{n-1}M^i a_{\frac{K-i -1}{K}, \frac{K-i}{K}}\\
&\leq M^n\Big( M \big(a_{0,\frac{K-n-1}{K}}+ u_0^{1,2}\big)+ 2a_{\frac{K-n-1}{K},\frac{K-n}{K}} \Big) +\Big(\sum_{i=1}^n M^i\Big)u_0^{1,2} + 2\sum_{i=0}^{n-1}M^i a_{\frac{K-i -1}{K}, \frac{K-i}{K}}\\
&= M^{n+1}a_{0,\frac{K-n-1}{K}} + M^{n+1}u_{0}^{1,2} + \Big(\sum_{i=1}^n M^i\Big)u_0^{1,2} + 2M^n a_{\frac{K-n-1}{K}, \frac{K-n}{K}} +  2\sum_{i=0}^{n-1}M^i a_{\frac{K-i -1}{K}, \frac{K-i}{K}}\\
& = M^{n+1}a_{0,\frac{K-n-1}{K}} + \Big(\sum_{i=1}^{n+1} M^i\Big)u_0^{1,2} + 2\sum_{i=0}^{n}M^i a_{\frac{K-i -1}{K}, \frac{K-i}{K}}
\end{equs}
as required. Therefore (\ref{azeroone}) is proven. Now choosing $n:=K-1$ in (\ref{azeroone}), we get
\begin{equs}
a_{0,1} &\leq \Big(\sum_{i=1}^{K-1} M^i\Big)u_0^{1,2} + M^{K-1}a_{0,\frac{1}{K}} + 2 \sum_{i=0}^{K-2}M^i  a_{\frac{K-i -1}{K}, \frac{K-i}{K}}\\
&\leq (K-1)M^{K-1}u_{0}^{1,2} + 2\sum_{i=0}^{K-1}M^i a_{\frac{K-i -1}{K}, \frac{K-i}{K}}
\end{equs}
as required.
\end{proof}

\begin{lemma}\label{Qnb1b2bound} Let Assumption \ref{bucklingsectionassumptions} hold.
For all $p \in [2,\infty)$ there exists a positive constant $K_0 = K_0(\max_{i \in \{1,2\}}[D^{u^i}]_{\mathscr{V}_{2p}^\beta}, p, \|\sigma\|_{C^4}, \mu, \alpha, \beta)$ such that if $K \in \mathbb{Z}$ satisfies $K> K_0$, then there exists a constant $M = {M}(p, \|\sigma\|_{C^1}, \alpha, \beta )$ such that  for all $n \in \{0,\dots, K\}$ we have  that
$$[u^1,u^2]_{\mathscr{S}_p^{1/2}[\frac{n}{K},\frac{n+1}{K}]} \leq M^K\left(\|u^1(0,\cdot) - u^2(0,\cdot)\|_{\mathbb{B}(\mathbb{T})} + \|b^1 - b^2\|_{C^{\alpha-1}}\right).$$
\end{lemma}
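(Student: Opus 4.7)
The plan is to invoke Lemma~\ref{Shalfofunum} on each of the short subintervals $I_n := [n/K, (n+1)/K]$ and use the smallness of $|I_n|^{(1+\alpha)/4}$ (an exponent strictly positive since $\alpha > -1$) to absorb the self-referential $\mathscr{S}_p^{1/2}$-bracket which appears on both sides of that estimate. The sup-norm differences of $u^1 - u^2$ at the grid points are then propagated across the grid using Lemma~\ref{bracketlemma}.

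Set $a_n := [u^1, u^2]_{\mathscr{S}_p^{1/2}[n/K,(n+1)/K]}$ and $b_n := \|u^1(n/K,\cdot) - u^2(n/K,\cdot)\|_{\mathbb{B}(\mathbb{T},L_p)}$. Applying Lemma~\ref{Shalfofunum} on $I_n$ yields
\begin{equs}
a_n \leq C_1 K^{-(1+\alpha)/4}\bigl(\|b^1 - b^2\|_{C^{\alpha-1}} + a_n + b_n\bigr),
\end{equs}
where $C_1$ depends on the parameters listed in that lemma together with $\max_i [D^{u^i}]_{\mathscr{V}_{2p}^\beta}$ and $\max_i\|b^i\|_{C^\alpha}$ (all finite by Assumption~\ref{bucklingsectionassumptions}). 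Choosing $K_0$ so that $C_1 K^{-(1+\alpha)/4} \leq 1/2$ for every $K \geq K_0$ allows the $a_n$ term to be absorbed into the left-hand side, leading to
\begin{equs}\label{eq:plan_an}
a_n \leq 2C_1 K^{-(1+\alpha)/4}\bigl(\|b^1 - b^2\|_{C^{\alpha-1}} + b_n\bigr).
\end{equs}

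Next I apply Lemma~\ref{bracketlemma} with $\beta = 1/2$ on $I_n$ to obtain $b_{n+1} \leq K^{-1/2}a_n + N_2 b_n$, where $N_2 = N_2(p, \|\sigma\|_{C^1}, \alpha)$ is independent of the solutions and of the drifts. Plugging \eqref{eq:plan_an} into this bound produces the affine recursion
\begin{equs}
b_{n+1} \leq \tilde M\bigl(\|b^1 - b^2\|_{C^{\alpha-1}} + b_n\bigr),
\end{equs}
with $\tilde M = \tilde M(p, \|\sigma\|_{C^1}, \alpha, \beta) > 1$; here the $K$-dependent contribution $2C_1 K^{-(1+\alpha)/4 - 1/2}$ to $\tilde M$ is bounded above by a constant, so $\tilde M$ has the advertised parameter dependence. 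A routine induction on $n$ then gives
\begin{equs}
b_n \lesssim \tilde M^{n}\bigl(\|u_0^1 - u_0^2\|_{\mathbb{B}(\mathbb{T})} + \|b^1 - b^2\|_{C^{\alpha-1}}\bigr)
\end{equs}
for every $n \leq K$, and substituting back into \eqref{eq:plan_an} (noting $2C_1K^{-(1+\alpha)/4}\le 1$) yields the claimed bound with $M$ chosen slightly larger than $\tilde M$ to absorb the remaining multiplicative constant.

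The main obstacle is the absorption step: without the strictly positive exponent $(1+\alpha)/4$ on $(t-s)$ in Lemma~\ref{Shalfofunum}, the self-referential appearance of $[u^1,u^2]_{\mathscr{S}_p^{1/2}[s,t]}$ on both sides would be useless, and it is precisely the requirement $C_1 K^{-(1+\alpha)/4} \leq 1/2$ that dictates the dependence of $K_0$ on $\max_i[D^{u^i}]_{\mathscr{V}_{2p}^\beta}$ (and implicitly on $\max_i\|b^i\|_{C^\alpha}$). The subsequent recursion is clean because $N_2$ from Lemma~\ref{bracketlemma} does not know about the solutions, which is what leaves $M$ with dependence only on $p$, $\|\sigma\|_{C^1}$, $\alpha$ and $\beta$.
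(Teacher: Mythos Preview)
Your proof is correct and follows essentially the same strategy as the paper: apply Lemma~\ref{Shalfofunum} on each subinterval $[n/K,(n+1)/K]$, use the smallness of $K^{-(1+\alpha)/4}$ to absorb the self-referential $\mathscr{S}_p^{1/2}$-bracket, and then propagate the sup-norm differences across the grid via Lemma~\ref{bracketlemma}. The only cosmetic difference is that the paper bundles your $a_n$ and $b_n$ into a single quantity before iterating, whereas you keep them separate and substitute back at the end; the logic and the resulting constants are the same.
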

\begin{proof}
By \cref{Shalfofunum} there exists some ${N} = {N}(p,\mu, \|\sigma\|_{C^4}, \alpha, \beta)>0$ such that for all $K \in \mathbb{N}$ and $n \in \{0,\dots,K-1\}$ we have
\begin{align*}&[u^1,u^2]_{\mathscr{S}_p^{1/2}[\frac{n}{K},\frac{n+1}{K}]}\\
&\leq {N} (1+\max_{i \in \{1,2\}}[D^{u^i}]_{\mathscr{V}_{2p}^\beta})(1+\|b^2\|_{C^\alpha}) \\
&\qquad\qquad\times\Big([u^1,u^2]_{\mathscr{S}_p^{1/2}[\frac{n}{K},\frac{n+1}{K}]} + \big\|u^1\big(\frac{n}{K},\cdot\big)- u^2\big(\frac{n}{K},\cdot\big)\big\|_{\mathbb{B}(\mathbb{T}, L_p)} +\big\|b^1 - b^2 \big\|_{C^{\alpha-1}} \Big)K^{- (1+\alpha)/4}.
\end{align*}
Let $\lceil \cdot \rceil$ denote the ceiling function, and define the constants
\begin{equs}
    \tilde{N} &:= {N}(1+\max_{i \in \{1,2\}}[D^{u^i}]_{\mathscr{V}_{2p}^\beta})(1+\|b^2\|_{C^\alpha}),\\
    K_0 &:= \Big\lceil (2\tilde{N})^{\frac{4}{1+\alpha}} \Big\rceil. \label{K0def}
\end{equs}
Then for $K>K_0$ we have that
\begin{equs}\label{u1u2firstbucklingnew}
[u^1,u^2]_{\mathscr{S}_p^{1/2}[\frac{n}{K},\frac{n+1}{K}]} &\leq \big\|u^1\big(\frac{n}{K},\cdot\big)- u^2\big(\frac{n}{K},\cdot\big)\big\|_{\mathbb{B}(\mathbb{T}, L_p)} +\|b^1 - b^2 \|_{C^{\alpha-1}}.
\end{equs}
In particular, by choosing $n=0$, we have
\begin{equs}\,[u^1,u^2]_{\mathscr{S}_p^{1/2}[0,\frac{1}{K}]} \leq \|u^1_0 - u^2_0\|_{\mathbb{B}(\mathbb{T})} +\|b^1 - b^2\|_{C^{\alpha-1}}.
\label{ubracketfirststep}
\end{equs}
Let
$$a_n := [u^1,u^2]_{\mathscr{S}_p^{1/2}[\frac{n}{K},\frac{n+1}{K}]} + \big\|u^1\big(\frac{n}{K},\cdot\big) - u^2\big(\frac{n}{K},\cdot\big)\big\|_{\mathbb{B}(\mathbb{T}, L_p)} + \|b^1 - b^2\|_{C^{\alpha-1}}.$$
In the $n=0$ case we can use (\ref{ubracketfirststep}) to bound the first term to get 
\begin{equs}a_0  \leq 2\left(\|u^1_0 - u^2_0\|_{\mathbb{B}(\mathbb{T})}+\|b^1 - b^2\|_{C^{\alpha-1}}\right). \label{a0lessthanb1minusb2}
\end{equs}
For the general case $n \in \{1,\dots,K-1\}$ we first use (\ref{u1u2firstbucklingnew}) to get rid of the first term in the definition of $a_n$, and then we apply \cref{bracketlemma} as follows:
\begin{equs}
a_n &\leq 2\Big(\big\|u^1\big(\frac{n}{K},\cdot\big) - u^2\big(\frac{n}{K},\cdot\big)\big\|_{\mathbb{B}(\mathbb{T}, L_p)} + \|b^1 - b^2\|_{C^{\alpha-1}}\Big)\\
&\leq {M}\Big([u^1,u^2]_{\mathscr{S}_p^{1/2}[\frac{n-1}{K},\frac{n}{K}]} + \big\|u^1\big(\frac{n-1}{K},\cdot\big) -u^2\big(\frac{n-1}{K},\cdot\big) \big\|_{\mathbb{B}(\mathbb{T}, L_p)} + \|b^1 - b^2\|_{C^{\alpha-1}}\Big)\\
& = {M}a_{n-1}\end{equs}
for some constant ${M} ={M}(p, \|\sigma\|_{C^1}, \alpha, \beta)> 2$.
Iterating this result $n$ times and then applying (\ref{a0lessthanb1minusb2}), we get
\begin{equs}a_n \leq {M}^n a_0 &\leq M^n 2 \big(\|u^1_0 - u^2_0\|_{\mathbb{B}(\mathbb{T})}+\|b^1 - b^2\|_{C^{\alpha-1}}\big)\\
&\leq M^{K-1}M\big(\|u^1_0 - u^2_0\|_{\mathbb{B}(\mathbb{T})}+\|b^1 - b^2\|_{C^{\alpha-1}}\big),
\end{equs}
which finishes the proof.
\end{proof}
\begin{lemma}[$\mathscr{S}_p^{1/2}$-stability of regularised solutions]\label{globalbuckling} Let Assumption \ref{bucklingsectionassumptions} hold and let $p \in [2,\infty)$.
There exists a continuous map (with dependencies as indicated below) $$f = f_{p,\|\sigma\|_{C^4}, \mu, \alpha, \beta} :[0,\infty)^2 \to [0,\infty)$$ such that $f(x,y)$ is increasing in both the $x$ and $y$ variables, and
that the following inequality holds:
$$[u^1,u^2]_{\mathscr{S}_p^{1/2}} \leq f\big(\max_{i\ \in\{1,2\}}\|b^i\|_{C^\alpha}, \max_{i \in \{1,2\}}[D^{u^i}]_{\mathscr{V}_{2p}^\beta}\big)\left(\|u^1_0 - u^2_0\|_{\mathbb{B}(\mathbb{T})}+\|b^1 - b^2\|_{C^{\alpha-1}}\right).$$
\end{lemma}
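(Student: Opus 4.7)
The plan is to combine \cref{triangualityofSp} and \cref{Qnb1b2bound} on a sufficiently fine partition of $[0,1]$, and then to absorb all the parameter dependence into a single explicit function $f$ which can be made increasing and continuous by a standard smoothing.

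First, I would choose the integer $K$ large enough for \cref{Qnb1b2bound} to apply: concretely, set $K := K_0+1$, where $K_0 = K_0(\max_{i}[D^{u^i}]_{\mathscr{V}_{2p}^\beta}, \max_{i}\|b^i\|_{C^\alpha}, p, \|\sigma\|_{C^4}, \mu, \alpha, \beta)$ is the constant appearing in (\ref{K0def}). Note that $K_0$ depends only on the quantities $x := \max_{i}\|b^i\|_{C^\alpha}$ and $y := \max_{i}[D^{u^i}]_{\mathscr{V}_{2p}^\beta}$ (once $p, \|\sigma\|_{C^4}, \mu, \alpha, \beta$ are fixed), and it is increasing in both of them. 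With this choice of $K$, \cref{Qnb1b2bound} yields a constant $M_2 = M_2(p, \|\sigma\|_{C^1}, \alpha, \beta)$ such that
\begin{equs}
[u^1,u^2]_{\mathscr{S}_p^{1/2}[\frac{n}{K}, \frac{n+1}{K}]} \leq M_2^K \big(\|u^1_0 - u^2_0\|_{\mathbb{B}(\mathbb{T})} + \|b^1-b^2\|_{C^{\alpha-1}}\big) \quad \text{for all } n \in \{0,\dots,K-1\}.
\end{equs}

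Next, I would apply \cref{triangualityofSp} (iterated on the same partition with $K$ pieces), which provides a constant $M_1 = N(1+y)$ with $N = N(p, \|\sigma\|_{C^4}, K, \mu, \alpha, \beta)$ and the bound
\begin{equs}
[u^1,u^2]_{\mathscr{S}_p^{1/2}} \leq (K-1)M_1^{K-1}\|u^1_0 - u^2_0\|_{\mathbb{B}(\mathbb{T})} + 2 \sum_{i=0}^{K-1} M_1^i \,[u^1,u^2]_{\mathscr{S}_p^{1/2}[\frac{K-i-1}{K}, \frac{K-i}{K}]}.
\end{equs}
Substituting the bound on each local bracket yields
\begin{equs}
[u^1,u^2]_{\mathscr{S}_p^{1/2}} \leq \Big((K-1)M_1^{K-1} + 2\sum_{i=0}^{K-1} M_1^i M_2^K\Big)\big(\|u^1_0 - u^2_0\|_{\mathbb{B}(\mathbb{T})}+\|b^1-b^2\|_{C^{\alpha-1}}\big).
\end{equs}
I would then call the prefactor $f_0(x,y)$; it is an explicit function of $x, y$ (through $K$ and $M_1$) with the correct underlying parameter dependencies.

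Finally, to ensure that $f$ can be taken continuous and increasing in each variable (as required by the statement), I would upper-bound $f_0$ by a continuous, increasing function of $(x,y)$. Since $K_0(x,y)$ is a continuous increasing function of $x,y$ (given by a ceiling of an explicit continuous increasing expression), one can replace the integer $K$ by $\widetilde{K}(x,y) := (K_0(x,y)+1)$ in the closed-form expression for $f_0$ and then majorise; the resulting function $f$ is continuous and increasing in both arguments because $M_1, M_2 \geq 1$ and increasing in $y$. The main (minor) technical point is to verify that the $K$-dependence of $N$ in $M_1$ remains explicit enough for this smoothing to be carried out rigorously; this is handled by tracking the dependence of $N$ in \cref{triangualityofSp} back to \cref{4pointestimate}, where the dependence on $K$ enters only polynomially through the partition length. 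This completes the construction of $f$.
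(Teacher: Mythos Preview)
Your proposal is correct and follows essentially the same approach as the paper: combine \cref{Qnb1b2bound} and \cref{triangualityofSp} on a partition of $[0,1]$ into $K$ pieces with $K$ large enough (as dictated by (\ref{K0def})), then collect the prefactor into a function $f$. Two minor remarks: first, $K_0$ as defined in (\ref{K0def}) involves a ceiling and is therefore \emph{not} continuous in $(x,y)$, so your smoothing step should replace the ceiling by the underlying continuous expression $(2\tilde N)^{4/(1+\alpha)}$ and then majorise (the paper simply asserts this follows ``by the definitions of $K, M_1, M_2$''); second, your concern about hidden $K$-dependence in the constant $N$ of \cref{triangualityofSp} is unnecessary, since inspecting its proof shows that $N$ comes from \cref{Spbracketistriangular}, whose constant is independent of $K$.
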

\begin{proof}
Let $K \in \mathbb{Z}$ be sufficiently large so that it satisfies the assumption of \cref{Qnb1b2bound}. By (\ref{K0def}) we know that we can choose $K = N_0(1+\max_{i \in \{1,2\}}[D^{u^i}]_{\mathscr{V}_{2p}^\beta})^{\frac{4}{1+\alpha}}(1+\|b^2\|_{C^\alpha})^{\frac{4}{1+\alpha}}$
with $N_0 = N_0(p,\mu, \|\sigma\|_{C^4}, \alpha, \beta)$.
Then there exists a constant $M_1 = M_1(p,\|\sigma\|_{C^1}, \alpha, \beta)$ such that
\begin{equs}\,[u^1,u^2]_{\mathscr{S}_p^{1/2}[\frac{n}{K},\frac{n+1}{K}]} \leq M_1^K\big(\|u^1_0 - u^2_0\|_{\mathbb{B}(\mathbb{T})}+ \|b^1 - b^2\|_{C^{\alpha-1}}\big).
\label{bracketineq1}\end{equs}
Recall moreover that by \cref{triangualityofSp} there exists a constant $N_2 = N_2(p,\|\sigma\|_{C^{4}}, K, \mu, \alpha, \beta)$ such that for
$M_2 : = N_2(1+ \max_{i \in \{1,2\}}[D^{u^i}]_{\mathscr{V}_{2p}^\beta})$
we have
\begin{equs}\,[u^1,u^2]_{\mathscr{S}_p^{1/2}} \leq (K-1)M_2^{K-1}\|u^1_0 - u^2_0\|_{\mathbb{B}(\mathbb{T})}+  2\sum_{i=0}^{K-1} M_2^{i}[u^1,u^2]_{\mathscr{S}_p^{1/2}[\frac{K-i-1}{K}, \frac{K-i}{K}]}. \label{bracketineq2}
\end{equs}
By (\ref{bracketineq1}), we get that the second term on the right hand side of (\ref{bracketineq2}) is bounded by
\begin{equs}
& 2\sum_{i=0}^{K-1}M_2^i M_1^{K}\big(\|u^1_0 - u^2_0\|_{\mathbb{B}(\mathbb{T})}+ \|b^1 - b^2\|_{C^{\alpha-1}}\big)\\
&\qquad\leq 2(K-1)(M_1M_2)^K\big(\|u^1_0- u^2_0\|_{\mathbb{B}(\mathbb{T})}+ \|b^1 - b^2\|_{C^{\alpha-1}}\big).
\end{equs}
Therefore
\begin{equs}
\,[u^1,u^2]_{\mathscr{S}_p^{1/2}}\leq (K-1)\big(M_2^{K-1} + 2(M_1M_2)^K\big)\big(\|u^1_0- u^2_0\|_{\mathbb{B}(\mathbb{T})}+ \|b^1 - b^2\|_{C^{\alpha-1}}\big),
\end{equs}
and the desired result follows by the definitions of $K,M_1,M_2$.
\end{proof}

 \section{The $\mathscr{V}_p$-bracket of the drift and an a priori estimate}\label{driftsection}
 The aim of this section is to provide a priori bounds on a  regularised solution of (\ref{SHE}) under Assumption \ref{assumptions}.
\begin{lemma}\label{Dbound}
Let Assumption \ref{assumptions} hold, let  $\beta \in (\frac{1}{4} - \frac{\alpha}{4}, 1 + \frac{\alpha}{4}]$ and assume that $u$ is a regularised solution of (\ref{SHE}) in the class $\mathscr{U}_2^\beta$.  
Then $u$ is also of class $\mathscr{U}^\beta$. Moreover for all $p \in [2,\infty)$
there exists a constant $N = N(p,\|\sigma\|_{C^4}, \mu, \alpha, \beta)>0$ such that 
$$[D^u]_{\mathscr{V}_p^\beta} \leq 
N\exp\Big(N \|b\|_{C^\alpha}^{\frac{4}{\alpha+3}}\Big).$$
\end{lemma}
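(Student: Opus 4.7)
The plan is to derive the estimate in three steps: I will first obtain a local $L_p$ bound on $A_{s,t}(x):= D^u_t(x)-P_{t-s}D^u_s(x)$ from \cref{otherregularisationlemma}, then perform an $L_2$ buckling argument on short time intervals, and finally globalise via the cocycle identity $A_{s,t}(x)=A_{a,t}(x)+(P_{t-a}A_{s,a})(x)$ for $s\le a\le t$, which is a consequence of the semigroup property $P_{t-s}=P_{t-a}P_{a-s}$.

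To execute the first step I fix an approximating sequence $(b^n)_{n\in\N}\subset C^\infty$ with $b^n\to b$ in $C^\alpha$. By \cref{defofsolution} and the semigroup property,
\begin{equs}
    A_{s,t}(x)=\lim_n\int_s^t\int_\T p_{t-r}(x,y)\,b^n(u(r,y))\,dy\,dr
\end{equs}
in probability, uniformly in $(s,t,x)$. Applying \cref{otherregularisationlemma} with $\lambda=\alpha$ and $g=b^n$ (allowed because $\beta>\tfrac14-\tfrac\alpha4$) and then passing to the limit along an almost surely convergent subsequence via conditional Fatou's lemma will yield, for every $p\in[2,\infty)$,
\begin{equs}
    \|A_{s,t}(x)\|_{L_{p,\infty}^{\mathscr{F}_s}}\le N\|b\|_{C^\alpha}\Bigl((t-s)^{1+\alpha/4}+[D^u]_{\mathscr{V}_2^\beta[s,t]}(t-s)^{\beta+(\alpha+3)/4}\Bigr).
    \label{eq:dbound_plan}
\end{equs}
The right-hand side depends on $D^u$ only through $[D^u]_{\mathscr{V}_2^\beta}$, so this estimate already shows that $[D^u]_{\mathscr{V}_p^\beta}<\infty$ for every $p\in[2,\infty)$ once $[D^u]_{\mathscr{V}_2^\beta}<\infty$; in particular $u\in\mathscr{U}^\beta$.

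For the second step I set $p=2$ in \eqref{eq:dbound_plan}, divide by $(t-s)^\beta$ and take the supremum over $x\in\T$ and $(s,t)\in[S,T]_<^2$, using that $[D^u]_{\mathscr{V}_2^\beta[s,t]}\le[D^u]_{\mathscr{V}_2^\beta[S,T]}$ and $1+\alpha/4-\beta\ge0$. Choosing $\delta:=\min\bigl(1,(2N\|b\|_{C^\alpha})^{-4/(\alpha+3)}\bigr)$ absorbs the $[D^u]_{\mathscr{V}_2^\beta[S,T]}$ term whenever $|T-S|\le\delta$ and produces the local bound $[D^u]_{\mathscr{V}_2^\beta[S,T]}\le 2N\|b\|_{C^\alpha}=:C_0$. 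For the global bound I iterate the cocycle identity along a partition $s=a_0<\cdots<a_{K'}=t$ with $\max_k(a_{k+1}-a_k)\le\delta$ and $K'\le 1/\delta+2$, using two elementary inequalities: $\|A_{a,t}(x)\|_{L_{p,\infty}^{\mathscr{F}_s}}\le\|A_{a,t}(x)\|_{L_{p,\infty}^{\mathscr{F}_a}}$, which follows from the tower property and conditional Jensen, and $\sup_x\|(P_{t-a}A_{s,a})(x)\|_{L_{p,\infty}^{\mathscr{F}_s}}\le\sup_y\|A_{s,a}(y)\|_{L_{p,\infty}^{\mathscr{F}_s}}$, which follows from conditional Minkowski and $\int_\T p_{t-a}(x,y)\,dy=1$. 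This gives $\|A_{s,t}(x)\|_{L_{2,\infty}^{\mathscr{F}_s}}\le C_0\sum_k(a_{k+1}-a_k)^\beta$, from which, distinguishing the cases $t-s\le\delta$ and $t-s>\delta$, the polynomial bound $[D^u]_{\mathscr{V}_2^\beta[0,1]}\le C\|b\|_{C^\alpha}^{1+4(1-\beta)/(\alpha+3)}$ follows. Plugging this back into \eqref{eq:dbound_plan} for arbitrary $p$ produces a polynomial bound on $[D^u]_{\mathscr{V}_p^\beta}$; since the exponent $4/(\alpha+3)$ is strictly positive, every polynomial in $\|b\|_{C^\alpha}$ is dominated by $N\exp(N\|b\|_{C^\alpha}^{4/(\alpha+3)})$, which finishes the proof. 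The main bookkeeping point will be the passage to the limit $b^n\to b$ in the conditional norms of \eqref{eq:dbound_plan}; this is handled uniformly by conditional Fatou, since the constant on the right-hand side depends on $b^n$ only through $\|b^n\|_{C^\alpha}$.
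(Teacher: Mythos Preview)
Your proof is correct and in fact sharper than the paper's. Both arguments start identically: apply \cref{otherregularisationlemma} through an approximating sequence and conditional Fatou to obtain \eqref{eq:dbound_plan}, then buckle on short intervals of length $\delta\sim\|b\|_{C^\alpha}^{-4/(\alpha+3)}$. The difference is in the globalisation. The paper replaces $[D^u]_{\mathscr{V}_2^\beta}$ by $[D^u]_{\mathscr{V}_p^\beta}$ on the right of \eqref{eq:dbound_plan} and then iterates the triangular inequality for the bracket (\cref{Vbracketistriangular}), which carries a factor $2$ at every split and therefore produces the factor $2^K\sim\exp(N\|b\|_{C^\alpha}^{4/(\alpha+3)})$. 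You instead telescope $A_{s,t}=\sum_k P_{t-a_{k+1}}A_{a_k,a_{k+1}}$ directly, and because $P_{t-a}$ is a contraction on $\mathbb{B}(\mathbb{T},L_{p,\infty}^{\mathscr{F}_s})$ and $\|\cdot\|_{L_{p,\infty}^{\mathscr{F}_s}}\le\|\cdot\|_{L_{p,\infty}^{\mathscr{F}_{a_k}}}$, no geometric factor appears: the pieces simply add. This yields the genuinely stronger polynomial estimate $[D^u]_{\mathscr{V}_p^\beta}\lesssim\|b\|_{C^\alpha}^{1+4(1-\beta)/(\alpha+3)}$, which you then (correctly, if wastefully) majorise by the stated exponential to match the lemma. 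Your route is more elementary and gives a better constant; the paper's route has the advantage that \cref{Vbracketistriangular} is a reusable abstract lemma, at the cost of the extra factor.
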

\begin{proof}
Let $(b^n)_{n \in \mathbb{N}} \subset C^\infty$ be a sequence of smooth functions such that $b^n \to b$ in $C^\alpha$. Then by the definition of $D^u$ (see (\ref{driftdef})), by the conditional Fatou's lemma and the usual Fatou's lemma, for $p\geq 2$ and for $(s,t) \in [0,1]_{\leq}^2$, $x \in \mathbb{T}$ we have that
\begin{equs}
\|D_t^u(x) - P_{t-s}D_s^u(x)\|_{L_{p,\infty}^{\mathscr{F}_s}} \lesssim \liminf_{n \to \infty} \sup_{x \in \mathbb{T}}\Big\|\int_s^t \int_{\mathbb{T}}p_{t-r}(x,y) b^n(u(r,y))dydr\Big\|_{L_{p,\infty}^{\mathscr{F}_s}}.
\end{equs}
Therefore by applying \cref{otherregularisationlemma} we know that
\begin{equs}
\|D_t^u(x) - P_{t-s}D_s^u(x)\|_{L_{p,\infty}^{ \mathscr{F}_s}} &\lesssim \|b\|_{C^\alpha}\Big((t-s)^{1 + \alpha/4} + [D^u]_{\mathscr{V}_2^\beta[s,t]}(t-s)^{\beta + \frac{\alpha+3}{4}}\Big)\\
&\lesssim  \|b\|_{C^\alpha}\Big((t-s)^{\beta} + [D^u]_{\mathscr{V}_2^\beta[s,t]}(t-s)^{\beta + \frac{\alpha+3}{4}}\Big),
\end{equs}
where we used the assumption that $\beta \leq 1+\alpha/4$. Hence there exists $\tilde{N} = \tilde{N}(p,\|\sigma\|_{C^4}, \mu, \alpha, \beta)$ such that for all $(s,t) \in [0,1]_{\leq}^2$ we have
\begin{equation}\label{prelocatisationforD}[D^u]_{\mathscr{V}_p^\beta[s,t]} \leq \tilde{N}\|b\|_{C^\alpha}  + \tilde{N}\|b\|_{C^\alpha}[D^u]_{\mathscr{V}_2^\beta[s,t]}(t-s)^{(\alpha+3)/4}.
\end{equation}
Since we assumed that $u \in \mathscr{U}_2^\beta$, we have $[D^u]_{\mathscr{V}_2^\beta} <\infty$, and thus by the inequality (\ref{prelocatisationforD}) we have $[D^u]_{\mathscr{V}_p^\beta}<\infty$, and thus $u \in \mathscr{U}_p^\beta$. Since $p \geq 2$ was arbitrary, it follows that $u \in \mathscr{U}^\beta.$ 

Note that on the right hand side of (\ref{prelocatisationforD}), the $[D^u]_{\mathscr{V}_2^\beta[s,t]}$ may be replaced with $[D^u]_{\mathscr{V}_p^\beta[s,t]}$. Hence choosing sufficiently large $K \in \mathbb{N}$, it follows that
\begin{equation}\label{postlocatisationforD}\max_{i \in \{0,\dots,K-1\}}[D^u]_{\mathscr{V}_p^\beta[\frac{i}{K},\frac{i+1}{K}]} \lesssim\|b\|_{C^\alpha}.
\end{equation}
To this end we may pick
$K := \Big\lceil(2\tilde{N}\|b\|_{C^\alpha})^{\frac{4}{\alpha+3}} \Big\rceil$. 
 Moreover using \cref{Vbracketistriangular} and the inequality (\ref{postlocatisationforD}), we obtain that
\begin{equs}
 \,[D^u]_{\mathscr{V}_p^\beta[0,1]} &\leq  2^K\sum_{i=0}^{K-1} [D^u]_{\mathscr{V}_p^\beta[\frac{i}{K},\frac{i+1}{K}]}\\
 &\lesssim 2^K\sum_{i=0}^{K-1}  \|b\|_{C^\alpha}
 \lesssim K2^K\|b\|_{C^\alpha},
\end{equs}
which finishes the proof .
\end{proof}

\begin{lemma}[The regularity of $D^u$]\label{regularityofD} Let Assumption \ref{assumptions} hold, and let $p \in [2,\infty)$, $\beta \in (\frac{1}{2} - \frac{\alpha}{4}, 1+\frac{\alpha}{4}]$.
There exists a constant $N = N(p,\|\sigma\|_{C^4}, \mu, \alpha, \beta)>0$ such that if $u$ is a regularised solution of class $\mathscr{U}^\beta$, then
$$\|D^u\|_{C^{\frac{1}{4},\frac{1}{2}}([0,1]\times \mathbb{T}, L_p)} \leq N(1+\|b\|_{C^\alpha})(1+[D^u]_{\mathscr{V}_p^\beta}).$$
\end{lemma}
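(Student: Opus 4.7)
The plan is to control, in turn, the boundedness, the spatial Hölder regularity, and the temporal Hölder regularity of $D^u$, and then combine these three pieces to recover the anisotropic Hölder norm. Throughout, let $(b^n)_{n \in \mathbb{N}} \subset C^\infty$ satisfy $b^n \to b$ in $C^\alpha$, so that, by \cref{defofsolution}, $D^u_t(x)$ is the uniform-in-$(t,x)$ probability limit of $\int_0^t\!\int_{\mathbb{T}} p_{t-r}(x,y)b^n(u(r,y))\,dy\,dr$. Note that $D^u_0 \equiv 0$, so the defining seminorm $[D^u]_{\mathscr{V}_p^\beta}$ already yields the uniform $L_p$ bound $\sup_{(t,x)}\|D^u_t(x)\|_{L_p} \leq [D^u]_{\mathscr{V}_p^\beta}$, which handles the $\mathbb{B}$ part of the norm.

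For the spatial regularity, I would write $D^u_t(x)-D^u_t(\bar x)$ as a limit in probability of $\int_0^t\!\int_{\mathbb{T}}(p_{t-r}(x,y)-p_{t-r}(\bar x,y))b^n(u(r,y))\,dy\,dr$ and apply the conditional version of Fatou's lemma to pass the $L_p$ norm through the limit. Invoking \cref{spatialregularisation} with $\lambda = \alpha$ (the hypothesis $\beta > \tfrac12 -\tfrac{\alpha}{4}$ is precisely what is needed) gives, uniformly in $n$,
\begin{equs}
\|D^u_t(x)-D^u_t(\bar x)\|_{L_p} \lesssim \|b\|_{C^\alpha}(1+[D^u]_{\mathscr{V}_p^\beta})|x-\bar x|^{1/2},
\end{equs}
which is the required spatial bound.

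For the temporal regularity, I would split
\begin{equs}
D^u_t(x)-D^u_s(x) = \bigl[D^u_t(x)-P_{t-s}D^u_s(x)\bigr] + \bigl[P_{t-s}D^u_s(x)-D^u_s(x)\bigr].
\end{equs}
The first bracket is bounded in $L_p$-norm by $[D^u]_{\mathscr{V}_p^\beta}(t-s)^{\beta}\lesssim [D^u]_{\mathscr{V}_p^\beta}(t-s)^{1/4}$, since $\beta \geq \tfrac12 -\tfrac{\alpha}{4} > \tfrac14$. For the second bracket, exploit $\int_{\mathbb{T}}p_{t-s}(x,y)\,dy=1$ to rewrite it as $\int_{\mathbb{T}}p_{t-s}(x,y)(D^u_s(y)-D^u_s(x))\,dy$, take $L_p$-norms via Minkowski's inequality, plug in the spatial bound from the previous paragraph, and use the elementary heat-kernel estimate $\int_{\mathbb{T}}p_{t-s}(x,y)|x-y|^{1/2}\,dy \lesssim (t-s)^{1/4}$ (itself a consequence of the Gaussian scaling on $\mathbb{R}$ and the periodisation). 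This gives a bound of the same form, and adding the three contributions yields the lemma. The only mildly delicate point is the limit passage to transfer \cref{spatialregularisation} from the smooth approximants $b^n$ to the distribution $b$, which is handled cleanly by Fatou; the rest is bookkeeping.
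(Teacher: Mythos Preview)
Your proposal is correct and follows essentially the same approach as the paper: the sup-norm bound via $D^u_0=0$, the spatial $C^{1/2}$ bound via \cref{spatialregularisation} and Fatou, and the temporal $C^{1/4}$ bound via the splitting $D^u_t - D^u_s = (D^u_t - P_{t-s}D^u_s) + (P_{t-s}D^u_s - D^u_s)$. The paper packages the last step into an embedding lemma (\cref{embeddinglemma}) which does exactly the splitting you spell out, so your argument is simply an inlined version of theirs.
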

\begin{proof}
Noting that $\|\cdot\|_{L_p} = \|\|\cdot\|_{L_p|\mathscr{F}_s}\|_{L_p} \leq \| \| \cdot\|_{L_p|\mathscr{F}_s}\|_{L_\infty}$ and that from the definition of $D^u$ (see (\ref{driftdef})) 
 we have $D_0^u =0$, we conclude for all $(t,x) \in [0,1]\times\mathbb{T}$ that
\begin{equs}
\|D_t^u(x)\|_{L_p} &\leq  \|D_t^u(x) - P_{t-0}D_0^u(x)\|_{L_{p,\infty}^{\mathscr{F}_s}}\leq  [D^u]_{\mathscr{V}_p^\beta}. \label{DCzeronorm}
\end{equs}
Let $(b^n)_{n \in \mathbb{N}} \subset C^\infty$ be a sequence of smooth functions such that $b^n \to b$ in $C^\alpha$. By (\ref{driftdef}), Fatou's lemma and \cref{spatialregularisation}, we can see that for all $x, \bar{x} \in \mathbb{T}$ and $t \in [0,1]$ we have
\begin{equs}
\|D_t^u(x) - D_t^u(\bar{x})\|_{L_p} & \leq \liminf_{n \to \infty}\Big\| \int_0^t \int_{\mathbb{T}} (p_{t-r}(x,y) - p_{t-r}(\bar{x}, y))b^n(u(r,y))dydr\Big\|_{L_p}\\
&\lesssim \|b\|_{C^\alpha}(1+[D^u]_{\mathscr{V}_p^\beta})|x-\bar{x}|^{1/2}.\label{DChalfspatial}
\end{equs}
By (\ref{DCzeronorm}) and (\ref{DChalfspatial}) we can see that
\begin{equs}\sup_{t \in [0,1]}\|D^u_t\|_{C^{1/2}(\mathbb{T}) }\lesssim (1+\|b\|_{C^\alpha})(1+[D^u]_{\mathscr{V}_p^\beta}). \label{DintheChalfnorm}
\end{equs}
Finally, note that since by assumption we have $\beta >\frac{1}{2}-\frac{\alpha}{4} > \frac{1}{4}$, and thus
\begin{equs}
\,[D^u]_{\mathscr{V}_p^{1/4}} \leq [D^u]_{\mathscr{V}_p^{\beta}}. \label{Vphalfminuseps}
\end{equs}
By (\ref{DintheChalfnorm}) and (\ref{Vphalfminuseps}), the desired bound holds for the $C^{0,\frac{1}{2}}([0,1]\times \mathbb{T}, L_p)$-norm and for the $\mathscr{V}_p^{1/4}$-bracket. Hence by \cref{embeddinglemma} the proof is finished.
\end{proof}

\begin{lemma}[An a priori estimate]\label{apriori}
Let Assumption \ref{assumptions} hold, and let $p\in [2,\infty)$, $\varepsilon  \in  (0, \frac{1}{2})$, $\beta \in (\frac{1}{2} - \frac{\alpha}{4}, 1+\frac{\alpha}{4}]$. There exists  a constant $N = N(p,\|\sigma\|_{C^4}, \mu, \alpha, \beta, \varepsilon)>0$ such that if $u$ is a regularised solution of class $\mathscr{U}^\beta$, then
\begin{equs}\,\|u -P_\cdot u_0(\cdot)\|_{C^{1/4 - \varepsilon/2, 1/2-\varepsilon}([0,1]\times \mathbb{T}, L_p)} \leq N(1+\|b\|_{C^\alpha})(1+[D^u]_{\mathscr{V}_p^\beta}).
\end{equs}
\end{lemma}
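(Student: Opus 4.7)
The starting point is the decomposition given by \eqref{integraleqn},
$$u(t,x) - P_t u_0(x) = D^u_t(x) + V_t(x), \qquad V_t(x) := \int_0^t\!\int_{\mathbb{T}} p_{t-r}(x,y)\sigma(u(r,y))\,\xi(dy,dr),$$
which reduces the claim to bounding the two summands separately in the target mixed H\"older norm $C^{1/4-\varepsilon/2,\,1/2-\varepsilon}([0,1]\times\mathbb{T},L_p)$. The drift part $D^u$ is handled directly by the preceding \cref{regularityofD}, which delivers a $C^{1/4,\,1/2}$-bound with exactly the right-hand side appearing in the claim; since the target norm is strictly weaker, nothing further is needed for $D^u$.

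It therefore suffices to control $V$. Thanks to the assumption $\sigma\in\mathbb{B}(\mathbb{R})$, both the spatial and temporal estimates for $V$ follow from the BDG inequality independently of $b$ and $u$, with constants depending only on $\|\sigma\|_\mathbb{B}$. For space, I would repeat the heat-kernel estimate used in the proof of \cref{spatialregularisation} to obtain
$$\|V_t(x)-V_t(\bar x)\|_{L_p}^2 \lesssim \|\sigma\|_\mathbb{B}^2 \int_0^t\!\!\int_\mathbb{T} \bigl(p_{t-r}(x,z)-p_{t-r}(\bar x,z)\bigr)^2 dz\,dr \lesssim|x-\bar x|,$$
giving $C^{1/2}$ spatial regularity in $L_p$. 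For time, the semigroup identity gives
$$V_t(x)-P_{t-s}V_s(x)=\int_s^t\!\int_{\mathbb{T}} p_{t-r}(x,y)\sigma(u(r,y))\,\xi(dy,dr),$$
and the conditional BDG inequality yields
$$\|V_t(x)-P_{t-s}V_s(x)\|_{L_{p,\infty}^{\mathscr{F}_s}}^2 \lesssim \|\sigma\|_\mathbb{B}^2\int_s^t\!\!\int_\mathbb{T} p_{t-r}^2(x,y)\,dy\,dr \lesssim|t-s|^{1/2},$$
i.e.\ $[V]_{\mathscr{V}_p^{1/4}}\lesssim\|\sigma\|_\mathbb{B}$. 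A companion BDG computation at $s=0$ yields $\sup_{(t,x)}\|V_t(x)\|_{L_p}\lesssim 1$.

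With the spatial $C^{1/2}$-bound and the temporal $\mathscr{V}_p^{1/4}$-bracket of $V$ in hand, the embedding lemma in the appendix (already used in the proof of \cref{regularityofD}) converts these into a joint bound $\|V\|_{C^{1/4-\varepsilon/2,\,1/2-\varepsilon}([0,1]\times\mathbb{T},L_p)}\lesssim 1$ for any $\varepsilon\in(0,1/2)$; this is the only step in which any $\varepsilon$-loss is incurred, and it is where the $\varepsilon$ in the statement comes from. Combining with the bound on $D^u$ via the triangle inequality yields the claim. The argument has no real obstacle: the key estimates for $V$ are entirely classical for the stochastic heat equation with bounded diffusion, and the quantitative dependence on $\|b\|_{C^\alpha}$ and $[D^u]_{\mathscr{V}_p^\beta}$ is inherited in its entirety from \cref{regularityofD}.
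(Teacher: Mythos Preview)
Your proposal is correct and follows the paper's approach: decompose $u - P_\cdot u_0 = D^u + V$, invoke \cref{regularityofD} for $D^u$, and bound $V$ via BDG and heat-kernel estimates. The only minor differences are that the paper handles the time regularity of $V$ directly via \eqref{ptimediffsq} rather than through $[V]_{\mathscr{V}_p^{1/4}}$ and \cref{embeddinglemma}, and that the $\varepsilon$-loss in fact enters through the heat-kernel estimates \eqref{pspacediffsq}--\eqref{ptimediffsq} rather than the embedding lemma (which preserves exponents).
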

\begin{proof}
For $(t,x) \in [0,1]\times \mathbb{T}$ denote
$$V_t(x) :=\int_0^t \int_{\mathbb{T}} p_{t-r}(x,y) \sigma(u(r,y)) \xi(dy,dr).$$ By the triangle inequality
\begin{equs}
\|u - Pu_0\|_{C^{1/4 - \varepsilon/2, 1/2-\varepsilon}([0,1]\times \mathbb{T},L_p)} \leq \|D^u\|_{C^{1/4 - \varepsilon/2, 1/2-\varepsilon}([0,1]\times \mathbb{T},L_p)} + \|V\|_{C^{1/4 - \varepsilon/2, 1/2-\varepsilon}([0,1]\times \mathbb{T},L_p)}.
\end{equs}
But by \cref{regularityofD}, we know that
$\|D^u\|_{C^{1/4, 1/2}([0,1]\times \mathbb{T},L_p)} \lesssim (1+\|b\|_{C^\alpha})(1+[D^u]_{\mathscr{V}_p^\beta})$
and it can be seen from the BDG inequality and by the heat kernel estimates (\ref{pspacediffsq}) and (\ref{ptimediffsq}) that $\|V\|_{C^{1/4 - \varepsilon/2, 1/2-\varepsilon}([0,1]\times \mathbb{T}, L_p)} \lesssim 1$,
and thus the proof is finished.
\end{proof}
 
\section{Proof of the main result}\label{mainproofsection}
 \begin{theorem}[Uniqueness]
 Let Assumption \ref{assumptions} hold, let $\beta \in (\frac{1}{2} - \frac{\alpha}{4}, 1+\frac {\alpha}{4}]$ and suppose that $u^1,u^2$ are regularised solutions of (\ref{SHE}) in the class $\mathscr{U}_2^{\beta}$. Then
 $u^1(t,x) = u^2(t,x)$ almost surely for all $(t,x) \in [0,1]\times\mathbb{T}$.
 \end{theorem}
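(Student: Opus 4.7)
The plan is to combine the stability estimate from Lemma \ref{globalbuckling} (which controls the $\mathscr{S}_p^{1/2}$-bracket of two solutions by the data) with Lemma \ref{bracketlemma} (which upgrades control of the bracket to control of the pointwise $L_p$-difference). Since $u^1$ and $u^2$ are solutions to the same equation with the same initial condition, both sources on the right-hand side of Lemma \ref{globalbuckling} vanish, which forces $u^1=u^2$.

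More precisely, I would proceed in three short steps. First, I would upgrade the regularity class: the hypothesis only gives $u^i\in\mathscr{U}_2^\beta$, whereas Lemma \ref{globalbuckling} is stated under Assumption \ref{bucklingsectionassumptions}, which requires $u^i\in\mathscr{U}^\beta$. Since $\beta\in(\tfrac12-\tfrac\alpha4,1+\tfrac\alpha4]\subset(\tfrac14-\tfrac\alpha4,1+\tfrac\alpha4]$, Lemma \ref{Dbound} is applicable and yields $u^i\in\mathscr{U}^\beta$ together with the quantitative bound
\begin{equation*}
  [D^{u^i}]_{\mathscr{V}_p^\beta}\;\le\; N\exp\!\big(N\|b\|_{C^\alpha}^{4/(\alpha+3)}\big)
  \qquad\text{for every }p\in[2,\infty).
\end{equation*}

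Second, I apply Lemma \ref{globalbuckling} to the pair $(u^1,u^2)$ (driven by the common drift $b^1=b^2=b$ and the common initial condition $u_0^1=u_0^2=u_0$). The right-hand side involves $\|u^1_0-u^2_0\|_{\mathbb{B}(\mathbb{T})}+\|b^1-b^2\|_{C^{\alpha-1}}$, both of which are zero, while the prefactor is finite by the first step. Hence $[u^1,u^2]_{\mathscr{S}_p^{1/2}}=0$ for every $p\in[2,\infty)$.

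Third, Lemma \ref{bracketlemma} applied with $s=0$, $\beta=\tfrac12$ gives
\begin{equation*}
  \|u^1(t,\cdot)-u^2(t,\cdot)\|_{\mathbb{B}(\mathbb{T},L_p)}
  \;\le\; [u^1,u^2]_{\mathscr{S}_p^{1/2}[0,t]}\,t^{1/2}
  + N\,\|u^1_0-u^2_0\|_{\mathbb{B}(\mathbb{T},L_p)}\;=\;0
\end{equation*}
for every $t\in[0,1]$, so $u^1(t,x)=u^2(t,x)$ almost surely for each fixed $(t,x)$. Since both random fields are almost surely continuous on $[0,1]\times\mathbb{T}$ (by the definition of a regularised solution), the null set can be chosen uniformly in $(t,x)$, completing the proof. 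There is essentially no obstacle left — all the analytic work is absorbed into Lemmas \ref{Dbound}, \ref{globalbuckling} and \ref{bracketlemma}; the only subtlety is the initial bootstrap from $\mathscr{U}_2^\beta$ to $\mathscr{U}^\beta$ so that the stability estimate actually applies.
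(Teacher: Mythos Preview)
The proposal is correct and follows essentially the same route as the paper: bootstrap from $\mathscr{U}_2^\beta$ to $\mathscr{U}^\beta$ via Lemma~\ref{Dbound}, apply the stability estimate Lemma~\ref{globalbuckling} with $b^1=b^2$ and $u_0^1=u_0^2$ to obtain $[u^1,u^2]_{\mathscr{S}_p^{1/2}}=0$, and conclude. The only cosmetic difference is that the paper observes directly that $\phi^{u^1(0,\cdot),0}=\phi^{u^2(0,\cdot),0}$ (so the bracket equals $\sup_{t,x}\|u^1(t,x)-u^2(t,x)\|_{L_p}/t^{1/2}$) rather than invoking Lemma~\ref{bracketlemma}, but this is the same computation.
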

 \begin{proof}
Since $u^1,u^2 \in \mathscr{U}_2^\beta$, it also follows by \cref{Dbound} that $u^1,u^2 \in \mathscr{U}^\beta$.
Thus Assumption \ref{bucklingsectionassumptions} satisfied. Therefore by \cref{globalbuckling}  we have for $p \in [2,\infty)$ that
$$[u^1,u^2]_{\mathscr{S}_{p}^{1/2}} \leq 0.$$
So since $u^1(t,\cdot) - u^2(t,\cdot) = u^1(t,\cdot) - u^2(t,\cdot) - \phi^{u^1(0,\cdot),s}(t,\cdot) + \phi^{u^2(0,\cdot),s}(t,\cdot)$, it follows that
$$\sup_{(t,x) \in [0,1]\times \mathbb{T}}\|u^2(t,x) - u^2(t,x)\|_{L_{p}} =0,$$
and the desired result follows.
\end{proof}

Let Assumption \ref{assumptions} hold. The rest of the section is concerned with proving the existence of regularised solutions in the class $\mathscr{U}^{1+\frac{\alpha}{4}}$.  Let $(b^n)_{n \in \mathbb{N}} \subset C^\infty$ such that
$b^n \to b$ in $C^\alpha$. Suppose that for all $n \in \mathbb{N}$, $u^n$ is the classical mild solution of the SPDE
\begin{equs}
(\partial_t - \Delta)u^n &= b^n(u^n) + \sigma(u^n)\xi, \qquad
u^n(0,\cdot)& = u(0,\cdot). \label{approximateSPDE}
\end{equs}
We call $(u^n)_{n \in \mathbb{N}}$ the sequence of \emph{approximate solutions},  and for
$(t,x) \in [0,1]\times \mathbb{T}$ we define the corresponding \emph{approximate drift term} and \emph{approximate noise term}  respectively by
\begin{equs}D_t^{u^n}(x) &:= \int_0^t \int_{\mathbb{T}}p_{t-r}(x,y)b^n(u^n(r,y))dydr,\\
V^{u^n}_t(x) &:= \int_0^t \int_{\mathbb{T}}p_{t-r}(x,y)\sigma(u^n(r,y))\xi(dy,dr).
\end{equs}
By \cref{Dbound} we have for all $p \geq 1$, that  
\begin{equs}
\sup_{n \in \mathbb{N}}[D^{u^n}]_{\mathscr{V}_p^{1+\alpha/4}} <\infty. \label{uniformlyUp}
\end{equs}
\begin{lemma}[Convergence of the approximate drift and noise terms]\label{convergenceofdrift}
Let Assumption \ref{assumptions} hold, and let $p  \in [1,\infty)$ and $\varepsilon \in (0,\frac{1}{2})$. Then the sequences $(D^{u^n})_{n \in \mathbb{N}}$, $(V^{u^n})_{n \in \mathbb{N}}$ are convergent in $C^{\frac{1}{4}-\frac{\varepsilon}{2},\frac{1}{2} - \varepsilon}([0,1]\times\mathbb{T}, L_p)$.
\end{lemma}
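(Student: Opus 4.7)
The plan is to first establish that the approximate solutions $(u^n)_{n\in\N}$ are Cauchy in $\mathbb{B}([0,1]\times\mathbb{T}, L_p)$ via the global $\mathscr{S}_p^{1/2}$-stability of \cref{globalbuckling}, then deduce Cauchyness of the drift and noise terms in the same norm, and finally upgrade to the required Hölder convergence by interpolating against uniform Hölder bounds coming from \cref{regularityofD} and the BDG inequality.

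For the first step, note that each $b^n$ is smooth and in particular lies in $C^{\alpha+}$, so  Assumption \ref{assumptions} is satisfied by $b^n$ and standard theory yields $u^n \in \mathscr{U}^{1+\alpha/4}$. By \cref{Dbound},
\begin{equs}
\sup_{n\in\N}[D^{u^n}]_{\mathscr{V}_{2p}^{1+\alpha/4}}\le \sup_{n\in\N}N\exp\!\bigl(N\|b^n\|_{C^\alpha}^{4/(\alpha+3)}\bigr)<\infty,
\end{equs}
because $b^n\to b$ in $C^\alpha$ and hence $\sup_n\|b^n\|_{C^\alpha}<\infty$. Since $u^n(0,\cdot)=u^m(0,\cdot)=u_0$ for all $n,m$, applying \cref{globalbuckling} with $\beta=1+\alpha/4$ gives
\begin{equs}
[u^n,u^m]_{\mathscr{S}_p^{1/2}} \le f\bigl(\sup_k\|b^k\|_{C^\alpha},\sup_k[D^{u^k}]_{\mathscr{V}_{2p}^{1+\alpha/4}}\bigr)\,\|b^n-b^m\|_{C^{\alpha-1}},
\end{equs}
which vanishes as $n,m\to\infty$ because the continuous embedding $C^\alpha\hookrightarrow C^{\alpha-1}$ makes the right-hand side go to $0$. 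Since the initial conditions agree, \cref{bracketlemma} then yields
\begin{equs}
\sup_{t\in[0,1]}\|u^n(t,\cdot)-u^m(t,\cdot)\|_{\mathbb{B}(\mathbb{T},L_p)}\longrightarrow 0\qquad\text{as }n,m\to\infty.
\end{equs}

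For the second step, write $u^n=P_\cdot u_0+D^{u^n}+V^{u^n}$. The conditional BDG inequality together with the Lipschitzness of $\sigma$ (which holds since $\sigma\in C^4$ is bounded with bounded derivatives) gives
\begin{equs}
\|V^{u^n}_t(x)-V^{u^m}_t(x)\|_{L_p}^2\lesssim \int_0^t\!\int_{\mathbb{T}} p_{t-r}^2(x,y)\|u^n(r,y)-u^m(r,y)\|_{L_p}^2\,dy\,dr,
\end{equs}
and the right-hand side is bounded by a constant multiple of $\sup_{r,y}\|u^n(r,y)-u^m(r,y)\|_{L_p}^2$, which tends to $0$ uniformly in $(t,x)$ by Step~1. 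Combined with $D^{u^n}-D^{u^m}=(u^n-u^m)-(V^{u^n}-V^{u^m})$, this shows that both $(D^{u^n})$ and $(V^{u^n})$ are Cauchy in $\mathbb{B}([0,1]\times\mathbb{T},L_p)$.

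For the final step, \cref{regularityofD} together with \eqref{uniformlyUp} gives
\begin{equs}
\sup_{n\in\N}\|D^{u^n}\|_{C^{1/4,1/2}([0,1]\times\mathbb{T},L_p)}<\infty,
\end{equs}
while a direct BDG estimate (using the standard heat-kernel differences \eqref{pspacediffsq} and \eqref{ptimediffsq}) gives $\sup_n\|V^{u^n}\|_{C^{1/4-\varepsilon/2,1/2-\varepsilon}([0,1]\times\mathbb{T},L_p)}<\infty$. A standard Hölder interpolation inequality of the form
\begin{equs}
\|f\|_{C^{\gamma_1',\gamma_2'}}\lesssim \|f\|_{\mathbb{B}}^{\theta}\,\|f\|_{C^{\gamma_1,\gamma_2}}^{1-\theta}
\end{equs}
(valid whenever $\gamma_i'<\gamma_i$ for some $\theta\in(0,1)$), applied to $f=D^{u^n}-D^{u^m}$ and $f=V^{u^n}-V^{u^m}$, converts the uniform Hölder bounds and the Step~2 $\mathbb{B}$-convergence into convergence in $C^{1/4-\varepsilon/2,1/2-\varepsilon}([0,1]\times\mathbb{T},L_p)$, which is a Banach space, completing the proof.

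The main obstacle is the uniform-in-$n$ control needed to invoke \cref{globalbuckling}: both $\sup_n\|b^n\|_{C^\alpha}$ and $\sup_n[D^{u^n}]_{\mathscr{V}_{2p}^{1+\alpha/4}}$ must be finite, the latter being precisely the content of the a~priori bound \eqref{uniformlyUp}. Once this is in hand, the interpolation step is routine.
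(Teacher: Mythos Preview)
Your argument is correct and essentially mirrors the paper's: both rely on \cref{globalbuckling} together with the uniform bound \eqref{uniformlyUp} for sup-norm Cauchyness, \cref{regularityofD} and BDG for uniform H\"older bounds, and then interpolation. The only differences are cosmetic---the paper bounds $\|D^{u^n}-D^{u^m}\|_{\mathbb{B}}$ directly via \cref{biuidriftcomparison} rather than by subtraction from $u^n-u^m$ and $V^{u^n}-V^{u^m}$, and it takes the uniform BDG bound on $V^{u^n}$ in $C^{1/4-\gamma/2,\,1/2-\gamma}$ for some $\gamma<\varepsilon$ so that the interpolation actually lands in $C^{1/4-\varepsilon/2,\,1/2-\varepsilon}$ (as written, your uniform bound with exponent $\varepsilon$ only interpolates into a strictly lower space, but this is trivially repaired by the same device).
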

\begin{proof}Assume without loss of generality that $p > 2$.
By \cref{biuidriftcomparison} (with $\beta = 1+\frac{\alpha}{4}$) and by \cref{globalbuckling} we have
\begin{equs}
&\sup_{(t,x) \in [0,1]\times \mathbb{T}}\|D_t^{u^n}(x) - D_t^{u^m}(x)\|_{L_p}\\
&=\sup_{(t,x) \in [0,1]\times \mathbb{T}}\Big\|\int_0^t \int_{\mathbb{T}}p_{t-r}(x,y)\big(b^n(u^n(r,y)) - b^m(u^m(r,y))\big)dydr\Big\|_{L_p}\\
&\lesssim  \|b^n - b^m\|_{C^{\alpha-1}} + [u^n,u^m]_{\mathscr{S}^{1/2}_p[0,1]}\lesssim \|b^n- b^m\|_{C^\alpha} \longrightarrow 0 \label{driftconvergesinsup}
\end{equs}
as $n \to \infty$. Moreover by \cref{regularityofD} (with $\beta = 1+\frac{\alpha}{4}$) and by (\ref{uniformlyUp}), we have that
\begin{equs}\sup_{n \in \mathbb{N}}\|D^{u^n}\|_{C^{1/4,1/2}([0,1]\times \mathbb{T}, L_p)} <\infty. \label{approxdriftisregular}
\end{equs}
By (\ref{driftconvergesinsup}), (\ref{approxdriftisregular}), and by a standard interpolation argument, we can see that $(D^{u^n})_{n \in \mathbb{N}}$ is Cauchy in $C^{\frac{1}{4}-\frac{\varepsilon}{2},\frac{1}{2} - \varepsilon}([0,1]\times\mathbb{T}, L_p)$. 

We proceed with showing that the same is true for the sequence $(V^n)_{n \in \mathbb{N}}$. To this end note that by the BDG inequality, by the definition of the $\mathscr{S}_p^{1/2}$-bracket, and by \cref{globalbuckling} we have
\begin{equs}
&\sup_{(t,x) \in [0,1]\times \mathbb{T}}\|V_t^{u^n}(x) - V_t^{u^m}(x)\|_{L_p}\\
&= \sup_{(t,x) \in [0,1]\times \mathbb{T}}\Big\|\int_0^t \int_{\mathbb{T}}p_{t-r}(x,y)\big(\sigma(u^n(r,y)) - \sigma(u^m(r,y))\big)\xi(dy,dr)\Big\|_{L_p}\\
&\lesssim t^{1/4}\|u^n - u^m\|_{\mathbb{B}([0,1]\times \mathbb{T}, L_p)} \leq [u^n,u^m]_{\mathscr{S}_p^{1/2}} \lesssim \|b^n - b^m\|_{C^{\alpha-1}} \longrightarrow 0 \label{noiseconverges}
\end{equs}
as $n,m \to \infty$. 
 Let $\gamma \in (0,\varepsilon)$. Using the BDG inequality and the heat kernel estimates (\ref{pspacediffsq}), (\ref{ptimediffsq}), we can see that for all $n \in \mathbb{N}$, $s,t \in [0,1]$, $x,\bar{x} \in \mathbb{T}$ the following estimates hold:
\begin{equs}
\|V_t^{u^n}(x) - V_t^{u^n}(\bar{x})\|_{L_p}^2 &\lesssim \int_0^t \int_{\mathbb{T}}(p_{t-r}(x,y) - p_{t-r}(\bar{x},y))^2 dydr \lesssim |x-\bar{x}|^{1-2\gamma},\\
\|V_t^{u^n}(x) - V_s^{u^n}(x)\|_{L_p}^2 &\lesssim \int_0^s (p_{t-r}(x,y) - p_{s-r}(x,y))^2 dydr + \int_s^t p_{t-r}^2(x,y)dydr\\
&\lesssim |t-s|^{1/2-\gamma}.
\end{equs}
Therefore we conclude that
\begin{equs}
\sup_{n \in \mathbb{N}}
\|V^{u^n}\|_{C^{\frac{1}{4}-\frac{\gamma}{2}, \frac{1}{2}-\gamma}([0,1]\times \mathbb{T}, L_p)} <\infty. \label{approxnoiseisregular}
\end{equs}
By (\ref{noiseconverges}), (\ref{approxnoiseisregular}), and by a standard interpolation argument, we can see that $(V^{n})_{n \in \mathbb{N}}$ is also Cauchy in $C^{\frac{1}{4}-\frac{\varepsilon}{2},\frac{1}{2} - \varepsilon}([0,1]\times\mathbb{T}, L_p)$, and thus the proof is finished.
\end{proof}
Consistently with the above lemmas, we will thus denote
\begin{equs}
{D}^{\tilde{u}} :=\lim_{n \to \infty}D^{u^n} \tand
V^{\tilde{u}} := \lim_{n \to \infty}V^{u^n},
\end{equs}
where the limits are taken pointwise in $(t,x)\in [0,1]\times \mathbb{T}$, in probability. Moreover, it follows that for all $\eps \in (0, 1/2)$ and $p \in [1, \infty) $ we have  ${D}^{\tilde{u}},V^{\tilde{u}}  \in  C^{\frac{1}{4}-\frac{\varepsilon}{2} , \frac{1}{2}-\varepsilon}([0,1]\times \mathbb{T}, L_p)$
and 
\begin{equs}                  \label{utildeDtildedef}
    \lim_{n \to \infty} \Big( || {D}^{\tilde{u}} -D^{u^n}\|_{C^{\frac{1}{4}-\frac{\eps}{2}, \frac{1}{2}-\eps}([0,1]\times \mathbb{T}, L_p)}+|| V^{\tilde{u}}-V^{u^n}\|_{C^{\frac{1}{4}-\frac{\eps}{2}, \frac{1}{2}-\eps}([0,1]\times \mathbb{T}, L_p)} \Big)=0 .
\end{equs}
Moreover for $(t,x) \in [0,1]\times \mathbb{T}$, we define
\begin{equs}
\tilde{u}(t,x) := P_t u_0(x) + D^{\tilde{u}}_t(x) + V^{\tilde{u}}_t(x). \label{utildedef}
\end{equs}
\begin{lemma}[$V^{\tilde{u}}$ is the noise term of $\tilde{u}$]\label{Vtildeuisournoiseterm}
Let Assumption \ref{assumptions} hold.
For all $(t,x) \in [0,1]\times \mathbb{T}$, we have $$V^{\tilde{u}}_t(x) = \int_0^t \int_{\mathbb{T}}p_{t-r}(x,y)\sigma(\tilde{u}(r,y))\xi(dy,dr).$$
\end{lemma}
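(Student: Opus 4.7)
The plan is to identify the limit $V^{\tilde u}$ by passing to the limit inside the stochastic integral defining $V^{u^n}$. Fix $p\in[2,\infty)$ and $\varepsilon\in(0,1/2)$. By definition of $\tilde u$ in \eqref{utildedef} together with the convergence \eqref{utildeDtildedef}, we have
\begin{equs}
\sup_{(t,x)\in[0,1]\times\mathbb{T}}\|u^n(t,x)-\tilde u(t,x)\|_{L_p}
\le \|D^{u^n}-D^{\tilde u}\|_{\mathbb{B}([0,1]\times\mathbb{T},L_p)}+\|V^{u^n}-V^{\tilde u}\|_{\mathbb{B}([0,1]\times\mathbb{T},L_p)}\to 0
\end{equs}
as $n\to\infty$. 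Since $\sigma\in C^4$ with bounded derivatives (so in particular globally Lipschitz), it follows that
\begin{equs}
\sup_{(r,y)\in[0,1]\times\mathbb{T}}\|\sigma(u^n(r,y))-\sigma(\tilde u(r,y))\|_{L_p}\le \|\sigma'\|_{\mathbb{B}}\sup_{(r,y)}\|u^n(r,y)-\tilde u(r,y)\|_{L_p}\to 0.
\end{equs}

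Next, I would fix $(t,x)\in[0,1]\times\mathbb{T}$ and compare $V^{u^n}_t(x)$ with the candidate limit $W_t(x):=\int_0^t\int_{\mathbb{T}}p_{t-r}(x,y)\sigma(\tilde u(r,y))\xi(dy,dr)$. The stochastic integral $W_t(x)$ is well-defined because $\tilde u$, being a uniform $L_p$-limit of the adapted continuous fields $u^n$, admits a predictable version, and $\sigma$ is bounded. Applying the BDG inequality, we obtain
\begin{equs}
\|V^{u^n}_t(x)-W_t(x)\|_{L_p}^2
&\lesssim \int_0^t\int_{\mathbb{T}}p_{t-r}^2(x,y)\|\sigma(u^n(r,y))-\sigma(\tilde u(r,y))\|_{L_p}^2\,dy\,dr\\
&\lesssim \|\sigma'\|_{\mathbb{B}}^2\Big(\sup_{(r,y)}\|u^n(r,y)-\tilde u(r,y)\|_{L_p}^2\Big)\int_0^t\int_{\mathbb{T}}p_{t-r}^2(x,y)\,dy\,dr,
\end{equs}
and the last factor is bounded by a constant multiple of $t^{1/2}$. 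Hence $V^{u^n}_t(x)\to W_t(x)$ in $L_p$ as $n\to\infty$.

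On the other hand, by \eqref{utildeDtildedef}, $V^{u^n}_t(x)\to V^{\tilde u}_t(x)$ in $L_p$ as well. Uniqueness of $L_p$-limits then yields $V^{\tilde u}_t(x)=W_t(x)$, which is the claim. The only mild subtlety in this argument is ensuring that $\tilde u$ is predictable so that $W$ is a bona fide It\^o--Walsh integral; this follows from the fact that each $u^n$ is predictable and continuous in $(t,x)$, and the convergence $u^n\to\tilde u$ is uniform in $(t,x)$ in $L_p$, so a standard diagonal/subsequence argument produces a predictable modification of $\tilde u$ agreeing with \eqref{utildedef}. No further difficulty is expected.
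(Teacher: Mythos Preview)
Your proof is correct and follows essentially the same approach as the paper: both arguments use the uniform $L_p$-convergence $u^n\to\tilde u$ (obtained from \eqref{utildeDtildedef} and \eqref{utildedef}), apply BDG together with the Lipschitz continuity of $\sigma$ to control the difference of the stochastic integrals, and conclude. The paper compresses the two-limit step into a single Fatou inequality $\|V^{\tilde u}_t(x)-W_t(x)\|_{L_p}\le\liminf_n\|V^{u^n}_t(x)-W_t(x)\|_{L_p}$, whereas you identify both limits separately; your added remark on predictability of $\tilde u$ is a useful clarification that the paper leaves implicit.
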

\begin{proof}
By the definitions of $D^{\tilde{u}}$ and $V^{\tilde{u}}$ (see (\ref{utildeDtildedef})), by Fatou's lemma, and by the definition of $\tilde{u}$ (see (\ref{utildedef})) we have for $p \geq 2$ that
\begin{equs}
&\|V_t^{\tilde{u}}(x) - \int_0^t \int_{\mathbb{T}}p_{t-r}(x,y)\sigma(\tilde{u}(r,y))\xi(dy,dr)\|_{L_p}^2\\
&\leq \liminf_{n \to \infty}\Big\|\int_0^t \int_{\mathbb{T}}p_{t-r}(x,y)\sigma(u^n(r,y)) - \sigma(\tilde{u}(r,y))\xi(dy,dr)\Big\|_{L_p}\\
&\lesssim t^{1/4}\liminf_{n \to \infty}\|u^n - \tilde{u}\|_{\mathbb{B}([0,1]\times \mathbb{T}, L_p)}\\
&\lesssim \lim_{n \to \infty}\|D^{u^n} - D^{\tilde{u}}\|_{\mathbb{B}([0,1]\times \mathbb{T}, L_p)} + \lim_{n \to \infty}\|V^{u^n} - V^{\tilde{u}}\|_{\mathbb{B}([0,1]\times \mathbb{T}, L_p)}= 0,
\end{equs}
and thus the proof is finished.
\end{proof}
We  proceed with verifying that the definition of $D^{\tilde{u}}$ is not an abuse of notation, i.e. that $D^{\tilde{u}}$ is indeed the drift of $\tilde{u}$ as prescribed in (\ref{driftdef}). To this end, we will first need to prove the following lemma.

\begin{lemma} \label{convergenceofDinCquarterhalfLp} Let Assumption \ref{assumptions} hold, and for $n \in \mathbb{N}$ define random fields
$f^n: \Omega \times[0,1]\times \mathbb{T} \to \mathbb{R}$ by
\begin{equs}f^n(t,x) :=D_t^{\tilde{u}}(x) - \int_0^t \int_{\mathbb{T}} p_{t-r}(x,y) b^n(\tilde{u}(r,y))dydr. \label{fndef}
\end{equs}
Then for any $p \in [1,\infty)$ we have that $\|f^n\|_{C^{\frac{1}{4},\frac{1}{2}}([0,1]\times \mathbb{T}, L_p)} \longrightarrow 0$ as $n \to \infty$.
\end{lemma}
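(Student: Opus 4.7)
The strategy is to realise $f^n$ as an $L_p$-limit of integrals of the form $\int_0^t\int_\mathbb{T} p_{t-r}(x,y)\,g(u^m(r,y))\,dy\,dr$ with $g = b^m - b^n$, apply the regularisation estimates of \cref{regularisationsection} uniformly in $m$, pass to the limit $m\to\infty$, and combine the resulting bounds via the embedding argument already used in the proof of \cref{regularityofD}.

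I would first establish the $L_p$-identity
\begin{equs}
    f^n(t,x) = \lim_{m\to\infty}\int_0^t\int_\mathbb{T} p_{t-r}(x,y)\bigl(b^m - b^n\bigr)(u^m(r,y))\,dy\,dr
\end{equs}
for each fixed $(t,x)\in[0,1]\times\mathbb T$. The part $\int p_{t-r}\,b^m(u^m)\,dy\,dr = D^{u^m}_t(x)$ converges to $D^{\tilde u}_t(x)$ by \eqref{utildeDtildedef}; the part $\int p_{t-r}\,b^n(u^m)\,dy\,dr$ converges to $\int p_{t-r}\,b^n(\tilde u)\,dy\,dr$ because $b^n\in C^\infty$ is Lipschitz and, by the definition of $\tilde u$ together with \eqref{utildeDtildedef}, $u^m\to\tilde u$ uniformly in $\mathbb{B}([0,1]\times\mathbb T, L_p)$.

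Next I would apply \cref{otherregularisationlemma} with $\lambda=\alpha$ and $\beta=1+\alpha/4$, and \cref{spatialregularisation} with $\lambda=\alpha$, to $g = b^m - b^n$ and $u = u^m$. Combined with the uniform bound \eqref{uniformlyUp} on $[D^{u^m}]_{\mathscr V_2^{1+\alpha/4}}$, these yield, with constants independent of $m$ and $n$,
\begin{equs}
    \Bigl\|\int_s^t\int_\mathbb T p_{t-r}(x,y)(b^m - b^n)(u^m)\,dy\,dr\Bigr\|_{L_p} &\lesssim \|b^m - b^n\|_{C^\alpha}(t-s)^{1+\alpha/4},\\
    \Bigl\|\int_0^t\int_\mathbb T\bigl(p_{t-r}(x,y)-p_{t-r}(\bar x,y)\bigr)(b^m - b^n)(u^m)\,dy\,dr\Bigr\|_{L_p} &\lesssim \|b^m - b^n\|_{C^\alpha}|x-\bar x|^{1/2}.
\end{equs}
Passing $m\to\infty$ in the identity from the previous paragraph via Fatou's lemma, and using $\limsup_m\|b^m-b^n\|_{C^\alpha} = \|b-b^n\|_{C^\alpha}$ since $b^m\to b$ in $C^\alpha$, I would then obtain $[f^n]_{\mathscr V_p^{1+\alpha/4}} + \sup_t[f^n_t]_{C^{1/2}(\mathbb T, L_p)} \lesssim \|b-b^n\|_{C^\alpha}$.

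Since $f^n(0,\cdot)=0$, the $\mathscr V_p^{1+\alpha/4}$-bracket bound forces $\|f^n\|_{\mathbb B([0,1]\times\mathbb T, L_p)}\lesssim \|b-b^n\|_{C^\alpha}$; combined with the spatial H\"older estimate, this yields $\|f^n\|_{C^{0,1/2}([0,1]\times\mathbb T, L_p)} \to 0$ as $n\to\infty$. Applying the same embedding lemma used at the end of the proof of \cref{regularityofD} to combine $C^{0,1/2}$-control with $\mathscr V_p^{1/4}$-control (noting that $[f^n]_{\mathscr V_p^{1/4}}\le[f^n]_{\mathscr V_p^{1+\alpha/4}}$ on $[0,1]$ since $1+\alpha/4 > 1/4$) then promotes this to $\|f^n\|_{C^{1/4,1/2}([0,1]\times\mathbb T, L_p)} \to 0$, as required. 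The main technical point is the rigorous passage to the limit $m\to\infty$ inside the $L_p$-type Hölder seminorms; this is handled cleanly by the pointwise $L_p$-convergence from the identity above together with the lower semicontinuity of $L_p$-norms under almost-sure limits.
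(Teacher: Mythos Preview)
Your proposal is correct and follows essentially the same approach as the paper: both represent $f^n$ as an $L_p$-limit of $\int p_{t-r}(b^m-b^n)(u^m)$, apply \cref{otherregularisationlemma} and \cref{spatialregularisation} with $g=b^m-b^n$ together with the uniform bound \eqref{uniformlyUp}, pass to the limit via Fatou, and conclude by the embedding \cref{embeddinglemma}. The only difference is expository---you spell out the $L_p$-convergence identity and the inequality $[f^n]_{\mathscr V_p^{1/4}}\le[f^n]_{\mathscr V_p^{1+\alpha/4}}$ explicitly, whereas the paper leaves these implicit.
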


\begin{proof}
To bound the sup norm, we note that by  Fatou's lemma, \cref{otherregularisationlemma} (with $g = b^m - b^n$) and \cref{Dbound}, we have that
\begin{equs}
\|f^n\|_{\mathbb{B}([0,1]\times \mathbb{T}, L_p)}&=  \sup_{(t,x) \in [0,T]\times \mathbb{T}}\Big\|{D}_t^{\tilde{u}}(x) - \int_0^t \int_{\mathbb{T}} p_{t-r}(x,y) b^n(\tilde{u}(r,y))dydr\Big\|_{L_p}\\
&\leq \liminf_{m \to \infty}\sup_{(t,x) \in [0,T]\times \mathbb{T}}\Big\| \int_0^t \int_{\mathbb{T}}p_{t-r}\big(b^m(u^m(r,y)) - b^n(u^m(r,y))\big)dydr \Big\|_{L_p}\\
&\lesssim \liminf_{m \to \infty }\|b^m - b^n\|_{C^\alpha}(1+[D^{u^m}]_{\mathscr{V}_p^{1+\alpha/4}})(t-s)^{1 + \alpha/4}\lesssim \|b - b^n\|_{C^\alpha},
\end{equs}
and thus
\begin{equs}\lim_{n \to \infty}\|f^n\|_{\mathbb{B}([0,1]\times\mathbb{T}, L_p)}  = 0. \label{fnisCzerospacetime}
\end{equs}
Next, we bound the spatial seminorm. Let $x, \bar{x} \in \mathbb{T}$. In the calculation below we will use the definitions of $D^{\tilde{u}}$ $\tilde{u}$, $f^n$ (see (\ref{utildeDtildedef}), (\ref{utildedef}), and (\ref{fndef})) and the continuity of the approximate drifts, Fatou's lemma, \cref{spatialregularisation} (with $g(x) = b^m(x) - b^n(x)$) and (\ref{uniformlyUp}),
\begin{equs}
&\sup_{t \in [0,1]}\|f^n(t,x) - f^n(t,\bar{x})\|_{L_p}\\
&=\sup_{t \in [0,1]}\Big\| D_t^{\tilde{u}}(x) - D_t^{\tilde{u}}(\bar{x}) - \int_0^t \int_{\mathbb{T}}(p_{t-r}(x,y) - p_{t-r}(\bar{x},y))b^n(\tilde{u}(r,y))dydr\Big\|_{L_p}\\
& = \sup_{t \in [0,1]}\liminf_{m \to \infty}\Big\|  \int_0^t \int_{\mathbb{T}}\big(p_{t-r}(x,y) - p_{t-r}(\bar{x},y))\big(b^m(u^m(r,y)) -b^n(u^m(r,y))\big)dydr\Big\|_{L_p}\\
& \lesssim \liminf_{m \to \infty}\|b^m - b^n\|_{C^\alpha}(1+[D^{u^m}]_{\mathscr{V}_{2p}^{1+\alpha/4}})|x-\bar{x}|^{1/2} \lesssim \|b - b^n\|_{C^\alpha}|x-\bar{x}|^{1/2}.
\end{equs}
Therefore
\begin{equs}\lim_{n \to \infty}\sup_{t \in [0,1]}[f^n(t,\cdot)]_{C^{1/2}(\mathbb{T}, L_p)} = 0. \label{fnisChalfinspace}
\end{equs}
Finally, note that for $s,t \in [0,1]$ we have by Fatou's lemma, \cref{otherregularisationlemma} (with $g = b^m - b^n$), and \cref{Dbound}, that
\begin{equs}
&\sup_{x \in \mathbb{T}}\| f^n(t,\cdot) - P_{t-s}f^n(s,x)\|_{L_{p,\infty}^{\mathscr{F}_s}}\\
&= \sup_{x \in \mathbb{T}}\Big\| D_t^{\tilde{u}}(x)  - \int_0^t \int_{\mathbb{T}}p_{t-r}(x,y)b^n(\tilde{u}(r,y))dydr\\
&\qquad\qquad\qquad\qquad- P_{t-s}\Big(D_s^{\tilde{u}}(\cdot) -\int_0^s \int_{\mathbb{T}}p_{t-r}(x,y)b^n(\tilde{u})dydr \Big)\Big\|_{L_{p,\infty}^{\mathscr{F}_s}}\\
& \leq \sup_{x \in \mathbb{T}}\liminf_{m \to \infty}\Big\| \int_s^t \int_{\mathbb{T}}p_{t-r}(x,y)(b^m(u^m(r,y)) - b^n(u^m(r,y)))dydr \Big\|_{L_{p,\infty}^{\mathscr{F}_s}}\\
&\lesssim \liminf_{m \to \infty }\|b^m - b^n\|_{C^\alpha}(1+[D^{u^m}]_{\mathscr{V}_{2p}^{1+\alpha/4}})(t-s)^{1 + \alpha/4}\\
&\lesssim \|b- b^n\|_{C^\alpha}(t-s)^{1 + \alpha/4}.
\end{equs}
 It follows that
\begin{equs}
\lim_{n \to \infty}[f^n]_{\mathscr{V}_p^{1+\alpha/4}} = 0. \label{convergenceoffinbracket}
\end{equs}
By (\ref{fnisCzerospacetime}), (\ref{fnisChalfinspace}), (\ref{convergenceoffinbracket}), and by \cref{embeddinglemma} the proof is finished.
\end{proof}

\begin{corollary}[$D^{\tilde{u}}$ is the drift of $\tilde{u}$]\label{tildeDisthedrift} Let Assumption \ref{assumptions} hold.
Then the pair $(\tilde{u}, D^{\tilde{u}})$ satisfies the condition (\ref{driftdef}) from Definition \ref{defofsolution}, that is for any sequence $(b^n)_{n \in \mathbb{N}}\subset C^\infty$ such that $b^n \to b$ in $C^\alpha$, we have
$$\sup_{(t,x) \in [0,T]\times \mathbb{T}}\Big|D_t^{\tilde{u}}(x) - \int_0^t \int_{\mathbb{T}} p_{t-r}(x,y) b^n(\tilde{u}(r,y))dydr\Big|  \longrightarrow 0$$
in probability as $n \to \infty$. 
\end{corollary}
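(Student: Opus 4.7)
The plan is to reduce to \cref{convergenceofDinCquarterhalfLp}, which already handles the specific sequence $(b^n)$ used in the construction of $\tilde u$. Given an arbitrary sequence $(\bar b^n)\subset C^\infty$ with $\bar b^n \to b$ in $C^\alpha$, I would write
\begin{equs}
D^{\tilde u}_t(x) - \int_0^t\int_{\mathbb{T}} p_{t-r}(x,y)\bar b^n(\tilde u(r,y))\,dy\,dr = f^n(t,x) + g^n(t,x),
\end{equs}
where $f^n$ is exactly the quantity defined in \eqref{fndef} (with the fixed $b^n$ from the construction of $\tilde u$), and
\begin{equs}
g^n(t,x) := \int_0^t\int_{\mathbb{T}} p_{t-r}(x,y)\big(b^n(\tilde u(r,y)) - \bar b^n(\tilde u(r,y))\big)\,dy\,dr.
\end{equs}
The first summand is already controlled by \cref{convergenceofDinCquarterhalfLp}, giving $\|f^n\|_{C^{1/4,1/2}([0,1]\times\mathbb{T}, L_p)} \to 0$ for every $p\in[1,\infty)$.

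For $g^n$, I would repeat almost verbatim the argument of \cref{convergenceofDinCquarterhalfLp}: apply \cref{otherregularisationlemma} and \cref{spatialregularisation} with $g = b^n - \bar b^n$ to each classical approximate solution $u^m$; then pass to the limit $m\to\infty$ via Fatou's lemma, using the pointwise $L_p$-convergence $u^m \to \tilde u$ inherited from \eqref{utildeDtildedef}, together with the uniform-in-$m$ bound \eqref{uniformlyUp} on $[D^{u^m}]_{\mathscr{V}_{2p}^{1+\alpha/4}}$. This yields
\begin{equs}
\|g^n\|_{C^{1/4,1/2}([0,1]\times\mathbb{T},L_p)} \lesssim \|b^n - \bar b^n\|_{C^\alpha},
\end{equs}
which tends to $0$ since $\|b^n - \bar b^n\|_{C^\alpha} \leq \|b^n - b\|_{C^\alpha} + \|b - \bar b^n\|_{C^\alpha} \to 0$. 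Combining, $\|f^n + g^n\|_{C^{1/4,1/2}([0,1]\times\mathbb{T},L_p)} \to 0$ for every $p$.

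To finish, I would upgrade this $L_p$-Hölder convergence to uniform-in-$(t,x)$ convergence in probability by applying a Kolmogorov–Chentsov continuity argument: choosing $p$ large enough, the $L_p$-Hölder bound controls $\mathbb{E}\big[\sup_{(t,x)}|f^n(t,x) + g^n(t,x)|^p\big]$, so $L_p$-convergence of the sup (and hence convergence in probability) follows. There is no real obstacle here: everything is straightforward bookkeeping on top of the lemmas already established. The only mild subtlety is that the sequence appearing in \cref{defofsolution} need not coincide with the fixed sequence used to define $\tilde u$, but this mismatch is precisely what the additive splitting into $f^n$ and $g^n$ is designed to absorb.
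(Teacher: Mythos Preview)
Your proposal is correct. The paper gives no explicit proof for the corollary, treating it as immediate from \cref{convergenceofDinCquarterhalfLp} combined with a Kolmogorov continuity argument, exactly as you outline. Your $f^n + g^n$ decomposition is a careful way to handle the fact that \cref{convergenceofDinCquarterhalfLp} is, as stated, only for the fixed construction sequence; a slightly shorter route is to observe that the \emph{proof} of \cref{convergenceofDinCquarterhalfLp} already works verbatim for an arbitrary smooth sequence $(\bar b^n)$ converging to $b$ in $C^\alpha$: the construction sequence appears only through the index $m$ used to represent $D^{\tilde u}$ and $\tilde u$, and the final bound $\lesssim \|b - \bar b^n\|_{C^\alpha}$ vanishes regardless. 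Either way, the argument is the same in substance.
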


\begin{theorem}[Existence]
Let Assumption \ref{assumptions} hold. Then the process $\tilde{u}$ is a regularised solution of (\ref{SHE}) in the class $\mathscr{U}^{1 + \alpha/4}$.
\end{theorem}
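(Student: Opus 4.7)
The plan is to verify the three requirements of \cref{defofsolution} for the pair $(\tilde u, D^{\tilde u})$ and then establish that $D^{\tilde u} \in \mathscr{V}_p^{1+\alpha/4}[0,1]$ for every $p\in[1,\infty)$. Almost all the necessary ingredients have been assembled in \cref{convergenceofdrift,Vtildeuisournoiseterm,convergenceofDinCquarterhalfLp,tildeDisthedrift}, and only the adapted/continuity bookkeeping and a Fatou-type passage to the limit remain.

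\textbf{Step 1 (measurability and continuity).} By construction $D^{\tilde u}$ and $V^{\tilde u}$ are $\mathscr{P}\otimes\mathscr{B}(\mathbb{T})$-measurable as pointwise-in-probability limits of the predictable random fields $D^{u^n}$ and $V^{u^n}$. From \cref{convergenceofdrift}, for every $\varepsilon\in(0,1/2)$ and every $p\in[1,\infty)$ we have $D^{\tilde u},V^{\tilde u}\in C^{1/4-\varepsilon/2,\,1/2-\varepsilon}([0,1]\times\mathbb{T},L_p)$. Choosing $p$ arbitrarily large and applying Kolmogorov's continuity theorem yields a jointly continuous modification of each, and hence of $\tilde u = P_{\cdot}u_0 + D^{\tilde u} + V^{\tilde u}$.

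\textbf{Step 2 (integral equation and drift condition).} The defining identity \eqref{utildedef} combined with \cref{Vtildeuisournoiseterm} gives \eqref{integraleqn} for $\tilde u$. Condition \eqref{driftdef} is precisely the content of \cref{tildeDisthedrift}.

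\textbf{Step 3 (regularity of $D^{\tilde u}$).} This is the main step. Fix $p\in[2,\infty)$, $(s,t)\in[0,1]_{\leq}^2$ and $x\in\mathbb{T}$. By \eqref{uniformlyUp}, the constant $M_p:=\sup_{n}[D^{u^n}]_{\mathscr{V}_p^{1+\alpha/4}}$ is finite, so for every $n$,
\begin{equs}
\|D_t^{u^n}(x)-P_{t-s}D_s^{u^n}(x)\|_{L_p|\mathscr{F}_s}\leq M_p|t-s|^{1+\alpha/4}\qquad\text{a.s.}
\end{equs}
By \cref{convergenceofdrift}, $D_t^{u^n}(x)\to D_t^{\tilde u}(x)$ in $L_p(\Omega)$; along a subsequence the convergence is almost sure, and since $P_{t-s}$ is a contraction on $L_p$ the same holds for $P_{t-s}D_s^{u^n}(x)\to P_{t-s}D_s^{\tilde u}(x)$. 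The conditional Fatou lemma then yields
\begin{equs}
\|D_t^{\tilde u}(x)-P_{t-s}D_s^{\tilde u}(x)\|_{L_p|\mathscr{F}_s}\leq M_p|t-s|^{1+\alpha/4}\qquad\text{a.s.}
\end{equs}
Taking the essential supremum in $\omega$ and then the supremum over $x\in\mathbb{T}$ and $(s,t)\in[0,1]_{<}^2$ gives $[D^{\tilde u}]_{\mathscr{V}_p^{1+\alpha/4}}\leq M_p<\infty$. Since $p$ was arbitrary, $D^{\tilde u}\in\mathscr{V}_p^{1+\alpha/4}$ for all $p\in[1,\infty)$, that is, $\tilde u\in\mathscr{U}^{1+\alpha/4}$.

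\textbf{Main obstacle.} The one subtle point is the passage to the limit in the seminorm $[\,\cdot\,]_{\mathscr{V}_p^{1+\alpha/4}}$, which involves the $L_{p,\infty}^{\mathscr{F}_s}$ norm rather than a plain $L_p$ norm. The argument above handles this by first passing to the limit in the conditional $L_p$-norm via conditional Fatou (valid once an almost-sure subsequential limit has been extracted) and only afterwards taking the $\omega$-essential supremum; no uniform-in-$\omega$ convergence is needed, only the deterministic uniform bound $M_p$ supplied by \eqref{uniformlyUp}.
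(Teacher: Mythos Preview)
Your proposal is correct and follows essentially the same approach as the paper: measurability/continuity via Kolmogorov, the drift condition via \cref{tildeDisthedrift}, the integral equation via \cref{Vtildeuisournoiseterm}, and the $\mathscr{V}_p^{1+\alpha/4}$ bound by passing the uniform estimate \eqref{uniformlyUp} through a Fatou argument. The paper records the last step as the bare inequality $[D^{\tilde u}]_{\mathscr{V}_p^{1+\alpha/4}} \leq \liminf_{n}[D^{u^n}]_{\mathscr{V}_p^{1+\alpha/4}}$ without elaboration; your conditional-Fatou treatment of the $L_{p,\infty}^{\mathscr{F}_s}$ norm is exactly the content behind that line.
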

\begin{proof}
Since for all $n \in \mathbb{N}$, the random field $u^n$ (which is a classically defined mild solution) is $\mathscr{P}\otimes \mathscr{B}(\mathbb{T})$-measurable, so is the limit $\tilde{u}$. By the definition of $\tilde{u}$ and by \cref{convergenceofdrift} we have that
$$\tilde{u} - P_\cdot u_0 \in C^{1/4 - \varepsilon, 1/4 - \varepsilon/2}([0,1]\times \mathbb{T}, L_p)$$
for $p \geq 1$ and for any $\varepsilon>0$. Therefore by Kolmogorov's continuity theorem, the random field $\tilde{u}(t,x) -P_tu(0,\cdot)(x)$ is continuous in $(t,x)$. So noting that $P_tu(0,x)$ is also continuous in $(t,x)$, it follows that $\tilde{u}(t,x)$ is continuous in $(t,x)$.
Note moreover that by \cref{tildeDisthedrift}, the pair $(\tilde{u}, {D}^{\tilde{u}})$ satisfies (\ref{driftdef}). 
Finally, we observe that by the definition of $\tilde{u}$ and by \cref{Vtildeuisournoiseterm} the integral equation (\ref{integraleqn}) is satisfied.
 Therefore it is clear that $\tilde{u}$ is a regularised solution of (\ref{SHE}). Moreover for all $p \geq 1$ we have
$$[D^{\tilde{u}}]_{\mathscr{V}_p^{1+\alpha/4}} \leq  \liminf_{n \to \infty}[D^{u^n}]_{\mathscr{V}_p^{1+\alpha/4}} \leq \sup_{n \in \mathbb{N}}[D^{u^n}]_{\mathscr{V}_p^{1+\alpha/4}} <\infty,$$
where the last inequality holds by (\ref{uniformlyUp}). Therefore $\tilde{u} \in \mathscr{U}^{1 + \alpha/4}$, and the proof is finished.
\end{proof}

\section{Appendix}

\begin{lemma}\label{trivialpowerslemma}
Let $\varepsilon \in (0,1/2)$, $\gamma \in (0 , \varepsilon)$, and define
$$\delta := \frac{2(\varepsilon- \gamma)}{1 - 2\gamma}.$$
Then $\delta \in (0,1)$, and for all $(t,x),(s,y) \in [0,1]\times \mathbb{T},$ we have
\begin{equs}
\Big(|t-s|^{1/4 - \gamma/2} + |x-y|^{1/2 - \gamma}\Big)^{1 - \delta} \leq |t-s|^{1/4 - \varepsilon/2} + |x-y|^{1/2 - \varepsilon}. \label{powersineqappendix}
\end{equs}
\end{lemma}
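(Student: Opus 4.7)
The plan is to choose the exponent $\delta$ precisely so that the $(1-\delta)$-power redistributes cleanly across both summands, and then invoke subadditivity of $x \mapsto x^{1-\delta}$.

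First I would verify $\delta \in (0,1)$. Since $0 < \gamma < \varepsilon < 1/2$, the numerator $2(\varepsilon - \gamma)$ is strictly positive and the denominator $1 - 2\gamma$ is strictly positive, so $\delta > 0$. The bound $\delta < 1$ rearranges to $2\varepsilon < 1$, which holds by assumption.

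Next, a short calculation gives $1 - \delta = (1 - 2\varepsilon)/(1 - 2\gamma)$, and from this one reads off the two key identities
\begin{equs}
\Big(\tfrac{1}{4} - \tfrac{\gamma}{2}\Big)(1 - \delta) = \tfrac{1}{4} - \tfrac{\varepsilon}{2}, \qquad \Big(\tfrac{1}{2} - \gamma\Big)(1 - \delta) = \tfrac{1}{2} - \varepsilon.
\end{equs}
This is the whole reason for the specific formula defining $\delta$.

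Finally, set $A := |t-s|^{1/4 - \gamma/2}$ and $B := |x - y|^{1/2 - \gamma}$. Since $1 - \delta \in (0,1)$, the function $u \mapsto u^{1-\delta}$ is concave on $[0,\infty)$ and vanishes at $0$, hence subadditive: $(A + B)^{1-\delta} \leq A^{1-\delta} + B^{1-\delta}$. Substituting the two identities from the previous step turns the right-hand side into exactly $|t-s|^{1/4 - \varepsilon/2} + |x-y|^{1/2 - \varepsilon}$, which is (\ref{powersineqappendix}). There is no real obstacle here; the content of the lemma is just the algebraic verification that $\delta$ has been tuned correctly.
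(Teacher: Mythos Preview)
Your proposal is correct and follows essentially the same approach as the paper: verify $\delta\in(0,1)$, apply subadditivity of $u\mapsto u^{1-\delta}$ to split the left-hand side, and check that the resulting exponents match those on the right. Your explicit computation of $1-\delta=(1-2\varepsilon)/(1-2\gamma)$ is a slight streamlining of the exponent verification, but the argument is otherwise identical.
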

\begin{proof}
We begin by noting that since $\varepsilon \in (0,1/2)$, we have
$
\delta < \frac{2(\varepsilon- \gamma)}{2\varepsilon - 2\gamma} =1$. The positivity of $\delta$ also immediately follows from the fact that $0<\gamma <\varepsilon<1/2$.
So we have $1-\delta \in (0,1)$, and thus the map $x \mapsto |x|^{1-\delta}$ is subadditive. Hence the left hand side of (\ref{powersineqappendix}) is bounded by
$$|t-s|^{(1/4 - \gamma/2)(1-\delta)} + |x-y|^{(1/2 -\gamma)(1-\delta)}.$$
Now we just need to check that the powers in this expression match the powers on the right hand side of (\ref{powersineqappendix}). This is indeed true, since
\begin{equs}
\big(\frac{1}{4} - \frac{\gamma}{2}\big)(1-\delta) = \frac{1}{4}(1-2\gamma)\bigg(1 - \frac{2(\varepsilon-\gamma)}{1-2\gamma}\bigg) =\frac{1}{4}(1- 2\gamma - 2(\varepsilon-\gamma)) =\frac{1}{4} -\frac{\varepsilon}{2},
\end{equs}
and
\begin{equs}
\big(\frac{1}{2}- \gamma\big)(1-\delta) =\frac{1}{2}(1-2\gamma)\Big(1-\frac{2(\varepsilon-\gamma)}{1-2\gamma}\Big) = \frac{1}{2}(1-2\gamma  - 2(\varepsilon-\gamma)) = \frac{1}{2} - \varepsilon,
\end{equs}
and thus the proof is finished.
\end{proof}

\begin{proposition}
\label{borrowedheatkernelestimates}
 For any $\gamma \in[0,1]$ there exists a constant $N  = N(\gamma)>0$ such that for all $t \in [0,1]$ and $x,\bar{x},y \in \mathbb{T}$ we have
\begin{equs}|p_t(x,y) -p_t(\bar{x},y)| \leq N|x-\bar{x}|^\gamma t^{-\gamma/2}\big(p_{2t}(x,y) + p_{2t}(\bar{x},y)\big). \label{khoaskernelidentity}
\end{equs}
Moreover for any $\gamma,\beta \in [0,1]$ with $\alpha \leq \beta$ there exists a constant $N = N(\gamma,\beta)>0$ such that for all $(s,t) \in [0,1]_{\leq}^2$ and $x,\bar{x} \in \mathbb{T}$ and for all $f \in C^\alpha(\mathbb{T})$ we have
\begin{equs}|P_t f(x) - P_sf(\bar{x})| \leq N\|f\|_{C^\gamma}(|x-\bar{x}|^\beta + |t-s|^{\beta/2})s^{(\gamma-\beta)/2}.
\label{konstantinoskernelidentity}
\end{equs}
\end{proposition}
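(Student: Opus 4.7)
Both bounds are classical heat-kernel estimates. My plan is to reduce each to a pointwise computation on the Gaussian kernel $p_t^\R$ on $\R$, then transfer to $\T$ by summing over integer translations, exploiting the fact that the estimates are stable under such summation.

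For (\ref{khoaskernelidentity}) I would establish the two endpoint cases $\gamma=0$ and $\gamma=1$ and interpolate. The case $\gamma=0$ is trivial after the one-line observation that $p_t^\R(z)\leq\sqrt 2\,p_{2t}^\R(z)$, which yields $|p_t^\R(z_1)-p_t^\R(z_2)|\leq\sqrt 2\,(p_{2t}^\R(z_1)+p_{2t}^\R(z_2))$. For $\gamma=1$ I would start from the pointwise gradient bound $|\D_x p_t^\R(z)|\leq Ct^{-1/2}p_{2t}^\R(z)$, which follows from $|z|e^{-z^2/(4t)}\leq C\sqrt t\,e^{-z^2/(8t)}$, combine it with the mean value theorem, and then split into the two regimes: when $|x-\bar x|\leq\sqrt t$, the estimate $|z_\theta-y|^2\geq\tfrac12|x-y|^2-|x-\bar x|^2$ for $z_\theta$ on the segment $[\bar x,x]$ shows that $p_{2t}^\R(z_\theta-y)$ is bounded (up to a universal constant) by $p_{2t}^\R(x-y)+p_{2t}^\R(\bar x-y)$; when $|x-\bar x|>\sqrt t$, the factor $|x-\bar x|^\gamma t^{-\gamma/2}\geq 1$ allows me to invoke the already proved $\gamma=0$ case. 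Intermediate $\gamma\in(0,1)$ is then obtained from $|a-b|=|a-b|^{1-\gamma}|a-b|^\gamma$ by applying the two endpoint bounds to the two factors.

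For (\ref{konstantinoskernelidentity}) I would decompose
\begin{equs}
P_tf(x)-P_sf(\bar x)=\big(P_tf(x)-P_tf(\bar x)\big)+\big(P_tf(\bar x)-P_sf(\bar x)\big).
\end{equs}
To estimate the spatial increment at fixed time $t$, I would use the cancellation $\int_\T(p_t(x,y)-p_t(\bar x,y))\,dy=0$ to rewrite it as $\int_\T(p_t(x,y)-p_t(\bar x,y))(f(y)-f(\bar x))\,dy$, then apply (\ref{khoaskernelidentity}) with exponent $\beta$ together with the elementary moment bound $\int_\T p_{2t}(z,y)|y-\bar x|^\gamma dy\lesssim t^{\gamma/2}+|z-\bar x|^\gamma$ and a short case distinction on whether $|x-\bar x|$ is smaller or larger than $\sqrt t$; this produces $|P_tf(x)-P_tf(\bar x)|\lesssim\|f\|_{C^\gamma}\,t^{(\gamma-\beta)/2}|x-\bar x|^\beta$. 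For the temporal increment, the semigroup identity $P_t=P_{t-s}P_s$ gives
\begin{equs}
P_tf(\bar x)-P_sf(\bar x)=\int_\T p_{t-s}(\bar x,y)\big(P_sf(y)-P_sf(\bar x)\big)\,dy,
\end{equs}
and the spatial estimate just established, applied at time $s$ with exponent $\beta$, yields $|P_sf(y)-P_sf(\bar x)|\lesssim\|f\|_{C^\gamma}s^{(\gamma-\beta)/2}|y-\bar x|^\beta$. Multiplying by $p_{t-s}(\bar x,y)$ and using $\int_\T p_{t-s}(\bar x,y)|y-\bar x|^\beta dy\lesssim(t-s)^{\beta/2}$ gives a bound of order $s^{(\gamma-\beta)/2}(t-s)^{\beta/2}\|f\|_{C^\gamma}$. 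Combining with the spatial piece and exploiting $s\leq t$ together with $\gamma\leq\beta$, which forces $t^{(\gamma-\beta)/2}\leq s^{(\gamma-\beta)/2}$, produces (\ref{konstantinoskernelidentity}).

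The only real obstacle is bookkeeping in the $\gamma=1$ step of (\ref{khoaskernelidentity}): Gaussians along the segment between $x$ and $\bar x$ are not literally dominated by Gaussians at the endpoints with the same time parameter, and one must either slightly enlarge the time constant (and absorb the loss into $N$) or perform the case split $|x-\bar x|\leq\sqrt t$ vs.\ $|x-\bar x|>\sqrt t$ carefully. Everything else is elementary.
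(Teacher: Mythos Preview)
Your proof plan is correct and self-contained. The paper, however, does not actually prove this proposition: immediately after the statement it simply records that the first inequality is taken from the proof of \cite[Lemma~C2]{athreya2024well} and the second from \cite{butkovsky2023optimal}. So there is nothing to compare on the level of argument---you are supplying a proof where the paper opts to cite.

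Your approach is the standard one and matches what one finds in those references: endpoint cases plus interpolation for \eqref{khoaskernelidentity}, and a spatial/temporal splitting combined with the semigroup identity for \eqref{konstantinoskernelidentity}. The case distinction $|x-\bar x|\lessgtr\sqrt t$ you flag is indeed the only place requiring care, and you handle it correctly; the passage from $t^{(\gamma-\beta)/2}$ to $s^{(\gamma-\beta)/2}$ at the end is exactly the right observation. (Note the statement itself contains a typo---``$\alpha\leq\beta$'' and ``$f\in C^\alpha$'' should read $\gamma\leq\beta$ and $f\in C^\gamma$---which you have silently corrected.)
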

The first inequality of the lemma above is taken from the proof of  \cite[Lemma C2]{athreya2024well}, while the second inequality can be found in \cite{butkovsky2023optimal}.

\begin{proposition}
\label{heatkerneldiffsquareint}
For any $\varepsilon \in (0,1]$ there exists a constant $N = N(\varepsilon)>0$ such that for all $(s,t) \in [0,1]_{\leq}^2$, the following inequalities hold:
\begin{equs}
\int_{\mathbb{T}}|p_{t}(x,y) - p_t(\bar{x},y)|dy &\leq N|x-\bar{x}|^\varepsilon t^{-\varepsilon/2} \label{pspacediffL1},\\
\int_0^t \int_{\mathbb{T}}|p_{t-r}(x,y)- p_{t-r}(\bar{x},y)|^2 dy dr &\leq N |x-\bar{x}|^{1-\varepsilon}t^{\varepsilon/2} \label{pspacediffsq},\\
\int_0^s \int_{\mathbb{T}} |p_{t-r}(x,y) - p_{s-r}(x,y)|^2 dydr &\leq {N} |t-s|^{1/2 - \varepsilon/2}. \label{ptimediffsq}
\end{equs}
\end{proposition}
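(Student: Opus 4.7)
The plan is to establish the three estimates in turn, leveraging Proposition~\ref{borrowedheatkernelestimates} for the spatial estimates (i) and (ii), and passing to Fourier series on $\mathbb{T}$ for the temporal estimate (iii).

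For~\eqref{pspacediffL1}, I would simply integrate the pointwise bound~\eqref{khoaskernelidentity} in $y$ with $\gamma = \varepsilon$, using that both $p_{2t}(x,\cdot)$ and $p_{2t}(\bar{x},\cdot)$ are probability densities on $\mathbb{T}$. This is essentially immediate.

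For~\eqref{pspacediffsq}, the key observation is the elementary inequality $|a-b|^2 \leq |a-b|(a+b)$ valid for nonnegative reals. Applying~\eqref{khoaskernelidentity} with $\gamma = 1-\varepsilon$ to the factor $|p_{t-r}(x,y) - p_{t-r}(\bar{x},y)|$ and the trivial bound $p_{t-r}(x,y) + p_{t-r}(\bar{x},y)$ to the remaining factor yields, after integrating in $y$ and using $\|p_{\tau}\|_{\mathbb{B}(\mathbb{T})} \lesssim \tau^{-1/2}$ for $\tau \in (0,1]$, a pointwise-in-$r$ bound of the form $|x-\bar{x}|^{1-\varepsilon}(t-r)^{-1+\varepsilon/2}$. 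Since $\varepsilon/2 > 0$, integrating over $r \in [0,t]$ produces the desired $t^{\varepsilon/2}$.

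The estimate~\eqref{ptimediffsq} is the main obstacle, since we must extract temporal rather than spatial regularity and the argument via~\eqref{khoaskernelidentity} no longer applies. My approach would be to exploit the spectral representation of the periodic heat kernel: writing $p_\tau(x,y) = \sum_{k\in\mathbb{Z}} e^{-4\pi^2 k^2 \tau}e^{2\pi i k(x-y)}$ and applying Parseval's identity gives
\[
\int_{\mathbb{T}} |p_{t-r}(x,y) - p_{s-r}(x,y)|^2\,dy \;=\; \sum_{k\in\mathbb{Z}} e^{-8\pi^2 k^2(s-r)} \big(e^{-4\pi^2 k^2(t-s)} - 1\big)^2 .
\]
The elementary bound $|e^{-a}-1|^2 \leq a^{2\delta}$, valid for $a \geq 0$ and $\delta \in [0,1]$, controls the jump between times $s$ and $t$. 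Choosing $\delta = \tfrac{1}{4} - \tfrac{\varepsilon}{4}$ and comparing $\sum_k (k^2)^{2\delta} e^{-c k^2(s-r)}$ with an integral yields the pointwise bound $|t-s|^{1/2-\varepsilon/2}(s-r)^{-1+\varepsilon/2}$, where the exponent $-1+\varepsilon/2$ is integrable on $[0,s]$ precisely because $\delta < 1/4$; the $r$-integration then contributes the harmless factor $s^{\varepsilon/2} \leq 1$. The constraint $\varepsilon \in (0,1]$ is exactly what is needed both for $\delta \geq 0$ in the elementary inequality and for the summability of the Fourier series, which is what makes this computation succeed.
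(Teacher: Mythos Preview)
Your arguments for \eqref{pspacediffL1} and \eqref{pspacediffsq} are correct and coincide with what the paper indicates: both rest on the pointwise Gaussian bound \eqref{khoaskernelidentity}, and your use of $|a-b|^2 \le |a-b|(a+b)$ together with $\|p_\tau\|_{\mathbb{B}(\mathbb{T})}\lesssim \tau^{-1/2}$ is exactly the intended route.

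For \eqref{ptimediffsq} your Fourier-series computation is also correct, but it differs from the paper's suggestion. The paper points to the semigroup estimate \eqref{konstantinoskernelidentity}: writing $p_{t-r}(x,\cdot)=P_{t-r-\rho}p_\rho(\cdot,y)(x)$ with $\rho=(s-r)/2$ and applying \eqref{konstantinoskernelidentity} (with $x=\bar x$) gives a pointwise bound $|p_{t-r}(x,y)-p_{s-r}(x,y)|\lesssim |t-s|^{\beta/2}(s-r)^{-(1+\beta)/2}$, after which one argues as in your proof of \eqref{pspacediffsq}. Your spectral approach is more self-contained on the torus---it needs nothing beyond Parseval and the scalar inequality $1-e^{-a}\le a^\delta$---whereas the real-variable route via \eqref{konstantinoskernelidentity} does not rely on any explicit eigenbasis and would transfer to settings (whole space, variable coefficients) where the Fourier representation is unavailable. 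Either method yields the stated bound with the same range $\varepsilon\in(0,1]$.
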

The inequality (\ref{pspacediffL1}) can be found in Lemma C2 of (\cite{athreya2024well}). The inequality (\ref{pspacediffsq}) can be proven by using (\ref{khoaskernelidentity}) and (\ref{ptimediffsq}) can be shown using (\ref{konstantinoskernelidentity}). 
\begin{lemma}
\label{gronwallintegralheatkernel}
For every $\gamma \in (1,2]$ there exists a constant $N(\gamma)>0$ such that for all $t \in [0,1]$,
$$\int_0^t \int_{\mathbb{T}} p_{t-r}^\gamma(x,y) e^{-\lambda(t-r)} dydr \leq \frac{N}{\sqrt{\lambda}}.$$
\end{lemma}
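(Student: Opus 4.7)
The plan is to first separate the spatial and temporal integrations by bounding the spatial integral pointwise in $r$. Using the standard pointwise estimate $\sup_{x,y \in \mathbb{T}} p_s(x,y) \lesssim s^{-1/2}$ for the periodic heat kernel on $\mathbb{T}$ (valid for $s \in (0,1]$) together with the conservation property $\int_{\mathbb{T}} p_s(x,y)\,dy = 1$, I would write
\[
\int_{\mathbb{T}} p_{t-r}^\gamma(x,y)\,dy \le \|p_{t-r}(x,\cdot)\|_{\mathbb{B}(\mathbb{T})}^{\gamma-1} \int_{\mathbb{T}} p_{t-r}(x,y)\,dy \le C (t-r)^{-(\gamma-1)/2}
\]
for some constant $C$ depending only on $\gamma$. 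After the substitution $u = t-r$, the problem then reduces to showing
\[
I(\lambda) := \int_0^t u^{-(\gamma-1)/2} e^{-\lambda u}\,du \le N'(\gamma)\,\lambda^{-1/2}
\]
for all $\lambda > 0$ and all $t \in [0,1]$, with a constant $N'(\gamma)$ independent of $t$ and $\lambda$.

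Next I would split into two regimes. In the regime $\lambda \ge 1$, extending the integration to $[0,\infty)$ and rescaling by $v = \lambda u$ gives
\[
I(\lambda) \le \lambda^{(\gamma-3)/2}\int_0^\infty v^{-(\gamma-1)/2} e^{-v}\,dv = \lambda^{(\gamma-3)/2}\,\Gamma\bigl(\tfrac{3-\gamma}{2}\bigr),
\]
where the Gamma value is finite since $(3-\gamma)/2 \in [1/2, 1)$ for $\gamma \in (1,2]$. Because $(\gamma-3)/2 \le -1/2$ and $\lambda \ge 1$, this is dominated by $\Gamma((3-\gamma)/2)\,\lambda^{-1/2}$. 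In the regime $\lambda < 1$, I would use the trivial bound $e^{-\lambda u} \le 1$ and $t \le 1$ to get
\[
I(\lambda) \le \int_0^t u^{-(\gamma-1)/2}\,du = \frac{2\,t^{(3-\gamma)/2}}{3-\gamma} \le \frac{2}{3-\gamma},
\]
which is in turn bounded by $\frac{2}{3-\gamma}\,\lambda^{-1/2}$ since $\lambda^{-1/2} \ge 1$ in this regime.

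Combining the two regimes, the claim follows with $N(\gamma) := C \max\bigl(\Gamma((3-\gamma)/2),\,2/(3-\gamma)\bigr)$. No step here presents a real obstacle: the argument rests solely on the elementary pointwise bound for the heat kernel and the rescaling of a standard Gamma integral. The condition $\gamma \le 2$ is used precisely to ensure that the exponent $(\gamma-3)/2$ is at most $-1/2$, which is what yields the $\lambda^{-1/2}$ decay.
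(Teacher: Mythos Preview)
Your proof is correct and follows essentially the same approach as the paper: both use the spatial bound $\int_{\mathbb{T}} p_s^\gamma(x,y)\,dy \lesssim s^{-(\gamma-1)/2}$ and then estimate the resulting one-dimensional integral. The paper avoids your case split on $\lambda$ by first bounding $(t-r)^{-(\gamma-1)/2} \le (t-r)^{-1/2}$ (valid since $\gamma \le 2$ and $t-r \le 1$) and then using the single substitution $\theta = \sqrt{\lambda(t-r)}$ to reduce directly to the error function, yielding $\sqrt{\pi}/\sqrt{\lambda}$ uniformly in $\lambda > 0$.
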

\begin{proof}
The left-hand-side is controlled by
\begin{align*}
\int_0^t (t-r)^{-1/2(\gamma-1)} e^{-\lambda(t-r)}dydr \leq
\int_0^t \frac{1}{\sqrt{t-r}}e^{-\lambda(t-r)}dr = \frac{2}{\sqrt{\lambda}}\int_0^{\sqrt{\lambda t}} e^{-\theta^2} d\theta\\
=\frac{ \sqrt{\pi}}{\sqrt{\lambda}} \text{erf}(\sqrt{\lambda t})\leq  \frac{\sqrt{\pi}}{\sqrt{\lambda}}
\end{align*}
where we used the change of variables $\theta := \sqrt{\lambda}(t-r)^{1/2}$ and the fact that $|\text{erf}(\cdot)| \leq 1$.
\end{proof}

\begin{lemma}
[A Gr\"onwall-type inequality]\label{Gronwalltypelemma}
Fix $s \geq 0$. Let $C \in \mathbb{B}([s,1],\mathbb{R})$ be a non-decreasing function and let 
 $f : [s,1] \times \mathbb{T} \to [0, \infty)$ be a bounded function. Suppose that there exists $\gamma \in (1,2]$ and $N_0 \geq 0$ such that for all  $t \in [s,1]$ and $x \in \mathbb{T}$ we have
$$
f(t,x) \leq C(t)+N_0\int_s^t \int_{\mathbb{T}} p_{t-r}^\gamma(x,y)f(r,y) dy dr.
$$
 There exists a constant $N=N(\gamma, N_0)$ such that for all $t \in [s,1]$ we have
$$\sup_{x \in \mathbb{T}}f(t,x) \leq  NC(t).$$
\end{lemma}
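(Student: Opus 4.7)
The plan is to use the standard exponential-weighting trick combined with the heat-kernel integral bound from \cref{gronwallintegralheatkernel}. For a parameter $\lambda>0$ to be tuned later, I would multiply both sides of the hypothesized inequality by $e^{-\lambda(t-s)}$, obtaining
\begin{equs}
e^{-\lambda(t-s)}f(t,x)\le e^{-\lambda(t-s)}C(t)+N_0\int_s^t\int_{\mathbb{T}}p^\gamma_{t-r}(x,y)e^{-\lambda(t-r)}\bigl(e^{-\lambda(r-s)}f(r,y)\bigr)dydr.
\end{equs}

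Next I would set $M(t):=\sup_{r\in[s,t],\,x\in\mathbb{T}}e^{-\lambda(r-s)}f(r,x)$. Crucially, the assumption that $f$ is bounded on $[s,1]\times\mathbb{T}$ ensures $M(t)<\infty$, which is what makes the buckling step legitimate. Bounding the weighted $f(r,y)$ in the integrand by $M(t)$, using monotonicity of $C$ (so that $e^{-\lambda(t-s)}C(t)\le C(t)$), and then applying \cref{gronwallintegralheatkernel} yields
\begin{equs}
e^{-\lambda(t-s)}f(t,x)\le C(t)+N_0 M(t)\int_s^t\int_{\mathbb{T}}p^\gamma_{t-r}(x,y)e^{-\lambda(t-r)}dydr\le C(t)+\frac{N_0 N_1}{\sqrt{\lambda}}M(t),
\end{equs}
with $N_1=N_1(\gamma)$ from \cref{gronwallintegralheatkernel}.

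Taking the supremum over $x\in\mathbb{T}$ and $r\in[s,t]$ on the left (noting that the right-hand side is non-decreasing in $t$ by monotonicity of $C$ and $M$) gives $M(t)\le C(t)+\tfrac{N_0N_1}{\sqrt{\lambda}}M(t)$. Choosing $\lambda=(2N_0N_1)^2$ so that $N_0 N_1/\sqrt{\lambda}\le 1/2$ and absorbing, one obtains $M(t)\le 2C(t)$. Unwinding the weighting finally gives $f(t,x)\le e^{\lambda(t-s)}M(t)\le 2e^{\lambda}C(t)$, which is the claim with $N:=2e^\lambda$, a constant depending only on $\gamma$ and $N_0$.

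The only subtle point is the finiteness of $M(t)$ at the buckling step, which is exactly why the boundedness hypothesis on $f$ is assumed; everything else reduces to the single kernel estimate from \cref{gronwallintegralheatkernel} and the monotonicity of $C$.
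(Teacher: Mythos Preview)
Your proof is correct and follows essentially the same route as the paper: exponential weighting with parameter $\lambda$, passing to the running supremum $M(t)$ (the paper calls it $m_t$), applying \cref{gronwallintegralheatkernel} to get the $N_1/\sqrt{\lambda}$ factor, choosing $\lambda$ large to absorb, and unwinding. The only cosmetic difference is that the paper weights by $e^{-\lambda r}$ rather than $e^{-\lambda(r-s)}$, which amounts to a harmless constant shift.
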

\begin{proof} Let $\lambda>0$ and consider the non-decreasing function of time $m: [s,1] \to \mathbb{R}$, that is given by
$$m_t := \sup_{s \leq r \leq t} \sup_{x \in \mathbb{T}}\big(f(r,x) e^{-\lambda r}\big).$$
Then 
$$f(t,x) \lesssim C(t) + \int_0^t \int_{\mathbb{T}} p_{t-r}^\gamma(x,y)m_r e^{\lambda r} dy dr,$$
where used the definition of $m$ and the fact that $[s,t] \subset [0,t]$.
Multiplying both sides by $e^{-\lambda t}$ and noting that $m_r \leq m_t$ for $r \leq t$ gives
$$f(t,x) e^{-\lambda t} \lesssim C(t) e^{-\lambda t} + m_t \int_0^t \int_{\mathbb{T}} p_{t-r}^\gamma(x,y) e^{-\lambda(t-r)} dy dr.$$
Let $T \in [s,1]$. Using \cref{gronwallintegralheatkernel} to estimate the second term, and taking supremum over 
$(t,x) \in [s,T]\times\mathbb{T}$ 
we get
$$m_T \lesssim C(T) + \frac{m_T}{\sqrt{\lambda}}.$$
Choosing $\lambda$ to be sufficiently large, we get that $m_T \lesssim C(T)$.
Since, $T \in [s,1]$ was arbitrary, the result follows by the definition of $m$.
\end{proof}

\begin{lemma}[A commonly used corollary of H\"older's inequality]\label{holdertrick}
Let $\gamma \in (1,3)$, $\delta \in (0,3)$. There exists
\begin{equs}\label{pconditionsforholder}
p>\frac{3-\delta}{3-\gamma}, \quad \text{such that} \quad \Big(\gamma - \frac{\delta}{p}\Big)\frac{p}{p-1} \geq 1,
\end{equs}
and a constant $N = N(\delta,\gamma,p)>0$, such that for all  $(s,t) \in [0,1]_{\leq}^2$ we have
\begin{equs}&\Big(\int_s^t\int_{\mathbb{T}} |p_{t-r}(x,y)|^\gamma f(r,y) dy dr\Big)^{p} \\
&\qquad\leq N (t-s)^{\frac{(3-\gamma)p}{2} + \frac{\delta -3}{2}}\int_s^t \int_{\mathbb{T}} |p_{t-r}(x,y)|^\delta f^p(r,y) dy dr. \label{holdercorollinequ}
\end{equs}
\end{lemma}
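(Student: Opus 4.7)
The plan is to decompose the integrand as $|p_{t-r}(x,y)|^\gamma f(r,y) = |p_{t-r}(x,y)|^{\gamma - \delta/p} \cdot \big(|p_{t-r}(x,y)|^{\delta/p} f(r,y)\big)$ and apply H\"older's inequality with conjugate exponents $p/(p-1)$ and $p$ in the $(r,y)$ variables. Setting $q := (\gamma - \delta/p)\frac{p}{p-1} = \frac{\gamma p - \delta}{p-1}$, this produces
$$\int_s^t \int_{\mathbb{T}} |p_{t-r}(x,y)|^\gamma f(r,y) dy dr \leq \Big(\int_s^t \int_{\mathbb{T}} |p_{t-r}(x,y)|^q dy dr\Big)^{(p-1)/p} \Big(\int_s^t \int_{\mathbb{T}} |p_{t-r}(x,y)|^\delta f^p(r,y) dy dr\Big)^{1/p},$$
and raising to the $p$-th power gives the structure of \eqref{holdercorollinequ}; it only remains to bound the first factor.

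First I would verify the existence of a $p$ satisfying both conditions in \eqref{pconditionsforholder}. The upper bound $q<3$ is equivalent to $(3-\gamma)p > 3-\delta$, i.e.\ the first inequality in \eqref{pconditionsforholder}, while the lower bound $q \geq 1$ is the second inequality, which rearranges to $(\gamma-1)p \geq \delta - 1$. Since $\gamma>1$ by hypothesis, both are satisfied by any sufficiently large $p$, so such a choice exists.

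Next, to bound the first H\"older factor, I would invoke the standard heat-kernel estimate $\int_{\mathbb{T}} p_{t-r}^q(x,y) dy \lesssim (t-r)^{-(q-1)/2}$ (valid for $q \in [1,3)$ and obtained via $p_{t-r} \lesssim (t-r)^{-1/2}$ combined with $\int_{\mathbb{T}} p_{t-r} dy = 1$) and integrate in time. Since $q<3$, the time integral is finite and yields
$$\int_s^t \int_{\mathbb{T}} |p_{t-r}(x,y)|^q dy dr \lesssim (t-s)^{(3-q)/2}.$$
Raising to the $(p-1)$-th power, the first H\"older factor contributes $(t-s)^{(3-q)(p-1)/2}$.

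The last step is the algebraic simplification of the exponent. Substituting $q = (\gamma p - \delta)/(p-1)$ gives
$$\frac{(3-q)(p-1)}{2} = \frac{3(p-1) - (\gamma p - \delta)}{2} = \frac{(3-\gamma)p + \delta - 3}{2} = \frac{(3-\gamma)p}{2} + \frac{\delta-3}{2},$$
which matches the exponent on the right-hand side of \eqref{holdercorollinequ}. There is no serious obstacle here beyond the exponent bookkeeping; the main point is recognising that the two conditions on $p$ in \eqref{pconditionsforholder} are precisely what is needed to ensure that $q \in [1,3)$ so that the heat kernel power $|p_{t-r}|^q$ is integrable in space-time over the simplex $[s,t]\times \mathbb{T}$.
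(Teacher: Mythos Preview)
Your proposal is correct and follows essentially the same approach as the paper: the same H\"older splitting $|p_{t-r}|^{\gamma-\delta/p}\cdot|p_{t-r}|^{\delta/p}f$, the same interpolation bound $\int_{\mathbb{T}} p_{t-r}^q\,dy\lesssim (t-r)^{-(q-1)/2}$, and the same exponent algebra. If anything, you are slightly more explicit than the paper in identifying that the two conditions in \eqref{pconditionsforholder} correspond precisely to $q\ge 1$ and $q<3$.
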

\begin{proof}
Note that for any $\gamma \in (1,3)$, $\delta \in (0,3)$ we have
$\lim_{p \to \infty}\left(\gamma - \frac{\delta}{p}\right)\frac{p}{p-1} = \gamma > 1,$
and thus it follows that for sufficiently large $p$ the conditions (\ref{pconditionsforholder}) hold.
By H\"older's inequality, the left-hand-side of (\ref{holdercorollinequ}) is bounded by
\begin{align*}
\Big(\int_s^t \int_{\mathbb{T}} |p_{t-r}(x,y)|^{(\gamma - \frac{\delta}{p})\frac{p}{p-1}} dy dr\Big)^{\frac{p-1}{p}\cdot p} \int_s^t \int_{\mathbb{T}}| p_{t-r}(x,y)|^\delta f^p(r,y) dy dr.
\end{align*}
Moreover using the results $\|p_t\|_{\mathbb{B}(\mathbb{T})} \lesssim t^{-1/2}$  and $\|p_t\|_{L_1(\mathbb{T})} =1$
to interpolate, we can see that
the first factor is bounded by
\begin{equs}\Big(\int_s^t (t-r)^{-\frac{1}{2}\Big((\gamma - \frac{\delta}{p})\frac{p}{p-1} -1\Big)} dr\Big)^{p-1} &\lesssim (t-s)^{\Big(-\frac{1}{2}\big((\gamma - \frac{\delta}{p})\frac{p}{p-1} -1\big) + 1\Big)(p-1)}= (t-s)^{\frac{(3-\gamma)p}{2} + \frac{\delta -3}{2}},\end{equs}
and thus the proof is finished.
\end{proof}

\begin{lemma}[Conditional BDG inequality for stochastic convolutions]\label{conditionalBDGforstochintegral}
Let $0\leq s \leq t$, $n \in \mathbb{Z}_{\geq 0}$ and let $X:\Omega\ \times [0,1]\times \mathbb{T} \to H^{\otimes n}$ be a $\mathscr{P}\otimes \mathscr{B}(\mathbb{R})$-measurable $H^{\otimes n}$-valued random field.  For all $p \in [2,\infty)$ there exists a constant $C_p$ such that if $f_t \in L_2([0,t]\times \mathbb{T})$ for all $t \in [0,1]$, then for all $(s,t) \in [0,1]_{\leq}^2$ we have
\begin{equs}&\mathbb{E}^s \Big\| \int_s^t \int_{\mathbb{T}} f_t(r,y) X(r,y)\xi(dy,dr)\Big\|_{H^{\otimes n}}^p\\
&\qquad\leq C_p \mathbb{E}^s \Big(\int_s^t \int_{\mathbb{T}} f_t^2(r,y) \|X(r,y) \|_{H^{\otimes n}}^2 dydr \Big)^{p/2} \label{bdgsharp},
\end{equs}
and consequentially 
\begin{equs}&\Big\|\Big\|\int_s^t \int_{\mathbb{T}} f_t(r,y)X(r,y) \xi(dy,dr)\Big\|_{H^{\otimes n}}\Big\|_{L_p|\mathscr{F}_s}^2\\
&\qquad\qquad\leq C_p \int_s^t \int_{\mathbb{T}} f_t^2(r,y) \|\|X(r,y)\|_{H^{\otimes n}}\|_{L_p|\mathscr{F}_s}^2dydr. \label{bdgused}
\end{equs}
\end{lemma}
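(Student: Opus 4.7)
The plan is to reduce the assertion to the conditional BDG inequality for Hilbert-valued continuous martingales, which is classical. Fix $t \in [0,1]$ and consider, for $\tau \in [s,t]$, the $H^{\otimes n}$-valued process
\begin{equs}
M_\tau := \int_s^\tau \int_{\mathbb{T}} f_t(r,y) X(r,y) \, \xi(dy,dr).
\end{equs}
Since $f_t$ is deterministic and $X$ is predictable, $M$ is a continuous $H^{\otimes n}$-valued martingale with respect to $(\mathscr{F}_\tau)_{\tau \in [s,t]}$ starting from $0$ at $\tau = s$, with scalar quadratic variation
\begin{equs}
[M]_\tau = \int_s^\tau \int_{\mathbb{T}} f_t^2(r,y) \|X(r,y)\|_{H^{\otimes n}}^2 \, dy \, dr.
\end{equs}
A standard localisation argument handles the case where the right-hand side of \eqref{bdgsharp} is not a priori integrable.

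To obtain \eqref{bdgsharp} I would invoke the conditional Hilbert-valued BDG inequality applied to $M$ at $\tau = t$. This can be justified either by passing to the regular conditional probability $\mathbb{P}(\cdot \mid \mathscr{F}_s)$, under which $M$ remains a continuous martingale starting from $0$, so that the classical unconditional BDG applies for $\mathbb{P}$-almost every $\omega$, or by expanding $\|M_t\|_{H^{\otimes n}}^2 = \sum_k |\langle M_t, e_k\rangle_{H^{\otimes n}}|^2$ in an orthonormal basis $(e_k)$ of $H^{\otimes n}$ and combining the scalar conditional BDG for each coordinate with monotone convergence. Either route yields \eqref{bdgsharp}.

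For \eqref{bdgused} I would raise \eqref{bdgsharp} to the power $2/p$ and apply the conditional Minkowski integral inequality. Since $p \geq 2$, we have $p/2 \geq 1$, and Minkowski in $L_{p/2}^{\mathscr{F}_s}$ gives
\begin{equs}
\Big\| \int_s^t \int_{\mathbb{T}} f_t^2(r,y) \|X(r,y)\|_{H^{\otimes n}}^2 \, dy\,dr \Big\|_{L_{p/2}|\mathscr{F}_s} \leq \int_s^t \int_{\mathbb{T}} f_t^2(r,y) \big\| \|X(r,y)\|_{H^{\otimes n}} \big\|_{L_p|\mathscr{F}_s}^2 \, dy\, dr,
\end{equs}
which combined with \eqref{bdgsharp} immediately yields \eqref{bdgused}. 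There is no genuine obstacle; the only point demanding care is the appeal to the Hilbert-valued conditional BDG, and both of the justifications sketched above are standard.
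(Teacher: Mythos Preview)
Your proposal is correct and follows exactly the approach indicated in the paper: the paper states only that \eqref{bdgsharp} ``follows easily from the classic conditional BDG inequality'' and that \eqref{bdgused} then follows ``by the Minkowski inequality,'' and you have simply filled in the details of precisely this argument. There is nothing to add.
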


The inequality (\ref{bdgsharp}) follows easily from the classic conditional BDG inequality. From (\ref{bdgsharp}) we can see that (\ref{bdgused}) holds by the Minkowski inequality

\begin{lemma}\label{4pointcompar}
Suppose that $f: \mathbb{R} \to \mathbb{R}$ is twice differentiable. Then for $\phi_1,\dots, \phi_4 \in \mathbb{R}$ we have
\begin{equs}
&f(\phi_1) - f(\phi_2) - f(\phi_3) + f(\phi_4) \\
&= \int_0^1 \int_0^1 (\phi_1- \phi_2)(\theta(\phi_1 - \phi_3) + (1-\theta)(\phi_2 - \phi_4))\nabla^2 f(\Theta_1(\theta,\eta))d\eta d \theta\\
&\quad+ (\phi_1 - \phi_2 - \phi_3 + \phi_4)\int_0^1 \nabla f(\Theta_2(\theta))d\theta \label{4pointidentity}
\end{equs}
where $\Theta_1(\theta,\eta)$ and $\Theta_2(\theta)$ are the convex combinations of $\phi_1,\dots, \phi_4$ given by
\begin{equs}
\Theta_1(\theta,\eta) &:= \eta(\theta \phi_1 + (1-\theta)\phi_2) + (1-\eta)(\theta \phi_3 + (1-\theta) \phi_4),\\
\Theta_2(\theta) &:= \theta \phi_3 + (1-\theta) \phi_4.
\end{equs}
Moreover
\begin{equs}&|f(\phi_1) - f(\phi_2) - f(\phi_3) + f(\phi_4)|\\
&\qquad\leq \|f\|_{C^1}|\phi_1 - \phi_2||\phi_1 - \phi_3| + \|f\|_{C^2}|\phi_1 - \phi_2 - \phi_3 + \phi_4|. \label{4pointinequality}
\end{equs}
\end{lemma}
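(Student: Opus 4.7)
The identity will be derived by iterating the fundamental theorem of calculus, first along the segment from $\phi_2$ to $\phi_1$ (and $\phi_4$ to $\phi_3$) and then across the gap between these two segments. Concretely, define $a(\theta):=\theta\phi_1+(1-\theta)\phi_2$ and $b(\theta):=\theta\phi_3+(1-\theta)\phi_4=\Theta_2(\theta)$. Then
\begin{equs}
f(\phi_1)-f(\phi_2)=\int_0^1 \nabla f(a(\theta))(\phi_1-\phi_2)\,d\theta, \qquad f(\phi_3)-f(\phi_4)=\int_0^1 \nabla f(b(\theta))(\phi_3-\phi_4)\,d\theta.
\end{equs}
Subtracting and inserting the telescoping term $\pm \nabla f(b(\theta))(\phi_1-\phi_2)$ yields
\begin{equs}
f(\phi_1)-f(\phi_2)-f(\phi_3)+f(\phi_4)
&=\int_0^1 \bigl[\nabla f(a(\theta))-\nabla f(b(\theta))\bigr](\phi_1-\phi_2)\,d\theta\\
&\quad+(\phi_1-\phi_2-\phi_3+\phi_4)\int_0^1\nabla f(\Theta_2(\theta))\,d\theta,
\end{equs}
which already isolates the second summand in the claimed identity.

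For the first summand I would apply the fundamental theorem of calculus a second time, this time in a direction transverse to the two segments. Noting that $\eta a(\theta)+(1-\eta)b(\theta)=\Theta_1(\theta,\eta)$ and $a(\theta)-b(\theta)=\theta(\phi_1-\phi_3)+(1-\theta)(\phi_2-\phi_4)$, I get
\begin{equs}
\nabla f(a(\theta))-\nabla f(b(\theta))=\int_0^1 \nabla^2 f(\Theta_1(\theta,\eta))\bigl(\theta(\phi_1-\phi_3)+(1-\theta)(\phi_2-\phi_4)\bigr)d\eta.
\end{equs}
Substituting this back produces exactly the double integral in the stated identity, completing the derivation of \eqref{4pointidentity}.

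The inequality \eqref{4pointinequality} is then an immediate consequence: in the first summand bound $|\nabla^2 f|\le \|f\|_{C^2}$ and use the trivial estimate $|\theta(\phi_1-\phi_3)+(1-\theta)(\phi_2-\phi_4)|\le |\phi_1-\phi_3|+|\phi_2-\phi_4|$ (or, more sharply, collapse the two cross-differences to the single $|\phi_1-\phi_3|$ factor modulo a $|\phi_1-\phi_2-\phi_3+\phi_4|$ error, which can be absorbed into the second summand), and in the second summand bound $|\nabla f|\le \|f\|_{C^1}$. There is no real obstacle here; the only mild subtlety is organising the identity so that the $\nabla^2 f$ term carries the \emph{product} structure $(\phi_1-\phi_2)(\phi_1-\phi_3)$ rather than two independent differences, which is why the telescoping in the first step is done specifically by subtracting and adding $\nabla f(b(\theta))(\phi_1-\phi_2)$ rather than the symmetric alternative.
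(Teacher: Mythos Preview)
Your derivation of the identity \eqref{4pointidentity} is essentially identical to the paper's: both apply the fundamental theorem of calculus along $\theta\mapsto\theta\phi_1+(1-\theta)\phi_2$ and $\theta\mapsto\theta\phi_3+(1-\theta)\phi_4$, telescope by adding and subtracting $\nabla f(\Theta_2(\theta))(\phi_1-\phi_2)$, and then apply the fundamental theorem once more in the $\eta$-direction.

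For the inequality \eqref{4pointinequality}, however, your absorption step has a gap. Writing $\theta(\phi_1-\phi_3)+(1-\theta)(\phi_2-\phi_4)=(\phi_1-\phi_3)-(1-\theta)(\phi_1-\phi_2-\phi_3+\phi_4)$ and bounding the double integral from the identity produces the extra term
\[
\|\nabla^2 f\|_{\mathbb{B}}\,|\phi_1-\phi_2|\,|\phi_1-\phi_2-\phi_3+\phi_4|,
\]
which carries an unwanted factor $|\phi_1-\phi_2|$ and therefore \emph{cannot} be absorbed into the second summand $\|\nabla f\|_{\mathbb{B}}|\phi_1-\phi_2-\phi_3+\phi_4|$ without a bound on $|\phi_1-\phi_2|$. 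The paper circumvents this by proving \eqref{4pointinequality} from a \emph{different}, asymmetric decomposition: setting $\delta_{i,j}:=\phi_j-\phi_i$, it writes
\[
f(\phi_1)-f(\phi_2)-f(\phi_3)+f(\phi_4)
=\bigl[f(\phi_1)-f(\phi_1+\delta_{1,2})-f(\phi_3)+f(\phi_3+\delta_{1,2})\bigr]
+\bigl[f(\phi_3+\delta_{3,4})-f(\phi_3+\delta_{1,2})\bigr],
\]
so that the first bracket is a second difference with \emph{equal} increments $\delta_{1,2}$ at the two base points $\phi_1,\phi_3$, yielding exactly $\delta_{1,2}\delta_{1,3}\int_0^1\int_0^1\nabla^2 f(\cdot)\,d\eta\,d\theta$ with no cross term, while the second bracket gives $(\delta_{3,4}-\delta_{1,2})\int_0^1\nabla f(\cdot)\,d\theta$. (Incidentally, the paper's own proof places $\|f\|_{C^2}$ on the product term and $\|f\|_{C^1}$ on the four-point difference, the reverse of how \eqref{4pointinequality} is stated; this looks like a typo in the statement and does not affect the applications.)
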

\begin{proof}
We begin by proving (\ref{4pointidentity}). Using the notation 
\begin{equs}
\phi_{1,2}^\theta := \theta\phi_1 + (1-\theta)\phi_2, \qquad \phi_{3,4}^\theta := \theta \phi_3 + (1-\theta)\phi_4,
\end{equs}the expression $f(\phi_1) - f(\phi_2) - f(\phi_3) + f(\phi_4)$ can be rewritten as
\begin{equs}
&(\phi_1 - \phi_2)\int_0^1 \nabla f(\phi_{1,2}^\theta) d\theta - (\phi_3 - \phi_4)\int_0^1 \nabla f(\phi_{3,4}^\theta) d\theta \\
&= (\phi_1 - \phi_2)\int_0^1\Big(\nabla f(\phi_{1,2}^\theta)  -\nabla f(\phi_{3,4}^\theta)\Big)d\theta + (\phi_1 - \phi_2 - \phi_3 + \phi_4) \int_0^1 \nabla f(\phi_{3,4}^\theta) d \theta.
\end{equs}
The second term is exactly as desired, and the first term can be written as
$$(\phi_1 - \phi_2)\int_0^1(\phi_{1,2}^\theta - \phi_{3,4}^\theta)\int_0^1 \nabla^2 f(\eta \phi_{1,2}^\theta + (1-\eta)\phi_{3,4}^\theta) d\eta d\theta$$
which is indeed the first term of the desired expression. Hence (\ref{4pointidentity}) is proven. To prove (\ref{4pointinequality}), we set
$$\delta_{i,j} := \phi_j - \phi_i$$
and we note that $f(\phi_1) - f(\phi_2) - f(\phi_3) + f(\phi_4)$ can be written as
\begin{equs}
&f(\phi_1)- f(\phi_1 + \delta_{1,2}) - f(\phi_3) + f(\phi_3 + \delta_{1,2}) - f(\phi_{3} + \delta_{1,2}) + f(\phi_3 + \delta_{3,4})\\
&= -\delta_{1,2}\int_0^1 \nabla f(\phi_1 + \theta \delta_{1,2}) d\theta + \delta_{1,2}\int_0^1 \nabla f(\phi_3 + \theta \delta_{1,2}) d\theta\\
&\qquad \qquad+(\delta_{3,4} - \delta_{1,2}) \int_0^1 \nabla f(\phi_3 + \theta \delta_{3,4} + (1-\theta)\delta_{1,2}) d \theta\\
& =  \delta_{1,2}\delta_{1,3}\int_0^1 \int_0^1 \nabla^2 f(\eta\phi_3 + (1-\eta)\phi_1 +\theta \delta_{1,2}) d\theta d \eta\\
&\qquad\qquad+ (\delta_{3,4} - \delta_{1,2})\int_0^1 \nabla f(\phi_3 + \theta \delta_{3,4} + (1-\theta)\delta_{1,2})d\theta.
\end{equs}
Hence (\ref{4pointinequality}) follows as well.
\end{proof}

The following result is taken from \cite{dareiotis2022quantifying}.
\begin{proposition}
\label{laplacianlemma}
Let $\gamma \in \mathbb{R} \setminus \mathbb{Z}$. There exists a constant $N = N(\gamma)$ such that for all $f \in C^{\gamma}(\mathbb{R}^d)$ we have
$$\|(1-\Delta)^{-1} f \|_{C^{\gamma + 2}(\mathbb{R}^d)} \leq N \|f\|_{C^\gamma(\mathbb{R}^d)}.$$
\end{proposition}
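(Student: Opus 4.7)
The plan is to leverage the identification (noted right after the heat-kernel definition of $C^\alpha$) of $C^\gamma(\mathbb{R}^d)$ for non-integer $\gamma$ with the Besov--H\"older space $\mathscr{B}^\gamma_{\infty,\infty}$ (with equivalent norms) and to show that $(1-\Delta)^{-1}$ is a smoothing operator of order two on this scale. Since $\gamma \in \mathbb{R}\setminus\mathbb{Z}$ automatically forces $\gamma+2 \in \mathbb{R}\setminus\mathbb{Z}$, both the source and target spaces in the statement may be replaced by the corresponding Besov spaces without loss.

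First I would fix a standard inhomogeneous Littlewood--Paley partition of unity and associated frequency projectors $(\Delta_j)_{j\ge -1}$, so that for $j\ge 0$ the block $\Delta_j f$ has Fourier support in a dyadic annulus $\{|\xi|\sim 2^j\}$, and recall the characterisation
\begin{equation*}
\|f\|_{\mathscr{B}^\gamma_{\infty,\infty}} \sim \sup_{j\ge -1} 2^{j\gamma}\|\Delta_j f\|_{L_\infty}.
\end{equation*}
The core ingredient is then a Bernstein-type estimate
\begin{equation*}
\|\Delta_j (1-\Delta)^{-1} f\|_{L_\infty} \lesssim (1+2^{2j})^{-1}\|\Delta_j f\|_{L_\infty},
\end{equation*}
obtained by writing $\Delta_j(1-\Delta)^{-1} = K_j * \Delta_j$, where $K_j$ is the inverse Fourier transform of $\widetilde\varphi_j(\xi)(1+|\xi|^2)^{-1}$ for a slightly enlarged frequency cutoff $\widetilde\varphi_j$ supported in $\{|\xi|\sim 2^j\}$, and estimating $\|K_j\|_{L_1}$ by a scaling argument. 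Combining the two, I would conclude
\begin{equation*}
\sup_{j\ge -1} 2^{j(\gamma+2)}\|\Delta_j (1-\Delta)^{-1} f\|_{L_\infty} \lesssim \sup_{j\ge -1} 2^{j\gamma}\|\Delta_j f\|_{L_\infty},
\end{equation*}
which yields the required bound upon translating back to the $C^\cdot$ norms.

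The main technical obstacle is the Bernstein estimate for the Bessel potential on frequency-localised functions, which relies on the scaling of the $L_1$-kernel bound for $(1+|\xi|^2)^{-1}$; this is classical but needs some bookkeeping at the low-frequency block $j=-1$, where the multiplier is smooth and bounded and one simply has $\|\Delta_{-1}(1-\Delta)^{-1}f\|_{L_\infty} \lesssim \|\Delta_{-1}f\|_{L_\infty}$. An alternative closer to the paper's own framework would be to use the subordination representation
\begin{equation*}
(1-\Delta)^{-1} f = \int_0^\infty e^{-t}\, P_t^{\mathbb{R}^d} f\, dt,
\end{equation*}
which follows from $\frac{1}{1+|\xi|^2} = \int_0^\infty e^{-t(1+|\xi|^2)}\,dt$, and directly use the heat-semigroup characterisation of $C^\alpha$ for $\alpha<0$ together with classical Schauder/heat-kernel estimates when the target regularity is positive, splitting into cases according to the signs of $\gamma$ and $\gamma+2$. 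Either route gives a constant depending only on $\gamma$ (and $d$), as required.
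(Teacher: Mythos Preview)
Your approach is correct and is in fact the standard way to prove this Schauder-type estimate on the Besov--H\"older scale. However, the paper does not actually prove this proposition: it simply states the result and cites \cite{dareiotis2022quantifying} (``The following result is taken from \cite{dareiotis2022quantifying}''). So there is nothing to compare against in the paper itself; your Littlewood--Paley/Bernstein argument (or the alternative heat-semigroup subordination route you mention) supplies precisely the kind of proof that the cited reference would contain.
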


\begin{lemma}[The $\mathscr{V}_p^{\gamma}$-bracket is triangular in time]\label{Vbracketistriangular}
Let $p\in [1,\infty)$, $\gamma>0$, and let $f \in \mathscr{V}_p^\gamma$. Then for all $0\leq S \leq Q \leq T \leq 1$ we have
\begin{equation}\label{triangleforVbracket}
[f]_{\mathscr{V}_p^\gamma[S,T]} \leq  2[f]_{\mathscr{V}_p^\gamma[S,Q]} + 2[f]_{\mathscr{V}_p^\gamma[Q,T]}.
\end{equation}
Consequently, for any integer $K \geq 2$ we have
\begin{equation}\label{Vbracketrepeatedtriangle}[f]_{\mathscr{V}_p^\gamma} \lesssim 2^K\sum_{i=0}^{K-1} [f]_{\mathscr{V}_p^{\gamma}[\frac{i}{K},\frac{i+1}{K}]}.
\end{equation}
\end{lemma}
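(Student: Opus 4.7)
The plan is to prove the first inequality \eqref{triangleforVbracket} by a direct triangle inequality applied to a decomposition that exploits the semigroup property $P_{t-Q}P_{Q-s}=P_{t-s}$, and then to derive \eqref{Vbracketrepeatedtriangle} by iterating.

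For \eqref{triangleforVbracket}, fix $x \in \mathbb{T}$ and $S \le s < t \le T$. If $(s,t) \in [S,Q]^2_<$ or $(s,t) \in [Q,T]^2_<$, the bound is immediate from one of the two seminorms on the right-hand side. The only interesting case is $s < Q < t$. Here I would write
\begin{equs}
f_t(x) - P_{t-s}f_s(x) &= \big(f_t(x) - P_{t-Q}f_Q(x)\big) + P_{t-Q}\big(f_Q(\cdot) - P_{Q-s}f_s(\cdot)\big)(x),
\end{equs}
where the decomposition uses the semigroup identity $P_{t-Q}P_{Q-s}=P_{t-s}$. For the first summand, the tower property together with $\mathscr{F}_s \subset \mathscr{F}_Q$ yields
\begin{equs}
\|f_t(x) - P_{t-Q}f_Q(x)\|_{L_{p,\infty}^{\mathscr{F}_s}} \le \|f_t(x) - P_{t-Q}f_Q(x)\|_{L_{p,\infty}^{\mathscr{F}_Q}} \le [f]_{\mathscr{V}_p^\gamma[Q,T]}(t-Q)^\gamma.
\end{equs}
For the second summand, Minkowski's inequality (applied inside the conditional expectation) lets me push the $L_{p,\infty}^{\mathscr{F}_s}$-norm through the spatial integral against $p_{t-Q}$; since $\|p_{t-Q}(x,\cdot)\|_{L_1(\mathbb{T})}=1$, this gives
\begin{equs}
\big\|P_{t-Q}(f_Q - P_{Q-s}f_s)(x)\big\|_{L_{p,\infty}^{\mathscr{F}_s}} \le \sup_{y \in \mathbb{T}}\|f_Q(y) - P_{Q-s}f_s(y)\|_{L_{p,\infty}^{\mathscr{F}_s}} \le [f]_{\mathscr{V}_p^\gamma[S,Q]}(Q-s)^\gamma.
\end{equs}
Bounding $(t-Q)^\gamma, (Q-s)^\gamma \le (t-s)^\gamma$, dividing by $(t-s)^\gamma$, and taking the supremum over $x$ and $(s,t)$ yields \eqref{triangleforVbracket} (even with constant $1$ in front of each term, which is of course stronger than the stated constant $2$).

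For \eqref{Vbracketrepeatedtriangle}, I would set $b_k := [f]_{\mathscr{V}_p^\gamma[0,k/K]}$ and $a_k := [f]_{\mathscr{V}_p^\gamma[k/K,(k+1)/K]}$. Applying \eqref{triangleforVbracket} with $S=0$, $T=k/K$, $Q=(k-1)/K$ gives $b_k \le 2 b_{k-1}+2 a_{k-1}$, and a trivial induction starting from $b_0=0$ yields $b_K \le \sum_{i=0}^{K-1}2^{K-i}a_i \le 2^K\sum_{i=0}^{K-1}a_i$, which is exactly \eqref{Vbracketrepeatedtriangle}. The only place one needs to be careful is the interchange of the conditional $L_{p,\infty}^{\mathscr{F}_s}$-norm with the heat semigroup; the key observation is simply that $P_{t-Q}$ acts as an average with a probability kernel and commutes with conditioning on $\mathscr{F}_s \subset \mathscr{F}_Q$ because the kernel is deterministic.
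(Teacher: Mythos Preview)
Your proof is correct and follows essentially the same approach as the paper: the same semigroup-based decomposition in the case $s<Q<t$, the same tower-property argument to pass from $\mathscr{F}_s$ to $\mathscr{F}_Q$ in the first summand, and the same Minkowski-type bound for the second summand. Your observation that the constant in \eqref{triangleforVbracket} can be taken to be $1$ is correct; the paper obtains the factor $2$ only because it crudely adds the bound for the trivial case $(s,t)\in[S,Q]_<^2\cup[Q,T]_<^2$ to the bound for the split case $s<Q<t$, rather than noting that the latter already dominates the former. Your explicit induction for \eqref{Vbracketrepeatedtriangle} is also cleaner than the paper, which leaves that step implicit.
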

\begin{proof}
For $(s,t) \in [0,1]_{\leq}^2$ define
$$A(s,t) := \sup_{x \in \mathbb{T}}\|f_t(x) - P_{t-s}f_s(x)\|_{L_{p, \infty}^{\mathscr{F}_s}}.$$
For $(s,t) \in [S,Q]_{\leq}^2 \cup [Q,T]_{\leq}^2$, we clearly have
\begin{equs}
A(s,t) \leq [f]_{\mathscr{V}_p^\gamma[s,t]}(t-s)^\gamma \leq \Big( [f]_{\mathscr{V}_p^\gamma[S,Q]} + [f]_{\mathscr{V}_p^\gamma[Q,T]}\Big)(t-s)^\gamma. \label{stisbeloworaboveQ}
\end{equs}
We also need to check what happens in the case when $Q \in (s,t)$. Then we write
\begin{equs}
A(s,t) &\leq \sup_{x \in \mathbb{T}}\|f_t(x) - P_{t-Q}f_Q(x)\|_{L_{p,\infty}^{\mathscr{F}_s}} + \sup_{x \in \mathbb{T}}\|P_{t-Q}f_Q(x) - P_{t-s}f_s(x)\|_{L_{p,\infty}^{\mathscr{F}_s}}\\
&= B(s,t) + C(s,t).
\end{equs}
Note that as $s \leq Q$, we have
$\|\cdot\|_{L_{p}|\mathscr{F}_s} \leq \|\|\cdot\|_{L_p |\mathscr{F}_Q}\|_{L_p|\mathscr{F}_s} \leq \|\cdot\|_{L_{p,\infty}^Q}$, and thus
\begin{equs}
B(s,t) \leq \sup_{x \in \mathbb{T}}\|f_t(x) - P_{t-Q}f_Q(x)\|_{L_{p,\infty}^Q} = A(Q,t).
\end{equs}
Moreover using that $s \leq Q$, we have
\begin{equs}
C(s,t) &= \sup_{x \in \mathbb{T}}\|P_{t-Q}\big(f_Q - P_{Q-s}f_s \big)(x)\|_{L_{p,\infty}^{\mathscr{F}_s}} \leq \sup_{x \in \mathbb{T}}\|f_Q(x) - P_{Q-s}f_s (x)\|_{L_{p,\infty}^{\mathscr{F}_s}} = A(s,Q)  .
\end{equs}
By the above bounds on $B$ and $C$, we conclude that
\begin{equs}
A(s,t) \leq A(s,Q) + A(Q,t) \leq \Big( [f]_{\mathscr{V}_p^\gamma[S,Q]} + [f]_{\mathscr{V}_p^\gamma[Q,T]}\Big)(t-s)^\gamma \label{Qisbetweensandt}
\end{equs}
By adding up the bounds (\ref{stisbeloworaboveQ}) and (\ref{Qisbetweensandt}), we can see that for all $(s,t) \in [S,T]_{\leq}^2$ and $Q \in [S,T]$, we have
\begin{equs}
A(s,t) \leq 2\Big( [f]_{\mathscr{V}_p^\gamma[S,Q]} + [f]_{\mathscr{V}_p^\gamma[Q,T]}\Big)(t-s)^\gamma,
\end{equs}
from which the desired result follows.
\end{proof}

\begin{lemma}[The $L_p$-valued $C^{1/4, 1/2}$-norm, and the $\mathscr{V}_p^{1/4}$-bracket]\label{embeddinglemma}
Let $\alpha \in (-1,0)$ and $p \in [1,\infty)$. There exists a constant $N = N(p, \alpha)>0$ such that for $f\in \mathscr{V}_p^{1/4}\cap C^{0,1/2}([0,1]\times \mathbb{T}, L_p)$ we have
\begin{equs}
\|f\|_{C^{1/4, 1/2}([0,1]\times \mathbb{T}, L_p)} \leq   N[f]_{\mathscr{V}_p^{1/4}} + N\|f\|_{C^{0,1/2}([0,1]\times \mathbb{T}, L_p)}.
\end{equs}
\end{lemma}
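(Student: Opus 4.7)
The plan is direct: the $C^{1/4,1/2}([0,1]\times\mathbb{T}, L_p)$-norm consists of the $\mathbb{B}$-norm, the spatial $\frac{1}{2}$-H\"older seminorm, and the temporal $\frac{1}{4}$-H\"older seminorm. The first two are already controlled by $\|f\|_{C^{0,1/2}([0,1]\times\mathbb{T}, L_p)}$, so the only task is to estimate the temporal seminorm
\begin{equs}
\sup_{x\in\mathbb{T}}\sup_{(s,t)\in[0,1]_<^2}\frac{\|f_t(x)-f_s(x)\|_{L_p}}{|t-s|^{1/4}}
\end{equs}
by $N[f]_{\mathscr{V}_p^{1/4}}+N\|f\|_{C^{0,1/2}([0,1]\times\mathbb{T}, L_p)}$.

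The natural decomposition to use is
\begin{equs}
f_t(x)-f_s(x)=\bigl(f_t(x)-P_{t-s}f_s(x)\bigr)+\bigl(P_{t-s}f_s(x)-f_s(x)\bigr).
\end{equs}
For the first term, since $\|\cdot\|_{L_p}\le \|\cdot\|_{L_{p,\infty}^{\mathscr{F}_s}}$, the definition of the $\mathscr{V}_p^{1/4}$-bracket gives
\begin{equs}
\|f_t(x)-P_{t-s}f_s(x)\|_{L_p}\le [f]_{\mathscr{V}_p^{1/4}}\,|t-s|^{1/4}.
\end{equs}

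For the second term, I would write $P_{t-s}f_s(x)-f_s(x)=\int_{\mathbb{T}}p_{t-s}(x,y)(f_s(y)-f_s(x))dy$ (using that $p_{t-s}(x,\cdot)$ integrates to one), apply the Minkowski inequality in $L_p$, and bound
\begin{equs}
\|f_s(y)-f_s(x)\|_{L_p}\le \|f\|_{C^{0,1/2}([0,1]\times\mathbb{T}, L_p)}\,|x-y|^{1/2}.
\end{equs}
This reduces the estimate to a purely deterministic integral $\int_{\mathbb{T}}p_{t-s}(x,y)|x-y|^{1/2}dy$, which by a standard Gaussian moment computation on the periodic heat kernel is bounded by $N(t-s)^{1/4}$. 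Combining the two bounds and taking the supremum over $(s,t,x)$ yields the temporal H\"older seminorm bound, and adding $\|f\|_{C^{0,1/2}}$ for the remaining pieces of the $C^{1/4,1/2}$-norm completes the proof.

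There is no genuine obstacle here; this is an elementary interpolation between the heat-semigroup-compatibility afforded by the $\mathscr{V}_p^{1/4}$-bracket and the raw spatial $\frac{1}{2}$-H\"older regularity. The only thing to be careful about is the heat-kernel moment estimate $\int_{\mathbb{T}}p_t(x,y)|x-y|^{1/2}dy\lesssim t^{1/4}$, which follows by the standard periodisation argument from the corresponding bound on $\mathbb{R}$, or alternatively from the estimate \eqref{konstantinoskernelidentity} in \cref{borrowedheatkernelestimates} applied to $f(\cdot)=|\cdot-x|^{1/2}$-type reasoning.
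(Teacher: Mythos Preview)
Your proposal is correct and follows essentially the same approach as the paper's proof: both decompose $f_t(x)-f_s(x)$ into $(f_t(x)-P_{t-s}f_s(x))+(P_{t-s}f_s(x)-f_s(x))$, bound the first term via $\|\cdot\|_{L_p}\le\|\cdot\|_{L_{p,\infty}^{\mathscr{F}_s}}$ and the $\mathscr{V}_p^{1/4}$-bracket, and the second via the spatial $C^{1/2}$-regularity together with the standard heat-kernel estimate $\|P_t g-g\|_{\mathbb{B}}\lesssim t^{1/4}\|g\|_{C^{1/2}}$. The paper simply labels this last step ``a standard heat kernel estimate'' while you spell out the moment computation; otherwise the arguments are identical.
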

\begin{proof}
We decompose the space--time H\"older norm to the sup norm, and spatial and temporal seminorms as follows:
\begin{equs}
    &\|f^n\|_{C^{1/4,1/2}([0,1]\times \mathbb{T}, L_p)} \\
    &\leq \|f^n\|_{\mathbb{B}([0,1]\times \mathbb{T}, L_p)} + \sup_{x \in \mathbb{T}}[f^n(\cdot, x)]_{C^{1/4}([0,1])} + \sup_{t \in [0,1]}[f^n(t,\cdot)]_{C^{1/2}(\mathbb{T}, L_p)}.\label{decomptoseminorms}
\end{equs}
To bound the temporal seminorm, note that for $(s,t) \in [0,1]_{\leq}^2$ we have
\begin{equs}
\|f(t,\cdot) -f(s,\cdot)\|_{\mathbb{B}(\mathbb{T}, L_p)} &\leq \|f(t,\cdot) - P_{t-s}f(s,\cdot)\|_{\mathbb{B}(\mathbb{T}, L_p)} + \|P_{t-s}f(s,\cdot) - f(s,\cdot)\|_{\mathbb{B}(\mathbb{T}, L_p)}\\
&=: A(s,t) + B(s,t). \label{decomptoAandB}
\end{equs}
Since $\|\cdot\|_{L_p} \leq \|\cdot\|_{L_{p,\infty}^{\mathscr{F}_s}}$, it follows that
\begin{equs}
A(s,t) \leq [f]_{\mathscr{V}_p^{1/4}[s,t]}(t-s)^{1/4}.
\end{equs}
Moreover by a standard heat kernel estimate
\begin{equs}
B(s,t) &\lesssim \|f(s,\cdot)\|_{C^{1/2}(\mathbb{T}, L_p)}(t-s)^{1/4}.
\end{equs}
By putting the above bounds on $A$ and $B$ into (\ref{decomptoAandB}), we can see that
\begin{equs}
\sup_{x \in \mathbb{T}}[f(\cdot,x)]_{C^{1/4}([0,1], L_p)} \lesssim [f]_{\mathscr{V}_p^{1/4}[s,t]} + \|f\|_{C^{0,1/2}([0,1]\times\mathbb{T}, L_p)}.
\end{equs}
Using this bound on the second term of (\ref{decomptoseminorms}) finishes the proof.
\end{proof}

In the next lemma, we show a Markov-type property which will be used often. It is very standard but the proof is included for the convenience of the reader. Recall that $C(\mathbb{T})$ denotes the collection of continuous functions $f:\mathbb{T} \to \mathbb{R}$, and it is equipped with the sup-norm $\|\cdot\|_{\mathbb{B}}$. The topology induced by this norm generates the Borel $\sigma$-algebra $\mathscr{B}(C(\mathbb{T}))$ which coincides with the cylindrical $\sigma$-algebra. Moreover, recall that since $C(\mathbb{T})$ is separable,  the notions of measurable, weakly measurable, and strongly measurable $C(\mathbb{T})$-valued maps on $\Omega$ coincide. In addition, a continuous random field $u : \Omega \times \mathbb{T} \to \R$ is actually a $C(\mathbb{T})$-valued random variable. 

\begin{lemma}\label{markovproperty}      Let  $b,\sigma \in C^{1}(\mathbb{R})$, $M \in \mathbb{N}$, $(Z_i)_{i=1}^M \subset L_2(\Omega, \mathscr{F}_s, \mathbb{P}; C(\mathbb{T}))\cap \mathbb{B}(\mathbb{T}, L_2(\Omega))$, and let $\phi^{Z,s}$ be the unique solution of (\ref{phidef0}).
Further, for $p \in [1,\infty)$, $f \in C^1(\mathbb{R}^M)$, $t \in [s,1]$, and $x \in \mathbb{T}$, define $g:(C(\mathbb{T}))^M \to \mathbb{R}$ by
$$
g(z_1,\dots,z_M) := \mathbb{E}f\big(\phi^{z_1}(t-s,x),\dots, \phi^{z_M}(t-s,x) \big).
$$
Then, for $i=1,\dots,M$ and $Z_i \in L_2(\Omega, \mathcal{F}_s, \mathbb{P}; C(\mathbb{T}))\cap\mathbb{B}(\mathbb{T}, L_2(\Omega))$, we have 
\begin{equs}        \label{eq:Markov_conclusion}
\mathbb{E}^s f\big(\phi^{Z_1,s}(t,x),\dots, \phi^{Z_M,s}(t,x)\big) = g(Z_1,\dots, Z_M).
\end{equs}
\end{lemma}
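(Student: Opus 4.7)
The plan is to combine time-homogeneity of the noise with a standard freezing argument for conditional expectations. First I would set up a shifted driving noise. Define the isonormal Gaussian process $\xi^{(s)}$ on $L_2([0,1-s]\times\mathbb{T})$ by $\xi^{(s)}(h) := \xi(h(\cdot-s,\cdot))$; this is independent of $\mathscr{F}_s$ and has the same law as $\xi$ restricted to $[0,1-s]\times\mathbb{T}$. For fixed deterministic $z\in C(\mathbb{T})$ let $\tilde\phi^z$ denote the unique (strong) solution of the driftless equation \eqref{driftlessSHE} driven by $\xi^{(s)}$ with initial condition $z$. By strong uniqueness of \eqref{phidef0} combined with the deterministic change of variables $r\mapsto r-s$, one verifies the pathwise identity $\phi^{z,s}(t,x)=\tilde\phi^z(t-s,x)$ for every $t\in[s,1]$, $x\in\mathbb{T}$, and for every deterministic $z\in C(\mathbb{T})$. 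By the same argument applied to \eqref{phidef0} with random $\mathscr{F}_s$-measurable initial data, we also get $\phi^{Z_i,s}(t,x)=\tilde\phi^{Z_i}(t-s,x)$ almost surely.

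Next I would check that $g$ is continuous on $(C(\mathbb{T}))^M$ so that $g(Z_1,\ldots,Z_M)$ is a bona fide $\mathscr{F}_s$-measurable random variable. Combining the $C^1$-regularity (hence local Lipschitz-ness) of $f$ on compacts with \cref{lipschitzflowgeneral} (case $n=0$, $q=2$, deterministic inputs) shows that for each $(t,x)$ the map $z\mapsto \phi^{z}(t-s,x)$ is continuous from $C(\mathbb{T})$ into $L_2(\Omega)$, and therefore (by dominated convergence using the moment bound on $\phi^z(t-s,x)$ of \cref{posimomentbound} together with the at-most-polynomial growth of $f$ on bounded sets, plus the fact that convergence in $C(\mathbb{T})$ of the initial data yields boundedness) the function $g$ is continuous on $(C(\mathbb{T}))^M$.

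Then I would carry out the freezing step. Because $Z_1,\ldots,Z_M$ are strongly $\mathscr{F}_s$-measurable in the separable Banach space $C(\mathbb{T})$, each $Z_i$ is the $\mathbb{P}$-a.s.\ uniform limit of a sequence of $\mathscr{F}_s$-measurable simple $C(\mathbb{T})$-valued random variables $Z_i^{(k)}=\sum_{j}z_{i,j}^{(k)}\mathbf{1}_{A_j^{(k)}}$ with $A_j^{(k)}\in\mathscr{F}_s$ disjoint. On each set $A_j^{(k)}$ the solution $\phi^{Z_i^{(k)},s}(t,x)$ coincides with $\tilde\phi^{z_{i,j}^{(k)}}(t-s,x)$, which depends only on $\xi^{(s)}$ and hence is independent of $\mathscr{F}_s$; consequently
\begin{equs}
\mathbb{E}^s f\big(\phi^{Z_1^{(k)},s}(t,x),\ldots,\phi^{Z_M^{(k)},s}(t,x)\big)
=\sum_{j_1,\ldots,j_M}\mathbf{1}_{A^{(k)}_{j_1,\ldots,j_M}}\,g\big(z_{1,j_1}^{(k)},\ldots,z_{M,j_M}^{(k)}\big)
=g\big(Z_1^{(k)},\ldots,Z_M^{(k)}\big),
\end{equs}
the first equality being the standard scalar freezing lemma applied to the independent randomness $\xi^{(s)}$. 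To pass to the limit $k\to\infty$ on both sides, I would use continuity of $g$ on the left-hand argument and, on the right-hand side, the $L_1$-continuity of the map $Z\mapsto f(\phi^{Z_1,s}(t,x),\ldots)$ obtained again from \cref{lipschitzflowgeneral} and the local Lipschitz-ness of $f$ on compacts (combined with the uniform moment bounds from \cref{posimomentbound} to control a truncation error). Taking the limit yields \eqref{eq:Markov_conclusion}.

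The main technical obstacle is precisely this last passage to the limit: one needs integrable domination along the approximating sequence $(Z_i^{(k)})$ despite $f$ being merely $C^1$ (and therefore only locally Lipschitz). Since the $Z_i^{(k)}$ converge in $C(\mathbb{T})$ almost surely and are uniformly bounded in $L_2(\Omega,C(\mathbb{T}))$ by $\|Z_i\|_{C(\mathbb{T})}+1$ eventually, this is handled by a routine truncation of $f$ combined with the uniform-in-$z$ second-moment estimate for $\phi^z(t-s,\cdot)$ from \cref{posimomentbound}; every other step is either a verbatim application of cited results or a deterministic manipulation.
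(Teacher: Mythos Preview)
Your proposal is correct and follows essentially the same route as the paper: approximate the $\mathscr{F}_s$-measurable initial data by simple $C(\mathbb{T})$-valued random variables, verify the identity on simple functions using independence of the post-$s$ noise from $\mathscr{F}_s$ together with time-homogeneity, and pass to the limit using the Lipschitz dependence of the flow on the initial condition (\cref{lipschitzflowgeneral}). Your explicit construction of the shifted noise $\xi^{(s)}$ is a slightly more detailed version of the step the paper compresses into the line $\mathbb{E}^s f(\phi^{h_{k,1},s}(t,x),\dots)=\mathbb{E} f(\phi^{h_{k,1}}(t-s,x),\dots)$.

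One minor simplification: your concern about the ``technical obstacle'' of passing to the limit with $f$ merely locally Lipschitz is not actually present here. In the paper's notation, $f\in C^1(\mathbb{R}^M)$ means $\|f\|_{C^1}=\|f\|_{\mathbb{B}}+\sum_i\|\partial_i f\|_{\mathbb{B}}<\infty$, so $f$ is globally Lipschitz and bounded. The paper therefore passes to the limit directly via $\|f(\phi^{Z_i^n,s})-f(\phi^{Z_i,s})\|_{L_p}\lesssim \|\phi^{Z_i^n,s}-\phi^{Z_i,s}\|_{L_p}$ without any truncation argument.
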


\begin{proof}
Suppose first that the $Z_i$ are simple random variables of the form 
\begin{equs}   \label{eq:simple_RV}
Z_i = \sum_{k=1}^K h_{k,i} \mathbf{1}_{E_k}
\end{equs}
where $K \in \mathbb{N}$,  $(h_{k,i})_{k=1}^K \subset C(\mathbb{T})$ and $(E_k)_{k=1}^K \subset  \mathcal{F}_s$ is a partition of $\Omega$. In this case, we have for $(t,x) \in [s,1]\times \mathbb{T}$ that
\begin{equs}
\mathbb{E}^s&f(\phi^{Z_1,s}(t,x), \dots, \phi_{s,t}^{Z_M,s}(t,x))\\
&= \mathbb{E}^sf\Big(\sum_{k=1}^K \mathbf{1}_{E_k}\phi^{h_{k,1},s}(t,x), \dots, \sum_{k=1}^K \mathbf{1}_{E_k}\phi^{h_{k,M},s}(t,x)\Big)\\
&= \mathbb{E}^s 
\sum_{k=1}^K \mathbf{1}_{E_k}f(\phi^{h_{k,1},s}(t,x),\dots, \phi^{h_{k,M},s}(t,x))\\
&= \sum_{k=1}^K \mathbf{1}_{E_k}\mathbb{E}^s f(\phi^{h_{k,1},s}(t,x),\dots, \phi^{h_{k,M},s}(t,x))\\
&= \sum_{k=1}^K \mathbf{1}_{E_k}\mathbb{E}f(\phi^{h_{k,1}}(t-s,x),\dots, \phi^{h_{k,M}}(t-s,x))\\
& = \sum_{k=1}^K \mathbf{1}_{E_k}g(h_{k,1},\dots,h_{k,M})= g(Z_1,\dots,Z_M), 
\end{equs}
which shows \eqref{eq:Markov_conclusion}. For the general case, since $Z_i \in L_2(\Omega, \mathcal{F}_s, \mathbb{P}; C(\mathbb{T}))$, for $i=1, \dots, M$ there exist sequences $(Z^n_i)_{n \in \mathbb{N}}$ of the form \eqref{eq:simple_RV} such that $\| Z^n_i-Z_i\|_{\mathbb{B}(\mathbb{T})} \to 0 $ almost surely and in $L_2(\Omega)$ as $n \to \infty$. For those $Z^n_is$ and for $(t,x) \in [s,1]\times \mathbb{T}$ we have 
\begin{equs}        \label{eq:Markov_conclusion_approx}
\mathbb{E}^s f(\phi^{Z^n_1,s}(t,x),\dots \phi^{Z^n_M,s}(t,x)) = g(Z^n_1,\dots, Z^n_M).
\end{equs}
It follows from \cref{lipschitzflowgeneral} that for all $(t,x) \in [s,1]\times \mathbb{T}$, the map $ L_p(\Omega; C(\mathbb{T})) \ni Z \mapsto \phi^{Z,s}(t,x) \in L_p(\Omega) $ is Lipschitz. From this, it firstly  follows that $\phi^{Z^n_i,s}(t,x) \to \phi^{Z_i,s}(t,x)$ in $L_p(\Omega)$, which by using the Lipschitz continuity of $f$ implies that $$\mathbb{E}^s f(\phi^{Z^n_1,s}(t,x),\dots, \phi^{Z^n_M,s}(t,x)) \longrightarrow \mathbb{E}^s f(\phi^{Z_1,s}(t,x),\dots \phi^{Z_M,s}(t,x))$$ in $L_p(\Omega)$.  Secondly, it also follows that the function $g:C(\mathbb{T})^M \to \R $ is continuous. Hence, upon taking the limit in probability with $n\to \infty$ in \eqref{eq:Markov_conclusion_approx}, the result follows. 
\end{proof}

\bibliographystyle{Martin}
\bibliography{references}
\end{document}